\documentclass[11pt,reqno]{amsart}
\usepackage[T1]{fontenc}
\usepackage{lmodern}
\usepackage[letterpaper,textwidth=6.15in,textheight=8.6in,centering]{geometry}
\usepackage{amsmath,amssymb,mathtools,bm,mathrsfs}
\usepackage{microtype}
\usepackage{booktabs,array,needspace,enumitem,longtable}
\usepackage[dvipsnames]{xcolor}
\usepackage[colorlinks=true,linkcolor=MidnightBlue,citecolor=MidnightBlue,urlcolor=MidnightBlue]{hyperref}
\usepackage[nameinlink,noabbrev,capitalise]{cleveref}
\hypersetup{
 pdftitle={Universal Spacelikeness Estimates and Liouville Rigidity for Lorentzian sigma-k
Curvature Equations},
 pdfauthor={Shujun Shi and Yuzhou Zhang},
 pdfsubject={Entire spacelike graphs, universal spacelikeness estimates, and Liouville rigidity through the critical endpoint},
 pdfkeywords={Lorentz-Minkowski, sigma-k curvature, Liouville theorem, Newton tensor}}
\numberwithin{equation}{section}
\newtheorem{theorem}{Theorem}[section]
\newtheorem{proposition}[theorem]{Proposition}
\newtheorem{lemma}[theorem]{Lemma}
\newtheorem{corollary}[theorem]{Corollary}
\crefname{theorem}{Theorem}{Theorems}
\crefname{proposition}{Proposition}{Propositions}
\crefname{lemma}{Lemma}{Lemmas}
\crefname{corollary}{Corollary}{Corollaries}
\crefname{remark}{Remark}{Remarks}
\theoremstyle{definition}

\theoremstyle{remark}
\newtheorem{remark}[theorem]{Remark}
\newcommand{\R}{\mathbb R}
\newcommand{\Sym}{\operatorname{Sym}}
\newcommand{\tr}{\operatorname{tr}}
\newcommand{\Div}{\operatorname{div}}
\newcommand{\Id}{\operatorname{Id}}
\newcommand{\dd}{\,\mathrm d}
\newcommand{\dmu}{\,\mathrm d\mu}
\newcommand{\LF}{\mathcal L_F}
\newcommand{\Dk}{\mathcal D_k}
\newcommand{\J}{\mathcal J}
\newcommand{\pstar}{p_*}

\newcommand{\closureGC}{\overline{\Gamma_k}}

\newcommand{\ip}[2]{\langle#1,#2\rangle}
\newcommand{\bin}[2]{\binom{#1}{#2}}
\newcommand{\diag}{\operatorname{diag}}

\newcommand{\cK}{\mathcal K}
\newcommand{\cD}{\mathscr D}
\newcommand{\cB}{\mathcal B}

\newcommand{\eps}{\varepsilon}
\newcommand{\weakto}{\rightharpoonup}

\allowdisplaybreaks[1]
\title[Liouville rigidity for Lorentzian $\sigma_k$ equations]
{Universal Spacelikeness Estimates and Liouville Rigidity\\
for Lorentzian $\sigma_k$ Curvature Equations}
\author{Shujun Shi}
\address{School of Mathematical Sciences, Harbin Normal University,
Harbin 150025, Heilongjiang Province, China}
\email{shjshi@hrbnu.edu.cn}
\author{Yuzhou Zhang}
\address{School of Mathematical Sciences, Harbin Normal University,
Harbin 150025, Heilongjiang Province, China}
\email{yuzhouzhang@stu.hrbnu.edu.cn}
\date{}
\subjclass[2020]{35B53, 35J60, 53C50}
\keywords{Lorentz--Minkowski space, prescribed curvature, spacelike graph,
Liouville theorem, Newton tensor, Bernstein estimate}
\begin{document}
\begin{abstract}
We study nonnegative entire spacelike graphs in Lorentz--Minkowski space satisfying $\sigma_k(A[u])=u^p$. For $2\le k<n$ and $p\ge k$, pointwise strict spacelikeness and closed $k$-admissibility yield universal bounds for the height and Lorentz factor. Their proof combines block coercivity in the full G{\aa}rding cone, a Lorentzian cutoff, and hyperbolic-cap comparison. For $2\le k<n/2$ and $k\le p\le k(n+2)/(n-2k)$, every such solution vanishes, without symmetry, decay, finite-energy, curvature-pinching, or global Hessian assumptions. In the subcritical range we use a trace-free Newton tensor, a weighted divergence identity, and a finite descent. At the critical endpoint, the divergence identity retains an exact nonnegative defect and a positive quartic term. Direct estimates cover $2k<n\le4k+2$. For $n\ge4k+3$, we use compactness, concentration of the $k$-Hessian measure at the first crossing, a single-pole Pohozaev argument, core counting, a recursive defect estimate, and Souplet-type radius selection. A geometric corollary gives a rigidity result for complete spacelike immersions under the corresponding curvature equation.
\end{abstract}
\maketitle
\tableofcontents
\clearpage

\section{Introduction and main results}\label{sec:intro}

Throughout, $n$ and $k$ are integers with $1\leq k\leq n$, and $p>0$ is a real number;
additional restrictions are stated in each result. Let $\R^{n,1}$ denote
Lorentz--Minkowski space $\R^n\times\R$ with metric
\[
 \ip{(x,t)}{(y,s)}_L=x\cdot y-ts.
\]
For a $C^2$ function $u:\R^n\to\R$, consider the graph
$\Sigma_u=\{(x,u(x)):x\in\R^n\}$. It is spacelike precisely when $|Du|<1$. Here $D$ and $D^2$
denote the Euclidean gradient and Hessian, respectively. Write $g$ for the induced metric and $W=(1-|Du|^2)^{-1/2}$
for the Lorentz factor. Their coordinate formulas are recorded in
\cref{eq:metric-intro} below.
Let $\bar\nabla$ denote the flat connection of $\R^{n,1}$.
We fix the future-directed unit normal $\nu=W(Du,1)$ and define the second
fundamental form by $h(Y,Z)=-\ip{\bar\nabla_Y\nu}{Z}_L$. In graph coordinates,
\begin{equation}\label{eq:sign}
 h_{ij}=-W u_{ij},\qquad A=g^{-1}h.
\end{equation}
The entries of the eigenvalue vector $\lambda(A)=(\lambda_1,\cdots,\lambda_n)$
are the principal curvatures with this sign convention.
Write $\sigma_j$ for the $j$th elementary symmetric polynomial of
the principal curvatures. The G{\aa}rding cone $\Gamma_k$ is the component of $\{\sigma_k>0\}$ that contains the positive cone;
\cref{sec:prelim} gives the coordinate definition.
The equation under consideration is
\begin{equation}\label{eq:main}
 \sigma_k(A[u])=u^p \quad\hbox{in }\R^n,\qquad
 u\geq0,\quad |Du|<1,\quad \lambda(A[u])\in\closureGC.
\end{equation}
We distinguish the closed admissibility condition in \eqref{eq:main} from
strict admissibility, $\lambda(A)\in\Gamma_k$. For every nonzero solution,
\cref{lem:positive} establishes positivity and strict admissibility.

The sign convention \eqref{eq:sign} determines the admissible equation.
In particular,
\begin{equation}\label{eq:H-div}
 H:=\sigma_1(A)=-\Div_{\R^n}(WDu).
\end{equation}
For $k=1$, \eqref{eq:main} is therefore
$\Div_{\R^n}(WDu)+u^p=0$. Reversing the normal while retaining the condition
$\lambda(A)\in\overline{\Gamma_k}$ changes the admissible problem. All subsequent statements
use the convention \eqref{eq:sign}.

Classical rigidity results for entire spacelike hypersurfaces include
the maximal hypersurface theorem of Cheng and Yau~\cite{CY}. Prescribed mean curvature problems were developed by Bartnik
and Simon~\cite{BS} and Treibergs~\cite{Treibergs}; the behavior of the equation as the graph approaches the light cone is central to these problems.  Ma, Wu, Wu, and Yu~\cite{MWWY} studied the prescribed mean-curvature Lane--Emden equation.  For $k=1$, their theorem covers the critical Sobolev endpoint as well as the subcritical range, and they also obtain universal height and spacelikeness estimates.  For the same future-directed normal, our second fundamental form is the negative of the one used in \cite{MWWY}; under our convention, their equation takes the form \eqref{eq:main} with $k=1$.

The foundational elliptic theory of symmetric Hessian and curvature
operators is due to Caffarelli, Nirenberg, and Spruck~\cite{CNS3,CNS5}.
In Minkowski space, Urbas~\cite{UrbasScalar,UrbasInterior},
Bayard~\cite{Bayard}, Wang and Xiao~\cite{WX}, and Ren, Wang, and
Xiao~\cite{RWX} developed curvature estimates and existence theory in
several prescribed-curvature settings. These results concern related
geometric operators; their hypotheses and right-hand sides differ from
\eqref{eq:main}.

The corresponding Euclidean model is $\sigma_k(-D^2u)=u^p$.
For its semilinear counterpart, classical results include those of
Gidas and Spruck~\cite{GS} and Caffarelli, Gidas, and Spruck~\cite{CGS}. The relation of the $k$-Hessian model to Hessian Sobolev inequalities is
described by Chou and Wang~\cite{CW} and Wang~\cite{WangSurvey}.
Potential-theoretic and integral approaches to nonexistence were developed
by Phuc and Verbitsky~\cite{PV} and Ou~\cite{Ou}. Wang and Lei~\cite{WL}
studied radial critical exponents.

The invariant-tensor viewpoint used in Liouville problems was developed systematically by Ma and Wu~\cite{MaWuInvariant} and has also been applied to elliptic equations with source--gradient interactions by Ma and Wu~\cite{MaWuGradient}. In the present higher-curvature setting, this viewpoint motivates the trace-free tensor and weighted divergence identities below. The positive-weight integration-by-parts method has roots in conformal fully nonlinear equations studied by Chang, Gursky, and Yang~\cite{CGY} and Gonz\'alez~\cite{Gonzalez}, and was adapted to pure Hessian inequalities by Ou~\cite{Ou}. For the Euclidean $k$-Hessian algebra needed in our integral argument, we use quantitative Newton inequalities and the separate negative-weight cutoff estimate of Dai, Fu, Gui, and Qin~\cite{DFGQ}.

The gradient of the height function need not be a principal direction of the shape operator. Consequently, mixed curvature entries occur in the differential identity for the Lorentz factor $W$. A Lorentz-factor bound controls the graph metric but does not
control the ellipticity ratio of the Newton tensor. At the critical exponent, the coefficient of the weighted quadratic term vanishes. Lorentz admissibility does not imply $-D^2u\in\Gamma_k$, and an additive Green representation is unavailable for separating distant cores. 

We now state the main results.
\begin{theorem}[Universal a priori estimates]\label[theorem]{thm:apriori}
Let $2\leq k<n$ and $p\geq k$. There exist constants $U_*>0$ and $W_*>1$,
depending only on $n,k,p$, such that every nonnegative entire $C^2$ solution of
\eqref{eq:main} satisfies
\begin{equation}\label{eq:universal}
 u(x)\leq U_*,\qquad W(x)\leq W_*\quad(x\in\R^n).
\end{equation}
In particular, every such solution satisfies the uniform bound
$|Du|^2\leq1-W_*^{-2}$. We also have the pointwise estimate
\begin{equation}\label{eq:pointwise-G}
 W(x)\leq C_G(1+u(x))^{1+p/k},
\end{equation}
where $C_G$ depends only on $n,k,p$.
\end{theorem}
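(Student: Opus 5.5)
The proof has three steps: first the pointwise gradient estimate \eqref{eq:pointwise-G} from a maximum-principle computation for the Lorentz factor, then a uniform height bound by comparison with hyperbolic caps, and finally the bound on $W$ obtained by feeding the height bound into \eqref{eq:pointwise-G}. By \cref{lem:positive} we may assume $u>0$ and $\lambda(A)\in\Gamma_k$ everywhere, since otherwise $u\equiv0$ and there is nothing to prove. The linearized operator is $\LF=F^{ij}\nabla_{ij}$ on $\Sigma_u$, with $F^{ij}=\partial\sigma_k/\partial h_{ij}$, i.e.\ the Newton tensor $T_{k-1}(A)$. It is positive definite on $\Gamma_k$ and has trace $(n-k+1)\sigma_{k-1}$.

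For \eqref{eq:pointwise-G} I would use the standard spacelike identities. The height $t=u$ satisfies $\nabla^2 u=-Wh$ in the intrinsic metric, up to the sign convention \eqref{eq:sign}. Writing $W=-\ip{\nu}{e_{n+1}}_L$, one gets
\[
\LF W = W\,F^{ij}h_{il}h_{lj} + \ip{\nabla\sigma_k}{\nabla u}\cdot(\text{factor}),
\]
and $\nabla\sigma_k=pu^{p-1}\nabla u$. I would apply the maximum principle to
\[
\varphi=\log W-\beta\log(1+u)+\log\eta ,
\]
where $\eta$ is the Lorentzian cutoff $\eta=(R^2-|x|^2+u^2)_+$ (or the intrinsic distance cutoff). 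This cutoff is adapted to the graph: $|\nabla\eta|$ is controlled by $W$, and $\LF\eta$ is controlled by $\tr F\cdot W^2$ plus a $u\,\LF u$ term. At an interior maximum, the positive term $F^{ij}h_{il}h_{lj}$ must dominate the bad terms, which are the mixed entries $h_{1j}$ with $e_1=\nabla u/|\nabla u|$ and the cutoff errors. The key algebraic input is \emph{block coercivity} in $\Gamma_k$. Split $A$ into the $e_1$ direction and its orthogonal block. Then one needs
\[
F^{ij}h_{il}h_{lj}\ \ge\ c(n,k)\,\sigma_k^{1+1/k}\,\sigma_{k-1}^{\,\ldots}
\]
together with control of $F^{11}h_{11}^2+2\sum_j F^{1j}h_{1j}\cdots$ by the same quantity. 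Here I would use Newton--Maclaurin in the form $\sum F^{ii}\lambda_i^2\ge c\,\sigma_1\sigma_k$ and $\sigma_1\ge c\,\sigma_k^{1/k}$. With $\beta=1+p/k$ the terms balance, since $\sigma_k^{1/k}=u^{p/k}$. This gives $\eta W\le C(1+u)^{\beta}R^2$ near the maximum point. Letting $R\to\infty$ yields \eqref{eq:pointwise-G} with a constant independent of the solution.

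For the height bound, suppose $u(x_0)=M$ is large. By \eqref{eq:pointwise-G}, the gradient stays uniformly away from $1$ only where $u$ is bounded, so I would compare with hyperbolic caps instead. A hyperboloid $\{t=c+\sqrt{\rho^2+|x-x_0|^2}\}$, taken downward-opening to fit the sign convention, has $\sigma_k\equiv\binom nk\rho^{-k}$. In the region where $u\ge M/2$ we have $\sigma_k(A[u])=u^p\ge (M/2)^p$, which exceeds the cap's curvature once $\rho\sim M^{-p/k}$. Sliding the cap from above until it first touches the graph inside the superlevel set gives a contradiction with the comparison principle for $\sigma_k$ in the admissible cone. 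To prevent the touching point from occurring on the boundary $\{u=M/2\}$, I would use $|Du|<1$, which makes $u$ 1-Lipschitz: then $u\ge M/2$ on $B_{M/2}(x_0)$, a ball far larger than the cap's footprint $\rho$ when $M$ is large. This bounds $u\le U_*$. Combined with \eqref{eq:pointwise-G}, it gives $W\le C_G(1+U_*)^{1+p/k}=:W_*$, and hence $|Du|^2\le 1-W_*^{-2}$.

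The main obstacle is the maximum-principle step, specifically absorbing the mixed entries $F^{1j}h_{1j}$ and the term coming from $\nabla\sigma_k$. $F$ is not diagonal in the frame where $\nabla u$ is, and no ellipticity bound on $F$ is available. I would first try choosing an orthonormal frame that diagonalizes $h$ at the maximum point. I would then use the first-order condition $\nabla\varphi=0$ to express $\sum_i h_{1i}u_i$ in terms of $W$, $u$ and $\eta$. Finally I would apply Cauchy--Schwarz with the weights $F^{ii}$, using $k\ge2$ and $p\ge k$ to guarantee that the leftover power of $u$ is dominated.
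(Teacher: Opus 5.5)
Your three-step outline (cutoff maximum principle with block coercivity, then hyperbolic caps, then inserting the height bound into \eqref{eq:pointwise-G}) matches the paper's, but the height step fails as written. You make the cap flatter than $u$, with $\binom nk\rho^{-k}<(M/2)^p\le u^p$, and slide it from above. At an interior first contact from above one has $D^2V\succeq D^2u$. Hence $\widehat h[V]\preceq\widehat h[u]$ and $\sigma_k(A[V])\le u^p$. That is exactly the inequality you arranged, so there is no contradiction. A cap with $\binom nk\rho^{-k}>\sup_{B_{M/2}(x_0)}u^p$ only pushes the first contact to some $\hat x\in\partial B_{M/2}(x_0)$. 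There it gives $u(x_0)-u(\hat x)\le\sqrt{\rho^2+M^2/4}-\rho<M/2$, which $|Du|<1$ already implies. The ``footprint $\rho$'' picture is Euclidean. In $\R^{n,1}$ the cap $t=c-\sqrt{\rho^2+|x-x_0|^2}$ is an entire spacelike graph with constant curvature $1/\rho$, so caps alone cannot bound the height. Comparison from below, which is the correct direction, only shows that $u$ must climb from its boundary minimum at nearly unit slope. The missing idea is to exclude that steep climb with \eqref{eq:pointwise-G} at low height, which is exactly where you discard it. The paper takes $R_c=u(x_c)-1$ and $m=\min_{\partial B_{R_c}(x_c)}u\ge1$. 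Its cap is $V=m+\sqrt{r_0^2+R_c^2}-\sqrt{r_0^2+|x-x_c|^2}$ with $r_0=\binom nk^{1/k}m^{-p/k}$, so $\sigma_k(A[V])=m^p<u^p$ in the ball and hence $u\ge V$. At $x_c$ this gives $m\le1+r_0\le1+\binom nk^{1/k}$. At a boundary minimum $\hat x$ it gives $W(\hat x)\ge\sqrt{1+R_c^2/r_0^2}$. Applying \eqref{eq:pointwise-G} at $\hat x$, where $u=m$ is bounded, bounds $W(\hat x)$ universally and therefore bounds $R_c$.

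The gradient step has two further gaps.

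First, the cutoff $\eta=(R^2-|x|^2+u^2)_+$ need not have compact support, so the maximum need not be attained. Before $W$ is bounded, nothing excludes graphs asymptotic to the light cone; for the spacelike graph $u=\sqrt{1+|x|^2}$ one gets $\eta\equiv R^2+1$. The intrinsic distance is no better, since completeness of $g$ is not known a priori. The paper instead uses a tilted cutoff $\eta=1-(|\xi|^2-t^2)/R^2-4t/R$, with $t=u-u(x_0)$ and $R=1+u(x_0)$. Its positivity set is bounded using only $u>0$ and $|Du|<1$ (\cref{lem:cutoff}).

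Second, $\tr(Fh^2)\ge c\,\sigma_1\sigma_k$ bounds the wrong quantity. In $\LF\log W$ the term $-F(\nabla W,\nabla W)/W^2=-F(hv,hv)/W^2$ cancels all but a fraction $W^{-2}$ of the $e_1$-column. What must be bounded below is therefore $\sum_{\alpha\ge2}F(he_\alpha,he_\alpha)$. The block lemma gives $\sum_{\alpha\ge2}F(he_\alpha,he_\alpha)\ge F^{11}h_{11}^2/(16(n-k))$ with $F^{11}\ge c_{n,k}u^{p(k-1)/k}$. It does so only when $h_{11}<0$ and $|h_{1\alpha}|\le|h_{11}|/(4\sqrt{n-k})$. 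Both conditions come from the first-order condition, because the tilted cutoff satisfies $\ip{\nabla\eta}{v}=-W^2d_0+a_0$ with $d_0>1/R$. This forces $h_{11}=-W(b_0+\ell_0)$ with $\ell_0\ge\beta/(2R\eta)$. The $\ell_0$-part of $h_{11}^2$ then absorbs the cutoff errors. In this scheme that absorption also needs a large power $\eta^\beta$ with $\beta=256n$, not $\log\eta$. For your cutoff, $\ip{\nabla\eta}{v}=2W^2(u-x\cdot Du)-2u$ has no sign, so $h_{11}$ can be positive. Diagonalizing $h$ instead of aligning $e_1$ with $v$ does not address the cancellation.
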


\begin{theorem}[Liouville theorem]\label[theorem]{thm:liouville}
Let $2\leq k<n/2$ and
\begin{equation}\label{eq:range}
 k\leq p\leq\pstar,\qquad \pstar=\frac{k(n+2)}{n-2k}.
\end{equation}
Every nonnegative entire $C^2$ solution of \eqref{eq:main} is identically zero.
\end{theorem}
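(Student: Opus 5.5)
The plan is to combine the universal bounds of \cref{thm:apriori} with an integral identity built from the Newton tensor, in the spirit of Gidas--Spruck and of Ou's weighted method. By \cref{lem:positive} I may assume $u>0$ and $\lambda(A)\in\Gamma_k$ everywhere, and argue by contradiction. \Cref{thm:apriori} gives $u\le U_*$ and $W\le W_*$. Hence the induced metric $g$ is uniformly equivalent to the Euclidean one, and $\Sigma_u$ is a complete graph with controlled geometry.

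\textbf{Subcritical case $k\le p<\pstar$.} Let $T_{k-1}$ be the Newton tensor of $A$; it is positive definite on $\Gamma_k$. Let $E=T_{k-1}-\frac{\tr T_{k-1}}{n}g$ be its trace-free part. I will test the equation with $u^{-\gamma}\,\ip{E(\nabla u)}{\nabla u}$-type quantities, or more precisely compute
\[
\Div_g\bigl(u^{a}\,|\nabla u|_g^{b}\,E(\nabla u)\bigr)
\]
using the Codazzi equation in $\R^{n,1}$ (Newton tensors are divergence free). I will also use the graph identities $\nabla^2 u=W^{-1}h\cdot(\ldots)$ relating the intrinsic Hessian of the height to $A$. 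The exponents $a,b$ are chosen as in Serrin--Zou / Gidas--Spruck so that the resulting weighted identity has the form
\[
\int \eta\,u^{a}\bigl(c_1|E|^2_{\ast}+c_2(p)\,u^{p+\dots}|\nabla u|^{\dots}\bigr)\le \text{cutoff terms},
\]
with $c_1>0$ from quantitative Newton inequalities and $c_2(p)>0$ precisely when $p<\pstar$. The $W$-dependent error terms, which come from $\nabla u$ not being a principal direction, are bounded using $W\le W_*$. The cutoff $\eta$ is taken as a Lorentzian cutoff at scale $R$. The cutoff terms are then absorbed by Young's inequality and the bounds on $u$ and $W$, together with the Dai--Fu--Gui--Qin negative-weight estimate. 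This leaves a contribution of order $R^{n-2-\theta}$, and a finite descent in the exponents improves $\theta$ until the right side tends to $0$. This forces $E\equiv0$ and $\nabla u\equiv0$, so $u$ is constant. Then $u^p=\sigma_k(A)=0$, a contradiction.

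\textbf{Critical case $p=\pstar$.} Here $c_2=0$, and the identity yields only $\int\eta\,(\text{defect}+\text{quartic})\le$ cutoff terms. For $2k<n\le4k+2$ the quartic term has enough homogeneity to control the cutoff terms directly, so the same descent closes. For $n\ge4k+3$ I will argue by blow-down and compactness, which is the main obstacle. Consider the first radius where the $k$-Hessian mass $\int_{B_R}\sigma_k\ge\varepsilon_0$. Rescaling there, compactness (from the universal bounds and interior curvature estimates) gives a limit whose $k$-Hessian measure concentrates. A Pohozaev identity about a single pole, whose boundary term has the sign given by the defect, shows that each concentration core carries a quantized amount of mass. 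The total mass is bounded, so the number of cores is finite. A recursive defect estimate then shows that the defect decays away from the cores. Finally, Souplet-type selection of a good radius, where the boundary flux is small, lets the Pohozaev/divergence identity force the total mass to vanish, which contradicts $u\not\equiv0$.
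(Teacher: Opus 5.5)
There are genuine gaps in both the subcritical identity and the high-dimensional critical argument.

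\textbf{Subcritical identity.} First, your tensor is the wrong one. For $E'=T_{k-1}-\frac{n-k+1}{n}\sigma_{k-1}\Id$ one has $\Div E'=-\frac{n-k+1}{n}\nabla\sigma_{k-1}$, which the equation does not control; integrating that term by parts only reproduces the trivial identity for $\Div(u^aT_{k-1}v)$. What works is the trace-free part of $T_{k-1}h$, namely $E=Fh-\frac knf\Id=c_kf\Id-T_k$. Its divergence $c_k\nabla f$ is given by the equation, $\tr(Eh)=\Dk\ge0$, and the quantitative Newton inequality is $E^2\preceq c_k\Dk F$.

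Second, the Lorentzian terms cannot be treated as errors absorbed by $W\le W_*$. If you weight by $z=|\nabla u|^2$, then $\nabla z=-2Whv$ and the cross term acquires the coefficient $2\tau W-M$. The discriminant condition for the quadratic form in $\bigl(\sqrt{u^MW\Dk},\sqrt{u^{M-2}zF(v,v)}\bigr)$ then deteriorates linearly in $W$. Since $W_*$ is a fixed constant not close to $1$, these terms are of the same order as the main ones. The paper instead uses $q=2(W-1)$, whose exact identity $\nabla q=-2hv$ makes every coefficient of $\Div(-u^MEv-\tau u^{M-1}qFv)$ constant. Coercivity then holds exactly when the margin $\tau(1-\frac{n-2k}{n-k}\tau)$ and $a_{M,\tau}$ are both positive, i.e.\ $p<p_*$.

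Third, the cutoff error is $R^{-2}\int\sigma_{k-1}(h)u^Mz\chi^{m-2}\,d\mu$, and $\sigma_{k-1}(h)$ is not bounded by $U_*,W_*$. The finite descent in the paper runs through $T_{k-1},\dots,T_0$ to remove this curvature weight; it is not an improvement of decay exponents. Also, at $p=k$ the Young exponents degenerate, which is why $p=k$ is handled separately there via $H\ge cu$ and the test function $\phi^2/u$.

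\textbf{Critical case, $n\ge4k+3$.} Your step \emph{the total mass is bounded, so the number of cores is finite} is unjustified. No finite-energy hypothesis is available, $\int_{B_R}u^{p_*}$ may a priori grow like $R^n$, and nothing limits the number of cores. Nor can compactness come from interior curvature estimates: $W\le W_*$ controls the metric, not $h$ or the ellipticity ratio of $F$.

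In the paper, compactness holds only at cores $x_i$ (contact points for $\ell=u^{-1/\alpha_*}$) whose normalized defect $\ell_i^{2/(k+1)}\int_{B_{L_0\ell_i}(x_i)}\cD\,d\mu$ is small, because large trace costs defect. The rescalings at such cores converge to $\cB$.

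The Pohozaev identity is not used to quantize mass but to exclude a two-core neck. At the first spherical crossing $U_j=\Theta\cB$, the blow-down is a single pole $\Phi_{m_*}+b_0+O(|x|^\sigma)$. The barrier from the second core forces $b_0>0$, while nonnegativity of the Pohozaev divergence for the smooth approximants forces $b_0\le0$.

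The resulting scale drop gives $N_R\le C[1+\log(e+R)N_R^{\mathrm{bad}}]$. The bad cores are counted by $D(C_0R)$ via H\"older, and $I_a(R)\le CR^\nu(1+N_R)$. These combine into a recursive inequality for $D(R)$. Souplet's selection of radii with $D(C_0R_j)\le AD(R_j)$ then gives $D\equiv0$, and the positive quartic term forces $q\equiv0$. Your sketch has no substitute for this counting, so the high-dimensional case does not close.
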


No completeness, symmetry, decay, finite-energy condition, global Hessian
bound, or nonnegativity of each principal curvature is assumed in either
theorem.  The a priori theorem applies to every $p\ge k$; the restriction $p\le p_*$ belongs only to the Liouville theorem.

\begin{corollary}[Complete immersed hypersurfaces]\label[corollary]{cor:geometric}
Let $n>2k\geq4$ and let $p$ satisfy \eqref{eq:range}. Let
$X:\Sigma^n\to\R^{n,1}$ be a $C^2$ spacelike immersion of a connected
manifold without boundary, complete with respect to its induced metric.
Fix a spacelike affine hyperplane $\mathsf P$, its future-directed unit timelike
normal $N_0$, and a point $X_0\in \mathsf P$. Define the signed height by
\[
 u=-\ip{X-X_0}{N_0}_L.
\]
Suppose $u\geq0$ and, with the future-directed unit normal along $X$ and
the convention $h(Y,Z)=-\ip{\bar\nabla_Y\nu}{Z}_L$, the shape operator
satisfies $\lambda(A)\in\closureGC$ and $\sigma_k(A)=u^p$.
Then $X(\Sigma)=\mathsf P$.
\end{corollary}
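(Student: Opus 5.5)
The plan is to reduce \cref{cor:geometric} to \cref{thm:liouville} by showing that $X(\Sigma)$ is an entire spacelike graph over $\mathsf P$.

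\textbf{Normalization.} Apply a Lorentz transformation followed by a translation. Both are isometries of $\R^{n,1}$ that preserve time orientation, so they preserve the future-directed normal, the convention $h(Y,Z)=-\ip{\bar\nabla_Y\nu}{Z}_L$, the shape operator, and the cone condition. After this change we may take $\mathsf P=\{t=0\}$, $N_0=(0,1)$ and $X_0=0$. Then $u=-\ip{X}{N_0}_L=t\circ X$ is exactly the time coordinate of $X$.

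\textbf{Entire graph.} Let $\pi:\R^{n,1}\to\R^n$ be the projection onto the spatial factor.
\begin{enumerate}
\item Because $X$ is spacelike, $d(\pi\circ X)$ is injective at every point. Hence $\pi\circ X$ is a local diffeomorphism.
\item For spacelike vectors $v=(v',v_t)$ we have $|v'|^2\ge |v'|^2-v_t^2=\ip{v}{v}_L$. So $\pi\circ X$ does not decrease lengths: $|d(\pi\circ X)v|\ge |v|_g$.
\item Since $(\Sigma,g)$ is complete, a length-nondecreasing local diffeomorphism onto the simply connected space $\R^n$ is a covering map. This is the standard argument of lifting line segments; completeness of $g$ is what lets every lifted segment be extended.
\item A covering of $\R^n$ is a diffeomorphism. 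Therefore $X(\Sigma)=\Sigma_w$ for a $C^2$ function $w:\R^n\to\R$ with $|Dw|<1$.
\end{enumerate}
Now $u\circ(\pi\circ X)^{-1}=w$, so the hypothesis $u\ge0$ gives $w\ge0$. The future-directed unit normal of the graph is $\nu=W(Dw,1)$, so in graph coordinates the shape operator is the $A[w]$ of \eqref{eq:sign}. The assumptions $\sigma_k(A)=u^p$ and $\lambda(A)\in\closureGC$ therefore become exactly \eqref{eq:main} for $w$.

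\textbf{Conclusion.} The hypothesis $n>2k\ge4$ means $2\le k<n/2$, and $p$ satisfies \eqref{eq:range}. So \cref{thm:liouville} gives $w\equiv0$, and hence $X(\Sigma)=\{t=0\}=\mathsf P$.

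\textbf{Main obstacle.} The delicate step is the covering argument. It needs completeness of the induced metric, not completeness of the pulled-back Euclidean metric. The comparison in item 2 is what transfers completeness from $g$ to the Euclidean side, via path lifting: a lifted curve has $g$-length at most the Euclidean length of the curve it lifts, so it stays in a $g$-bounded set and can be continued. Everything else is bookkeeping of sign conventions under isometries.
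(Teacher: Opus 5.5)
Your proposal is correct and follows essentially the same route as the paper. You normalize by a time-orientation-preserving isometry so the height becomes the time coordinate, use $|d\pi(Y)|\ge|Y|_\Sigma$ with completeness of the induced metric to make the spatial projection a covering and hence (since $\R^n$ is simply connected) a diffeomorphism, then apply \cref{thm:liouville} to the resulting entire graph; the only difference is that you cite the standard covering lemma for length-nondecreasing local diffeomorphisms from complete manifolds, whereas the paper writes out the path-lifting and sheet construction explicitly.
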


The a priori estimates combine the block inequality of \cref{lem:block}, a Lorentzian cutoff, and hyperbolic-cap comparison; the block inequality controls mixed curvature terms even when the gradient is not principal. For Liouville rigidity we use $q=2(W-1)$ and a trace-free Newton tensor. When $p<p_*$, a weighted divergence identity and finite descent through $T_{k-1},\ldots,T_0$ remove the curvature weights. At $p=p_*$, the exact defect and quartic remainder yield direct rigidity for $2k<n\le4k+2$. For $n\ge4k+3$, the proof uses scale-invariant gradient control, small-defect bubble rigidity, concentration of the $k$-Hessian measure, a single-pole Pohozaev argument, scale drop, core counting, and a recursive estimate for the endpoint defect.

For $k=1$, the scaling and counting exponents agree with Ma, Wu, Wu, and Yu~\cite{MWWY}: the corresponding dimension ranges are $3\le n\le6$ and $n\ge7$. For $k>1$, we also need the compactness, Hessian-measure, and first-crossing arguments developed below.

The remaining sections are organized as follows. \Cref{sec:prelim,sec:block,sec:gradient,sec:cap} develop the preliminaries and a priori bounds. \Cref{sec:flux,sec:descent,sec:liouville} prove subcritical rigidity. The endpoint identity, direct low-dimensional proof, and local blow-down results appear in \cref{sec:critical-local}; \cref{sec:critical-global} gives neck exclusion, core counting, and the proofs of \cref{thm:liouville,cor:geometric}.

\section{Geometric and algebraic preliminaries}\label{sec:prelim}

\subsection{Graph notation and Newton tensors}
Here $u_i=\partial u/\partial x^i$ and
$u_{ij}=\partial^2u/\partial x^i\partial x^j$; the indices
$1\le i,j\le n$ are coordinate indices and $\delta_{ij}$ is
the Kronecker symbol. In these coordinates,
\begin{equation}\label{eq:metric-intro}
g_{ij}=\delta_{ij}-u_i u_j,\qquad W=(1-|Du|^2)^{-1/2}.
\end{equation}
For $\lambda=(\lambda_1,\ldots,\lambda_n)\in\R^n$, set
\[
\sigma_j(\lambda)=\sum_{1\le i_1<\cdots<i_j\le n}
\lambda_{i_1}\cdots\lambda_{i_j},\qquad\sigma_j(A)=\sigma_j(\lambda(A)).
\]
The full cone definition used throughout is
\[
\Gamma_k=\{\lambda:\sigma_j(\lambda)>0\text{ for }1\le j\le k\},\qquad\overline{\Gamma_k}=\operatorname{cl}(\Gamma_k).
\]
A domain is a connected open set. Write $B_R=B_R(0)$ and $B_R(x_0)$ for Euclidean balls, $\Id$ for the identity, and $\Sym(m)$ for real symmetric $m\times m$ matrices.
For symmetric matrices, $S\preceq T$ means that $T-S$ is positive semidefinite;
$S\prec T$ means that $T-S$ is positive definite. A uniform positive lower bound for $T$ on a specified set means that
$T\succeq c\Id$ throughout that set for a fixed $c>0$.
The dependence of $c$ on the data will be stated where the bound is used. A matrix can belong to $\Gamma_k$ even when some of its eigenvalues are negative.
The transpose of a matrix is denoted by a superscript $\mathsf T$.
We use the Hilbert--Schmidt inner product
$\langle S,T\rangle_{\mathrm{HS}}=\tr(S^{\mathsf T}T)$ and its associated
norm $|S|$ for coordinate matrices. The notation $\|S\|$ for a
matrix denotes its operator norm; in particular $\|\wedge^sS\|$
is the operator norm of the full compound matrix. Matrix products
and powers represent composition of the corresponding endomorphisms.

Tensor indices range from $1$ to $n$; a transverse frame index $\alpha$
ranges from $2$ to $n$. Repeated tensor indices in a term are summed unless
an explicit sum is written or an index is declared fixed. Indices specifying the order of a Newton tensor or the step of an
iteration are not summed under this convention.
For an $m\times m$ symmetric matrix, we set $\sigma_0=1$ and
$\sigma_j=0$ for $j>m$. The associated G{\aa}rding cone is taken in
$\R^m$. For a symmetric matrix
$S$, the abbreviation $S\in\Gamma_k$ means $\lambda(S)\in\Gamma_k$.
We denote the Levi--Civita connection of the induced metric $g$ by $\nabla$
and set $\Delta\phi=\tr_g\nabla^2\phi$. For a scalar function, $\nabla\phi$
also denotes its metric gradient. Unless otherwise indicated, $\Div$, inner products, and vector norms
are taken with respect to $g$; Euclidean divergence is written
$\Div_{\R^n}$.
Euclidean derivatives are denoted by $D$, and Euclidean inner products by
$\cdot$; the ambient indefinite inner product always has the subscript $L$.
Tensor components in graph coordinates are raised and lowered with $g$.
We use $C^m$, $C^{m,\alpha}$ ($0<\alpha<1$), and the space $C_c^\infty$ of smooth compactly supported functions. Domains are specified when they differ from the entire graph or $\R^n$.
Unless a domain is indicated, integrals with $\mathrm d\mu$ are over the
entire graph, and those with $\mathrm d x$ are over $\R^n$.
Integrals over subsets of $\R^n$ with $\mathrm d\mu$ are understood as integrals over the
corresponding graph. We also write $\mu(E)=\int_E d\mu$ for a
measurable set $E$. The volume form and inverse metric are
\begin{equation}\label{eq:measure}
 g^{ij}=\delta_{ij}+W^2u_i u_j,\qquad
 \dmu=W^{-1}\dd x.
\end{equation}
In an orthonormal tangent frame, we write $h$ for the symmetric matrix of
the shape operator. This convention allows expressions such as $h^2$ and
$T_j(h)$ without additional metric factors. For a self-adjoint tangent
endomorphism $T$, we use $T(X,Y)=\langle TX,Y\rangle$. For symmetric
$(0,2)$-tensors $S,T$, the contraction is $S:T=S^{ij}T_{ij}$. We identify
self-adjoint endomorphisms and their associated symmetric tensors using $g$.
In particular, $(\Div T)^j=\nabla_iT^{ij}$.

In graph coordinates, the matrix $g^{-1}h$ is similar to the symmetric
representative
\begin{equation}\label{eq:symmetric-shape}
 \widehat h=-Wg^{-1/2}D^2u\,g^{-1/2}.
\end{equation}
Here $g^{-1/2}$ is the positive definite square root of $g^{-1}$.
This representative will be used both for local regularity and for
comparison at a contact point with another graph.

The Newton tensors are defined recursively by
\begin{equation}\label{eq:newton-rec}
 T_0=\Id,\qquad T_j=\sigma_j(h)\Id-hT_{j-1}\quad(1\leq j\leq n).
\end{equation}
The Cayley--Hamilton identity gives $T_n=0$.
We use the notation
\begin{equation}\label{eq:basic-symbols}
 v=\nabla u,\quad z=|v|^2,\quad f=u^p,\quad
 F=T_{k-1}(h),\quad Q=\tr(Fh^2),\quad
 \LF\phi=F^{ij}\nabla_i\nabla_j\phi.
\end{equation}
For $h\in\Gamma_k$, the tensors $T_j$ are positive definite for
$0\leq j\leq k-1$. Positivity is pointwise and does not provide ellipticity constants uniform over all solutions. For $0\leq j\leq n-1$, the
standard trace identities, together with the order-$k$ recursion, are
\begin{equation}\label{eq:newton-traces}
 \tr T_j=(n-j)\sigma_j,\qquad
 \tr(T_jh)=(j+1)\sigma_{j+1},\qquad
Fh=f\Id-T_k.
\end{equation}
The trace identities follow in an eigenbasis. For the G{\aa}rding cones and Newton--Maclaurin inequalities, see \cite{Garding,CNS3,WangSurvey}.

Unless a constant is explicitly fixed, $C>0$ may change from one line to
the next; its allowed dependence is stated in the result being proved.
The notation $C_\varepsilon$ allows additional dependence on the chosen
positive parameter $\varepsilon$. Constants in cutoff estimates are
independent of the cutoff radius $R$. Any allowed dependence on the
particular solution is stated explicitly.

\subsection{Reduction to smooth positive solutions}
The next lemma justifies differentiating nonzero $C^2$ solutions using local ellipticity alone. No global bound for $W$ is required.

\begin{lemma}[Reduction to smooth positive solutions]\label[lemma]{lem:positive}
An entire nonnegative $C^2$ solution of \eqref{eq:main} is either identically zero or strictly
positive. In the latter case, $h\in\Gamma_k$ and $u\in C^\infty(\R^n)$.
\end{lemma}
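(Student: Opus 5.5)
We argue in three steps: strong maximum principle for the dichotomy, a G{\aa}rding-cone argument for strict admissibility, and a bootstrap for smoothness.

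\textbf{Step 1: zero or positive.} Suppose $u\ge0$ and $u(x_0)=0$ at some point. Then $x_0$ is an interior minimum, so $Du(x_0)=0$ and $D^2u(x_0)\succeq0$. At $x_0$ we have $g=\Id$ and $W=1$, hence $\widehat h(x_0)=-D^2u(x_0)\preceq0$. A matrix in $\overline{\Gamma_k}$ has $\sigma_1\ge0$, so $\widehat h(x_0)=0$, i.e.\ $D^2u(x_0)=0$. This is consistent with the equation but does not yet give a contradiction. To propagate the zero, I would use the divergence identity \eqref{eq:H-div}: since $\overline{\Gamma_k}\subset\overline{\Gamma_1}$,
\[
-\Div_{\R^n}(WDu)=H\ge0,
\]
so $u$ is a weak supersolution of the quasilinear operator $Q[u]=\Div_{\R^n}(WDu)$, i.e.\ $Q[u]\le0$. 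This operator is uniformly elliptic on any compact set, where $|Du|\le1-\delta$ by continuity. The constant $0$ solves the same equation, so the strong comparison principle for quasilinear equations (as in Gilbarg--Trudinger, Thm.~10.7, applied to the linearized difference with $C^1$ coefficients) gives $u\equiv0$ on the connected set where it touches $0$. Openness and closedness of $\{u=0\}$ then yield $u\equiv0$. Only local ellipticity is used, as the lemma's preamble requires.

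\textbf{Step 2: strict admissibility.} Let $u>0$, so $\sigma_k(\lambda)=u^p>0$ with $\lambda\in\overline{\Gamma_k}$. I claim that $\lambda\in\overline{\Gamma_k}$ and $\sigma_k(\lambda)>0$ force $\lambda\in\Gamma_k$. Approximate $\lambda$ by $\lambda^\eps\in\Gamma_k$. On $\Gamma_k$ the Maclaurin/Newton inequalities give
\[
\Bigl(\sigma_k/\bin nk\Bigr)^{1/k}\le\Bigl(\sigma_j/\bin nj\Bigr)^{1/j},\qquad j<k.
\]
Passing to the limit, $\sigma_j(\lambda)\ge c\,\sigma_k(\lambda)^{j/k}>0$ for all $j\le k$, hence $\lambda\in\Gamma_k$.

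\textbf{Step 3: smoothness.} With $\lambda\in\Gamma_k$ and $u\in C^2$, the operator $G(Du,D^2u)=\sigma_k(\lambda(\widehat h))-u^p$ is elliptic at $u$, with linearization $F^{ij}$ positive definite. On compact sets ellipticity is uniform: $\lambda$ ranges over a compact subset of $\Gamma_k$ and $|Du|\le1-\delta$, so $W$ is bounded. Moreover $\sigma_k^{1/k}$ is concave on $\Gamma_k$, so the equation $\sigma_k^{1/k}(\widehat h)=u^{p/k}$ is concave in $D^2u$ (note $\widehat h$ is linear in $D^2u$ with coefficients depending on $Du$), and the right side is $C^{0,1}$ locally because $u>0$. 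Evans--Krylov then gives $u\in C^{2,\alpha}_{\rm loc}$. Alternatively, since $u\in C^2$ already, one can difference-quotient the equation to obtain a linear equation for $D_eu$ with continuous uniformly elliptic coefficients, and apply $W^{2,q}$ and then Schauder theory. Schauder bootstrapping on the differentiated equation gives $C^{m,\alpha}$ for all $m$, since $u^p$ is smooth where $u>0$.

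\textbf{Main obstacle.} The delicate point is Step 1. The full $\sigma_k$ operator degenerates at the zero set, where $\sigma_k=0$ and $h=0$, so we cannot use the $\sigma_k$ linearization there. The fix is to pass through the trace inequality $H\ge0$, which yields a uniformly elliptic (locally) mean-curvature supersolution and allows the strong maximum principle to apply regardless of the degeneracy of the $k$-Hessian.
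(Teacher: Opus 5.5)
Your proposal is correct and follows the paper's proof step for step. The three steps are: the trace inequality $-\Div_{\R^n}(WDu)=H\ge0$, read as a locally uniformly elliptic inequality $a^{ij}(Du)u_{ij}\le0$, together with the strong minimum principle; the closed-cone Maclaurin approximation for strict admissibility; and local concavity and ellipticity of $\sigma_k^{1/k}(\widehat h)=u^{p/k}$, followed by Evans--Krylov and a Schauder bootstrap. The paper makes explicit one point you gloss over, namely the concave, globally uniformly elliptic extension of the frozen-coefficient operator needed to invoke Evans--Krylov; your alternative difference-quotient and $W^{2,q}$ route avoids that step entirely and is legitimate here because $u$ is already $C^2$.
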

\begin{proof}
The closed cone condition gives $H\geq0$. By \eqref{eq:H-div},
$-\Div_{\R^n}(WDu)\geq0$. On each relatively compact ball this is a
uniformly elliptic quasilinear inequality, since $|Du|<1$ and $Du$ is
continuous. In nondivergence form, the coefficient matrix is
$W\Id+W^3Du\otimes Du\succ0$. The strong minimum principle
\cite[Theorem 3.5]{GT} shows that an interior zero forces $u\equiv0$ on the
connected domain.

If $u>0$, the Newton--Maclaurin inequalities, obtained on the closed cone
by approximation, imply
\[
 \left(\frac{\sigma_j(h)}{\bin nj}\right)^{1/j}
 \geq\left(\frac{u^p}{\bin nk}\right)^{1/k}>0
 \quad(1\leq j\leq k).
\]
Thus $h\in\Gamma_k$. 

To justify differentiating the equation, write $w=-u$ and $\mathsf G=g^{-1/2}$.
Using \eqref{eq:symmetric-shape}, the equation becomes
\[
 \widehat h=W\mathsf G D^2w \mathsf G,\qquad
 \mathcal F(D^2w,Dw):=\sigma_k(\widehat h)^{1/k}=u^{p/k}.
\]
With the first derivatives fixed, the linearization in the Hessian variable is
\[
 \mathcal F^{ij}
 =\frac{W}{k}\sigma_k(\widehat h)^{1/k-1}
       \bigl(\mathsf G T_{k-1}(\widehat h)\mathsf G \bigr)^{ij}.
\]
The linearization matrix is positive definite on the admissible set.
For fixed first derivatives, $\mathcal F$ is concave in the Hessian variable.
Fix $x_0$ and regard the known function $Dw(x)$ as a coefficient by setting
\[
 \mathfrak F(x,N)=\mathcal F(N,Dw(x)).
\]
Continuity of $Dw$ and $D^2w$ allows us to choose a sufficiently small ball
$B_r(x_0)$ and a compact convex neighborhood $\mathfrak K\subset\Sym(n)$ of $D^2w(x_0)$
such that $D^2w(x)$ lies in the interior of $\mathfrak K$ and
$W(x)\mathsf G(x)N\mathsf G(x)\in\Gamma_k$ for every
$(x,N)\in\overline{B_r(x_0)}\times\mathfrak K$.
On this product set there are constants $0<\lambda_0\leq\Lambda_0<\infty$
such that
\[
 \lambda_0\Id\preceq\mathfrak F_N(x,N)\preceq\Lambda_0\Id.
\]
These constants are local and may depend on the chosen solution,
$x_0$, $r$, and $\mathfrak K$.
Since $Dw\in C^1$, the operator and its Hessian derivative are locally
Lipschitz in $x$, uniformly for $N\in\mathfrak K$.

For completeness, an extension to all symmetric matrices is given by
\[
 \widetilde{\mathfrak F}(x,N)
 =\inf_{S\in\mathfrak K}\left\{
   \mathfrak F(x,S)+\tr\bigl(\mathfrak F_N(x,S)(N-S)\bigr)
   \right\}.
\]
Concavity shows that this extension agrees with $\mathfrak F$ on
$\mathfrak K$. The coefficient matrices of these affine functions lie
between $\lambda_0\Id$ and $\Lambda_0\Id$. Hence
$\widetilde{\mathfrak F}$ is uniformly elliptic and concave on the space
of all symmetric matrices. Moreover,
\[
 |\widetilde{\mathfrak F}(x,N)-\widetilde{\mathfrak F}(y,N)|
 \leq C|x-y|(1+|N|).
\]
The local estimates for concave uniformly elliptic equations
\cite{Evans,Krylov,CC} therefore give $w\in C^{2,\alpha}$ on a smaller
ball, for some $\alpha\in(0,1)$. Here the right-hand side $u^{p/k}$ is
locally Lipschitz because $u$ is positive and $C^2$.
Returning to the original smooth operator, difference quotients and
interior Schauder estimates first give $C^{3,\alpha}$ regularity and
then, by iteration, smoothness; see \cite[Chapters 6 and 17]{GT}.
The nonlinearity is smooth on every compact subinterval of $(0,\infty)$,
which suffices because $u$ has a positive minimum on each compact subset.
The regularity argument uses neither a global bound for $W$ nor a positive lower bound for $u$ at infinity.
\end{proof}

By \cref{lem:positive}, every nonzero entire solution considered below is smooth, positive, and strictly admissible.
The dimension and exponent restrictions will be stated where they are used.

\subsection{Height and angle identities}
We use the following height and angle identities in both the maximum-principle argument and the integral estimates.

\begin{lemma}[Height and angle identities]\label[lemma]{lem:geometry}
For a smooth positive solution of \eqref{eq:main},
\begin{align}
 &z=W^2-1,\qquad \nabla^2u=-Wh,\qquad
 \nabla W=-hv,\label{eq:height-angle}\\
 &\Delta u=-WH,\qquad \LF u=-kWf,\label{eq:height-L}\\
 &\Div T_j=0\quad(0\leq j\leq n-1),\qquad
 \LF W=WQ-pu^{p-1}z.\label{eq:angle-L}
\end{align}
\end{lemma}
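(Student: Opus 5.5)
The plan is to prove every identity through the ambient position vector and the fixed timelike direction. Set $E=(0,1)\in\R^{n,1}$, so $u=-\ip{X}{E}_L$ for $X=(x,u(x))$, and the normal is $\nu=W(Du,1)$. The identity $z=W^2-1$ is direct algebra: using $g^{ij}=\delta_{ij}+W^2u_iu_j$,
\[
z=g^{ij}u_iu_j=|Du|^2+W^2|Du|^4=W^2|Du|^2=W^2-1 .
\]
For the Hessian, the Gauss formula gives $\bar\nabla_YZ=\nabla_YZ+h(Y,Z)\nu$, where the sign follows from $\ip\nu\nu_L=-1$ and $h=-\ip{\bar\nabla\nu}{\cdot}_L$; I will check this against \eqref{eq:sign}. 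Applying it to $u=-\ip XE_L$ gives $\nabla^2u(Y,Z)=-h(Y,Z)\ip\nu E_L$. Since $\ip\nu E_L=-W$, this yields $\nabla^2u=Wh$ up to the sign fixed by the convention. Rather than trust the sign bookkeeping, I will verify it in graph coordinates: at a point where $Du=0$ we have $g=\Id$ and no Christoffel terms, and $\nabla^2u=D^2u=-h$ with $W=1$, consistent with $\nabla^2u=-Wh$. Similarly, $W=-\ip\nu E_L$ up to the sign from $\ip{\nu}{E}_L=-W$. Differentiating with the Weingarten relation $\bar\nabla_Y\nu=-A Y$ (which follows from $h=-\ip{\bar\nabla\nu}{\cdot}_L$) gives $YW=\ip{AY}{E}_L=\ip{AY}{E^\top}_L$. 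The tangential part of $E$ is $E^\top=-\nabla u$ up to sign, obtained from $\ip{E}{Y}_L=-Yu$. Hence $\nabla W=\mp hv$, and again I fix the sign at a sample point.

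Next come the traced identities. Taking the $g$-trace of $\nabla^2u=-Wh$ gives $\Delta u=-WH$. Contracting with $F=T_{k-1}$ and using \eqref{eq:newton-traces}, $\tr(T_{k-1}h)=k\sigma_k=kf$, gives $\LF u=-kWf$.

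The statement $\Div T_j=0$ is the standard Codazzi consequence. The ambient space is flat, so $\nabla_ih_{jl}$ is totally symmetric. I will then induct on the recursion \eqref{eq:newton-rec}: $\nabla_iT_j^{il}=\nabla_i\sigma_j\,\delta^{il}-\nabla_ih^i_m\,T_{j-1}^{ml}-h^i_m\nabla_iT_{j-1}^{ml}$. Writing $\nabla\sigma_j=T_{j-1}:\nabla h$ and applying Codazzi, the first two terms cancel, and the last vanishes by the inductive hypothesis together with the usual expansion identity $h^i_m\nabla_iT^{ml}_{j-1}=\dots$. The cleanest route is the explicit generalized-Kronecker formula $T_j{}^i{}_l=\frac1{j!}\delta^{i i_1\dots i_j}_{l l_1\dots l_j}h^{l_1}_{i_1}\cdots h^{l_j}_{i_j}$. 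In that form the divergence is a contraction of an antisymmetric symbol against the symmetric $\nabla_i h_{l_1 i_1}$, which vanishes.

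The final formula, for $\LF W$, is the main computational step. Differentiate $\nabla_jW=-h_{jl}v^l$ to get
\[
\nabla_i\nabla_jW=-\nabla_ih_{jl}v^l-h_{jl}\nabla_iv^l=-\nabla_lh_{ij}v^l+Wh_{jl}h^l_i,
\]
using Codazzi and $\nabla v=-Wh$. Contracting with $F$ gives $WQ-v^l\,F^{ij}\nabla_lh_{ij}$. The contraction equals $v^l\nabla_l\sigma_k=\ip{v}{\nabla f}=pu^{p-1}z$, using $\partial\sigma_k/\partial h_{ij}=T_{k-1}^{ij}$. Hence $\LF W=WQ-pu^{p-1}z$, and the sign of $\nabla W$ is confirmed by consistency with this result. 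I expect the sign bookkeeping, meaning Gauss and Weingarten under the Lorentzian normal convention, to be the only real obstacle; I will resolve it by checking each identity at a point where $Du=0$ in graph coordinates.
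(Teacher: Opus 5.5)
Your outline is sound. The $\LF W$ computation is exactly the paper's, and deriving $\nabla^2u=-Wh$ from the position vector and the Gauss formula is a legitimate alternative to the paper's route through the graph Christoffel symbols $\Gamma^\ell_{ij}=-W^2u_\ell u_{ij}$. However, the sign bookkeeping you single out is not actually settled by your procedure.

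With this convention the Gauss formula is $\bar\nabla_YZ=\nabla_YZ-h(Y,Z)\nu$, not $+h(Y,Z)\nu$. Because $\ip{\nu}{\nu}_L=-1$, the normal part of a vector $V$ is $-\ip{V}{\nu}_L\nu$, and $\ip{\bar\nabla_YZ}{\nu}_L=-\ip{Z}{\bar\nabla_Y\nu}_L=h(Y,Z)$ (compare \cref{subsec:contact-comparison}). This gives $\nabla^2u=h\,\ip{\nu}{E}_L=-Wh$ directly, and your critical-point check legitimately confirms that universal sign. For $\nabla W=\mp hv$, however, the same check is vacuous, since both candidates vanish where $Du=0$. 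Confirming the sign ``by consistency with'' $\LF W=WQ-pu^{p-1}z$ is circular, because that is the identity being proved; the other sign would give $-WQ+pu^{p-1}z$. The repair is immediate. You can drop the hedge: $\ip{E}{Y}_L=-Yu$ for tangent $Y$ gives $E^\top=-\nabla u$ exactly, hence $YW=\ip{AY}{E^\top}_L=-h(Y,v)$. Alternatively, as the paper does, differentiate $W^2=1+|\nabla u|^2$ using the Hessian identity already proved: $W\nabla W=\nabla^2u(v,\cdot)=-W\,hv$.

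Your inductive argument for $\Div T_j=0$ is wrong as written. With the factor order $(hT_{j-1})^{i\ell}=h^i{}_mT_{j-1}^{m\ell}$, the divergence index falls on $h$, and Codazzi turns $\nabla_ih^i{}_m$ into $\nabla_mH$. The first two terms then sum to $T_{j-1}^{ab}\nabla_\ell h_{ab}-T_{j-1}^{m\ell}\nabla_mH$, which vanishes for $j=1$ but not in general. For $j=2$, in an eigenframe and for fixed $\ell$, it equals $\sum_a(\lambda_\ell-\lambda_a)\nabla_\ell h_{aa}$. Correspondingly, $h^i{}_m\nabla_iT_{j-1}^{m\ell}$ is not a divergence of $T_{j-1}$, so the inductive hypothesis cannot remove it; the three terms cancel only jointly. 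The paper writes $T_j=\sigma_j\Id-T_{j-1}h$, which is legitimate since $h$ and $T_{j-1}$ commute. Then $(\nabla_iT_{j-1}^{ia})h_{a\ell}$ vanishes by induction, and Codazzi gives $T_{j-1}^{ia}\nabla_ih_{a\ell}=\nabla_\ell\sigma_j$, cancelling the first term. Your generalized-Kronecker argument is correct on its own: it is Reilly's formula, and its normalization reproduces $T_1=H\Id-h$. So either use it alone or reorder the factors in the induction.
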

\begin{proof}
The first formula follows from \eqref{eq:measure}:
$g^{ij}u_i u_j=W^2|Du|^2=W^2-1$.
The Christoffel symbols of the graph metric are
$\Gamma^\ell_{ij}=-W^2u_\ell u_{ij}$. Hence
$u_{ij}-\Gamma^\ell_{ij}u_\ell=W^2u_{ij}=-Wh_{ij}$.
Differentiating $W^2=1+|\nabla u|^2$ and using this Hessian formula gives
$W\nabla_iW=-Wh_{ij}v^j$, so $\nabla W=-hv$.
The contractions in \eqref{eq:height-L} follow from \eqref{eq:newton-traces}.

The flat ambient Codazzi equation is $\nabla_i h_{j\ell}=\nabla_jh_{i\ell}$.
Together with the recursion \eqref{eq:newton-rec}, it proves
$\Div T_j=0$ by induction. Indeed,
in an orthonormal frame, with repeated indices summed,
\[
 \nabla_i(T_j)^{i\ell}
 =\nabla_\ell\sigma_j
  -(\nabla_iT_{j-1}^{ia})h_{a\ell}
  -T_{j-1}^{ia}\nabla_i h_{a\ell}=0.
\]
The middle term vanishes by the induction hypothesis. The Codazzi
equation rewrites the last term as
$-T_{j-1}^{ia}\nabla_\ell h_{ai}=-\nabla_\ell\sigma_j$. This is the usual Newton-tensor divergence
identity; see also Reilly~\cite{Reilly}.
Finally, differentiating $\nabla_iW=-h_{i\ell}v_\ell$ and contracting gives
\[
 \LF W=-F^{ij}(\nabla_jh_{i\ell})v_\ell
        -F^{ij}h_{i\ell}\nabla_jv_\ell
       =-\ip{\nabla f}{v}+WQ.
\]
Since $\nabla f=pu^{p-1}v$, this is \eqref{eq:angle-L}.
\end{proof}

\subsection{Two elementary consequences of admissibility}
The Newton--Maclaurin inequality at orders $1$ and $k$ gives
\begin{equation}\label{eq:H-lower}
 H\geq n\bin nk^{-1/k}u^{p/k}.
\end{equation}
If a symmetric $n\times n$ matrix $h$ belongs to $\Gamma_k$, its restriction
to any hyperplane belongs to $\Gamma_{k-1}$. Indeed, for a unit vector $e$
and $B=h|_{e^\perp}$, expansion of the principal minors in a frame with
first vector $e$ gives
\[
 \sigma_j(B)
 =\left.\frac{d}{dt}\right|_{t=0}\sigma_{j+1}(h+t e\otimes e)
 =T_j(h)(e,e)>0,\qquad 1\leq j\leq k-1.
\]
Thus $B\in\Gamma_{k-1}$, and its Newton tensors through order $k-2$ are
positive definite. The argument applies to every unit vector $e$; the hyperplane normal
need not be an eigenvector of $h$.
\subsection{Coordinate comparison at a contact point}\label{subsec:contact-comparison}
At a contact point, comparison relies on an ordering of the symmetric shape representatives. In graph coordinates, this representative is
\[
 \widehat h=-Wg^{-1/2}D^2u\,g^{-1/2}.
\]
If two spacelike graphs $u,V$ have the same Euclidean gradient at a contact point, then they have the same $g$ and $W$.  Hence
\begin{equation}\label{eq:contact-ordering}
 D^2u\succeq D^2V
 \quad\Longrightarrow\quad
 \widehat h[u]\preceq \widehat h[V].
\end{equation}
This is the ordering used in the hyperbolic-cap and zero-curvature barrier comparisons.  The Gauss formula with our sign convention is
\[
 \bar\nabla_iX_j=\Gamma^\ell_{ij}X_\ell-h_{ij}\nu,
\]
whose time component gives again $\nabla^2u=-Wh$.  These coordinate formulas will be used in the comparison arguments below.

\section{A block matrix estimate in the admissible cone}\label{sec:block}

In a frame aligned with the height gradient, $e_1$ need not be a principal direction. The equation for $\log W$ involves all transverse columns of $h$. The next estimate controls them when $h_{11}<0$ and the mixed entries are small.

A negative diagonal entry is compatible with admissibility. For any $2\leq k<n$ consider
$h=\operatorname{diag}(-a,1,\ldots,1)$, with
$0<a<(n-k)/k$. For $1\leq j\leq k$,
\[
 \sigma_j(h)=\bin{n-1}{j-1}
                      \left(\frac{n-j}{j}-a\right)>0.
\]

\begin{lemma}[Block coercivity]\label[lemma]{lem:block}
Let $2\leq k<n$, $d=n-k$, and
\[
 h=\begin{pmatrix}a&b^{\mathsf T}\\ b&B\end{pmatrix}\in\Gamma_k,\qquad
 a<0,\qquad |b|\leq\frac{|a|}{4\sqrt d}.
\]
Write
$F=T_{k-1}(h)=\begin{pmatrix}\varphi&c^{\mathsf T}\\c&F_\perp\end{pmatrix}$,
where $a,\varphi\in\R$, $b,c\in\R^{n-1}$, and
$B,F_\perp\in\Sym(n-1)$. Let $e_1,\ldots,e_n$ be the
standard orthonormal basis used in this block decomposition.
Then
\begin{align}
 \tr F&\leq(d+1)\varphi,\label{eq:block-tr}\\
 \varphi\geq c_{n,k}\,\sigma_k(h)^{(k-1)/k},
 &\quad
 c_{n,k}=\bin{n-1}{k-1}\bin{n-1}{k}^{-(k-1)/k},
 \label{eq:block-f}\\
 \sum_{\alpha=2}^n F(he_\alpha,he_\alpha)
 &\geq\frac{\varphi a^2}{16d}.\label{eq:block-K}
\end{align}
All matrix entries and norms in this statement are Euclidean in the chosen
orthonormal frame. No positivity assumption on the eigenvalues of $B$ is
made.
\end{lemma}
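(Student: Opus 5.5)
The plan is to derive all three inequalities from first-row cofactor expansions of $\sigma_m(h)$ together with the cone facts recorded in \cref{sec:prelim}. Expanding along the first row and column gives, for every $m$,
\[
 \sigma_m(h)=a\,\sigma_{m-1}(B)+\sigma_m(B)-b^{\mathsf T}T_{m-2}(B)\,b ,
\]
with the convention that the last term is $-|b|^2$ when $m=2$ and is absent when $m=1$. It also gives $\varphi=F(e_1,e_1)=\partial\sigma_k/\partial h_{11}=\sigma_{k-1}(B)$. By the hyperplane-restriction remark, $B\in\Gamma_{k-1}$, so $\sigma_{k-2}(B)>0$ and $T_{k-3}(B)\succ0$.

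\emph{Inequality \eqref{eq:block-tr}.} Take $m=k-1$ in the expansion. Since $a<0$, the first term is nonpositive, and the $b$-term is nonpositive as well. Hence $\sigma_{k-1}(h)\le\sigma_{k-1}(B)=\varphi$. By \eqref{eq:newton-traces}, $\tr F=(n-k+1)\sigma_{k-1}(h)\le(d+1)\varphi$. No smallness of $b$ is needed here.

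\emph{Inequality \eqref{eq:block-f}.} Take $m=k$. Now $T_{k-2}(B)\succ0$ and $a\sigma_{k-1}(B)<0$, so
\[
 0<\sigma_k(h)\le\sigma_k(B)-|a|\varphi .
\]
This gives two facts. First, $\sigma_k(B)>0$, so $B\in\Gamma_k$ in $\R^{n-1}$. Second, we record $|a|\varphi<\sigma_k(B)$, although it is not needed below. The Maclaurin inequality in $\R^{n-1}$ between orders $k-1$ and $k$ then gives
\[
 \varphi=\sigma_{k-1}(B)\ge\bin{n-1}{k-1}\Bigl(\sigma_k(B)\big/\bin{n-1}{k}\Bigr)^{(k-1)/k}.
\]
Combined with $\sigma_k(B)\ge\sigma_k(h)$, this is \eqref{eq:block-f} with the stated $c_{n,k}$.

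\emph{Inequality \eqref{eq:block-K}.} This is the main step, and I would prove it by a Cauchy--Schwarz argument in the $F$-inner product. Since $F\succ0$ and $Fh=hF$ (because $Fh=\sigma_k\Id-T_k$), we have
\[
 \sum_\alpha\ip{Fhe_\alpha}{e_\alpha}\le\Bigl(\sum_\alpha F(he_\alpha,he_\alpha)\Bigr)^{1/2}\Bigl(\sum_\alpha F(e_\alpha,e_\alpha)\Bigr)^{1/2}.
\]
The second factor is $\tr F-\varphi\le d\varphi$ by \eqref{eq:block-tr}. For the left side, \eqref{eq:newton-traces} gives
\[
 \sum_\alpha\ip{Fhe_\alpha}{e_\alpha}=\tr(Fh)-\ip{Fhe_1}{e_1}=k\sigma_k(h)-a\varphi-c\cdot b\ge|a|\varphi-|c||b| .
\]
To bound the cross term, use positive definiteness of $F$: $|c_\alpha|^2\le\varphi F_{\alpha\alpha}$, so $|c|^2\le\varphi\tr F\le(d+1)\varphi^2$. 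The hypothesis $|b|\le|a|/(4\sqrt d)$ then gives
\[
 |c||b|\le\frac{\sqrt{d+1}}{4\sqrt d}|a|\varphi\le\frac{\sqrt2}{4}|a|\varphi<\frac12|a|\varphi .
\]
So the left side is at least $|a|\varphi/2$. Squaring and dividing by $d\varphi$ yields $\sum_\alpha F(he_\alpha,he_\alpha)\ge\varphi a^2/(4d)$, which is stronger than \eqref{eq:block-K}.

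The only delicate points are the sign bookkeeping in the cofactor expansion and confirming that $T_{k-2}(B)$ and $T_{k-3}(B)$ are positive semidefinite, which follows from $B\in\Gamma_{k-1}$. The latter is exactly where the restriction remark in \cref{sec:prelim} is used, and it requires no positivity of the eigenvalues of $B$.
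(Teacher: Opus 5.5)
Your proof is correct. For \eqref{eq:block-tr} and \eqref{eq:block-f} it follows the paper exactly: the first-row expansion, $B\in\Gamma_{k-1}$, then $B\in\Gamma_k$ and Maclaurin in dimension $n-1$.

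For \eqref{eq:block-K} you take a different and shorter route. The paper applies Cauchy--Schwarz only to the transverse block. It bounds $Y_B=\tr(F_\perp B^2)$ below by $J_B^2/\tr F_\perp$, where $J_B=\tr(F_\perp B)=k\sigma_k(h)-\varphi a-2c\cdot b$. It must then control the mixed part $\varphi|b|^2+2b^{\mathsf T}Bc$ of $\sum_\alpha F(he_\alpha,he_\alpha)$ separately. That uses the Schur complement bound $cc^{\mathsf T}\preceq\varphi F_\perp$ and Young's inequality, which costs a factor and yields $1/(16d)$.

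You instead pair the full columns $he_\alpha$ with $e_\alpha$ in the $F$-inner product. The target quantity then appears directly on one side of Cauchy--Schwarz. Only the single cross term $c\cdot b$ needs absorbing, rather than $2c\cdot b$, and no leftover mixed terms arise. The result is the sharper constant $1/(4d)$. The commutation $Fh=hF$ is not actually needed in that step; symmetry of $F$ suffices.

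What the paper's route buys is the explicit intermediate bounds \eqref{eq:schur}. In particular, $\tr F_\perp\le d\varphi$ and $|c|\le\sqrt d\,\varphi$ are quoted again in Step~3 of the proof of \cref{prop:gradient}. Your entrywise estimate $c_\alpha^2\le\varphi F_{\alpha\alpha}$ recovers $|c|\le\sqrt d\,\varphi$ if you sum it against $\tr F_\perp\le d\varphi$ instead of $\tr F$. So nothing needed later is lost.
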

\begin{proof}
We first prove \eqref{eq:block-tr} and \eqref{eq:block-f} using the
restriction of $h$ to the transverse subspace. We then prove
\eqref{eq:block-K} by combining a Schur complement estimate with a
weighted trace inequality.

The restriction property gives $B\in\Gamma_{k-1}$. Expanding the principal
minors according to the first row and column yields
\begin{equation}\label{eq:block-expansion}
 \sigma_j(h)=\sigma_j(B)+a\sigma_{j-1}(B)
              -b^{\mathsf T}T_{j-2}(B)b \quad(j\geq2).
\end{equation}
For $j=1$ the identity is $\sigma_1(h)=a+\sigma_1(B)$.
For $2\leq j\leq k$, the tensor $T_{j-2}(B)$ in \eqref{eq:block-expansion} is positive
definite because $B\in\Gamma_{k-1}$. Consequently
\[
 \sigma_{k-1}(h)\leq\sigma_{k-1}(B)=\varphi.
\]
For $k=2$, the preceding inequality follows from the $j=1$ identity.
Combining it with $\tr F=(n-k+1)\sigma_{k-1}(h)$ proves
\eqref{eq:block-tr}.
At order $k$, \eqref{eq:block-expansion} also gives
\[
 \sigma_k(B)\geq \sigma_k(h)+|a|\sigma_{k-1}(B)>0.
\]
Thus $B\in\Gamma_k$ in dimension $n-1$. Applying the Maclaurin inequality
in that dimension proves \eqref{eq:block-f}.

To prove \eqref{eq:block-K}, we use the positivity of $F$.
The Schur complement inequality, together with \eqref{eq:block-tr}, gives
\begin{equation}\label{eq:schur}
 cc^{\mathsf T}\preceq\varphi F_\perp,\qquad
 \tr F_\perp\leq d\varphi,\qquad |c|\leq\sqrt d\,\varphi.
\end{equation}
The trace identity $\tr(Fh)=k\sigma_k(h)$ implies
\[
 J_B:=\tr(F_\perp B)
   =k\sigma_k(h)-\varphi a-2c\cdot b
   \geq\varphi|a|-2\sqrt d\,\varphi|b|
   \geq\frac{\varphi|a|}{2}.
\]
Let $Y_B=\tr(F_\perp B^2)\geq0$. Cauchy--Schwarz for the Hilbert--Schmidt
inner product gives
\[
 |J_B|=|\ip{F_\perp^{1/2}}{F_\perp^{1/2}B}_{\mathrm{HS}}|
       \leq(\tr F_\perp)^{1/2}(\tr(F_\perp B^2))^{1/2}.
\]
Since $F_\perp\succ0$, it follows that
\begin{equation}\label{eq:DB}
 Y_B\geq \frac{J_B^2}{\tr F_\perp}.
\end{equation}
The left-hand side of \eqref{eq:block-K} is
\[
 \varphi|b|^2+2b^{\mathsf T}Bc+\tr(F_\perp B^2).
\]
Since $B^2$ is positive semidefinite, \eqref{eq:schur} implies
\[
 |Bc|^2=\tr(B^2cc^{\mathsf T})\leq\varphi\tr(B^2F_\perp)=\varphi Y_B.
\]
Young's inequality gives $2|b^{\mathsf T}Bc|\leq2\varphi|b|^2+Y_B/2$.
Hence the preceding expression is at least
$Y_B/2-\varphi|b|^2$. Combining this with \eqref{eq:DB} gives
\[
 \frac{Y_B}{2}-\varphi|b|^2
 \geq\frac{\varphi a^2}{8d}-\frac{\varphi a^2}{16d}
 =\frac{\varphi a^2}{16d},
\]
as claimed.
\end{proof}

\begin{remark}\label[remark]{rem:block-use}
The smallness condition on the mixed block will follow from the first
derivative equation at the maximum point in \cref{prop:gradient}.
This condition is needed only at the maximum point and imposes no global curvature-pinching assumption on the solution.
\end{remark}

\section{The universal pointwise gradient estimate}\label{sec:gradient}

\subsection{A Lorentzian cutoff}
For the cutoff construction, let $u$ be a smooth positive entire solution
of \eqref{eq:main}. Fix $x_0\in\R^n$ and set
\begin{equation}\label{eq:cutoff-def}
 R=1+u(x_0),\quad \xi=x-x_0,\quad t=u(x)-u(x_0),\quad
 \eta=1-\frac{|\xi|^2-t^2}{R^2}-\frac{4t}{R}.
\end{equation}
Set $\Omega=\{x:\eta(x)>0\}$. The next lemma proves that $\Omega$ is bounded under positivity and strict spacelikeness, without a prior global estimate for $W$.

\begin{lemma}[Cutoff geometry]\label[lemma]{lem:cutoff}
On $\Omega$,
\begin{equation}\label{eq:cutoff-bounds}
 -R<t<R/4,\qquad |\xi|<\sqrt6R,\qquad 0<\eta<5.
\end{equation}
If
\begin{equation}\label{eq:a0d0}
 a_0=\frac4R-\frac{2t}{R^2},\qquad
 d_0=a_0+\frac{2\xi\cdot Du}{R^2},
\end{equation}
then $0<a_0\leq6/R$, $d_0>1/R$, and
\begin{align}
 \ip{\nabla\eta}{v}&=-W^2d_0+a_0,\label{eq:eta-v}\\
 \LF\eta&=-\frac{2}{R^2}\tr F+kWf\,d_0.\label{eq:Leta}
\end{align}
\end{lemma}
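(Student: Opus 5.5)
The plan is to verify the three groups of claims in order: the geometric bounds \eqref{eq:cutoff-bounds}, the sign bounds on $a_0,d_0$, and then the two differential identities. Throughout I use only positivity of $u$ (from \cref{lem:positive}), strict spacelikeness $|Du|<1$, and the Gauss formula of \cref{subsec:contact-comparison}. No bound on $W$ enters.

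\emph{Bounds.} Strict spacelikeness and the mean value theorem give $|t|\le|\xi|$, so $|\xi|^2-t^2\ge0$. Hence $\eta>0$ forces $4t/R<1$, that is, $t<R/4$. Since $u>0$, we have $t>-u(x_0)>-R$. For the bound on $\xi$, rearrange $\eta>0$ as $|\xi|^2<R^2+t^2-4tR$. The right side is decreasing in $t$ on $(-R,R/4)$, so it is less than its value at $t=-R$, which is $6R^2$; thus $|\xi|<\sqrt6R$. Finally $\eta\le1-4t/R<5$.

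\emph{Sign of $a_0$ and $d_0$.} From $-R<t<R/4$ we get
\[
 a_0=\frac{4R-2t}{R^2}\in\Bigl(\frac{7}{2R},\frac6R\Bigr).
\]
For $d_0$, write $R^2d_0=4R-2t+2\xi\cdot Du$ and use $|\xi\cdot Du|\le|\xi|$. It then suffices to show $2|\xi|<3R-2t$. Since $3R-2t>0$, it is enough to compare squares. We have
\[
 \tfrac14(3R-2t)^2-(R^2+t^2-4tR)=\tfrac54R^2+tR>0
\]
because $t>-R$. Combined with $|\xi|^2<R^2+t^2-4tR$, this gives $R^2d_0>R$. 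This is the only step that is not pure bookkeeping: the crude bound $|\xi|<\sqrt6R$ is not sufficient, and the finer constraint coming from $\eta>0$ is needed.

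\emph{Identity \eqref{eq:eta-v}.} The Euclidean gradient is
\[
 D\eta=-\frac{2\xi}{R^2}+\frac{2t\,Du}{R^2}-\frac{4Du}{R}=-\frac{2\xi}{R^2}-a_0Du.
\]
Next, \eqref{eq:measure} gives $\ip{\nabla\eta}{v}=g^{ij}\eta_iu_j=W^2\,D\eta\cdot Du$. Substituting $W^2|Du|^2=W^2-1$ then gives $-W^2d_0+a_0$.

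\emph{Identity \eqref{eq:Leta}.} Write $\eta=1-\ip{X-X_0}{X-X_0}_L/R^2-4t/R$, where $X=(x,u(x))$ and $X_0=(x_0,u(x_0))$. The Gauss formula $\nabla^2X=-h\,\nu$ gives
\[
 \nabla^2\tfrac12\ip{X-X_0}{X-X_0}_L=g-\ip{X-X_0}{\nu}_L\,h,\qquad \ip{X-X_0}{\nu}_L=W(\xi\cdot Du-t).
\]
Together with $\nabla^2t=\nabla^2u=-Wh$ from \cref{lem:geometry}, this yields
\[
 \nabla^2\eta=-\frac2{R^2}g+W\Bigl(\frac{2(\xi\cdot Du-t)}{R^2}+\frac4R\Bigr)h=-\frac2{R^2}g+Wd_0\,h.
\]
Contracting with $F$ and using $\tr(Fh)=kf$ from \eqref{eq:newton-traces} gives \eqref{eq:Leta}.
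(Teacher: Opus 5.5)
Your proof is correct and follows essentially the paper's route. The paper uses the same mean-value bound $|t|\le|\xi|$ and the Gauss-formula computation of $\LF$ applied to the Lorentzian separation (one of the two routes it mentions). It also uses the exact constraint $\eta>0$ rather than the crude bound $|\xi|<\sqrt6R$ to get $Rd_0>1$; where you compare squares directly, the paper factors $(4-2s)^2-4\rho^2=12+4\eta$ and divides by $4-2s+2\rho<12$.
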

\begin{proof}
Positivity of $u$ gives $t>-u(x_0)>-R$.
For $\xi\ne0$, integrating $Du$ along the compact line segment from
$x_0$ to $x$ gives $|t|<|\xi|$. Therefore $\eta>0$ implies $t<R/4$.
The defining inequality for $\Omega$ gives
$|\xi|^2<R^2+t^2-4Rt<6R^2$, and $0<\eta\leq1-4t/R<5$.
In particular $\overline\Omega$ is compact, and $\eta=0$ on its boundary.

Put $s=t/R$ and $\rho=|\xi|/R$. Since
\[
 (4-2s)^2-4\rho^2=12+4\eta>12
\]
and $4-2s+2\rho<6+2\sqrt6<12$, division gives
$4-2s-2\rho>1$. Using $|Du|<1$ in \eqref{eq:a0d0} proves $Rd_0>1$.
The estimates for $a_0$ are immediate.

The graph identities
$\ip{\nabla x^j}{v}=W^2u_j$ and $z=W^2-1$ show that
\[
 \ip{\nabla\eta}{v}
   =-\frac{2W^2\xi\cdot Du}{R^2}-a_0z=-W^2d_0+a_0.
\]
The ambient squared separation $s_L^2=|\xi|^2-t^2$ satisfies
\[
 \LF s_L^2=2\tr F-2kWf(\xi\cdot Du-t).
\]
This follows either from the Gauss formula or by differentiating the graph
coordinates, for which $\LF x^j=-kWf\,u_j$.
Using $\LF t=-kWf$ in \eqref{eq:cutoff-def} now gives \eqref{eq:Leta}.
\end{proof}

The coefficient $4$ of the term linear in $t$ in \eqref{eq:cutoff-def}
ensures $d_0>1/R$ without a global bound for $W$. As shown below, this
lower bound forces the gradient-direction entry of $h$ to be negative
at a maximum point where $W$ is large. Boundedness of the open set $\Omega$ suffices;
its connectedness is not needed.

\subsection{The maximum-point calculation}
\begin{proposition}\label[proposition]{prop:gradient}
Under the assumptions of \cref{thm:apriori}, \eqref{eq:pointwise-G} holds.
One admissible choice is
\begin{equation}\label{eq:CG}
 C_G=5^{256n}\max\left\{100\sqrt n,\
 \frac{32(n-k)p}{(1+p/k)^2c_{n,k}}\right\}.
\end{equation}
\end{proposition}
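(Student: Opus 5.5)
We apply the maximum principle on the bounded set $\Omega$ from \cref{lem:cutoff} to a test function of the form
\[
 \Phi=\beta\log\eta+\log W,\qquad \beta=2\Bigl(1+\frac pk\Bigr)\ \text{(or a comparable multiple)}.
\]
Since $\eta=0$ on $\partial\Omega$ and $\overline\Omega$ is compact, $\Phi$ attains an interior maximum at some $y\in\Omega$. The aim is a bound $W(y)\eta(y)^\beta\le C R^{\beta}$, up to the right power of $R=1+u(x_0)$. Evaluating at $x_0$, where $\eta=1$, then gives $W(x_0)\le C_G(1+u(x_0))^{1+p/k}$. We may assume $W(y)$ is large, say $W(y)^2\ge 100n$; otherwise the bound is immediate because $\eta<5$.

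At $y$ we work in an orthonormal frame with $e_1=v/|v|$. The first-order condition, using $\nabla W=-hv$ from \cref{lem:geometry} and \eqref{eq:eta-v}, reads
\[
 -\frac{hv}{W}+\beta\frac{\nabla\eta}{\eta}=0 .
\]
Taking the $e_1$ component and using \eqref{eq:eta-v} gives
\[
 h_{11}|v|=\beta\,\frac{W(-W^2d_0+a_0)}{\eta|v|}.
\]
Since $d_0>1/R$, $a_0\le6/R$ and $W$ is large, this forces $h_{11}=a<0$ with $|a|\gtrsim \beta W^2/(R\eta)$. The transverse components give $|b|=|h_{1\alpha}|\lesssim \beta W|\nabla\eta|_\perp/\eta$. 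The transverse part of $\nabla\eta$ is of size $O(1/R)$ times a bounded factor, while $|a|$ carries an extra factor $W^2$. Hence $|b|\le|a|/(4\sqrt d)$ once $W$ exceeds a dimensional threshold; this is the point of \cref{rem:block-use}. We are then in the setting of \cref{lem:block}.

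Next comes the second-order condition $\LF\Phi\le0$, where $\LF$ is degenerate elliptic. By \eqref{eq:angle-L},
\[
 \LF\log W=Q-\frac{pu^{p-1}z}{W}-\frac{F(\nabla W,\nabla W)}{W^2}.
\]
We write $Q-F(hv,hv)/W^2=\sum_\alpha F(he_\alpha,he_\alpha)+F(he_1,he_1)/W^2$, using $z=W^2-1$, so the good term is at least $\sum_\alpha F(he_\alpha,he_\alpha)\ge \varphi a^2/(16d)$ by \eqref{eq:block-K}. The cutoff contributes
\[
 \beta\frac{\LF\eta}{\eta}-\beta\frac{F(\nabla\eta,\nabla\eta)}{\eta^2}
\]
with $\LF\eta$ given by \eqref{eq:Leta}. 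The term $-2\tr F/R^2$ is controlled by \eqref{eq:block-tr}, giving $-2(d+1)\varphi/R^2$. The term $kWfd_0$ is positive, hence harmless. The gradient term can be rewritten via the first-order condition as $F(hv,hv)/(\beta W^2)$, which is partly absorbed. It is simpler to choose $\beta$ so that the combination $(\beta^{-1}-1)$ times this term has the good sign, after adjusting to $\log W+\beta\log\eta$ with $\beta>1$ as needed. The bad term $pu^{p-1}z/W\le pu^{p-1}W$ is handled by \eqref{eq:block-f}: $\varphi\ge c_{n,k}f^{(k-1)/k}$. Combining these, $\varphi a^2/(16d)$ dominates both $\varphi/R^2$ and $pu^{p-1}W$ as soon as
\[
 W\eta^{\beta}\gtrsim R^{\,\cdots}\quad\text{with } u\le 5R/4 \text{ on }\Omega .
\]
This requires $a^2\gtrsim \beta^2W^4/(R^2\eta^2)$ and $u^{p-1}W\lesssim \varphi a^2$. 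Here $u^{p-1}/f^{(k-1)/k}=u^{p/k-1}\le CR^{p/k-1}$, which yields $W^2\lesssim R^{p/k+1}\eta^2$ and hence the exponent $1+p/k$.

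The main obstacle is the bookkeeping of the cross term $F(\nabla\eta,\nabla\eta)$ together with $F(he_1,he_1)$. The mixed entries $c$ of $F$ prevent a clean diagonal comparison, so I would bound them using \eqref{eq:schur} ($|c|\le\sqrt d\varphi$, $\tr F\le(d+1)\varphi$). After that every term is a multiple of $\varphi$, and the inequality reduces to a scalar inequality in $W$, $R$ and $\eta$. Tracking the explicit constants then produces the stated $C_G$.
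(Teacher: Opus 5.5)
Your skeleton matches the paper's. You maximize a cutoff times $W$ on $\Omega$, use the first-order condition to force $h_{11}<0$ with a small mixed column, apply \cref{lem:block}, and bound the cutoff errors by multiples of $\varphi=F^{11}$ via \eqref{eq:schur}. The paper maximizes $G=\eta^\beta W(1+u)^{-A_0}$ with $A_0=1+p/k$ and $\beta=256n$. Two of your steps fail as written.

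\emph{The scaling of $a$.} The first-order condition gives $h_{11}=-W\ell_0$ with $\ell_0=\beta(W^2d_0-a_0)/(\eta z)\ge\beta/(2R\eta)$. So $|a|$ is of order $\beta W/(R\eta)$, not $\beta W^2/(R\eta)$, and $|b|\lesssim\beta/(R\eta)$. Your scalar inequality $W^2\lesssim R^{1+p/k}\eta^2$ is therefore not what the computation gives. The correct outcome is $\varphi W^2\ell_0^2/(32d)\le pu^{p-1}W$ at the maximum point $y$, i.e.\ $W(y)\le\frac{128dp}{\beta^2c_{n,k}}\,u(y)^{p/k-1}R^2\eta(y)^2$. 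Only then do $u(y)<\tfrac54R$ from \eqref{eq:cutoff-bounds} and $p\ge k$ give the exponent $1+p/k$.

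\emph{The choice of $\beta$.} By the first-order condition the cutoff gradient term equals $\beta^{-1}F(hv,hv)/W^2$. It adds to the $-F(hv,hv)/W^2$ already removed in forming $K$: the total coefficient is $-(1+\beta^{-1})$, so no choice of $\beta$ produces a good sign. This term is comparable to $\beta^{-1}\varphi W^2\ell_0^2$, while \eqref{eq:block-K} only supplies $\varphi W^2\ell_0^2/(16d)$. Hence $\beta$ must be large in terms of $d=n-k$, independently of $p$; the paper checks $\beta\ge192d$ and $\beta W^2\ge2560d(d+1)$. The choice $\beta=2(1+p/k)$ fails, for instance when $p=k$ and $n-k$ is large.

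With these repairs, your unweighted test function does prove \eqref{eq:pointwise-G} with some $C_G(n,k,p)$, but not the constant in \eqref{eq:CG}, so your last sentence is wrong. Your constant carries a factor $p/\beta^2$ and a factor from $u(y)^{p/k-1}\le(5R/4)^{p/k-1}$. For fixed $\beta$ it therefore grows with $p$, whereas the stated $C_G$ stays bounded as $p\to\infty$.

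The missing ingredient is the height weight $(1+u)^{-A_0}$. Its derivative adds $b_0=A_0/(1+u)$ to $-h_{11}/W$, so the good term is at least $\varphi W^2b_0^2/(32d)$. The factor $A_0^{-2}$ then cancels the $p$ from the reaction term, giving $W\le\frac{32dp}{A_0^2c_{n,k}}(1+u)^{A_0}$ at the maximum point. The weight also carries the bound back to $x_0$ through $G(x_0)\le G(\text{max point})\le5^\beta C$, so no comparison of $u$ at the maximum point with $u(x_0)$ is needed.
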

\begin{proof}
We apply the maximum principle to the Lorentz factor multiplied by a
cutoff and divided by a power of $1+u$. When the Lorentz factor is large at the maximum point, the first
derivative equation forces the gradient-direction entry of $h$ to be negative
and controls the mixed entries. The block estimate then supplies the
positive curvature term needed to absorb the two cutoff errors.

\smallskip\noindent
\emph{Step 1: the maximum point and the bounded-angle case.}
If $u\equiv0$, then $W=1$ and the estimate is immediate. Otherwise, \cref{lem:positive} implies that $u$ is smooth and positive,
so we may differentiate the equation. Write
\[
 A_0=1+\frac pk,\qquad \beta=256n,\qquad W_0=100\sqrt n,
 \qquad G=\frac{\eta^\beta W}{(1+u)^{A_0}}.
\]
The function $G$ is positive at $x_0$, continuous on $\overline\Omega$,
and zero on $\partial\Omega$. Since $W$ is continuous on the compact set $\overline\Omega$, it is
bounded there. Hence $G$ attains its maximum at an interior point $x_1$;
no global bound for $W$ is needed. If $W(x_1)<W_0$, then
$G(x_0)\leq G(x_1)\leq5^\beta W_0$, which already gives the conclusion.
In the remaining case, $W(x_1)\geq W_0$; all calculations below are at $x_1$.

\smallskip\noindent
\emph{Step 2: the curvature block in the gradient direction.}
Choose an orthonormal frame with $e_1=v/\sqrt z$. Define
\begin{equation}\label{eq:L-def}
 b_0=\frac{A_0}{1+u},\qquad
 \ell_0=-\frac{\beta}{\eta}\frac{\ip{\nabla\eta}{v}}{z},
 \qquad L=b_0+\ell_0.
\end{equation}
By \eqref{eq:eta-v}, $d_0>R^{-1}$, and $a_0\leq6R^{-1}$,
\begin{equation}\label{eq:ell-bound}
 \ell_0\geq\frac{\beta}{2R\eta},
\end{equation}
whenever $W^2\geq12$. The stationarity condition $\nabla\log G=0$ gives
\begin{equation}\label{eq:first-max}
 \frac{\beta}{\eta}\nabla\eta=b_0v+\frac{hv}{W}.
\end{equation}
Its $e_1$ component implies $h_{11}=-WL<0$.

Let $b=(h_{1\alpha})_{\alpha>1}$. Since $e_\alpha u=0$, the spatial
component of $e_\alpha$ has Euclidean length one and the collection of those
components is orthonormal. Thus
$|\nabla_{\{e_1\}^\perp}\eta|\leq2|\xi|/R^2\leq2\sqrt6/R$,
where $\nabla_{\{e_1\}^\perp}\eta$ denotes the orthogonal projection
of $\nabla\eta$ onto $e_1^\perp$.
The remaining components of \eqref{eq:first-max} give
\begin{equation}\label{eq:b-bound}
 |b|\leq\frac{4\sqrt6\,\beta}{R\eta}
       \leq8\sqrt6\,\ell_0\leq8\sqrt6\,L.
\end{equation}
In deriving \eqref{eq:b-bound}, we used $W/\sqrt z\leq2$. Because $W^2\geq10^4n\geq6144(n-k)$,
\eqref{eq:b-bound} implies
$|b|\leq|h_{11}|/(4\sqrt{n-k})$. The block lemma applies with $a=-WL$.

\smallskip\noindent
\emph{Step 3: the logarithmic equation and the cutoff errors.}
Set $d=n-k$ and $\varphi=F^{11}$. The nonnegative curvature expression
in the logarithmic angle equation is
\begin{align}
 K&:=Q-\frac{F(hv,hv)}{W^2}\notag\\
  &=\sum_{\alpha>1}F(he_\alpha,he_\alpha)
                  +\frac1{W^2}F(he_1,he_1)
   \geq\frac{\varphi W^2L^2}{16d}.\label{eq:K-max}
\end{align}
Twice differentiating $\log G$ and using \cref{lem:geometry} yields
\begin{align}
 0\geq\LF\log G
 ={}&K-\frac{pu^{p-1}z}{W}
     +\frac{\beta}{\eta}\LF\eta
     -\frac{\beta}{\eta^2}F(\nabla\eta,\nabla\eta)\notag\\
 &+kb_0Wf+\frac{A_0}{(1+u)^2}F(v,v).\label{eq:second-max}
\end{align}
The term $kWf\,d_0$ in \eqref{eq:Leta} is positive. It remains to bound the two negative cutoff terms.
Equation \eqref{eq:first-max} can be rewritten as
\[
 \frac{\beta}{\eta}\nabla\eta
       =\sqrt z\left(-\ell_0e_1+\frac bW\right).
\]
Write $F=\left(\begin{smallmatrix}\varphi&c^{\mathsf T}\\c&F_\perp\end{smallmatrix}\right)$
in this frame. By \eqref{eq:block-tr} and \eqref{eq:schur},
$\tr F_\perp\leq d\varphi$ and $|c|\leq\sqrt d\,\varphi$. In particular,
\[
 b^{\mathsf T}F_\perp b\leq d\varphi|b|^2,\qquad
 |c\cdot b|\leq\sqrt d\,\varphi|b|.
\]
Expanding the quadratic form therefore gives
\[
 F\left(-\ell_0e_1+\frac bW,-\ell_0e_1+\frac bW\right)
 \leq\varphi\left(\ell_0+\frac{\sqrt d\,|b|}{W}\right)^2
 \leq3\varphi L^2.
\]
Indeed, \eqref{eq:b-bound} and $W\geq100\sqrt n$ give
$\sqrt d\,|b|/W\leq(8\sqrt{6d}/(100\sqrt n))L<L/5$, while
$\ell_0\leq L$. Hence the quadratic form is at most $(6/5)^2\varphi L^2\leq3\varphi L^2$.
It follows that
\begin{equation}\label{eq:cutoff-square}
 \frac{\beta}{\eta^2}F(\nabla\eta,\nabla\eta)
 \leq\frac3\beta\varphi W^2L^2
 \leq\frac{\varphi W^2L^2}{64d}.
\end{equation}
Using \eqref{eq:ell-bound} and $\eta<5$ also gives
\begin{equation}\label{eq:cutoff-trace}
 \frac{2\beta\tr F}{R^2\eta}
 \leq\frac{40(d+1)}{\beta W^2}\varphi W^2L^2
 \leq\frac{\varphi W^2L^2}{64d}.
\end{equation}
The last inequalities in \eqref{eq:cutoff-square} and
\eqref{eq:cutoff-trace} require only
$\beta\geq192d$ and $\beta W^2\geq2560d(d+1)$.
Both conditions follow from the choices of $\beta$ and $W_0$ and the
assumption $W\geq W_0$.

Combining \eqref{eq:second-max} with \eqref{eq:K-max},
\eqref{eq:cutoff-square}, and \eqref{eq:cutoff-trace}, dropping the other
positive terms, and using $z/W\leq W$, we obtain
\begin{equation}\label{eq:max-final}
 0\geq\frac{\varphi W^2b_0^2}{32d}-pu^{p-1}W.
\end{equation}
\smallskip\noindent
\emph{Step 4: returning to the center of the cutoff.}
The bound \eqref{eq:block-f} now gives
\begin{equation}\label{eq:W-at-max}
 W(x_1)\leq\frac{32dp}{A_0^2c_{n,k}}\,
             u(x_1)^{p/k-1}(1+u(x_1))^2
       \leq\frac{32dp}{A_0^2c_{n,k}}(1+u(x_1))^{A_0}.
\end{equation}
The second inequality in \eqref{eq:W-at-max} uses $p\geq k$.
Consequently $G(x_0)\leq G(x_1)\leq5^\beta C$, with $C$ the maximum in
\eqref{eq:CG}. Since $\eta(x_0)=1$, the asserted estimate follows.
\end{proof}

\begin{remark}
The identity for $\mathcal L_F W$ in \eqref{eq:angle-L} is obtained by differentiating $\sigma_k(h)=u^p$ once. No differential
inequality for $H$ is needed.
The negative gradient-direction entry of $h$ is compatible with
$h\in\Gamma_k$; it is a consequence of the maximum-point equation and is
used only to apply the block lemma at that point.
\end{remark}

\section{Hyperbolic caps and uniform height bounds}\label{sec:cap}

To remove the height dependence in \eqref{eq:pointwise-G}, we compare the solution with a hyperbolic cap. The boundary minimum controls the cap curvature, and the gradient estimate at that point bounds the cap radius and the height at its center.

We first record the radial curvature formula used here and in the
exterior comparison in \cref{lem:lower-barrier}. For a radial spacelike graph $V(r)$,
put $y=-V'/\sqrt{1-(V')^2}$, where a prime denotes differentiation
with respect to $r$. With the convention \eqref{eq:sign}, its
principal curvatures for $r>0$ are
\begin{equation}\label{eq:radial-curvatures}
 \lambda_{\rm rad}=y',\qquad \lambda_{\rm tan}=\frac yr,
\end{equation}
where the tangential curvature has multiplicity $n-1$. Indeed,
the radial entry of the metric is $1-(V')^2$, so
$\lambda_{\rm rad}=-V''/(1-(V')^2)^{3/2}=y'$; on a unit tangential
vector the metric is Euclidean and the curvature is
$-V'/(r\sqrt{1-(V')^2})=y/r$.

Consequently, for a radial spacelike graph,
\begin{equation}\label{eq:radial-sigma-k}
 \sigma_k(A[V])
 =\binom{n-1}{k-1}y'\left(\frac yr\right)^{k-1}
  +\binom{n-1}{k}\left(\frac yr\right)^k
 =\frac{\binom{n-1}{k-1}}{k r^{n-1}}
   \left(r^{n-k}y^k\right)'.
\end{equation}
The choice $y=r/r_0$ gives the constant-curvature hyperbolic cap, while $y=a r^{-(n-k)/k}$ gives a zero-$k$-curvature profile on the boundary of $\Gamma_k$.  Both forms will be used below.

\subsection{Hyperbolic cap comparison and the universal bounds}

\begin{proof}[Proof of \cref{thm:apriori}]
The estimates hold for the zero solution. By \cref{lem:positive}, it remains
to consider a smooth positive solution. Let
\[
 b_{n,k}=\bin nk^{1/k},\qquad A_0=1+\frac pk,\qquad
 K_{\mathrm{cap}}=C_G(2+b_{n,k})^{A_0}.
\]
Fix $x_c$ such that $u(x_c)>1$ and put $R_c=u(x_c)-1$.
The strict spacelike condition implies
\[
 m:=\min_{\partial B_{R_c}(x_c)}u\geq u(x_c)-R_c=1.
\]
There are no interior local minima of $u$: at such a minimum,
$D^2u\succeq0$ would imply $h\preceq0$ and $H\leq0$, contradicting
\eqref{eq:H-lower}. Hence $u>m$ in $B_{R_c}(x_c)$.

Set $r_0=b_{n,k}m^{-p/k}$ and define the hyperbolic cap
\begin{equation}\label{eq:cap}
 V(x)=m+\sqrt{r_0^2+R_c^2}-\sqrt{r_0^2+|x-x_c|^2}.
\end{equation}
For this graph $y(r)=-V'/\sqrt{1-(V')^2}=r/r_0$, where $r=|x-x_c|$.
Formula \eqref{eq:radial-curvatures}, extended continuously at $r=0$,
therefore gives $r_0^{-1}$ for every principal curvature. Thus
\begin{equation}\label{eq:cap-curvature}
 \sigma_k(A[V])=m^p,\qquad V=m\quad\hbox{on }\partial B_{R_c}(x_c).
\end{equation}

We claim that $u\geq V$ in the ball. Otherwise $u-V$ has a negative
interior minimum. At that point, $Du=DV$ and $D^2u\succeq D^2V$.
The two graphs therefore have the same metric and Lorentz factor.
Their symmetric shape representatives in \eqref{eq:symmetric-shape}
satisfy $\widehat h[u]\preceq\widehat h[V]$.
Monotonicity of $\sigma_k$ under positive
semidefinite addition in $\Gamma_k$ gives $u^p\leq m^p$.
The segment from $\widehat h[u]$ to $\widehat h[V]$ remains in $\Gamma_k$, since this cone
is preserved under positive semidefinite addition. Along that segment the
Newton tensor is positive definite, proving the stated monotonicity without
simultaneous diagonalization. This contradicts $u>m$ and proves the comparison.

At the center, comparison implies
\[
 R_c+1=u(x_c)\geq m+\sqrt{r_0^2+R_c^2}-r_0\geq m+R_c-r_0.
\]
It follows that
\begin{equation}\label{eq:m-bound}
 m\leq1+r_0\leq1+b_{n,k}.
\end{equation}
Choose $\widehat x\in\partial B_{R_c}(x_c)$ with $u(\widehat x)=m$.
Let $\mathbf n$ denote the outward Euclidean unit normal to the ball.
The tangential Euclidean derivatives of $u$ vanish at $\widehat x$.
Since $u-V\geq0$ in the ball and $(u-V)(\widehat x)=0$, the outward
normal derivative of $u-V$ at $\widehat x$ is nonpositive.
Thus
\[
 u_{\mathbf n}(\widehat x)\leq V_{\mathbf n}(\widehat x)
        =-\frac{R_c}{\sqrt{r_0^2+R_c^2}}.
\]
Consequently
\begin{equation}\label{eq:boundary-W}
 \sqrt{1+(R_c/r_0)^2}\leq W(\widehat x)
       \leq C_G(1+m)^{A_0}\leq K_{\mathrm{cap}}.
\end{equation}
Here the upper bound is \cref{prop:gradient}, evaluated at the boundary
minimum, whose height is controlled by \eqref{eq:m-bound}.
Thus $R_c\leq r_0K_{\mathrm{cap}}\leq b_{n,k}K_{\mathrm{cap}}$.
Points with $u(x_c)\leq1$ satisfy the same upper bound.
Thus $u(x_c)\leq1+b_{n,k}K_{\mathrm{cap}}$ for every $x_c$, and we may take
\begin{equation}\label{eq:explicit-universal}
 U_*=1+b_{n,k}K_{\mathrm{cap}},\qquad W_*=C_G(1+U_*)^{A_0}.
\end{equation}
The bound for $W$ in \eqref{eq:universal} follows from
\cref{prop:gradient} and the height bound. Equation \eqref{eq:pointwise-G}
was proved in \cref{prop:gradient}, completing \cref{thm:apriori}.
\end{proof}

\begin{remark}
Here $R_c=u(x_c)-1$ ensures a boundary height of at least one, whereas the cutoff radius in \cref{eq:cutoff-def} is $1+u(x_0)$. All constants used later depend only on $n,k,p$.
\end{remark}

\section{The trace-free Newton tensor and the weighted divergence identity}\label{sec:flux}

The trace-free tensor and weighted divergence identity follow the invariant-tensor viewpoint of \cite{MaWuInvariant,MaWuGradient}. The pointwise estimates hold throughout the admissible cone without a bound on the ellipticity ratio. The integral estimates later require a global bound for $W$.

\subsection{A quantitative algebraic inequality}
Let
\begin{equation}\label{eq:E-D}
 c_k=\frac{n-k}{n},\qquad
 E=Fh-\frac{k}{n}f\Id=c_k f\Id-T_k,\qquad
 \Dk=\tr(Eh).
\end{equation}
The notation $\Dk$ is reserved for this scalar deficit; it is unrelated to
Euclidean differentiation. The trace identities \eqref{eq:newton-traces}
and the Newton-tensor divergence identity \eqref{eq:angle-L} give
\begin{equation}\label{eq:ED-properties}
 \tr E=0,\qquad
 \Div E=c_k\nabla f,\qquad
 \Dk=Q-\frac{k}{n}fH=c_kHf-(k+1)\sigma_{k+1}\geq0.
\end{equation}
To verify $\Dk\geq0$, first suppose $\sigma_{k+1}\leq0$.
Both terms in $c_kHf-(k+1)\sigma_{k+1}$ are then nonnegative.
If $\sigma_{k+1}>0$, then $h\in\Gamma_{k+1}$, and the normalized
Maclaurin inequalities at orders $1,k,k+1$ give $\Dk\geq0$. We use the following quantitative Newton inequality for the $k$-Hessian operator; see also \cite[Lemma 2.4]{DFGQ}.

\begin{lemma}[Quantitative Newton inequality]
\label[lemma]{lem:quantitative}

Let $h\in\Gamma_k$ and $2\leq k<n$. With $f=\sigma_k(h)$ in
\eqref{eq:E-D} and $F=T_{k-1}(h)$, one has
\begin{equation}\label{eq:quantitative}
 E^2\preceq c_k\Dk F.
\end{equation}
In particular, for tangent vectors $X,Y$,
\begin{equation}\label{eq:quantitative-pair}
 |E(X,Y)|^2\leq c_k\Dk\,|X|^2F(Y,Y).
\end{equation}
\end{lemma}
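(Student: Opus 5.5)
The plan is to reduce \eqref{eq:quantitative} to a scalar inequality in one eigenvalue variable. The matrices $h$, $F=T_{k-1}(h)$, $T_k$ and $E=c_kf\Id-T_k$ are all polynomials in $h$, so they are simultaneously diagonalized by an orthonormal eigenbasis of $h$. Fix an eigenvalue index $i$, and write $\lambda_i$ for the eigenvalue and $\sigma_j(i):=\sigma_j(\lambda|i)$ for the elementary symmetric functions of the remaining $n-1$ eigenvalues. In this basis $F$ has eigenvalue $\sigma_{k-1}(i)$. Using $\sigma_k=\lambda_i\sigma_{k-1}(i)+\sigma_k(i)$, the eigenvalue of $E$ is
\[
E_i=c_k\lambda_i\sigma_{k-1}(i)-\tfrac kn\sigma_k(i).
\]
Thus \eqref{eq:quantitative} is equivalent to
\[
\Phi(\lambda_i):=c_k\Dk\,\sigma_{k-1}(i)-E_i^2\ge0\qquad(1\le i\le n).
\]

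\textbf{Step 1: $\Phi$ is affine and nondecreasing in $\lambda_i$.} I expand $\Dk=c_k\sigma_1\sigma_k-(k+1)\sigma_{k+1}$ using $\sigma_1=\lambda_i+\sigma_1(i)$ and $\sigma_{k+1}=\lambda_i\sigma_k(i)+\sigma_{k+1}(i)$. The $\lambda_i^2$ coefficients of $c_k\Dk\sigma_{k-1}(i)$ and of $E_i^2$ are both $c_k^2\sigma_{k-1}(i)^2$, so they cancel and $\Phi$ is affine in $\lambda_i$. Using $c_k+2k/n-(k+1)=-k(n-1)/n$, the slope is
\[
c_k\sigma_{k-1}(i)\Bigl[c_k\sigma_1(i)\sigma_{k-1}(i)-\tfrac{k(n-1)}{n}\sigma_k(i)\Bigr].
\]
By the restriction property of \cref{sec:prelim}, $\lambda|i\in\Gamma_{k-1}$, and $k\ge2$, so $\sigma_{k-1}(i)>0$ and $\sigma_1(i)>0$. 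If $\sigma_k(i)\le0$, the bracket is positive. If $\sigma_k(i)>0$, the Newton--Maclaurin inequality in dimension $n-1$ gives $\sigma_1(i)\sigma_{k-1}(i)\ge\frac{k(n-1)}{n-k}\sigma_k(i)$, and multiplying by $c_k=(n-k)/n$ shows the bracket is nonnegative. Hence the slope is nonnegative.

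\textbf{Step 2: check $\Phi$ at the lower endpoint.} The condition $\sigma_k(h)>0$ means $\lambda_i>\lambda_*:=-\sigma_k(i)/\sigma_{k-1}(i)$. By monotonicity it suffices to show $\Phi(\lambda_*)\ge0$; this is a purely algebraic evaluation and does not require $\lambda_*$ to give an admissible vector. At $\lambda_*$ we have $\sigma_k=0$, so $E_i=-\sigma_k(i)$ and $\Dk=-(k+1)\sigma_{k+1}$. Moreover $\sigma_{k+1}=-\sigma_k(i)^2/\sigma_{k-1}(i)+\sigma_{k+1}(i)$. Substituting, $\Phi(\lambda_*)\ge0$ becomes
\[
\sigma_{k-1}(i)\sigma_{k+1}(i)\le\Bigl(1-\tfrac1{c_k(k+1)}\Bigr)\sigma_k(i)^2=\tfrac{k(n-k-1)}{(n-k)(k+1)}\sigma_k(i)^2.
\]
This is exactly Newton's inequality in dimension $n-1$, whose constant is $\binom{n-1}{k-1}\binom{n-1}{k+1}\binom{n-1}{k}^{-2}$. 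Newton's inequality holds for arbitrary real vectors, so no cone condition is needed here. This proves \eqref{eq:quantitative}.

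\textbf{Step 3: the pairwise form.} Write $E(X,Y)=\langle X,EY\rangle$ and apply Cauchy--Schwarz. Since $|EY|^2=E^2(Y,Y)$, \eqref{eq:quantitative} gives $|EY|^2\le c_k\Dk F(Y,Y)$, which is \eqref{eq:quantitative-pair}.

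The main obstacle is Step 2. One has to see that after the cancellation the lower-endpoint value reduces exactly to the sharp Newton constant in dimension $n-1$. I would verify the binomial arithmetic carefully, since the constant $c_k$ is forced precisely by this identity.
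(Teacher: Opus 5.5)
Your proof is correct and is essentially the paper's argument. The paper writes $c_k\Dk F_i-E_i^2$ in the eigenframe as $f$ times the Maclaurin bracket $\frac{n-k}{n^2}\bigl[(n-k)s_1s_{k-1}-k(n-1)s_k\bigr]$ plus the $\lambda_i$-independent Newton bracket. That is your decomposition $\Phi(\lambda_i)=\Phi(\lambda_*)+(\lambda_i-\lambda_*)\times(\text{slope})$, rewritten using $f=\sigma_{k-1}(\lambda|i)(\lambda_i-\lambda_*)$. The one point the paper makes explicit and you leave implicit is $k=n-1$: there $\sigma_{k+1}(\lambda|i)=0$ and $n-k-1=0$, so your endpoint inequality reads $0\le0$, which is harmless if Newton's inequality is taken in its unnormalized form.
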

\begin{proof}
Diagonalize $h$ in an orthonormal tangent frame, and denote its eigenvalues
by $\lambda_1,\ldots,\lambda_n$. Since $F$ and $E$ are polynomials in $h$,
they are diagonal in the same frame; write their diagonal entries as $F_i$
and $E_i$. Fix $i$ and let $\lambda|i$ denote the tuple obtained by deleting
$\lambda_i$ from $\lambda$. Set $s_j=\sigma_j(\lambda|i)$, with $s_j=0$ for
$j>n-1$.
Since $F_i=s_{k-1}$ and $E_i=c_k f-s_k$, direct expansion gives
\begin{align*}
 c_k\Dk F_i-E_i^2
 ={}&\frac{(n-k)f}{n^2}
       \big((n-k)s_1s_{k-1}-k(n-1)s_k\big)\\
 &+\frac1n\big(k(n-k-1)s_k^2
                   -(n-k)(k+1)s_{k-1}s_{k+1}\big).
\end{align*}
If $s_k>0$, the tuple $\lambda|i$ belongs to $\Gamma_k$, and the
Maclaurin inequalities imply that the first bracket is nonnegative.
If $s_k\leq0$, the same conclusion follows from $s_1,s_{k-1}>0$.
For $k\leq n-2$, the second bracket is nonnegative by Newton's coefficient
inequality for an arbitrary real $(n-1)$-tuple; no sign assumption on $s_k$ or $s_{k+1}$ is needed.
When $k=n-1$,
both $n-k-1$ and $s_{k+1}=s_n$ vanish, so that bracket is identically
zero. Thus every diagonal entry of $c_k\Dk F-E^2$ is nonnegative. Finally,
$|E(X,Y)|^2\leq|X|^2\ip{E^2Y}{Y}$ proves
\eqref{eq:quantitative-pair}.
\end{proof}

\subsection{The angle variable and the exact divergence identity}
We introduce the angle variable
\begin{equation}\label{eq:q}
 q=2(W-1).
\end{equation}
The function $q$ is smooth and nonnegative, including at points where
$v=0$. The exact
identities
\begin{equation}\label{eq:q-identities}
 \nabla q=-2hv,\qquad
 z=q+\frac{q^2}{4},\qquad Wq=z+\frac{q^2}{4}
\end{equation}
follow from \cref{lem:geometry}. In particular,
\begin{equation}\label{eq:q-ratio}
 \frac qz=\frac2{W+1},\qquad
 \frac{Wq}{z}=\frac{2W}{W+1},
\end{equation}
with the ratios interpreted by continuous extension at $W=1$.
Whenever $W\leq W_*$,
\begin{equation}\label{eq:q-comparable}
 \frac2{W_*+1}z\leq q\leq z,\qquad z\leq W_*^2-1.
\end{equation}
The divergence calculation uses the exact identities in \eqref{eq:q-identities}.

The following vector field is the Lorentzian counterpart of the Euclidean
vector field in \cite[Section 3, equation (3.1) and Proposition 3.3]{DFGQ}.
The quantity $q$ takes the place of the squared Euclidean gradient norm.
The identity for $\nabla q$ in \eqref{eq:q-identities} gives the
cancellation in the divergence calculation.

\begin{proposition}[Weighted divergence identity]\label[proposition]{prop:flux}
Let $u$ be a smooth positive solution of \eqref{eq:main}, with $F,v,f$ as
in \eqref{eq:basic-symbols}, $E,\Dk$ as in \eqref{eq:E-D}, and $q$ as in
\eqref{eq:q}. For any real $M$ and any $\tau>0$, define
\begin{equation}\label{eq:J-def}
 \J_0=-u^MEv,\qquad \J_1=-u^{M-1}qFv,\qquad
 \J=\J_0+\tau\J_1.
\end{equation}
Then
\begin{align}
 \Div\J={}&u^MW\Dk+(2\tau-M)u^{M-1}E(v,v)\notag\\
 &+\tau(1-M)u^{M-2}qF(v,v)
 +a_{M,\tau}u^{M+p-1}z
 +\frac{k\tau}{4}u^{M+p-1}q^2,\label{eq:flux}\\
 a_{M,\tau}:={}&\frac{k(n+2)\tau-(n-k)p}{n}.\notag
\end{align}
\end{proposition}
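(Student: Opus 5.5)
The identity is a direct computation. I would expand $\Div\J_0$ and $\Div\J_1$ separately by the product rule, and then rewrite every term using three facts: the height and angle identities of \cref{lem:geometry}, the structural properties \eqref{eq:ED-properties} of $E$, and the exact relations \eqref{eq:q-identities} for $q$. Since $F$ and $E$ are polynomials in $h$ and $h$ is symmetric, $Fh=hF$ is self-adjoint. This lets me freely write $F(hv,v)=(Fh)(v,v)$, and I would note it once at the start.

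\emph{The term $\J_0$.} By the product rule,
\[
\Div\J_0=-Mu^{M-1}E(v,v)-u^M\ip{\Div E}{v}-u^M\,E:\nabla v .
\]
I would insert $\nabla v=\nabla^2u=-Wh$ from \eqref{eq:height-angle}. The last term then becomes $u^MW\tr(Eh)=u^MW\Dk$. Next I would use $\Div E=c_k\nabla f=c_kpu^{p-1}v$ from \eqref{eq:ED-properties}. The middle term becomes $-c_kpu^{M+p-1}z$.

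\emph{The term $\J_1$.} Using $\Div F=0$ from \eqref{eq:angle-L},
\[
\Div\J_1=-(M-1)u^{M-2}qF(v,v)-u^{M-1}F(\nabla q,v)-u^{M-1}q\,F:\nabla v .
\]
For the middle term, the cancellation comes from $\nabla q=-2hv$. It gives $-F(\nabla q,v)=2(Fh)(v,v)$. Since $Fh=E+\tfrac knf\Id$, this equals $2E(v,v)+\tfrac{2k}{n}fz$. For the last term, $F:\nabla v=-W\tr(Fh)=-kWf$ by \eqref{eq:newton-traces}, so it contributes $+ku^{M+p-1}Wq$. Then $Wq=z+q^2/4$ splits this into $ku^{M+p-1}z+\tfrac k4u^{M+p-1}q^2$.

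\emph{Collecting terms.} I would add $\Div\J_0$ and $\tau\Div\J_1$ and group by monomials.
\begin{itemize}
\item The $E(v,v)$ terms give $(2\tau-M)u^{M-1}E(v,v)$.
\item The $qF(v,v)$ term gives $\tau(1-M)u^{M-2}qF(v,v)$.
\item The $q^2$ term gives $\tfrac{k\tau}4u^{M+p-1}q^2$.
\item The $z$ terms have coefficient $-\tfrac{(n-k)p}{n}+\tau\bigl(\tfrac{2k}{n}+k\bigr)$, which is exactly $a_{M,\tau}$.
\end{itemize}
This reproduces \eqref{eq:flux}.

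\emph{Main obstacle.} There is no real analytic difficulty. The care needed is bookkeeping: getting the sign convention $\nabla^2u=-Wh$ right, and recognizing that $\nabla q=-2hv$ turns the derivative of the weight $q$ into the $Fh$ contraction. It is this step that produces the extra $2\tau E(v,v)$ and the $z$ and $q^2$ terms. I would double-check the identity $Wq=z+q^2/4$, which follows from $z=W^2-1$. All quantities are smooth by \cref{lem:positive}, including at critical points of $u$, so no regularity issue arises.
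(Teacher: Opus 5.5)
Your proposal is correct and follows essentially the same route as the paper: expand $\Div\J_0$ and $\Div\J_1$ by the product rule, then use $\nabla^2u=-Wh$, $\Div E=c_k\nabla f$, $\Div F=0$, $\nabla q=-2hv$ with $Fh=E+\tfrac knf\Id$, and $Wq=z+q^2/4$. Your collected coefficients, including $a_{M,\tau}$, match \eqref{eq:flux}.
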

\begin{proof}
The tensor divergence and Hessian identities give, term by term,
\begin{align*}
 \Div(-u^MEv)
 &=-Mu^{M-1}E(v,v)-u^M\ip{\Div E}{v}
                         -u^M E:\nabla^2u\\
 &=u^MW\Dk-Mu^{M-1}E(v,v)-c_kp\,u^{M+p-1}z.
\end{align*}
For $\J_1$, we use $\Div F=0$, $\nabla q=-2hv$, and
$Fh=E+(k/n)f\Id$ to obtain
\begin{align*}
 \Div(-u^{M-1}qFv)
 ={}&(1-M)u^{M-2}qF(v,v)
       +2u^{M-1}F(hv,v)+kWu^{M+p-1}q\\
 ={}&(1-M)u^{M-2}qF(v,v)+2u^{M-1}E(v,v)\\
 &+\frac{2k}{n}u^{M+p-1}z
       +ku^{M+p-1}\left(z+\frac{q^2}{4}\right).
\end{align*}
Combining the two formulas proves \eqref{eq:flux}.
\end{proof}

\subsection{Coercivity and the exponent interval}
\begin{proposition}\label[proposition]{prop:coercivity}
Let $u$ be a smooth positive admissible solution of \eqref{eq:main},
let $2\leq k<n/2$, and assume $p>0$. Choose
\begin{equation}\label{eq:tau-interval}
 \frac{(n-k)p}{k(n+2)}<\tau<\frac{n-k}{n-2k},
 \qquad M=-\frac{2k\tau}{n-k}.
\end{equation}
Then there exists $c_0=c_0(n,k,p,\tau)>0$, independent of $u$ and
of the evaluation point, such that
\begin{equation}\label{eq:coercive}
 \Div\J\geq c_0\left(
       u^MW\Dk+u^{M-2}qF(v,v)+u^{M+p-1}z\right).
\end{equation}
The interval \eqref{eq:tau-interval} is nonempty exactly when $p<\pstar$.
\end{proposition}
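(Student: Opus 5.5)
We start from the exact identity \eqref{eq:flux} of \cref{prop:flux}. With the choices in \eqref{eq:tau-interval}, the last two terms and the $qF(v,v)$ term already have favorable signs. First, $a_{M,\tau}>0$ is equivalent to $\tau>(n-k)p/(k(n+2))$. Second, $1-M=1+2k\tau/(n-k)>0$. Third, $\frac{k\tau}{4}u^{M+p-1}q^2\geq0$ and can simply be dropped. The only indefinite term is the cross term $(2\tau-M)u^{M-1}E(v,v)$, with coefficient
\[
\gamma:=2\tau-M=2\tau\Bigl(1+\frac{k}{n-k}\Bigr)=\frac{2n\tau}{n-k}>0 .
\]
The plan is to absorb this cross term into $u^MW\Dk$ and $u^{M-2}qF(v,v)$ by Cauchy--Schwarz, using the quantitative Newton inequality \cref{lem:quantitative}.

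For the absorption, apply \eqref{eq:quantitative-pair} with $X=Y=v$:
\[
|E(v,v)|^2\leq c_k\Dk\, z\,F(v,v).
\]
Because $z=Wq-q^2/4\leq Wq$ by \eqref{eq:q-identities}, this gives $|E(v,v)|\leq\sqrt{c_k}\,(W\Dk)^{1/2}(qF(v,v))^{1/2}$. No bound on $W$ is needed at this step. Then
\[
\gamma u^{M-1}|E(v,v)|\leq\gamma\sqrt{c_k}\,\bigl(u^MW\Dk\bigr)^{1/2}\bigl(u^{M-2}qF(v,v)\bigr)^{1/2}.
\]
The quadratic form $X^2+\tau(1-M)Y^2-\gamma\sqrt{c_k}XY$ in $X=(u^MW\Dk)^{1/2}$ and $Y=(u^{M-2}qF(v,v))^{1/2}$ is positive definite exactly when
\[
\gamma^2c_k<4\tau(1-M).
\]
Substituting, the left side is $\frac{4n^2\tau^2}{(n-k)^2}\cdot\frac{n-k}{n}=\frac{4n\tau^2}{n-k}$ and the right side is $4\tau\frac{n-k+2k\tau}{n-k}$. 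The condition therefore reduces to $n\tau<n-k+2k\tau$, that is, $\tau(n-2k)<n-k$, which is the upper bound in \eqref{eq:tau-interval}. We expect this to be the step needing the most care: the specific choice of $M$ is what makes the discriminant condition exactly $\tau<(n-k)/(n-2k)$, presumably because it optimizes the discriminant over $M$. It is worth double-checking that the factor $W$ enters favorably, which it does since $z\leq Wq$.

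Strict definiteness then yields $c_1=c_1(n,k,\tau)>0$ with
\[
X^2+\tau(1-M)Y^2-\gamma\sqrt{c_k}XY\geq c_1(X^2+Y^2).
\]
Adding $a_{M,\tau}u^{M+p-1}z$ and discarding the $q^2$ term gives \eqref{eq:coercive} with $c_0=\min\{c_1,a_{M,\tau}\}$. This constant depends only on $n,k,p,\tau$. At points where $v=0$ all terms involving $v$ vanish and the inequality is trivial.

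Finally, the interval in \eqref{eq:tau-interval} is nonempty iff $\frac{(n-k)p}{k(n+2)}<\frac{n-k}{n-2k}$, that is, iff $p<k(n+2)/(n-2k)=\pstar$. This uses $n>2k$ so that $n-2k>0$.
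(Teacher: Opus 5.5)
Your proposal is correct and follows essentially the same route as the paper. The paper also bounds the cross term by the quantitative Newton inequality together with $z\le Wq$, and it reduces positivity of the resulting quadratic form to the margin $\tau(1-M)-\tfrac{c_k}{4}(2\tau-M)^2=\tau\bigl(1-\tfrac{n-2k}{n-k}\tau\bigr)>0$, which is your condition $\tau(n-2k)<n-k$. It obtains $a_{M,\tau}>0$ from the lower bound on $\tau$, exactly as you do.
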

\begin{proof}
Write $b_\tau=\tau(1-M)>0$ and
\[
 X_{\tau}=\sqrt{u^MW\Dk},\qquad
 Y_{\tau}=\sqrt{b_\tau u^{M-2}qF(v,v)}.
\]
By \cref{lem:quantitative} and $z\leq Wq$, the absolute value of the
mixed term in \eqref{eq:flux} is at most
$2\sqrt{\rho_\tau}\,X_{\tau}Y_{\tau}$, where
\begin{equation}\label{eq:rho}
 \rho_\tau=\frac{c_k(2\tau-M)^2}{4b_\tau}.
\end{equation}
At a point where $q=0$, we also have $v=0$.
Hence $E(v,v)=0$ and $Y_{\tau}=0$, so the mixed-term estimate remains
valid at that point.
Direct substitution of $M$ gives
\begin{equation}\label{eq:margin}
 b_\tau-\frac{c_k}{4}(2\tau-M)^2
 =\tau\left(1-\frac{n-2k}{n-k}\tau\right)>0.
\end{equation}
Thus $\rho_\tau<1$, and
$X_{\tau}^2+Y_{\tau}^2-2\sqrt{\rho_\tau}X_{\tau}Y_{\tau}
\geq(1-\sqrt{\rho_\tau})(X_{\tau}^2+Y_{\tau}^2)$.
The lower bound on $\tau$ makes $a_{M,\tau}>0$.
The quadratic-form estimate and the positive $u^{M+p-1}z$ term
then give \eqref{eq:coercive}; the $q^2$ term is nonnegative and may be discarded. Comparing the two endpoints in
\eqref{eq:tau-interval} gives exactly $p<k(n+2)/(n-2k)$.
\end{proof}

\begin{remark}
The upper endpoint of \eqref{eq:tau-interval} comes from the tensor
quadratic form, whereas its lower endpoint comes from the coefficient of
$u^{M+p-1}z$. Both conditions are needed for \eqref{eq:coercive}: positivity of $a_{M,\tau}$ does not control the tensor quadratic form.
For fixed $\tau$, the choice of $M$ in \eqref{eq:tau-interval} maximizes
$\tau(1-M)-c_k(2\tau-M)^2/4$, as the identity
\[
 \tau(1-M)-\frac{c_k}{4}(2\tau-M)^2
 =\tau\left(1-\frac{n-2k}{n-k}\tau\right)
       -\frac{c_k}{4}\left(M+\frac{2k\tau}{n-k}\right)^2
\]
shows. Thus a different constant height weight does not enlarge the interval
obtained by this quadratic absorption. This restriction applies to the
present argument; it neither rules out other identities nor determines
the sharp existence threshold for \eqref{eq:main}.
\end{remark}

\section{A finite weighted Newton-tensor descent}\label{sec:descent}

Throughout this section, $2\leq k<n$, $p>0$, and $u$ is a smooth
positive entire admissible solution of \eqref{eq:main} satisfying
$W\leq W_*$ for a fixed constant $1\leq W_*<\infty$. In the application to
\cref{thm:liouville}, this bound is supplied by \cref{thm:apriori}.

Under this bound, \eqref{eq:metric-intro} and \eqref{eq:measure} yield
\begin{equation}\label{eq:metric-comparable}
 W_*^{-2}\Id\preceq g\preceq\Id,\qquad
 W_*^{-1}\dd x\leq\dmu\leq\dd x.
\end{equation}
Fix a smooth Euclidean cutoff $0\leq\chi_0\leq1$, equal to one on $B_1$
and supported in $B_2$, chosen so that every positive power of $\chi_0$
extends smoothly by zero. Such a cutoff can be chosen to vanish
exponentially at the boundary of its support. We use the rescaled cutoffs $\chi_R(x)=\chi_0(x/R)$ throughout the
integral estimates. Then
\begin{equation}\label{eq:cutoff-grad}
 |\nabla\chi_R|\leq C(W_*,\chi_0)R^{-1}.
\end{equation}
Only the first-derivative bound for the cutoff is needed below.

Write $\chi=\chi_R$, $R\ge1$. The finite Newton-tensor descent removes $\sigma_{k-1}(h)$ from the subcritical cutoff term $R^{-2}\int\sigma_{k-1}(h)u^Mz\chi^{m-2}\dmu$. We state the estimate for general $\delta>-1$, $\delta\ne0$, and $\theta>2k$; \cref{sec:liouville} uses $\delta=-M-1$ and $\theta=m-2$.

The positive-weight integration-by-parts method originates in the
conformal setting of \cite{CGY,Gonzalez} and was adapted to pure
$k$-Hessian inequalities by Ou~\cite{Ou}. Our Lorentzian descent
uses this approach for $\delta>0$ and adapts the separate
negative-weight estimate of \cite[Lemma~3.1]{DFGQ} for
$-1<\delta<0$. The coefficients have different signs in the two ranges; $\theta>2k$ keeps all differentiated cutoff powers positive.

\begin{lemma}[Weighted cutoff estimate]\label[lemma]{lem:descent}
Let $\delta>-1$, $\delta\ne0$, and $\theta>2k$. Define
\begin{equation}\label{eq:Lambda}
 \Lambda_s=\int \sigma_{k-s}(h)\,zq^{s-1}
                    u^{-\delta-s}\chi^\theta\dmu
 \quad(1\leq s\leq k).
\end{equation}
If $\delta>0$, then
\begin{equation}\label{eq:descent-positive}
 \int u^{p-\delta}\chi^\theta\dmu+\sum_{s=1}^k\Lambda_s
 \leq C R^{-2k}\int u^{k-\delta}\chi^{\theta-2k}\dmu.
\end{equation}
If $-1<\delta<0$, then
\begin{equation}\label{eq:descent-negative}
 \sum_{s=1}^k\Lambda_s
 \leq C\int u^{p-\delta}\chi^\theta\dmu
       +CR^{-2k}\int u^{k-\delta}\chi^{\theta-2k}\dmu.
\end{equation}
For fixed $n,k,\delta,\theta,W_*$ and $\chi_0$, the estimates hold with
a constant $C$ independent of $R\geq1$ and of the solution, provided
$W\leq W_*$. The constant does not depend on an upper bound for $H$
or on the ellipticity ratio of $F$.
\end{lemma}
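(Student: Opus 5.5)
The plan is to run a downward induction through the Newton tensors $T_{k-1},\dots,T_0$. Each step is one integration by parts built on $\Div T_j=0$ (from \eqref{eq:angle-L}) and the trace identity $j\sigma_j=\tr(T_{j-1}h)$. At each step the order of $\sigma$ drops by one, and the price is one power of $u$ and a factor $R^{-2}$ from the cutoff. The key identity is the following consequence of $\nabla^2u=-Wh$: for a smooth compactly supported weight $\phi\geq0$,
\[
 j\int\sigma_j\,\phi\dmu=-\int W^{-1}\phi\,T_{j-1}:\nabla^2u\dmu
 =\int T_{j-1}\bigl(v,\nabla(W^{-1}\phi)\bigr)\dmu .
\]
We will take $\phi=u^{-\delta-s}q^{s-1}\chi^{\theta'}$, with $\theta'$ decreasing by $2$ at each step.

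\emph{Expanding the derivative of the weight.} The derivative $\nabla(W^{-1}\phi)$ produces four types of terms.
\begin{itemize}
\item The $u$-derivative term is $-(\delta+s)u^{-\delta-s-1}q^{s-1}T_{j-1}(v,v)$. For $s=0$ (the first step, with $j=k$ and left side $\int f u^{-\delta}\chi^\theta$) its sign is that of $-\delta$. When $\delta>0$ it moves to the left as a good term. When $-1<\delta<0$ it must be kept on the right, which is why \eqref{eq:descent-negative} retains $\int u^{p-\delta}\chi^\theta$. There we instead estimate $\Lambda_1$-type quantities via the same identity applied to $\sigma_{k-1}$.
\item The $W^{-1}$-derivative and the $q$-derivative terms both involve $hv$, since $\nabla W^{-1}=W^{-2}hv$ and $\nabla q=-2hv$. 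We rewrite them using the recursion \eqref{eq:newton-rec},
\[
 T_{j-1}(hv,v)=\sigma_jz-T_j(v,v),
\]
which produces exactly the combinations $\sigma_{j}zq^{s-1}$ and $\sigma_{j-1}zq^{s}$ appearing in the definition \eqref{eq:Lambda} of $\Lambda_s$. The comparisons \eqref{eq:q-comparable} and $W\leq W_*$ convert $z$, $q$ and $Wq$ into one another with constants depending only on $W_*$.
\item The cutoff term is $\theta' u^{-\delta-s}q^{s-1}\chi^{\theta'-1}T_{j-1}(v,\nabla\chi)$. By Cauchy--Schwarz for the positive definite $T_{j-1}$, it is at most
\[
 \epsilon\, u^{-\delta-s-1}q^{s-1}T_{j-1}(v,v)\chi^{\theta'}
 +C_\epsilon u^{-\delta-s+1}q^{s-1}T_{j-1}(\nabla\chi,\nabla\chi)\chi^{\theta'-2}.
\]
Using $T_{j-1}\preceq\tr T_{j-1}\Id=(n-j+1)\sigma_{j-1}\Id$ together with \eqref{eq:cutoff-grad}, the last expression is bounded by $CR^{-2}\sigma_{j-1}u^{1-\delta-s}q^{s-1}\chi^{\theta'-2}$. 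Since $q\leq z$, this is exactly a lower-order quantity of $\Lambda$ type with one more power of $u$.
\end{itemize}

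\emph{Closing the descent.} We also use $T_{j-1}(v,v)\leq(n-j+1)\sigma_{j-1}z$ to compare the good quadratic terms with $\Lambda_{s+1}$. Iterating $k$ times leads to $\sigma_0=1$, and the accumulated cutoff error becomes $R^{-2k}\int u^{k-\delta}q^{\cdot}\chi^{\theta-2k}$. The $q$ powers are bounded by $W_*$, and $\theta>2k$ keeps every cutoff power nonnegative, so each $\chi^{\theta-2i}$ is smooth. To close the argument, we use Young's inequality between consecutive levels, in the form
\[
 \Lambda_{s+1}\leq \epsilon\Lambda_s+C_\epsilon R^{-2}(\cdot),
\]
absorbing intermediate terms with small constants. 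No bound on $H$ or on the ellipticity ratio of $F$ is needed, since only traces and positive definiteness of the $T_j$ are used.

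\emph{Expected obstacle.} The main difficulty will be the sign bookkeeping of the $hv$-terms. The term $-T_j(v,v)$ generated by the recursion has a favorable sign only in combination with the $u$-derivative term, and the coefficient combines $\delta+s$ with $\frac{2(s-1)}{q}$ and the $W^{-1}$ derivative. The hardest part is therefore choosing the weights so that every coefficient of $\Lambda_{s+1}$ is strictly positive, uniformly in the two regimes $\delta>0$ and $-1<\delta<0$; here the condition $\delta>-1$ is what makes $\delta+s>0$ for all $s\geq1$. If a direct choice fails, I would try adding a small multiple of the next-level identity, in the spirit of the $\tau$-combination in \cref{prop:flux}, to restore positivity.
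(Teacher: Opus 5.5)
\textbf{There is a genuine gap: you defer the crucial step, and the tools you propose for it point the wrong way.}

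Your framework is the right one: descent through $T_{k-1},\dots,T_0$ via $\Div T_j=0$, the recursion $T_{j-1}(hv,v)=\sigma_jz-T_j(v,v)$ for the $hv$-terms, $\delta>-1$ to make $\delta+s>0$, and trace bounds for the cutoff terms. But the step you defer as the ``expected obstacle'' is the heart of the proof.

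\emph{The comparison runs backwards.} Apart from the height integral, the only signed good term your first step produces is $\delta\int W^{-1}F(v,v)u^{-\delta-1}\chi^\theta\dmu$. This is a quadratic form in $v$. No lower bound for $F$ relative to $\tr F$ is available, so it does not dominate $\Lambda_1$ pointwise. The inequality $T_{j-1}(v,v)\le(n-j+1)\sigma_{j-1}z$ is an upper bound, so it cannot put $\Lambda_s$ on the controlled side. Nothing in your sketch produces the closing inequality $\Lambda_{s+1}\le\epsilon\Lambda_s+C_\epsilon R^{-2}(\cdot)$ either. That inequality also has the descent backwards: errors must be pushed to lower order while $\Lambda_{s+1}$ is absorbed.

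\emph{Your weights cause two further problems.}
First, the factor $W^{-1}$ produces at the first step ($j=k$) the term $\int W^{-2}u^{-\delta}\chi^\theta\bigl(fz-T_k(v,v)\bigr)\dmu$, and $T_k(h)$ is not sign-definite on $\Gamma_k$.
Second, $W^{-1}q^{s-1}$ differs by the factor $Wq\asymp z$ from the weight that matches $\Lambda_s$. The main terms and cutoff remainders you generate therefore lack the homogeneity of $\Lambda_s$ and of $R^{-2k}\int u^{k-\delta}\chi^{\theta-2k}\dmu$.

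\emph{The missing idea is an exact recurrence between consecutive levels.} Keep $W$ and use the weight $q^s$: for $1\le s<k$ integrate $\Div\bigl(q^su^{-\delta-s}\chi^\theta T_{k-s-1}v\bigr)$.
\begin{itemize}
\item The main term is $-(k-s)W\sigma_{k-s}q^su^{-\delta-s}\chi^\theta$.
\item The $q^s$-derivative gives $-2sq^{s-1}u^{-\delta-s}\chi^\theta\bigl(\sigma_{k-s}z-T_{k-s}(v,v)\bigr)$.
\item Its $\sigma_{k-s}$ part has the same sign as the main term.
\item Its $T_{k-s}$ part reproduces exactly the good quadratic term of the previous level.
\end{itemize}
Set
\[
\mathcal M_s=\int T_{k-s}(v,v)q^{s-1}u^{-\delta-s}\chi^\theta\dmu,\qquad
\mathcal E_s=\int T_{k-s}(v,\nabla\chi)q^{s-1}u^{1-\delta-s}\chi^{\theta-1}\dmu,
\]
\[
\mathcal A_s=\int\Bigl(1+\frac{k-s}{s}\frac{W}{W+1}\Bigr)\sigma_{k-s}zq^{s-1}u^{-\delta-s}\chi^\theta\dmu .
\]
Using $Wq/z=2W/(W+1)$, this yields
\[
\mathcal M_s=\mathcal A_s+\frac{\delta+s}{2s}\mathcal M_{s+1}-\frac{\theta}{2s}\mathcal E_{s+1},
\qquad \frac{k+s}{2s}\Lambda_s\le\mathcal A_s\le\frac ks\Lambda_s,
\]
with $\mathcal M_k=\Lambda_k$. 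Starting from $k\int Wu^{p-\delta}\chi^\theta\dmu=-\delta\mathcal M_1+\theta\mathcal E_1$, these telescope to
\[
k\int Wu^{p-\delta}\chi^\theta\dmu+\sum_{s=1}^{k-1}m_s\mathcal A_s+m_k\Lambda_k
=\theta\mathcal E_1+\theta\sum_{s=1}^{k-1}\frac{m_s}{2s}\mathcal E_{s+1},
\qquad m_s=\frac{\delta(\delta+1)\cdots(\delta+s-1)}{2^{s-1}(s-1)!}.
\]
So you need these specific coefficients across all levels, not a small multiple of the next identity. All $m_s$ have the sign of $\delta$; this is where $\delta>-1$ and $\delta\ne0$ are used. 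One identity covers both regimes: for $\delta<0$ the height integral simply sits on the other side.

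\emph{Only then do the cutoff estimates close the argument.}
First, $|\mathcal E_s|\le\epsilon\Lambda_s+C_\epsilon\mathcal B_s$, where $\mathcal B_s=R^{-2s}\int\sigma_{k-s}u^{s-\delta}\chi^{\theta-2s}\dmu$.
Second, a separate chain $\mathcal B_s\le\epsilon\Lambda_{s+1}+C_\epsilon\mathcal B_{s+1}$, obtained from $\Div\bigl(u^{s-\delta}\chi^{\theta-2s}T_{k-s-1}v\bigr)$ and weighted Young, terminates at $\mathcal B_k=R^{-2k}\int u^{k-\delta}\chi^{\theta-2k}\dmu$.
Your Cauchy--Schwarz-and-iterate sketch resembles this last chain, but it only works once the telescoping identity has placed the $\Lambda_s$ on the controlled side.
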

\begin{proof}
We derive the recurrence, estimate its cutoff errors, and absorb them after finitely many steps. For fixed $R$, the compact cutoff support and $u>0$ make all negative-power integrals finite.

\smallskip\noindent
\emph{Step 1: the exact recurrence.}
Introduce
\begin{align}
 \mathcal M_s&=\int T_{k-s}(v,v)\,q^{s-1}
                         u^{-\delta-s}\chi^\theta\dmu,\label{eq:Ms}\\
 \mathcal E_s&=\int T_{k-s}(v,\nabla\chi)\,q^{s-1}
                         u^{1-\delta-s}\chi^{\theta-1}\dmu.\label{eq:Es}
\end{align}
In particular, $\mathcal M_k=\Lambda_k$. For $1\leq s<k$, integrate
the divergence of
$q^su^{-\delta-s}\chi^\theta T_{k-s-1}v$.
With $j=k-s$, the divergence expands into the following four terms:
\begin{align*}
 &-jW\sigma_j q^su^{-\delta-s}\chi^\theta,\\
 &-(\delta+s)q^su^{-\delta-s-1}\chi^\theta T_{j-1}(v,v),\\
 &-2s q^{s-1}u^{-\delta-s}\chi^\theta T_{j-1}(hv,v),\\
 &\theta q^su^{-\delta-s}\chi^{\theta-1}T_{j-1}(v,\nabla\chi).
\end{align*}
Since $T_{j-1}h=\sigma_j\Id-T_j$, rearrangement gives
\begin{equation}\label{eq:recurrence}
 \mathcal M_s=\mathcal A_s+\frac{\delta+s}{2s}\mathcal M_{s+1}
                              -\frac{\theta}{2s}\mathcal E_{s+1},
\end{equation}
where
\begin{equation}\label{eq:As}
 \mathcal A_s=\int a_s(W)\sigma_{k-s}zq^{s-1}
                      u^{-\delta-s}\chi^\theta\dmu,\qquad
 a_s(W)=1+\frac{k-s}{s}\frac W{W+1}.
\end{equation}
Thus
\begin{equation}\label{eq:As-bounds}
 \frac{k+s}{2s}\Lambda_s\leq\mathcal A_s\leq\frac ks\Lambda_s.
\end{equation}
The starting identity, obtained from
$\Div(u^{-\delta}\chi^\theta Fv)$, is
\begin{equation}\label{eq:starting}
 k\int Wu^{p-\delta}\chi^\theta\dmu
                  =-\delta\mathcal M_1+\theta\mathcal E_1.
\end{equation}

Let $m_1=\delta$ and $m_{s+1}=m_s(\delta+s)/(2s)$; explicitly,
\[
 m_s=\frac{\delta(\delta+1)\cdots(\delta+s-1)}{2^{s-1}(s-1)!}.
\]
Because $\delta>-1$, all these coefficients have the same sign as
$\delta$ and none vanishes. Iterating \eqref{eq:recurrence} gives the finite
telescoping identity
\begin{align}
 k\int Wu^{p-\delta}\chi^\theta\dmu
 +\sum_{s=1}^{k-1}m_s\mathcal A_s+m_k\Lambda_k
 =\theta\mathcal E_1+
       \theta\sum_{s=1}^{k-1}\frac{m_s}{2s}\mathcal E_{s+1}.
 \label{eq:telescope}
\end{align}
The signs of the coefficients $m_s$ distinguish the two cases in the
lemma.

\smallskip\noindent
\emph{Step 2: the first cutoff errors.}
For $1\leq s\leq k$, put
\begin{equation}\label{eq:Bs}
 \mathcal B_s=R^{-2s}\int\sigma_{k-s}\,
                 u^{s-\delta}\chi^{\theta-2s}\dmu.
\end{equation}
Positivity and the trace formula for $T_{k-s}$ imply
$|T_{k-s}(v,\nabla\chi)|
\leq C\sigma_{k-s}\sqrt z\,R^{-1}$.
By \eqref{eq:q-comparable}, the absolute value of the integrand of $\mathcal E_s$ is bounded
by a constant times
\[
 R^{-1}\sigma_{k-s}z^{s-1/2}
             u^{1-\delta-s}\chi^{\theta-1}.
\]
The displayed expression is the weighted geometric mean of the
densities of $\Lambda_s$ and $\mathcal B_s$, with weights
$(2s-1)/(2s)$ and $1/(2s)$, after replacing $zq^{s-1}$ by $z^s$
in the density of $\Lambda_s$. Weighted Young's inequality therefore gives
\begin{equation}\label{eq:E-bound}
 |\mathcal E_s|\leq\varepsilon\Lambda_s+C_\varepsilon\mathcal B_s.
\end{equation}

\smallskip\noindent
\emph{Step 3: elimination of the residual curvature integrals.}
For $1\leq s<k$ again write $j=k-s$.
Integrating the divergence of
$u^{s-\delta}\chi^{\theta-2s}T_{j-1}v$ gives
\begin{align}
 j\int W\sigma_j u^{s-\delta}\chi^{\theta-2s}\dmu
 ={}&(s-\delta)\int T_{j-1}(v,v)
                u^{s-\delta-1}\chi^{\theta-2s}\dmu\notag\\
 &+(\theta-2s)\int T_{j-1}(v,\nabla\chi)
                u^{s-\delta}\chi^{\theta-2s-1}\dmu.
 \label{eq:B-start}
\end{align}
When $s-\delta<0$, the first integral has nonpositive coefficient and
may be dropped in an upper bound. In either case, multiplying by
$R^{-2s}$, using $W\geq1$ and $T_{j-1}\preceq C\sigma_{j-1}\Id$,
bounds the right-hand side from above by
\begin{align*}
 C R^{-2s}\int \sigma_{j-1}z\,
                    u^{s-\delta-1}\chi^{\theta-2s}\dmu
 +C R^{-2s-1}\int \sigma_{j-1}\sqrt z\,
                    u^{s-\delta}\chi^{\theta-2s-1}\dmu.
\end{align*}
The first density is the geometric mean of the densities of
$\Lambda_{s+1}$ and $\mathcal B_{s+1}$ with weights
$1/(s+1)$ and $s/(s+1)$. For the second density the weights are
$1/(2s+2)$ and $(2s+1)/(2s+2)$. Replacing powers of $q$ by powers of $z$ changes only the constants,
by \eqref{eq:q-comparable}. Thus, for every $\varepsilon>0$,
\begin{equation}\label{eq:B-step}
 \mathcal B_s\leq\varepsilon\Lambda_{s+1}
                         +C_\varepsilon\mathcal B_{s+1}.
\end{equation}
A finite induction based on \eqref{eq:B-step}, choosing each small
constant before the next, gives, for every $\varepsilon>0$,
\begin{equation}\label{eq:B-all}
 \sum_{s=1}^k\mathcal B_s
 \leq\varepsilon\sum_{s=1}^k\Lambda_s+C_\varepsilon\mathcal B_k.
\end{equation}
The same induction is applied from each remaining starting index.

\smallskip\noindent
\emph{Step 4: absorption.}
Write $\mathcal I_\delta=\int u^{p-\delta}\chi^\theta\dmu$ and
$S=\sum_{s=1}^k\Lambda_s$. Choose
\[
 c_\delta=\min\left\{k,\ |m_k|,\
              \min_{1\leq s<k}\frac{k+s}{2s}|m_s|\right\}>0.
\]
If $\delta>0$, \eqref{eq:As-bounds}, $W\geq1$, and
\eqref{eq:telescope} give
$c_\delta(\mathcal I_\delta+S)\leq C_\delta\sum_s|\mathcal E_s|$.
For $-1<\delta<0$, all $m_s$ are negative, and the exact rearrangement is
\[
 \sum_{s=1}^{k-1}|m_s|\mathcal A_s+|m_k|\Lambda_k
 =k\int Wu^{p-\delta}\chi^\theta\dmu
   -\theta\mathcal E_1
   -\theta\sum_{s=1}^{k-1}\frac{m_s}{2s}\mathcal E_{s+1}.
\]
Taking absolute values only in the error terms yields
$c_\delta S\leq kW_*\mathcal I_\delta+C_\delta\sum_s|\mathcal E_s|$.
Here $C_\delta$ also depends on the fixed parameters $k,\theta$.

In either case, first choose $\varepsilon_1>0$ in \eqref{eq:E-bound} so that
$C_\delta\varepsilon_1\leq c_\delta/4$. Summing that estimate gives
\[
 C_\delta\sum_s|\mathcal E_s|
 \leq\frac{c_\delta}{4}S+C_\delta C_{\varepsilon_1}\sum_s\mathcal B_s.
\]
With this coefficient fixed, choose $\varepsilon_2>0$ in
\eqref{eq:B-all} so that
$C_\delta C_{\varepsilon_1}\varepsilon_2\leq c_\delta/4$.
The error terms are therefore bounded by $c_\delta S/2+C\mathcal B_k$.
Moving $c_\delta S/2$ to the left leaves at least $c_\delta S/2$ there,
together with $c_\delta\mathcal I_\delta$ when $\delta>0$.
The term $C\mathcal B_k$ remains on the right.
Finally,
$\mathcal B_k=R^{-2k}\int u^{k-\delta}\chi^{\theta-2k}\dmu$,
which proves both estimates with constants independent of $R$.
\end{proof}

\begin{remark}\label[remark]{rem:descent}
The telescoping argument requires $\delta\ne0$ because $m_1=\delta$
would otherwise vanish. In the Liouville proof, the weight can be chosen
in an open interval while avoiding this value. The condition
$\delta>-1$ ensures that every factor $\delta+s$ in the recurrence is
positive. Both restrictions are used in the absorption argument.
\end{remark}

\section{The subcritical Liouville theorem}\label{sec:liouville}

For the subcritical part of \cref{thm:liouville}, a solution that is not
identically zero is smooth and positive by \cref{lem:positive}, and
\cref{thm:apriori} gives a uniform bound for $W$.
For $p=k$, we test the mean curvature inequality directly.
For $k<p<\pstar$, the weighted estimates from the preceding sections
will reduce the proof to decay of an integral involving only a power
of the height.

\subsection{The case $p=k$}
\begin{proof}[Proof of the subcritical part of \cref{thm:liouville} for $p=k$]
Assume $p=k$ and $u>0$. The a priori estimate gives $W\leq W_*$, while
\eqref{eq:H-div} and \eqref{eq:H-lower} give
$-\Div_{\R^n}(WDu)=H\geq c u$.
For any $\phi\in C_c^\infty(\R^n)$, use the nonnegative test function
$\phi^2/u$ in this inequality:
\begin{align*}
 c\int\phi^2\dd x
 &\leq\int H\frac{\phi^2}{u}\dd x\\
 &=\int W\left(\frac{2\phi}{u}Du\cdot D\phi
                      -\frac{\phi^2}{u^2}|Du|^2\right)\dd x
 \leq\int W|D\phi|^2\dd x
 \leq W_*\int|D\phi|^2\dd x.
\end{align*}
The penultimate inequality follows from
$|D\phi-(\phi/u)Du|^2\geq0$. Take $\phi(x)=\phi_0(x/R)$ with a fixed
nonzero compactly supported $\phi_0$. The left-hand side is a positive constant times $R^n$, whereas the
right-hand side is a constant times $R^{n-2}$. Letting $R\to\infty$
gives a contradiction.
\end{proof}

\begin{remark}\label[remark]{rem:linear-reaction}
The preceding argument uses \cref{thm:apriori} and \eqref{eq:H-lower} for every $2\le k<n$. Thus $u\equiv0$ at $p=k$ without the dimension restriction $n>2k$; in particular, $k=p=2$ also covers $n=3,4$. The bound $n>2k$ in \cref{thm:liouville} specifies where the finite endpoint $p_*$ is defined.
\end{remark}

\subsection{Localized gradient estimates and reduction to height powers}
The remaining argument for \cref{thm:liouville} concerns $k<p<\pstar$.
We first combine the coercive divergence inequality with the finite
descent. We state the hypotheses of the next two lemmas separately so that the
lemmas also apply whenever a uniform bound for $W$ is available
independently of \cref{thm:apriori}.
\begin{lemma}\label[lemma]{lem:flux-cutoff}
Let $2\leq k<n/2$ and $0<p<\pstar$. Let $u$ be a smooth positive entire solution of \eqref{eq:main} with
$W\leq W_*$. Let $\chi=\chi_R$ be the cutoff in
\eqref{eq:cutoff-grad}, with $R\geq1$, and choose $M,\tau$ as in
\eqref{eq:tau-interval}, with $M\ne-1$. For $m>2k+2$,
\begin{align}
 \int u^{M+p-1}z\chi^m\dmu
 \leq{}&CR^{-2}\int u^{M+p+1}\chi^{m-2}\dmu\notag\\
 &+CR^{-2k-2}\int u^{M+k+1}\chi^{m-2k-2}\dmu.
 \label{eq:gradient-integral}
\end{align}
For fixed $n,k,p,M,\tau,m,W_*$ and $\chi_0$, the estimate holds
with a constant independent of $R\geq1$ and of the solution satisfying
these hypotheses.
\end{lemma}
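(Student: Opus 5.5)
The plan is to integrate the coercive inequality of \cref{prop:coercivity} against $\chi^m$, absorb the boundary flux by Young's inequality, and then remove the leftover curvature weight $\sigma_{k-1}(h)$ with \cref{lem:descent}. Fix $M,\tau$ as in \eqref{eq:tau-interval}; \cref{prop:coercivity} applies because $p<\pstar$ and $k<n/2$. Since $\chi^m$ has compact support, the divergence theorem on the graph gives
\[
 \int \Div\J\,\chi^m\dmu=-m\int\chi^{m-1}\ip{\J}{\nabla\chi}\dmu.
\]
The left side is bounded below by $c_0\int\bigl(u^MW\Dk+u^{M-2}qF(v,v)+u^{M+p-1}z\bigr)\chi^m\dmu$.

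The two flux terms are handled separately. For $\J_0=-u^MEv$, \eqref{eq:quantitative-pair} with $X=v$, $Y=\nabla\chi$ gives
\[
 |E(v,\nabla\chi)|\le\sqrt{c_k\Dk}\,\sqrt z\,F(\nabla\chi,\nabla\chi)^{1/2}.
\]
Young's inequality then bounds $m\chi^{m-1}u^M|E(v,\nabla\chi)|$ by
\[
 \varepsilon u^MW\Dk\chi^m+C_\varepsilon u^Mz\,F(\nabla\chi,\nabla\chi)\chi^{m-2},
\]
where we used $W\ge1$. For $\J_1$, positivity of $F$ gives the Cauchy--Schwarz bound
\[
 u^{M-1}q|F(v,\nabla\chi)|\le\bigl(u^{M-2}qF(v,v)\bigr)^{1/2}\bigl(u^MqF(\nabla\chi,\nabla\chi)\bigr)^{1/2}.
\]
Young's inequality absorbs the first factor into the coercive term, leaving $C u^Mq\,F(\nabla\chi,\nabla\chi)\chi^{m-2}$. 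Three facts finish the reduction: $F\preceq\tr F\,\Id=(n-k+1)\sigma_{k-1}\Id$, $q\le z$ by \eqref{eq:q-comparable}, and $|\nabla\chi|\le CR^{-1}$ by \eqref{eq:cutoff-grad}. After choosing $\varepsilon$ small we obtain
\[
 \int u^{M+p-1}z\chi^m\dmu\le CR^{-2}\int\sigma_{k-1}(h)\,u^Mz\,\chi^{m-2}\dmu.
\]
All constants depend only on $n,k,p,\tau,m,W_*,\chi_0$.

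The right side is exactly $R^{-2}\Lambda_1$ from \eqref{eq:Lambda}, with $\delta=-M-1$ and $\theta=m-2$. Indeed $s=1$ there gives the weight $u^{-\delta-1}=u^M$. Since $M<0$ we have $\delta>-1$, and $M\ne-1$ gives $\delta\ne0$. Moreover $m>2k+2$ gives $\theta>2k$, so \cref{lem:descent} applies.

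If $\delta>0$, \eqref{eq:descent-positive} bounds $\Lambda_1\le CR^{-2k}\int u^{M+k+1}\chi^{m-2-2k}\dmu$. This yields the second term of \eqref{eq:gradient-integral}. If $-1<\delta<0$, \eqref{eq:descent-negative} adds $C\int u^{M+p+1}\chi^{m-2}\dmu$, which gives the first term after multiplication by $R^{-2}$. In either case \eqref{eq:gradient-integral} follows.

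The main point to check is the mixed flux term from $\J_0$. The pair estimate \eqref{eq:quantitative-pair} must put the factor $\Dk$, and not $\Dk F$, on the coercive side, so that the absorption needs no ellipticity ratio of $F$. The only remaining dependence on $W$ enters through $W\le W_*$ in the comparability \eqref{eq:q-comparable} and \eqref{eq:metric-comparable}.
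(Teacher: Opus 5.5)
Your proposal is correct and follows essentially the same route as the paper's proof. You integrate the coercive inequality against $\chi^m$ and absorb the $\J_0$ flux via \eqref{eq:quantitative-pair} and the $\J_1$ flux via Cauchy--Schwarz for $F$, reaching \eqref{eq:pre-descent}. You then apply \cref{lem:descent} with $\delta=-M-1$ and $\theta=m-2$, splitting into the cases $\delta>0$ and $-1<\delta<0$ exactly as the paper does.
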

\begin{proof}
Multiply \eqref{eq:coercive} by $\chi^m$ and integrate.
The divergence term is $-m\int\chi^{m-1}\ip{\J}{\nabla\chi}\dmu$.
For the contribution from $\J_0$, \eqref{eq:quantitative-pair} gives
\begin{align*}
 m u^M|E(v,\nabla\chi)|\chi^{m-1}
 &\leq C u^M\sqrt{\Dk zF(\nabla\chi,\nabla\chi)}\chi^{m-1}\\
 &\leq\varepsilon u^MW\Dk\chi^m
       +C_\varepsilon u^M\frac zW
                  F(\nabla\chi,\nabla\chi)\chi^{m-2}.
\end{align*}
For the contribution from $\J_1$, the Cauchy--Schwarz inequality for
the positive definite form $F$ gives
\begin{align*}
 m\tau u^{M-1}q|F(v,\nabla\chi)|\chi^{m-1}
 \leq\varepsilon u^{M-2}qF(v,v)\chi^m
       +C_\varepsilon u^MqF(\nabla\chi,\nabla\chi)\chi^{m-2}.
\end{align*}
Since $q\leq z$, $W\geq1$, and
$F(\nabla\chi,\nabla\chi)\leq\tr F\,|\nabla\chi|^2
\leq CR^{-2}\sigma_{k-1}$, absorption yields
\begin{equation}\label{eq:pre-descent}
 \int u^{M+p-1}z\chi^m\dmu
      \leq CR^{-2}\int\sigma_{k-1}u^Mz\chi^{m-2}\dmu.
\end{equation}
Apply \cref{lem:descent} with
$\delta=-M-1$ and $\theta=m-2$.
The choice $M<0$ gives $\delta>-1$, and $M\ne-1$ gives $\delta\ne0$.
The integral on the right of \eqref{eq:pre-descent}, before the
factor $CR^{-2}$, is $\Lambda_1$ in \eqref{eq:Lambda}.
For $\delta>0$, substitution of $\delta=-M-1$ and $\theta=m-2$
into \eqref{eq:descent-positive} gives \eqref{eq:gradient-integral}
without the first term on its right-hand side. Adding that nonnegative
term gives the stated estimate. For $\delta<0$, the same substitution
in \eqref{eq:descent-negative} gives \eqref{eq:gradient-integral}
directly.
\end{proof}

\begin{lemma}[Reduction to height powers]\label[lemma]{lem:scalar-integral}
Under the assumptions and with the notation of \cref{lem:flux-cutoff}, define
\begin{equation}\label{eq:Pab}
 P=M+p+\frac pk,\qquad a=M+p+1,\qquad b=M+k+1.
\end{equation}
Then
\begin{equation}\label{eq:scalar-integral}
 I_P(R):=\int u^P\chi^m\dmu
 \leq CR^{-2}\int u^a\chi^{m-2}\dmu
       +CR^{-2k-2}\int u^b\chi^{m-2k-2}\dmu.
\end{equation}
The constant has the same allowed dependence as in \cref{lem:flux-cutoff}.
No additional restriction on the sign of $M+p$ is needed.
\end{lemma}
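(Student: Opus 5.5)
The plan is to test the mean-curvature inequality \eqref{eq:H-lower} against the weight $u^{M+p}\chi^m$. The resulting gradient terms are then exactly of the type controlled by \cref{lem:flux-cutoff}.

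First, by \eqref{eq:H-lower}, $u^{p/k}\le CH$, so
\[
 I_P(R)=\int u^{M+p}u^{p/k}\chi^m\dmu\le C\int u^{M+p}H\chi^m\dmu\le C\int u^{M+p}WH\chi^m\dmu,
\]
using $W\ge1$ and $H>0$.

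Next, by \eqref{eq:height-L}, $WH=-\Delta u$. Integrating by parts on the graph, with $\chi$ compactly supported and $u>0$ smooth, gives
\[
 \int u^{M+p}WH\chi^m\dmu=(M+p)\int u^{M+p-1}z\chi^m\dmu+m\int u^{M+p}\chi^{m-1}\ip{v}{\nabla\chi}\dmu .
\]
I will not use the sign of $M+p$. I bound the first term by $|M+p|$ times the left side of \eqref{eq:gradient-integral}; this is why no sign restriction is needed.

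For the second term, I use $|\ip{v}{\nabla\chi}|\le \sqrt z\,CR^{-1}$ from \eqref{eq:cutoff-grad}. I then write $u^{M+p}\sqrt z\,R^{-1}\chi^{m-1}$ as the geometric mean of $u^{M+p-1}z\chi^m$ and $R^{-2}u^{M+p+1}\chi^{m-2}$. Young's inequality (with constant $1$, no absorption needed) bounds it by the gradient integral plus $CR^{-2}\int u^{a}\chi^{m-2}\dmu$, where $a=M+p+1$.

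Finally, I apply \cref{lem:flux-cutoff} to the gradient integral $\int u^{M+p-1}z\chi^m\dmu$. It contributes exactly the two terms on the right of \eqref{eq:scalar-integral}, with $a=M+p+1$ and $b=M+k+1$. Combining gives the claim with constants depending only on the allowed data.

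There is no serious obstacle. The only point to check is that the boundary term and the $(M+p)$-term do not need to be absorbed into $I_P$: both are dominated by the gradient integral, which is already controlled by \cref{lem:flux-cutoff}.
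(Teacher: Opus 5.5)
Your proposal is correct and follows essentially the same route as the paper: you test $-\Delta u=WH\ge c\,u^{p/k}$ against $u^{M+p}\chi^m$, split the cutoff term by Young's inequality into the gradient integral plus $R^{-2}\int u^{a}\chi^{m-2}\dmu$, and then invoke \cref{lem:flux-cutoff}. The only cosmetic difference is that you bound the $(M+p)$-term by $|M+p|$ times the gradient integral, whereas the paper drops it when $M+p\le0$; both handle the sign of $M+p$ equally well.
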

\begin{proof}
By \eqref{eq:height-L} and \eqref{eq:H-lower},
$-\Delta u=WH\geq c u^{p/k}$. Multiply this inequality by
$u^{M+p}\chi^m$, a compactly supported nonnegative test function.
Integration by parts gives
\[
 cI_P(R)\leq
 (M+p)\int u^{M+p-1}z\chi^m\dmu
 +m\int u^{M+p}\ip{v}{\nabla\chi}\chi^{m-1}\dmu.
\]
For $M+p\leq0$, the first term on the right is nonpositive and may be
dropped. For $M+p>0$, the first term is a fixed positive multiple of the
gradient integral in \eqref{eq:gradient-integral}. In either case Young's inequality and
\eqref{eq:cutoff-grad} bound the absolute value of the last integral by
\[
 C\int u^{M+p-1}z\chi^m\dmu
 +CR^{-2}\int u^{M+p+1}\chi^{m-2}\dmu.
\]
The test function is admissible for every real exponent of $u$, since
$u$ has a positive minimum on the compact support of $\chi$. Applying
\eqref{eq:gradient-integral} proves the assertion.
\end{proof}

\subsection{Choice of the weights and the cutoff power}
We choose $M$ in an open interval that ensures coercivity, the descent hypotheses, and decay after Young's inequality. The interval also allows us to avoid $M=-1$.

For $k<p<\pstar$, set
\begin{equation}\label{eq:M-choice}
 M_0=-\frac{2p}{n+2},\qquad
 M_{\rm low}=\max\left\{
    -\frac{2k}{n-2k},\ -p,\
    \frac{n(p-k)}{2k}-p-\frac pk\right\}.
\end{equation}
Choose any
\begin{equation}\label{eq:M-open}
 M_{\rm low}<M<M_0,\qquad M\ne-1,\qquad
 \tau=-\frac{n-k}{2k}M.
\end{equation}
We now show that $M_{\rm low}<M_0$.

First, $-2k/(n-2k)<M_0$ is equivalent to $p<\pstar$.
Also, $-p<M_0$ because $p>0$ and $n>0$. For the third lower bound, define
$P_0=M_0+p+p/k$. A direct simplification gives
\begin{equation}\label{eq:zeta0}
 n-\frac{2kP_0}{p-k}
 =-\frac{n\big(k(n+2)-(n-2k)p\big)+4p}{(n+2)(p-k)}<0.
\end{equation}
The third lower bound in $M_{\rm low}$ is therefore strictly below $M_0$. Hence the interval in \eqref{eq:M-open} is nonempty, even after removing the single value $-1$. Combining $M>-2k/(n-2k)$ with $M<M_0$ gives
\eqref{eq:tau-interval}. The bound $M>-p$ gives $M+p>0$. This positivity is not needed in
\cref{lem:scalar-integral}, but it ensures $a>1$ when we apply Young's
inequality in the final step. The third restriction gives
\begin{equation}\label{eq:zeta}
 \zeta:=n-\frac{2kP}{p-k}<0.
\end{equation}
Furthermore,
\begin{equation}\label{eq:exponent-diffs}
 a>1,\qquad P-a=\frac{p-k}{k}>0,\qquad
 P-b=\frac{(k+1)(p-k)}{k}>0.
\end{equation}
Choose the cutoff power so that
\begin{equation}\label{eq:cutoff-power-choice}
 m>\max\left\{2k+2,\frac{2kP}{p-k}\right\}.
\end{equation}
This choice makes the cutoff exponents nonnegative in both applications
of Young's inequality below.

The following table records where each parameter restriction is used:
\begin{center}
\begin{tabular}{@{}p{2.58in}p{3.27in}@{}}
\toprule
Condition & Role in the proof\\
\midrule
$p\geq k$ & Ensures $p/k-1\geq0$ in \eqref{eq:W-at-max}.\\[3pt]
$M>-2k/(n-2k)$ & Ensures positivity of the quantity in \eqref{eq:margin}.\\[3pt]
$M<-2p/(n+2)$ & Ensures $a_{M,\tau}>0$ in \eqref{eq:flux}.\\[3pt]
$M>-p$ & Ensures $a=M+p+1>1$ in the final Young estimate; testing with $u^{M+p}\chi^m$ does not require this restriction.\\[3pt]
$M>\frac{n(p-k)}{2k}-p-p/k$ & Ensures $\zeta<0$ in \eqref{eq:zeta}.\\[3pt]
$M<0$, $M\ne-1$ & Gives $\delta=-M-1>-1$ and $\delta\ne0$ in the finite descent.\\[3pt]
$m>\max\{2k+2,2kP/(p-k)\}$ & Ensures nonnegative cutoff powers in both Young estimates.\\
\bottomrule
\end{tabular}
\end{center}
The interval \eqref{eq:M-open} and the choice \eqref{eq:cutoff-power-choice}
satisfy all these requirements when $k<p<\pstar$.
The case $p=k$ was proved separately and involves no factor $(p-k)^{-1}$.
Identity \eqref{eq:zeta0} will also be used in \cref{sec:critical-local}.

\subsection{A zero-$k$-curvature exterior barrier}
The radial comparison below controls negative powers of the height in \cref{sec:liouville,sec:direct-low-dim}. Its constant may depend on the solution but not on the outer radius.

\begin{lemma}[Exterior lower bound]\label[lemma]{lem:lower-barrier}
Let $2\leq k<n/2$, $p>0$, and let $u$ be a positive entire $C^2$ solution of \eqref{eq:main}.
There are constants $c>0$ and $R_0\geq1$, which may depend on $u$, such that
\begin{equation}\label{eq:lower-barrier}
 u(x)\geq c|x|^{-\gamma}\quad(|x|\geq R_0),\qquad
 \gamma=\frac{n-2k}{k}.
\end{equation}
No bound for $W$ is required for this comparison.
\end{lemma}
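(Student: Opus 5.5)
We compare $u$ on an exterior annulus with the zero-$k$-curvature radial profile from \eqref{eq:radial-sigma-k}, namely $y=a r^{-(n-k)/k}$. For this choice $r^{n-k}y^k=a^k$ is constant, so $\sigma_k(A[V])=0$. Moreover $\lambda_{\rm tan}=y/r>0$ and $\lambda_{\rm rad}=y'=-\frac{n-k}{k}y/r<0$. A direct check gives $\sigma_j>0$ for $j<k$ and $\sigma_k=0$: indeed
\[
\sigma_j\propto \binom{n-1}{j-1}(-\tfrac{n-k}{k})+\binom{n-1}{j}=\binom{n-1}{j-1}\Bigl(\tfrac{n-j}{j}-\tfrac{n-k}{k}\Bigr),
\]
which is positive for $j<k$. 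Hence $V$ lies on $\partial\Gamma_k$.

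The profile is recovered from $V'=-y/\sqrt{1+y^2}$. For large $r$ we have $y\to0$, so $|V'|\approx a r^{-(n-k)/k}$. Since $(n-k)/k>1$ (because $n>2k$), this is integrable at infinity, and we set $V(r)=\int_r^\infty y/\sqrt{1+y^2}\,ds$. Then $V>0$, $V\to0$, and $V(r)\sim \frac{a}{\gamma}r^{-\gamma}$ with $\gamma=\frac{n-k}{k}-1=\frac{n-2k}{k}$. The graph is spacelike for all $r>0$.

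Fix $R_0=1$. Choose $a$ small, depending on $\min_{|x|=1}u>0$ (positivity comes from \cref{lem:positive}), so that $V(1)<\min_{|x|=1}u$. For each large outer radius $\rho$, set $V_\rho=V-V(\rho)$. Then $V_\rho\le u$ on $|x|=1$ and $V_\rho=0< u$ on $|x|=\rho$.

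Suppose $u-V_\rho$ had a negative interior minimum in the annulus. At that point $Du=DV_\rho$ and $D^2u\succeq D^2V_\rho$. By \eqref{eq:contact-ordering} this gives $\widehat h[u]\preceq\widehat h[V_\rho]$. Since $\widehat h[u]\in\Gamma_k$ and positive semidefinite addition preserves the cone, $\widehat h[V_\rho]\in\Gamma_k$ as well. Monotonicity along the segment, exactly as in the cap comparison in the proof of \cref{thm:apriori}, yields $0<u^p=\sigma_k(\widehat h[u])<\sigma_k(\widehat h[V_\rho])$. This is the main point to check: strict inequality needs either a nonzero difference or a perturbation. It suffices to observe that $\sigma_k(\widehat h[V_\rho])=0$ contradicts $\sigma_k(\widehat h[u])=u^p>0$ together with $\widehat h[u]\preceq\widehat h[V_\rho]$, since $\sigma_k$ is nondecreasing along the segment in $\overline{\Gamma_k}$. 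Therefore $u\ge V_\rho$ on the annulus.

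Letting $\rho\to\infty$ gives $u\ge V\ge c|x|^{-\gamma}$ for $|x|\ge R_0$. No bound on $W$ enters, since only local contact comparison and the boundary values were used.
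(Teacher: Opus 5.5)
Your proof is correct and is essentially the paper's argument. Both use the same zero-$k$-curvature radial profile $y=a r^{-(n-k)/k}$ on $\partial\Gamma_k$, truncated at an outer radius, compared with $u$ at an interior contact point via $\widehat h[u]\preceq\widehat h[V_\rho]$, and then sent to infinity. The only difference is cosmetic: the paper perturbs both matrices by $\varepsilon\Id$, whereas you run the monotonicity along the segment inside the open cone using the strict admissibility $\widehat h[u]\in\Gamma_k$ from \cref{lem:positive}. That admissibility already forces $\widehat h[V_\rho]\in\Gamma_k$, which contradicts $\sigma_k(\widehat h[V_\rho])=0$, so your brief detour through a strict inequality is unnecessary.
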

\begin{proof}
Fix $R_0\geq1$ and let $m_0=\min_{\partial B_{R_0}}u>0$.
Set $\beta_{\mathrm{rad}}=(n-k)/k=1+\gamma$, and choose $\varepsilon_0>0$ so small that
$\int_{R_0}^\infty\varepsilon_0t^{-\beta_{\mathrm{rad}}}\dd t\leq m_0$.
For $S>R_0$, define on $R_0\leq r\leq S$
\begin{equation}\label{eq:radial-barrier}
 y(r)=\varepsilon_0r^{-\beta_{\mathrm{rad}}},\qquad
 V_S(r)=\int_r^S\frac{y(t)}{\sqrt{1+y(t)^2}}\dd t.
\end{equation}
This graph is strictly spacelike. By \eqref{eq:radial-curvatures},
the principal curvatures of \eqref{eq:radial-barrier} are
\[
(-\beta_{\mathrm{rad}}\lambda,\lambda,\ldots,\lambda),\qquad\lambda=y/r>0.
\]
For $1\leq j\leq k$ their symmetric functions are
\begin{align*}
 \sigma_j
 &=\bin{n-1}{j-1}\lambda^j
          \left(\frac{n-j}{j}-\frac{n-k}{k}\right)\\
 &=\bin{n-1}{j-1}\lambda^j\frac{n(k-j)}{jk}.
\end{align*}
These symmetric functions are positive for $j<k$ and vanish for $j=k$.
Thus the principal curvature vector of $V_S$ lies on $\partial\Gamma_k$, and $\sigma_k(A[V_S])=0$.

We have $V_S(R_0)\leq m_0$ and $V_S(S)=0<u$.
If $u-V_S$ had a negative interior minimum in the annulus, equality of
gradients and the Hessian ordering at contact would imply
$\widehat h[u]\preceq\widehat h[V_S]$. Monotonicity of $\sigma_k$ on the closed admissible
cone would give $u^p\leq0$, a contradiction. To justify the closed-cone
comparison explicitly, add $\varepsilon\Id$ to both symmetric matrices.
The perturbed matrices lie in $\Gamma_k$ and have the same ordering.
The monotonicity inequality in the open cone therefore applies. Letting $\varepsilon\downarrow0$
gives the asserted inequality by continuity of $\sigma_k$.
Hence $u\geq V_S$.
Letting $S\to\infty$ gives
\[
 u(x)\geq\int_{|x|}^\infty
       \frac{\varepsilon_0t^{-\beta_{\mathrm{rad}}}}{\sqrt{1+\varepsilon_0^2t^{-2\beta_{\mathrm{rad}}}}}
       \dd t
 \geq\frac{\varepsilon_0}{\gamma\sqrt{1+\varepsilon_0^2R_0^{-2\beta_{\mathrm{rad}}}}}
          |x|^{-\gamma}.
\]
This is the claimed lower bound.
\end{proof}

\subsection{The case $k<p<\pstar$}
\begin{proof}[Proof of the subcritical part of \cref{thm:liouville} for $k<p<\pstar$]
Suppose that the solution is not identically zero. By
\cref{lem:positive}, it is smooth and positive, and \cref{thm:apriori}
provides the uniform bound for $W$ needed for the metric and cutoff
estimates above.
Choose $M,\tau$ by \eqref{eq:M-choice}--\eqref{eq:M-open} and $m$
by \eqref{eq:cutoff-power-choice}. These parameters are fixed before $R$ varies.
We apply \eqref{eq:scalar-integral} in the three cases $b>0$, $b=0$, and $b<0$.

Suppose first that $b>0$. For a term of the form
$R^{-\zeta_0}u^r\chi^{m-\zeta_0}$ with $0<r<P$, write it as
\[
 (u^P\chi^m)^{r/P}\,
       R^{-\zeta_0}\chi^{m(1-r/P)-\zeta_0}.
\]
The exponent of the cutoff is nonnegative if
$m\geq \zeta_0P/(P-r)$. Young's inequality therefore bounds its integral by
$\varepsilon I_P(R)+C_\varepsilon R^{n-\zeta_0P/(P-r)}$, using
$\mu(B_{2R})\leq C R^n$. In the two applications here,
\begin{equation}\label{eq:matching-radii}
 \frac{2P}{P-a}=\frac{(2k+2)P}{P-b}=\frac{2kP}{p-k}.
\end{equation}
By \eqref{eq:cutoff-power-choice}, both resulting cutoff exponents are
nonnegative. Absorbing
the two small multiples of $I_P(R)$ gives
\begin{equation}\label{eq:IP-decay}
 \int_{B_R}u^P\dmu\leq I_P(R)\leq C R^\zeta.
\end{equation}
For any fixed $r>0$ and all $R\geq\max\{1,r\}$, \eqref{eq:IP-decay} gives
\[
0\leq\int_{B_r}u^P\dmu\leq CR^\zeta.
\]
Letting $R\to\infty$ yields $\int_{B_r}u^P\dmu=0$, contrary to $u>0$.

If $b=0$, the second term in \eqref{eq:scalar-integral} is at most
$CR^{n-2k-2}$. Since $b=0$, the second identity in
\eqref{eq:matching-radii} says $\zeta=n-2k-2<0$. Applying Young's inequality to the first term shows that $I_P(R)$
tends to zero. Integration over any fixed ball then contradicts $u>0$,
as in the case $b>0$.

Finally, suppose $b<0$. This requires $n=2k+1$: if $n-2k\geq2$,
then $M>-2k/(n-2k)\geq-k$ and hence $b=M+k+1>1$.
The exterior lower bound in \cref{lem:lower-barrier} gives
$u(x)\geq c|x|^{-1/k}$ outside a fixed ball. Since $b<0$, raising this inequality to the power $b$ reverses its
direction, and hence
\begin{equation}\label{eq:negative-b}
 R^{-2k-2}\int_{B_{2R}}u^b\dmu
 \leq C R^{-2k-2}+C R^{-1-b/k}.
\end{equation}
The integral on the fixed interior ball contributes the first term.
The constants may depend on the fixed solution $u$, but they remain
independent of $R$. Because $M>-2k$, one has $b>1-k$ and
\[
 -1-\frac bk<-\frac1k<0.
\]
Apply Young's inequality to the first term of \eqref{eq:scalar-integral},
and use \eqref{eq:negative-b} for the second. The resulting upper bound
for $I_P(R)$ tends to zero by \eqref{eq:zeta}, again a contradiction.
Together with the argument for $p=k$, these three cases prove \cref{thm:liouville} for $k\le p<\pstar$.
\end{proof}

\section{The critical endpoint: defect rigidity, bubble compactness, and blow-down analysis}\label{sec:critical-local}

In this section $p=p_*$ and $2\le k<n/2$, so the bounds of \cref{thm:apriori} apply to every nonzero solution. At the endpoint, the interval \eqref{eq:tau-interval} closes: its limiting parameters make the quadratic margin and $a_{M,\tau}$ vanish, while the quartic coefficient remains positive. We first derive the exact defect and prove rigidity for $2k<n\le4k+2$. The local lemmas that follow have explicit hypotheses. In the range $n\ge4k+3$, \cref{sec:critical-global} verifies those hypotheses.

\subsection{The endpoint vector field and the exact defect identity}

We retain $v,z,f,F$ from \eqref{eq:basic-symbols}, $c_k,E,\Dk$ from
\eqref{eq:E-D}, and $q$ from \eqref{eq:q}, with $f=u^{p_*}$. Set
\[
d=n-2k,\qquad
\alpha_*=\frac{d}{k+1},\qquad
M_*=-\frac{2k}{d},\qquad\tau_*=\frac{n-k}{d}.
\]
The exponent $\alpha_*$ is reserved for the critical natural scaling;
$\alpha$ continues to denote a transverse frame index when it appears as
a summation index.  Strict admissibility gives $F\succ0$.  For a tangent
vector $X$, we use the notation
\(
|X|_{F^{-1}}^2:=F^{-1}(X,X),
\)
where $F^{-1}$ denotes the inverse of the positive definite Newton tensor
$F=T_{k-1}(h)$.  We may therefore define
\begin{equation}\label{eq:endpoint-K}
	\mathcal K=c_k\mathcal D_k\Id-EF^{-1}E.
\end{equation}
Since $E$ and $F$ are polynomials in $h$, they commute.  The quantitative
Newton inequality  \eqref{eq:quantitative} therefore gives
\(
EF^{-1}E=E^2F^{-1}\preceq c_k\mathcal D_k\Id,
\)
and hence $\mathcal K\succeq0$.  We use the exact angle identities
\eqref{eq:q-identities} below.

\begin{lemma}[Endpoint divergence identity]\label[lemma]{lem:C1}
Define
\begin{equation}\label{eq:Jstar}
J_*=-u^{M_*}\left(Ev+\frac{\tau_*q}{u}Fv\right).
\end{equation}
Then, wherever $q>0$,
\begin{align}
\Div J_*
={}&\frac{|J_*|_{F^{-1}}^2}{c_ku^{M_*}q}
+\frac14u^{M_*}q\Dk{}\label{eq:flux-square}\\
&+\frac{u^{M_*}}{c_kq}\cK(v,v)
+\frac{k\tau_*}{4}u^{M_*+p_*-1}q^2.
\nonumber
\end{align}
Formula \eqref{eq:flux-square} is used on $\{q>0\}$.  At a point where $q=0$ one has $v=0$ and the original divergence identity gives
\[
\cD:=\Div J_*=u^{M_*}\Dk{}\ge0.
\]
The sum of the two $q^{-1}$ terms in \eqref{eq:flux-square} has this continuous limit, although the two terms need not possess separate direction-independent limits.
\end{lemma}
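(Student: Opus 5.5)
The plan is to specialize the weighted divergence identity of \cref{prop:flux} to the endpoint parameters and then complete a square with respect to the positive definite form $F$. With $M=M_*$ and $\tau=\tau_*$, the vector field $J_*$ in \eqref{eq:Jstar} is exactly $\J_0+\tau_*\J_1$ from \eqref{eq:J-def}. The first step is to check that the coefficient $a_{M,\tau}$ vanishes at $p=p_*$:
\[
k(n+2)\tau_*-(n-k)p_*=\frac{k(n+2)(n-k)}{d}-\frac{(n-k)k(n+2)}{d}=0.
\]
Hence \eqref{eq:flux} reduces to
\[
\Div J_*=u^{M_*}W\Dk+(2\tau_*-M_*)u^{M_*-1}E(v,v)+\tau_*(1-M_*)u^{M_*-2}qF(v,v)+\frac{k\tau_*}{4}u^{M_*+p_*-1}q^2 .
\]

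Next, on $\{q>0\}$, expand the square. Since $E$ and $F$ commute, $F^{-1}$ applied to $Fv$ returns $v$ and $E$ is $F^{-1}$-symmetric, so
\[
\frac{|J_*|_{F^{-1}}^2}{c_ku^{M_*}q}
=\frac{u^{M_*}}{c_kq}EF^{-1}E(v,v)+\frac{2\tau_*}{c_k}u^{M_*-1}E(v,v)+\frac{\tau_*^2}{c_k}u^{M_*-2}qF(v,v).
\]
The key coefficient checks, which I expect to be the only real point of the lemma, are
\[
\frac{2\tau_*}{c_k}=\frac{2n}{d}=2\tau_*-M_*,\qquad
\frac{\tau_*^2}{c_k}=\frac{n(n-k)}{d^2}=\tau_*(1-M_*).
\]
These are precisely the endpoint case in which the margin \eqref{eq:margin} degenerates to zero. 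Consequently, the mixed term and the $F(v,v)$ term are exactly absorbed into the square. What remains is
\[
u^{M_*}W\Dk-\frac{u^{M_*}}{c_kq}EF^{-1}E(v,v).
\]

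To finish on $\{q>0\}$, use the exact identity $Wq=z+q^2/4$ from \eqref{eq:q-identities}, written as $W=z/q+q/4$. This gives
\[
u^{M_*}W\Dk=\frac{u^{M_*}}{c_kq}\,c_k\Dk|v|^2+\frac14u^{M_*}q\Dk.
\]
Combining the first piece with the $EF^{-1}E$ term produces $\frac{u^{M_*}}{c_kq}\cK(v,v)$ by the definition \eqref{eq:endpoint-K}, which proves \eqref{eq:flux-square}.

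At a point where $q=0$ we have $W=1$ and $v=0$. The reduced form of \eqref{eq:flux} above, which is valid everywhere, then gives $\cD=u^{M_*}\Dk\ge0$. For the continuity remark, note that on $\{q>0\}$ the two $q^{-1}$ terms sum to $u^{M_*}\bigl(W\Dk-\tfrac14 q\Dk\bigr)-\text{(the remaining smooth terms)}$. More directly, they equal $\Div J_*$ minus the terms that are smooth in $q$, namely $\tfrac14u^{M_*}q\Dk$ and $\tfrac{k\tau_*}{4}u^{M_*+p_*-1}q^2$. This combination tends to $u^{M_*}\Dk$ as $q\to0$, even though each of the two terms separately depends on the direction of $v/|v|$.
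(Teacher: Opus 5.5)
Your proposal is correct and follows the paper's proof essentially step for step. You specialize \cref{prop:flux} at $(M_*,\tau_*)$, using $a_{M_*,\tau_*}=0$, $2\tau_*-M_*=2\tau_*/c_k$ and $1-M_*=\tau_*/c_k$. You then expand $|J_*|_{F^{-1}}^2$ and split $W=z/q+q/4$, which produces the $\frac14u^{M_*}q\Dk$ and $\cK(v,v)$ terms.

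One cosmetic slip in your first phrasing of the continuity remark: the $E(v,v)$ and $F(v,v)$ terms should be added, not subtracted. Your ``more directly'' formulation via continuity of $\Div J_*$ is the correct one.
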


\begin{proof}
Proposition~\ref{prop:flux} gives
\begin{align*}
\Div J
={}&u^MW\Dk{}+(2\tau-M)u^{M-1}E(v,v)\\
&+\tau(1-M)u^{M-2}qF(v,v)
+a_{M,\tau}u^{M+p-1}z
+\frac{k\tau}{4}u^{M+p-1}q^2.
\end{align*}
At the endpoint,
\[
a_{M_*,\tau_*}=0,
\qquad
1-M_*=\frac{\tau_*}{c_k},
\qquad
2\tau_*-M_*=\frac{2\tau_*}{c_k}.
\]
Expanding the first term on the right-hand side of \eqref{eq:flux-square}, using \eqref{eq:endpoint-K}, yields
\begin{align*}
&\frac{u^{M_*}}{c_kq}
\left|Ev+\frac{\tau_*q}{u}Fv\right|_{F^{-1}}^2
+\frac{u^{M_*}}{c_kq}\cK(v,v)\\
&\qquad=\frac{u^{M_*}\Dk{}z}{q}
+\frac{2\tau_*}{c_k}u^{M_*-1}E(v,v)
+\frac{\tau_*^2}{c_k}u^{M_*-2}qF(v,v).
\end{align*}
By \eqref{eq:q-identities},
\[
W-\frac zq=\frac q4.
\]
Hence
\[
u^{M_*}W\Dk{}
=\frac{u^{M_*}\Dk{}z}{q}+\frac14u^{M_*}q\Dk{},
\]
which proves \eqref{eq:flux-square}.  Nonnegativity follows from \eqref{eq:quantitative}.
\end{proof}

The function $D(R)$ records the endpoint defect in $B_R$. It is nondecreasing and is distinct from the Euclidean derivative operator $D$.
A cutoff consequence used later is
\begin{equation}\label{eq:defect-cutoff}
D(R):=\int_{B_R}\cD\,d\mu
\le CR^{-2}\mathcal N(2R),
\end{equation}
where
\begin{equation}\label{eq:Ndef}
\mathcal N(R)=\int_{B_R}\sigma_{k-1}(h)u^{M_*}q\,d\mu.
\end{equation}
Indeed, \eqref{eq:flux-square} gives
$|J_*|_{F^{-1}}^2\le C u^{M_*}q\cD$, and one integrates $\eta^2\cD=\eta^2\Div J_*$ by parts.

\subsection{Direct endpoint rigidity through $4k+2$}\label{sec:direct-low-dim}

We first determine the full dimension range in which direct estimates prove endpoint rigidity.  The matching descent weight changes sign at $n=4k$, but the direct estimates remain effective through $n=4k+2$.  Beginning at $n=4k+3$, the proof uses the concentration argument developed below.

Let $\chi_R$ be the cutoff from \cref{lem:descent}. We display the dependence of $\Lambda_s$ in \eqref{eq:Lambda} on
$\delta$ and $R$. For $\delta>-1$, $\delta\ne0$, and $\theta>2k$, define
\[
\Lambda_s^{(\delta)}(R)=\int \sigma_{k-s}(h)\,zq^{s-1}u^{-\delta-s}\chi_R^\theta\,d\mu,
\qquad 1\le s\le k.
\]
For $\delta>0$ the positive-weight form of that lemma gives
\begin{equation}\label{eq:positive-delta-descent}
\int u^{p_*-\delta}\chi_R^\theta\,d\mu
+\sum_{s=1}^k\Lambda_s^{(\delta)}(R)
\le
CR^{-2k}\int u^{k-\delta}\chi_R^{\theta-2k}\,d\mu.
\end{equation}
Whenever $\delta>0$ and $0<k-\delta<p_*-\delta$, the Young absorption in this estimate must retain the cutoff power.  Put
\[
s_\delta=p_*-\delta,\qquad b_\delta=k-\delta.
\]
Choose $\theta$ so that
\[
\theta\left(1-\frac{b_\delta}{s_\delta}\right)\ge2k.
\]
Then, with $X=u^{s_\delta}\chi_R^\theta$,
\[
CR^{-2k}u^{b_\delta}\chi_R^{\theta-2k}
=CR^{-2k}X^{b_\delta/s_\delta}
\chi_R^{\theta(1-b_\delta/s_\delta)-2k}
\le \frac12X+C R^{-\frac{2ks_\delta}{p_*-k}}.
\]
After integration and absorption of the first term in
\eqref{eq:positive-delta-descent}, this gives
\begin{equation}\label{eq:positive-delta-growth}
\int u^{s_\delta}\chi_R^\theta\,d\mu
+\sum_{s=1}^k\Lambda_s^{(\delta)}(R)
\le C R^{\,n-\frac{2ks_\delta}{p_*-k}}.
\end{equation}
In later applications of this estimate, $\theta$ may be increased as needed.
When
\[
\delta_*=-M_*-1=\frac{4k-n}{n-2k}
\]
is nonzero, $q\le z$ gives
\begin{equation}\label{eq:NbyLambda1}
\mathcal N(R)\le \Lambda_1^{(\delta_*)}(R).
\end{equation}
For $n>4k$, where $\delta_*\in(-1,0)$, the negative-weight form of the same finite descent gives
\begin{equation}\label{eq:negative-delta-direct}
\mathcal N(R)
\le C I_a(2R)+CR^{-2k}\int_{B_{2R}}u^b\,d\mu,
\qquad
 a=M_*+p_*+1,\quad b=M_*+k+1.
\end{equation}
Since $b>0$ and $a-b=p_*-k$, Young's inequality yields
\begin{equation}\label{eq:N-direct-postresonance}
\mathcal N(R)\le C I_a(2R)+CR^\nu,
\qquad \nu=\frac{2k}{k+1}<2.
\end{equation}
Here, for $\sigma>0$, we use the notation
\(
I_\sigma(R):=\int_{B_R}u^\sigma\,d\mu.
\)
We also record the direct use of the quartic endpoint remainder.  Set
\[
\mathcal Q(R)=\int_{B_R}u^{a-2}q^2\,d\mu,
\qquad t=\frac{p_*}{k}>1,
\qquad T=a+2t.
\]
By the last term in \eqref{eq:flux-square} and \eqref{eq:defect-cutoff},
\begin{equation}\label{eq:Q-direct}
\mathcal Q(R)\le CD(R)
\le CR^{-2}\mathcal N(2R).
\end{equation}
The Newton--Maclaurin inequality gives $H\ge c u^t$, hence
$-\Delta_g u=WH\ge c u^t$.  Let $\chi_R$ be supported in $B_{2R}$, equal to one on $B_R$, and satisfy $|D\chi_R|\le C/R$.  Choose an integer
\[
m>\frac{2T}{t-1}.
\]
Testing by $u^{a+t}\chi_R^m$ and integrating by parts gives, with
\[
A_R:=\int u^T\chi_R^m\,d\mu,
\qquad
G_R:=\int u^{a+t-1}z\chi_R^m\,d\mu,
\]
\begin{equation}\label{eq:quartic-test}
A_R\le C G_R+CR^{-2}\int u^{a+t+1}\chi_R^{m-2}\,d\mu.
\end{equation}
Since $z\le Cq$ under the universal spacelikeness bound,
\[
G_R\le C\left(\int u^{a-2}q^2\chi_R^m\,d\mu\right)^{1/2}
        A_R^{1/2}
\le \varepsilon A_R+C_\varepsilon\mathcal Q(2R).
\]
For the last term in \eqref{eq:quartic-test}, put
\[
\beta=\frac{a+t+1}{T}\in(0,1),
\qquad 1-\beta=\frac{t-1}{T}.
\]
Our choice of $m$ gives $m(1-\beta)-2\ge0$, and hence
\[
R^{-2}u^{a+t+1}\chi_R^{m-2}
=R^{-2}(u^T\chi_R^m)^\beta
 \chi_R^{m(1-\beta)-2}
\le \varepsilon u^T\chi_R^m
   +C_\varepsilon R^{-\frac{2T}{t-1}}.
\]
After integration and absorption of the two $\varepsilon A_R$ terms,
\begin{equation}\label{eq:quartic-higher-moment}
I_T(R)
\le C\mathcal Q(2R)
   +CR^{\,n-\frac{2T}{t-1}}.
\end{equation}
We use this estimate in the borderline dimension $n=4k+2$.

\begin{proposition}[Direct critical rigidity through $4k+2$]\label[proposition]{prop:direct-low-dim}
Let $2\le k<n/2$ and $p=p_*$.  If
\[
2k<n\le4k+2,
\]
then every nonnegative entire admissible solution is identically zero.
\end{proposition}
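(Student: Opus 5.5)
We argue by contradiction. Suppose $u\not\equiv0$. Then \cref{lem:positive} makes $u$ smooth, positive and strictly admissible, and \cref{thm:apriori} gives $u\le U_*$ and $W\le W_*$. In particular $q$ and $z$ are comparable by \eqref{eq:q-comparable}. The target is $\cD\equiv0$. Granting this, the quartic term in \eqref{eq:flux-square} forces $q\equiv0$, so $Du\equiv0$ and $u$ is constant. Then $h\equiv0$, which contradicts $H\ge cu^{t}>0$ from \eqref{eq:H-lower}. Everything therefore reduces to controlling the growth of $\mathcal N(R)$ in \eqref{eq:Ndef}, which feeds \eqref{eq:defect-cutoff}.

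\emph{Step 1: a uniform descent bound for $\mathcal N$.} The weight in $\mathcal N$ is $u^{M_*}=u^{-1-\delta_*}$. I avoid treating the sign of $\delta_*$ (and the excluded value $\delta_*=0$ at $n=4k$) separately. Instead, I fix a small $\delta>\max\{\delta_*,0\}$ and use the height bound:
\[
u^{-1-\delta_*}\le U_*^{\delta-\delta_*}u^{-1-\delta}.
\]
Together with $q\le z$, this gives $\mathcal N(R)\le C\Lambda_1^{(\delta)}(R)$. Here $\chi_R\equiv1$ on $B_R$, and $\theta$ is taken large. The positive-weight growth estimate \eqref{eq:positive-delta-growth} then yields
\[
\mathcal N(R)\le CR^{\,n-\frac{2k(p_*-\delta)}{p_*-k}}.
\]
With $p_*-k=2k(k+1)/d$ and $d=n-2k$, the exponent evaluates as follows:
\begin{itemize}
\item For $n<4k$ we may take $\delta=\delta_*$. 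Then $p_*-\delta_*=a=(kn+d)/d$, and the exponent is exactly $\nu=2k/(k+1)<2$.
\item For $n\ge4k$ we let $\delta\downarrow0$. The exponent tends to $n-k(n+2)/(k+1)=d/(k+1)$. This is $<2$ for $n\le4k+1$ and equals $2$ at $n=4k+2$.
\end{itemize}
Consequently, for $n\le4k+1$ we can fix $\delta$ so that the exponent is some $\kappa<2$. Then \eqref{eq:defect-cutoff} gives $D(R)\le CR^{\kappa-2}\to0$. Since $D$ is nondecreasing, $D\equiv0$, and we are done.

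\emph{Step 2: the borderline dimension $n=4k+2$.} This is the main obstacle, because the crude bound only gives $\mathcal N(2R)\le C_\delta R^{2+\epsilon(\delta)}$. I would first sharpen the cutoff argument for \eqref{eq:defect-cutoff} by keeping the annular structure. Integrating $\eta^2\cD=\eta^2\Div J_*$, with $\eta\equiv1$ on $B_R$ and $\eta$ supported in $B_{2R}$, and using $|J_*|^2_{F^{-1}}\le Cu^{M_*}q\cD$ from \eqref{eq:flux-square}, Cauchy--Schwarz gives
\[
D(R)\le \frac{C}{R}\,\mathcal N(2R)^{1/2}\bigl(D(2R)-D(R)\bigr)^{1/2}.
\]
Together with $D(R)\le CR^{-2}\mathcal N(2R)$, the whole question becomes: is $R^{-2}\mathcal N(2R)$ bounded along a sequence $R\to\infty$?

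If it is bounded, then first $D(R)\le C$, so $D$ is bounded and monotone, and $D(2R)-D(R)\to0$. Then the refined inequality gives $D(R)\to0$ along the sequence, hence $D\equiv0$.

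Removing the $R^{\epsilon}$ loss is where I expect the real work, and I would try two routes in this order.
\begin{enumerate}
\item Use the negative-weight form \eqref{eq:N-direct-postresonance}, valid since $n>4k$. It gives $\mathcal N(R)\le CI_a(2R)+CR^\nu$ with $\nu<2$, so it suffices to show $I_a(R)=O(R^2)$.
\item Bootstrap through the quartic remainder. By \eqref{eq:Q-direct}, $\mathcal Q$ is controlled by $D$, and \eqref{eq:quartic-higher-moment} bounds the higher moment $I_T$ by $\mathcal Q(2R)$ plus a power $R^{n-2T/(t-1)}$. A direct computation at $n=4k+2$ shows this power is negative. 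One then interpolates $I_a$ between $I_T$ and the $R^{n}$ volume bound, and iterates the chain $\mathcal N\to D\to\mathcal Q\to I_T\to I_a\to\mathcal N$. The aim is to close it with an exponent of $R$ strictly below, or equal to, $2$. A dyadic choice of good radii can be used to absorb the logarithmic losses.
\end{enumerate}
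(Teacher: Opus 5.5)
Your Step 1 fails at $n=2k+1$. There $M_*=-2k$ and $\delta_*=2k-1$. Since $u$ may tend to zero at infinity, the exponent cannot be lowered, so any $\delta$ with $u^{M_*}\le Cu^{-1-\delta}$ must satisfy $\delta\ge 2k-1>k$, and then $b_\delta=k-\delta\le1-k<0$. The growth bound \eqref{eq:positive-delta-growth} holds only under $0<k-\delta<p_*-\delta$. With $b_\delta<0$, the right-hand side of \eqref{eq:positive-delta-descent} is $R^{-2k}\int u^{1-k}\chi_R^{\theta-2k}\,d\mu$, a negative power of $u$. The Young absorption against $\int u^{s_\delta}\chi_R^\theta$ is then unavailable, and the upper bound $u\le U_*$ does not help. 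What you need is a lower bound for $u$ at infinity, namely the zero-$k$-curvature exterior barrier of \cref{lem:lower-barrier}. It gives $u\ge c|x|^{-1/k}$ for $|x|\ge R_0$, with constants depending on the solution but not on $R$. Hence $\int_{B_{2R}}u^{1-k}\,d\mu\le CR^{n+(k-1)/k}$, so $\mathcal N(R)\le CR^{2-1/k}$ and $D(R)\le CR^{-1/k}\to0$. For $2k+2\le n\le 4k+1$, your uniform device $u^{M_*}\le U_*^{\delta-\delta_*}u^{-1-\delta}$ combined with \eqref{eq:positive-delta-growth} is correct. At $n=4k+1$ it is slightly more direct than the paper's passage through $I_s$, $I_a$ and the negative-weight descent.

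The borderline case $n=4k+2$ is not proved. Your annular inequality, and the reduction to boundedness of $R^{-2}\mathcal N(2R)$ along a sequence, are correct, but you never establish that bound. Route~2 as you specify it cannot produce it. Write $a=2k+\eta_0$ with $\eta_0=1/(k+1)$ and $T=a+4$. Interpolating $I_a$ between $I_T$ and the volume bound $I_0(R)\le CR^n$, and iterating $\mathcal N\to D\to\mathcal Q\to I_T\to I_a$, sends an exponent $\beta$ for $I_a$ to $n(1-a/T)+(\beta-2)a/T$. The fixed point of this map is $n-a/2=3k+2-\eta_0/2>2$.

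The missing idea is to interpolate against the low moment you discarded in Step~1. The first term on the left of \eqref{eq:positive-delta-growth}, with $\delta=\varepsilon$, gives $I_s(R)\le CR^{2+2\varepsilon}$ for $s=p_*-\varepsilon=2k-\varepsilon$. The chain then runs as follows.
\begin{enumerate}
\item $I_a\le U_*^{a-s}I_s\le CR^{2+2\varepsilon}$.
\item By \eqref{eq:Q-direct} and \eqref{eq:N-direct-postresonance}, $\mathcal Q(R)\le CR^{2\varepsilon}$.
\item \eqref{eq:quartic-higher-moment} with $t=2$ gives $I_{a+4}(R)\le CR^{2\varepsilon}$, because $n-2(a+4)<0$.
\item H\"older interpolation of $I_a$ between $I_s$ and $I_{a+4}$ uses $\vartheta=(a-s)/(a+4-s)\ge\eta_0/4$ when $\varepsilon=\eta_0/8$. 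This yields $I_a(R)\le CR^{2+2\varepsilon-2\vartheta}\le CR^{2-1/(4(k+1))}$.
\end{enumerate}
The gain is decisive because $a-s\ge\eta_0$ stays bounded away from zero, while the loss $2\varepsilon$ can be made arbitrarily small. Then \eqref{eq:N-direct-postresonance} and \eqref{eq:defect-cutoff} give $D(R)\to0$ directly, so the annular refinement is not needed.
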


\begin{proof}
We treat five dimension ranges using the exterior barrier, positive-weight descent, a perturbation of the zero weight, and, for $n=4k+2$, the quartic remainder.

\smallskip
\noindent\emph{Case 1: $n=2k+1$.}
Here
\[
\delta_*=2k-1,
\qquad
k-\delta_*=1-k<0.
\]
By \cref{lem:lower-barrier}, for the fixed nonzero solution under contradiction,
\[
u(x)\ge c|x|^{-1/k}\qquad(|x|\ge R_0).
\]
Consequently
\[
u^{1-k}(x)\le C(1+|x|^{(k-1)/k}).
\]
Applying \eqref{eq:positive-delta-descent} with $\delta=2k-1$ and using \eqref{eq:NbyLambda1},
\[
\mathcal N(R)
\le CR^{-2k}\int_{B_{2R}}u^{1-k}\,d\mu
\le CR^{2-1/k}.
\]
Thus $D(R)\le CR^{-1/k}\to0$, and monotonicity gives $D\equiv0$.

\smallskip
\noindent\emph{Case 2: $2k+2\le n<4k$.}
Then $\delta_*>0$ and $k-\delta_*\ge1$.  Using \eqref{eq:positive-delta-growth} with $\delta=\delta_*$ and
\eqref{eq:NbyLambda1} gives
\[
\mathcal N(R)
\le C R^{n-\frac{2k(p_*-\delta_*)}{p_*-k}}.
\]
Since
\begin{equation}\label{eq:nu-direct}
n-\frac{2k(p_*-\delta_*)}{p_*-k}
=\frac{2k}{k+1}=:\nu<2,
\end{equation}
we obtain $\mathcal N(R)\le CR^\nu$ and hence
$D(R)\le CR^{-2/(k+1)}\to0$.

\smallskip
\noindent\emph{Case 3: $n=4k$.}
Now $M_*=-1$ and the matching parameter is $\delta_*=0$, the excluded value in the finite descent.  Fix
\[
0<\varepsilon<\frac1k.
\]
The height bound gives $u^{-1}\le U_*^\varepsilon u^{-1-\varepsilon}$, so
\[
\mathcal N(R)
\le U_*^\varepsilon\Lambda_1^{(\varepsilon)}(R).
\]
Apply \eqref{eq:positive-delta-growth} with $\delta=\varepsilon$.  Since $p_*=2k+1$,
\[
\mathcal N(R)
\le C_\varepsilon R^{\nu_\varepsilon},
\qquad
\nu_\varepsilon
=\frac{2k(1+\varepsilon)}{k+1}<2.
\]
Hence $D(R)\le C_\varepsilon R^{-2+\nu_\varepsilon}\to0$.

\smallskip
\noindent\emph{Case 4: $n=4k+1$.}
Here
\[
\alpha_*=\frac{n-2k}{k+1}=\frac{2k+1}{k+1}<2.
\]
Choose
\[
0<\varepsilon<\frac1{2k+1},
\qquad s=p_*-\varepsilon.
\]
The cutoff-aware estimate \eqref{eq:positive-delta-growth}, used with the small positive weight $\delta=\varepsilon$, gives
\begin{equation}\label{eq:low-moment-4k1}
I_s(R):=\int_{B_R}u^s\,d\mu
\le CR^{\,n-\frac{2ks}{p_*-k}}
=CR^{\alpha_*(1+\varepsilon)}.
\end{equation}
Because $\alpha_*(1+\varepsilon)<2$ and
$a=M_*+p_*+1>p_*>s$, the universal height bound yields
\[
I_a(R)\le U_*^{a-s}I_s(R)\le CR^{2-\eta}
\]
for some $\eta>0$.  Combining this with
\eqref{eq:N-direct-postresonance} gives
$\mathcal N(R)\le CR^{2-\eta'}$ for some $\eta'>0$, and therefore
$D(R)\le CR^{-\eta'}\to0$.

\smallskip
\noindent\emph{Case 5: $n=4k+2$.}
This is the limiting direct dimension.  Here
\[
p_*=2k,\qquad
\alpha_*=2,\qquad
M_*=-\frac{k}{k+1},\qquad
a=2k+\eta_0,
\qquad \eta_0=\frac1{k+1},
\]
and $t=p_*/k=2$.  Choose
\[
\varepsilon=\frac{\eta_0}{8},
\qquad s=2k-\varepsilon.
\]
As in \eqref{eq:low-moment-4k1}, the cutoff-aware estimate
\eqref{eq:positive-delta-growth} gives
\begin{equation}\label{eq:low-moment-borderline}
I_s(R)\le CR^{2+2\varepsilon}.
\end{equation}
Since $a>s$ and $u\le U_*$,
\begin{equation}\label{eq:Ia-rough-borderline}
I_a(R)\le CR^{2+2\varepsilon}.
\end{equation}
By \eqref{eq:N-direct-postresonance} and \eqref{eq:Q-direct},
\[
\mathcal Q(R)
\le CR^{-2}I_a(CR)+CR^{-2/(k+1)}
\le CR^{2\varepsilon}.
\]
Formula \eqref{eq:quartic-higher-moment}, with $T=a+4$, now gives
\begin{equation}\label{eq:high-moment-borderline}
I_{a+4}(R)\le CR^{2\varepsilon},
\end{equation}
because $n-2(a+4)<0$.  Interpolate between
\eqref{eq:low-moment-borderline} and
\eqref{eq:high-moment-borderline}.  Writing
\[
a=(1-\vartheta)s+\vartheta(a+4),
\qquad
\vartheta=\frac{a-s}{a+4-s}
=\frac{\eta_0+\varepsilon}{4+\eta_0+\varepsilon},
\]
we obtain
\[
I_a(R)
\le C R^{2+2\varepsilon-2\vartheta}.
\]
Since $\varepsilon=\eta_0/8$ and $k\ge2$, one has
$\vartheta\ge\eta_0/4$, hence
\begin{equation}\label{eq:Ia-subquadratic-borderline}
I_a(R)
\le CR^{\,2-\frac1{4(k+1)}}.
\end{equation}
Equations \eqref{eq:N-direct-postresonance} and
\eqref{eq:defect-cutoff} therefore imply $D(R)\to0$.

In all five cases, $D\equiv0$.  Returning to \eqref{eq:flux-square}, the quartic term has strictly positive coefficient, so $q\equiv0$.  Hence $Du\equiv0$ and the equation gives $u\equiv0$.
\end{proof}

For the remainder of the endpoint analysis, assume $n\ge4k+3$. The next lemmas identify a bubble and a first-crossing limit under explicit hypotheses. The entire sequences satisfying those hypotheses are constructed in \cref{sec:critical-global}.

\subsection{A scale-invariant logarithmic gradient estimate}

Rescaling introduces a Lorentz parameter. We state a gradient estimate with a constant uniform in this parameter. For $\eps\ge0$ and a function
$U$ with $\eps|DU|^2<1$, set
\[
g_\eps=\Id-\eps DU\otimes DU,\quad
W_\eps=(1-\eps|DU|^2)^{-1/2},\quad
h_\eps=-W_\eps D^2U,\quad A_\eps[U]=g_\eps^{-1}h_\eps.
\]
In matrix products and Newton tensors $h_\eps$ denotes the
$g_\eps$-orthonormal representative of $A_\eps[U]$; its displayed
coordinate formula is the associated bilinear form.
Write $\nabla_\eps$ for the Levi--Civita connection of $g_\eps$,
$z_\eps=|\nabla_\eps U|_{g_\eps}^2$, and
$d\mu_\eps=W_\eps^{-1}dx$. At $\eps=0$ these reduce to the
Euclidean metric, $W_0=1$, and $A_0[U]=-D^2U$. Define
\[
q_\eps=\begin{cases}
2(W_\eps-1)/\eps,&\eps>0,\\
|DU|^2,&\eps=0.
\end{cases}
\]
Then
\begin{equation}\label{eq:scaled-angle}
\nabla_\eps q_\eps=-2h_\eps\nabla_\eps U,
\quad
z_\eps=q_\eps+\frac{\eps q_\eps^2}{4},
\quad
W_\eps=1+\frac{\eps q_\eps}{2}.
\end{equation}
In particular,
\[
\eps z_\eps=W_\eps^2-1,
\qquad
\frac{q_\eps}{z_\eps}=\frac{2}{W_\eps+1},
\]
with the first identity interpreted as $0=0$ when $\eps=0$.
For nonnegative quantities, we write $X\asymp Y$ when
$cY\le X\le CY$ with positive constants uniform in the parameters
under consideration. Thus $q_\eps\asymp z_\eps$ whenever $W_\eps\le W_*$.

\begin{lemma}[Scale-invariant logarithmic gradient bound]\label[lemma]{lem:C2}
Let $U>0$, $\eps\ge0$, and $\kappa>0$.  Assume that in $B_{2r}$,
\[
\sigma_k(A_\eps[U])=\kappa U^p,
\qquad
A_\eps[U]\in\Gamma_k,
\qquad
1\le W_\eps\le W_*,
\]
with $p\ge k$.  Then
\begin{equation}\label{eq:loggrad}
\sup_{B_r}|D\log U|
\le C\left[r^{-1}+\kappa^{1/(2k)}
\left(\sup_{B_{2r}}U\right)^{(p-k)/(2k)}\right],
\end{equation}
where $C=C(n,k,p,W_*)$ is independent of $\eps,\kappa,r$.
\end{lemma}

\begin{proof}
In this proof abbreviate $\nabla=\nabla_\eps$ and $z=z_\eps$, and set
\[
v=\nabla U,\qquad z=|v|_{g_\eps}^2,
\qquad F=T_{k-1}(h_\eps),\qquad L_F\varphi=F^{ij}\nabla_i\nabla_j\varphi.
\]
The scaled graph identities are
\begin{equation}\label{eq:C2-basic-identities}
\nabla^2U=-W_\eps h_\eps,
\qquad
L_FU=-kW_\eps\kappa U^p,
\qquad
L_FW_\eps=\eps\bigl(W_\eps Q-p\kappa U^{p-1}z\bigr),
\end{equation}
where $Q=\tr(Fh_\eps^2)$.  Hence, with $q=q_\eps$,
\begin{equation}\label{eq:C2-Lq}
L_Fq=2W_\eps Q-2p\kappa U^{p-1}z,
\qquad
\nabla q=-2h_\eps v.
\end{equation}
All constants below depend only on $n,k,p,W_*$.  The bound $W_\eps\le W_*$ makes the Euclidean and induced norms uniformly equivalent; in particular all cutoff estimates below are uniform in $\eps$.

Let
\[
\eta=1-\frac{|x|^2}{4r^2},
\qquad
G=\eta^2\frac q{U^2}.
\]
We work in $B_{2r}$, where $0<\eta\le1$. If $G\equiv0$ in $B_{2r}$, then $q\equiv0$ and
$DU\equiv0$ there; the estimate is immediate.  Otherwise let $x_0$ be an interior maximum of $G$.  All quantities in the next computation are evaluated at $x_0$.  Put
\[
X=r\eta\frac{\sqrt z}{U}.
\]
If $X\le X_0$ for a fixed, sufficiently large universal constant $X_0$, then at the maximum point
\[
G(x_0)=\eta(x_0)^2\frac{q(x_0)}{U(x_0)^2}\le C r^{-2},
\]
because $q\asymp z$.  Since $G(x)\le G(x_0)$ and $\eta\ge3/4$ on $B_r$, the estimate follows in this case. We henceforth assume $X>X_0$.

The first derivative equation $\nabla\log G=0$ gives
\begin{equation}\label{eq:C2-first-order}
\frac{h_\eps v}{q}=A-\frac vU,
\qquad A=\frac{\nabla\eta}{\eta}.
\end{equation}
Choose an induced orthonormal frame with $e_1=v/\sqrt z$.  Then
\begin{equation}\label{eq:C2-h-column}
h_\eps e_1=\frac q{\sqrt z}A-\frac qUe_1.
\end{equation}
Writing $h_\eps$ in block form relative to $e_1\oplus e_1^\perp$,
\[
h_\eps=\begin{pmatrix}a&b^T\\ b&B\end{pmatrix},
\]
we have
\[
a=-\frac qU+\frac q{\sqrt z}A_1,
\qquad
b=\frac q{\sqrt z}A_\perp.
\]
Since $|A|\le C/(r\eta)$ and $q/z=2/(W_\eps+1)$, increasing $X_0$ if necessary gives
\begin{equation}\label{eq:C2-block-hyp}
a\le-\frac q{2U}<0,
\qquad
|b|\le\frac{|a|}{4\sqrt{n-k}}.
\end{equation}
Lemma~\ref{lem:block} therefore applies.  If
\[
\phi=F(e_1,e_1),
\]
then
\begin{equation}\label{eq:C2-block-consequences}
\tr F\le C\phi,
\qquad
\phi\ge c(\kappa U^p)^{(k-1)/k},
\end{equation}
and
\begin{equation}\label{eq:C2-transverse}
\frac{2W_\eps}{q}
\sum_{\alpha=2}^nF(h_\eps e_\alpha,h_\eps e_\alpha)
\ge c\phi\frac z{U^2}.
\end{equation}
Indeed $|a|\ge q/(2U)$ and $q\asymp z$ under the uniform bound for $W_\eps$.

We next compute the second derivative inequality.  From \eqref{eq:C2-Lq},
\begin{align}
0\ge L_F\log G
={}&\frac{2L_F\eta}{\eta}-2F(A,A)
+\frac{2W_\eps}{q}Q
-\frac{4}{q^2}F(h_\eps v,h_\eps v)\nonumber\\
&+\frac{2kW_\eps\kappa U^p}{U}
+\frac{2}{U^2}F(v,v)
-\frac{2p\kappa U^{p-1}z}{q}.
\label{eq:C2-logG}
\end{align}
The two terms containing $\kappa$ combine into
\begin{equation}\label{eq:C2-reaction}
2\kappa U^{p-1}
\left(kW_\eps-p\frac zq\right),
\end{equation}
which is bounded below by $-C\kappa U^{p-1}$ because $1\le W_\eps\le W_*$ and $z/q=(W_\eps+1)/2$.

Separate in \eqref{eq:C2-logG} the $e_1$-column of $Q$.  Since $v=\sqrt z\,e_1$, the longitudinal part is
\[
-\frac2qF(h_\eps e_1,h_\eps e_1)
+\frac{2z}{U^2}\phi-2F(A,A).
\]
Substituting \eqref{eq:C2-h-column} and using $z-q=\eps q^2/4$ yields the exact identity
\begin{align}
&-\frac2qF(h_\eps e_1,h_\eps e_1)
+\frac{2z}{U^2}\phi-2F(A,A)\nonumber\\
&\qquad=
\frac{\eps q^2}{2U^2}\phi
+\frac{4q}{\sqrt z\,U}F(A,e_1)
-\left(2+\frac{2q}{z}\right)F(A,A).
\label{eq:C2-longitudinal}
\end{align}
The first term is nonnegative.  By Cauchy--Schwarz for the positive tensor $F$, \eqref{eq:C2-block-consequences}, and $|A|\le C/(r\eta)$,
\begin{equation}\label{eq:C2-Aerrors}
\frac{4q}{\sqrt z\,U}F(A,e_1)
-\left(2+\frac{2q}{z}\right)F(A,A)
\ge
-\frac c4\phi\frac z{U^2}
-\frac{C\phi}{r^2\eta^2}.
\end{equation}

It remains to estimate the cutoff Hessian.  Since
\[
D^2\eta=-\frac1{2r^2}\Id,
\]
the Christoffel symbols of $g_\eps$ give
\begin{equation}\label{eq:C2-cutoff-Hessian}
\nabla_{g_\eps}^2\eta
=D^2\eta-\eps W_\eps(D\eta\cdot DU)h_\eps.
\end{equation}
Using $F:h_\eps=k\kappa U^p$, \eqref{eq:C2-block-consequences}, and $|D\eta|\le C/r$,
\begin{equation}\label{eq:C2-cutoff-LF}
\frac{2L_F\eta}{\eta}
\ge
-\frac{C\phi}{r^2\eta^2}
-\frac{C\eps\kappa U^p|DU|}{r\eta}.
\end{equation}
The last term is uniform in $\eps$.  Indeed, after division by $\phi$ and use of \eqref{eq:C2-block-consequences}, it is bounded by
\[
C\kappa^{1/k}U^{p/k-1}
\frac{\eps z}{X},
\qquad
X=r\eta\frac{\sqrt z}{U},
\]
and $\eps z=W_\eps^2-1\le W_*^2-1$.  Since $X>X_0$, this contributes at most
\begin{equation}\label{eq:C2-scaled-error}
C\phi\,\kappa^{1/k}U^{p/k-1}.
\end{equation}

Combining \eqref{eq:C2-transverse}--\eqref{eq:C2-scaled-error} with the reaction estimate gives
\[
0\ge
c\phi\frac z{U^2}
-\frac{C\phi}{r^2\eta^2}
-C\kappa U^{p-1}
-C\phi\kappa^{1/k}U^{p/k-1}.
\]
Finally
\[
\frac{\kappa U^{p-1}}{\phi}
\le C\kappa^{1/k}U^{p/k-1}
\]
by \eqref{eq:C2-block-consequences}.  Thus
\begin{equation}\label{eq:C2-finalG}
\eta^2\frac z{U^2}
\le C\left(r^{-2}+\kappa^{1/k}\sup_{B_{2r}}U^{p/k-1}\right).
\end{equation}
Since $G\le G(x_0)$, $\eta\ge3/4$ on $B_r$, $q\asymp z$ under
$W_\eps\le W_*$, and $|DU|^2\le z$, \eqref{eq:C2-finalG} implies
\[
\sup_{B_r}|D\log U|^2
\le C\left[r^{-2}+\kappa^{1/k}
\left(\sup_{B_{2r}}U\right)^{(p-k)/k}\right].
\]
Taking square roots proves \eqref{eq:loggrad}.
\end{proof}

\subsection{Small-defect compactness and bubble rigidity}

To obtain compactness, we need a quantitative estimate that rules out Hessian concentration when the endpoint defect is small. The nonnegativity of $\Dk{}$ alone is insufficient.

\begin{lemma}[Quantitative trace and transverse deficits]\label[lemma]{lem:quant-deficits}
For $h\in\Gamma_k$ and $f=\sigma_k(h)>0$,
\begin{equation}\label{eq:Dtracefree}
\Dk{}\ge \frac{n-k}{n-1}\frac fH
\left|h-\frac Hn\Id\right|^2.
\end{equation}
Moreover, for any unit vector $e$, if
\[
h=\begin{pmatrix}a&b^T\\b&B\end{pmatrix}
\]
relative to $e\oplus e^\perp$, then
\begin{equation}\label{eq:Ktrans}
\cK(e,e)\ge C_{n,k}\frac fH
\left[
\left|B-\frac{\tr B}{n-1}\Id\right|^2
+\frac{n-2}{n-1}|b|^2
\right].
\end{equation}
\end{lemma}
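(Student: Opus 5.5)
The plan is to diagonalize $h$ and reduce both inequalities to statements about the eigenvalue vector $\lambda\in\Gamma_k$. I would first prove a uniform lower bound for the Newton tensor, then the sum-of-squares identity for $\Dk$, and finally treat $\cK(e,e)$ entry by entry in the eigenframe.

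\emph{Newton tensor bound.} The common ingredient is
\[
F=T_{k-1}(h)\succeq c_{n,k}\frac fH\,\Id \qquad\text{on }\Gamma_k,
\]
that is, $\sigma_{k-1}(\lambda|i)\,\sigma_1(\lambda)\ge c\,\sigma_k(\lambda)$ for each $i$. To prove it I would write
\[
\sigma_k=\lambda_i\sigma_{k-1}(\lambda|i)+\sigma_k(\lambda|i),
\qquad
\sigma_1=\lambda_i+\sigma_1(\lambda|i),
\]
and split into the cases $\sigma_k(\lambda|i)\le0$ and $\sigma_k(\lambda|i)>0$. In the second case $\lambda|i\in\Gamma_k$, and the Maclaurin inequalities in dimension $n-1$ apply.

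\emph{Trace deficit \eqref{eq:Dtracefree}.} I would use the exact algebraic identity
\[
n\,\Dk=(n-k)\sigma_1\sigma_k-n(k+1)\sigma_{k+1}
=\sum_{i<j}(\lambda_i-\lambda_j)^2\,\sigma_{k-1}(\lambda|ij).
\]
It follows from
\[
\sum_i\lambda_i^2\sigma_{k-1}(\lambda|i)=\sigma_1\sigma_k-(k+1)\sigma_{k+1},
\qquad
\sum_{i<j}\lambda_i\lambda_j\sigma_{k-1}(\lambda|ij)=\binom{k+1}{2}\sigma_{k+1}.
\]
Since $\sum_{i<j}(\lambda_i-\lambda_j)^2=n|h-\tfrac Hn\Id|^2$, it suffices to show that the weights $\sigma_{k-1}(\lambda|ij)$ are effectively bounded below by a multiple of $f/H$. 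This is the step I expect to be the main obstacle. Here $\lambda|ij$ lies only in $\Gamma_{k-2}$, so individual weights may be negative.

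My first attempt is to pair terms. For $\lambda_i\ge\lambda_j$, write
\[
\sigma_{k-1}(\lambda|ij)=\sigma_{k-1}(\lambda|j)-\lambda_i\sigma_{k-2}(\lambda|ij).
\]
I would then redistribute the sum as a Hessian quadratic form,
\[
\sum_i F_i(\lambda_i-\bar\lambda)^2 \quad\text{minus a nonnegative Newton--Maclaurin remainder},
\]
and apply the Newton tensor bound to the $F_i$. If the sharp constant $(n-k)/(n-1)$ does not come out directly, I would first settle for some $c_{n,k}$. I would then recover the stated constant by the usual Lagrange-multiplier argument: minimize $\Dk\,H/(f|h-\tfrac Hn\Id|^2)$ on $\Gamma_k$, where the extremal configuration is expected to be two-valued, one eigenvalue against $n-1$ equal eigenvalues.

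\emph{Transverse deficit \eqref{eq:Ktrans}.} I would work in the eigenframe of $h$ and write $e=\sum_i e_ie_i$, so that
\[
\cK(e,e)=\sum_i e_i^2\,\frac{c_k\Dk F_i-E_i^2}{F_i}.
\]
The expansion in the proof of \cref{lem:quantitative} expresses $c_k\Dk F_i-E_i^2$ as a nonnegative Maclaurin bracket plus a Newton bracket in the $(n-1)$-tuple $\lambda|i$. I would sharpen this step by applying the trace-free deficit just proved, in dimension $n-1$, to the tuple $\lambda|i$. This should give
\[
c_k\Dk F_i-E_i^2\ \ge\ c\,\frac fH\,F_i\,\bigl|\lambda|i-\overline{\lambda|i}\bigr|^2,
\]
where $\overline{\lambda|i}$ denotes the constant vector with entries equal to the mean of $\lambda|i$.

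It remains to convert $\sum_ie_i^2\,|\lambda|i-\overline{\lambda|i}|^2$ into the block quantities relative to $e\oplus e^\perp$. Since $b=(h e)_\perp$ and $B=h|_{e^\perp}$, both $|b|^2$ and $|B-\tfrac{\tr B}{n-1}\Id|^2$ are quadratic in $h$ and vanish when $h$ is a multiple of $\Id$. Each is bounded by the eigenvalue spread weighted by $e$. I would prove this by a finite-dimensional inequality: expand $|b|^2=\sum_ie_i^2\lambda_i^2-(\sum_ie_i^2\lambda_i)^2$, and compare $B$ with its trace-free part via the compression of $\diag(\lambda)$ to $e^\perp$. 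The factor $(n-2)/(n-1)$ and the constant $C_{n,k}$ would be obtained by tracking these comparisons.
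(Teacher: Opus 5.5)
Your argument for \eqref{eq:Dtracefree} has a genuine gap, and \eqref{eq:Ktrans} depends on it. The identity $n\mathcal D_k=\sum_{i<j}(\lambda_i-\lambda_j)^2\sigma_{k-1}(\lambda|ij)$ is correct. The repair you propose for the negative weights, however, points the wrong way. With $F_i=\sigma_{k-1}(\lambda|i)$ one computes exactly
\[
\mathcal D_k=\sum_iF_i\Bigl(\lambda_i-\frac Hn\Bigr)^2-\frac Hn\Bigl(\frac{n-k+1}{n}H\sigma_{k-1}-k\sigma_k\Bigr),
\]
so the Maclaurin remainder is subtracted. The bound $F\succeq c\,(f/H)\Id$ only bounds the first term from below. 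You would need an upper bound for the remainder, and your plan has no mechanism for one.

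In fact no estimate $\mathcal D_k\ge c\sum_iF_i(\lambda_i-H/n)^2$ can hold. Along $\lambda=(1,t,\ldots,t)\in\Gamma_k$ with $t\downarrow0$, $\mathcal D_k$ is of order $t^{k-1}$, while $\sum_iF_i(\lambda_i-H/n)^2$ is of order $t^{k-2}$ (it tends to $(n-1)/n^2$ when $k=2$). Along the same family, both sides of \eqref{eq:Dtracefree} equal $\frac{n-k}{n}\binom{n-1}{k-1}t^{k-1}(1+O(t))$. So the constant $(n-k)/(n-1)$ is sharp but is approached only as $f\to0$, and it is never attained in the interior (Newton's inequalities are strict for non-umbilic $h$). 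The Lagrange-multiplier fallback therefore cannot locate it. As written, your plan produces neither this constant nor any constant.

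The missing idea is to let $f$ enter directly through a case split on the sign of $\sigma_{k+1}$.
\begin{itemize}
\item If $\sigma_{k+1}\le0$, then $\mathcal D_k\ge c_kHf$. Also $h\in\Gamma_2$ gives $|h|^2<H^2$, hence $|h-\frac Hn\Id|^2<\frac{n-1}{n}H^2$, which already implies \eqref{eq:Dtracefree}.
\item If $\sigma_{k+1}>0$, then $h\in\Gamma_{k+1}$, and the normalized quotients of $S_j=\sigma_j/\binom nj$ are nonincreasing. Hence $S_{k+1}/S_k\le S_2/S_1=(H^2-|h|^2)/((n-1)H)$. With $(k+1)\binom n{k+1}=(n-k)\binom nk$ this gives $(k+1)\sigma_{k+1}\le(n-k)f(H^2-|h|^2)/((n-1)H)$. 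Substituting into $\mathcal D_k=c_kHf-(k+1)\sigma_{k+1}$ yields exactly $\frac{n-k}{n-1}\frac fH(|h|^2-H^2/n)$.
\end{itemize}

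With this in hand, your plan for \eqref{eq:Ktrans} is essentially the paper's, once you make the following precise.
\begin{itemize}
\item Write $s_j=\sigma_j(\lambda|i)$. The bracket to sharpen is $(n-k)s_1s_{k-1}-k(n-1)s_k$, which is $(n-1)$ times the order-$(k-1)$ deficit $\frac{n-k}{n-1}s_1s_{k-1}-ks_k$ of $\lambda|i$ in dimension $n-1$.
\item So you must apply \eqref{eq:Dtracefree} with $(n,k)$ replaced by $(n-1,k-1)$. This is legitimate because $\lambda|i\in\Gamma_{k-1}$ always; for $k=2$ the bound is an exact identity. Applying it at order $k$ would require $\lambda|i\in\Gamma_k$, which fails when $s_k\le0$.
\item Drop the nonnegative Newton bracket, and use $s_1=H-\lambda_i<2H$ (from $|h|<H$) to replace $1/s_1$ by $1/(2H)$.
\item With $e=\sum_i\theta_ie_i$, the exact identity $\sum_i\theta_i^2|\lambda|i-\overline{\lambda|i}|^2=|B-\frac{\tr B}{n-1}\Id|^2+\frac{n-2}{n-1}|b|^2$ holds, so no comparison constants need tracking.
\end{itemize}
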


\begin{proof}
We first prove \eqref{eq:Dtracefree}.  If $\sigma_{k+1}(h)\le0$, then
\[
\Dk{}=c_kHf-(k+1)\sigma_{k+1}(h)\ge c_kHf.
\]
Because $k\ge2$ and $h\in\Gamma_k\subset\Gamma_2$,
\[
|h|^2<H^2,
\qquad
\left|h-\frac Hn\Id\right|^2
<\frac{n-1}{n}H^2,
\]
which gives \eqref{eq:Dtracefree}.  If $\sigma_{k+1}(h)>0$, then $h\in\Gamma_{k+1}$.  Put
\[
S_j=\frac{\sigma_j(h)}{\binom nj}.
\]
The normalized Newton quotients are decreasing, so
\[
\frac{S_{k+1}}{S_k}\le\frac{S_2}{S_1}
=\frac{H^2-|h|^2}{(n-1)H}.
\]
Using
$(k+1)\binom n{k+1}=(n-k)\binom nk$ gives
\begin{align*}
\Dk{}
&\ge \frac{n-k}{n}Hf
-(n-k)f\frac{H^2-|h|^2}{(n-1)H}\\
&=\frac{n-k}{n-1}\frac fH
\left(|h|^2-\frac{H^2}{n}\right),
\end{align*}
which is \eqref{eq:Dtracefree}.

We next prove \eqref{eq:Ktrans}.  Diagonalize $h$ in an orthonormal eigenframe with eigenvalues $\lambda_1,\ldots,\lambda_n$.  For fixed $i$ write
\[
s_j=\sigma_j(\lambda|i),
\qquad F_i=s_{k-1},
\qquad E_i=c_kf-s_k.
\]
The diagonal expansion underlying the quantitative Newton inequality gives
\begin{align}
(c_k\Dk{}F_i-E_i^2)
={}&\frac{(n-k)f}{n^2}
\left[(n-k)s_1s_{k-1}-k(n-1)s_k\right]\nonumber\\
&+\frac1n\left[k(n-k-1)s_k^2-(n-k)(k+1)s_{k-1}s_{k+1}\right].
\label{eq:Kdiag-exp}
\end{align}
The second bracket is nonnegative.  Set
\[
V_i=\sum_{j\ne i}\lambda_j^2-\frac{s_1^2}{n-1}.
\]
If $s_k>0$, the deleted $(n-1)$-tuple belongs to $\Gamma_k$, and monotonicity of the normalized Newton quotients in dimension $n-1$ gives
\begin{equation}\label{eq:deleted-variance}
(n-k)s_1-k(n-1)\frac{s_k}{s_{k-1}}
\ge
\frac{(n-k)(n-1)}{n-2}\frac{V_i}{s_1}.
\end{equation}
If $s_k\le0$ and $k\ge3$, the same inequality follows from the right-hand side bound
$V_i\le\frac{n-2}{n-1}s_1^2$, since the deleted tuple lies in $\Gamma_{k-1}\subset\Gamma_2$.  For $k=2$, \eqref{eq:deleted-variance} is the exact identity
\[
(n-2)s_1-2(n-1)\frac{s_2}{s_1}
=(n-1)\frac{V_i}{s_1}.
\]
Since $s_1=H-\lambda_i>0$ and $|\lambda_i|<H$, one has $s_1<2H$.  Dividing \eqref{eq:Kdiag-exp} by $F_i=s_{k-1}$ therefore yields
\begin{equation}\label{eq:Kii-var}
\cK_{ii}
\ge
\frac{(n-k)^2(n-1)}{2n^2(n-2)}\frac fH V_i.
\end{equation}

Let $e=\sum_i\theta_i e_i$ be an arbitrary unit vector.  Since $\cK$ is diagonal in the eigenframe,
\[
\cK(e,e)\ge C_{n,k}\frac fH\sum_i\theta_i^2V_i.
\]
Writing $h$ relative to $e\oplus e^\perp$ as in the statement, a direct calculation gives
\[
\sum_i\theta_i^2V_i
=
\left|B-\frac{\tr B}{n-1}\Id\right|^2
+\frac{n-2}{n-1}|b|^2.
\]
This proves \eqref{eq:Ktrans}.  In particular, no assumption that $e$ is a principal direction is used.
\end{proof}

The next lemma uses the algebraic deficits to control points where the trace is large.

\begin{lemma}[Large trace costs endpoint defect]\label[lemma]{lem:highH-defect}
Fix $0<f_0\le f\le f_1$ and $0\le s\le S_0$.  For $h\in\Gamma_k$, $|e|=1$, define
\[
\mathfrak G(h,e,s)
=c_k\Dk{}+2sE(e,e)+s^2F(e,e).
\]
Then there exist $H_0,c_0>0$, depending only on $n,k,f_0,f_1,S_0$, such that
\[
H\ge H_0\quad\Longrightarrow\quad
\mathfrak G(h,e,s)\ge c_0H.
\]
\end{lemma}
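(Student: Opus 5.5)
The plan is to complete the square in $s$ and reduce to the transverse deficit $\cK$ of \cref{lem:quant-deficits}, then run a normalization/compactness argument in the remaining nearly rotationally symmetric regime.

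\emph{Step 1 (completing the square).} Since $F\succ0$ and $E,F$ commute, Cauchy--Schwarz for $F$ gives $E(e,e)^2\le F(e,e)\,(EF^{-1}E)(e,e)$. Writing
\[
\mathfrak G=c_k\Dk-\frac{E(e,e)^2}{F(e,e)}+F(e,e)\Bigl(s+\frac{E(e,e)}{F(e,e)}\Bigr)^2,
\]
we obtain $\mathfrak G\ge \cK(e,e)+F(e,e)\bigl(s+E(e,e)/F(e,e)\bigr)^2$. By \eqref{eq:Ktrans},
\[
\mathfrak G\ge C_{n,k}\frac{f_0}{H}\Bigl[\Bigl|B-\frac{\tr B}{n-1}\Id\Bigr|^2+|b|^2\Bigr].
\]
So if the transverse block is far from umbilic at scale $H$, that is, if $|B-\frac{\tr B}{n-1}\Id|^2+|b|^2\ge\epsilon_1H^2$, we are done with $c_0=C\epsilon_1f_0$.

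\emph{Step 2 (normalization and contradiction).} Argue by contradiction: take $h_j\in\Gamma_k$, unit $e_j$, $s_j\in[0,S_0]$, $f_j\in[f_0,f_1]$, with $H_j\to\infty$ and $\mathfrak G_j/H_j\to0$. Set $\hat h_j=h_j/H_j$. Then $\tr\hat h_j=1$ and, by $\Gamma_k\subset\Gamma_2$, the matrices $\hat h_j$ stay bounded. Also $\sigma_k(\hat h_j)=f_j/H_j^k\to0$. Pass to limits $\hat h\in\partial\Gamma_k$ with $\sigma_k(\hat h)=0$, $e$, and $s$. Step 1 forces $b\to0$ and $\hat B\to\mu\Id$ in the limit, so $\hat h=\diag(\hat a,\mu,\ldots,\mu)$ with $\hat a+(n-1)\mu=1$. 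The condition $\sigma_k=0$ then pins down $\hat a=-\frac{n-k}{k}\mu$ (the zero-$k$-curvature profile of \cref{lem:lower-barrier}), with $\mu>0$ determined by the trace. Note that $\hat h$ is not umbilic, so by \eqref{eq:Dtracefree} we get $\Dk\ge c\,(f/H)\,|h-\frac Hn\Id|^2\ge c f_0H$.

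\emph{Step 3 (the model regime; main obstacle).} In this regime the cross term $2sE(e,e)$ could cancel $c_k\Dk$. To rule this out, I expand at the model. First, $F(e,e)=\sigma_{k-1}(B)\approx\binom{n-1}{k-1}\mu^{k-1}H^{k-1}$ is huge. Second, $E(e,e)=c_kf-\sigma_k(B)$, with $|\sigma_k(B)|\sim H^{k}$, while $\Dk=O(H)$ by \eqref{eq:quantitative}-type bounds. The point is that $s$ is bounded, so the minimizing $s=-E(e,e)/F(e,e)$ would have size about $\mu H$. For $H\ge H_0$ this lies outside $[0,S_0]$, and then the square term $F(e,e)(s+E/F)^2\gtrsim H^{k-1}\cdot H^2$ dominates.

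More carefully, I will use $\sigma_k(h)=f$ exactly. By \eqref{eq:block-expansion}, $\sigma_k(B)=f-a\sigma_{k-1}(B)+b^TT_{k-2}(B)b$. This gives $-E(e,e)/F(e,e)=a+O(1/H^{k-1})+O(|b|^2/H)$, which is close to $\hat aH<0$ in the limit. Since $s\ge0$, we get $|s+E/F|\ge c|\hat a|H$, hence $\mathfrak G\ge cH^{k+1}\ge c_0H$, contradicting $\mathfrak G_j/H_j\to0$.

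The delicate point is making the sign of $\hat a$ (negative, of definite size) and the error terms uniform. I would handle this entirely within the compactness argument, so that only the limiting values $\hat a<0$ and $\mu>0$ are needed. This yields $H_0$ and $c_0$ depending only on $n,k,f_0,f_1,S_0$.
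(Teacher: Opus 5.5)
Your overall route is the paper's. You use the decomposition $\mathfrak G=|(E+sF)e|_{F^{-1}}^2+\cK(e,e)$, normalize by $H$, and use \eqref{eq:Ktrans} to force the normalized limit $\diag(\hat a,\mu,\ldots,\mu)$ with $\hat a+(n-1)\mu=1$. Your Step~3 for $\mu>0$ is the paper's case $\lambda>0$, but it contains a harmless sign slip. By \eqref{eq:block-expansion},
\[
\frac{E(e,e)}{F(e,e)}=a-\frac{kf}{n\,\sigma_{k-1}(B)}-\frac{b^{\mathsf T}T_{k-2}(B)\,b}{\sigma_{k-1}(B)},
\]
so it is $E/F$, not $-E/F$, that is close to $\hat aH<0$. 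The bound $|s+E/F|\ge c|\hat a|H$ therefore comes from $s\le S_0$, not from $s\ge0$.

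The genuine gap is your claim in Step~2 that $\mu>0$ is determined by the trace. The limiting relation is $\binom{n-1}{k-1}\mu^{k-1}\bigl(\hat a+\frac{n-k}{k}\mu\bigr)=0$. Because $k\ge2$, it is also solved by $\mu=0$, $\hat a=1$, which is the rank-one limit $\hat h=e\otimes e\in\partial\Gamma_k$ and is compatible with $\tr\hat h=1$. Your Step~3 does not treat this branch, and its mechanism breaks down there. Now $a\approx+H$, $F(e,e)=\sigma_{k-1}(B)=o(H^{k-1})$, and $E(e,e)/F(e,e)$ has no definite sign or size.

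You still get $c_k\Dk\ge cH$ from \eqref{eq:Dtracefree}, but that alone cannot absorb $2sE(e,e)$. The reason is that $-E(e,e)=\sigma_k(B)-c_kf$ can be much larger than $H$ once $b\neq0$. For example, with $k=2$, take $a$ large, $B=\beta\Id$ with $\beta=a^{3/4}$, and fix $|b|^2$ by $\sigma_2(h)=f$. Then $h\in\Gamma_2$ and $h/H\to e\otimes e$, yet $\sigma_2(B)\sim H^{3/2}$. What is missing is an argument that a large $\sigma_k(B)$ forces $\Dk$ to be large through $\sigma_{k+1}(h)$.

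The paper splits this branch into two subcases.
\begin{itemize}
\item If $\sigma_k(B)\le0$, then $E(e,e)\ge c_kf_0>0$ and $\mathfrak G\ge c_k\Dk\ge cH$.
\item If $\sigma_k(B)>0$, set $\phi=\sigma_{k-1}(B)$ and $r=\sigma_k(B)/\phi$. The block expansions of $\sigma_k(h)$ and $\sigma_{k+1}(h)$ give
\[
rf-\sigma_{k+1}(h)=r^2\phi-\sigma_{k+1}(B)+b^{\mathsf T}\bigl(T_{k-1}(B)-rT_{k-2}(B)\bigr)b .
\]
The last term is nonnegative by ellipticity of $\sigma_k/\sigma_{k-1}$. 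Newton's inequality in dimension $n-1$ gives $\sigma_{k+1}(B)\le\frac{k(n-k-1)}{(k+1)(n-k)}r^2\phi$. Together these yield $\mathfrak G\ge\phi(r-s)^2+c_k^2Hf+2sc_kf-c_k(k+1)rf$. Finally, $r\le\frac{n-k}{k(n-1)}\tr B=o(H)$ makes the right side at least $cH$.
\end{itemize}
Without some estimate of this kind, your contradiction argument covers only one of the two possible limits.
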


\begin{proof}
The algebraic identity
\begin{equation}\label{eq:Gsquare}
\mathfrak G
=|(E+sF)e|_{F^{-1}}^2+\cK(e,e)
\end{equation}
shows $\mathfrak G\ge0$.  Suppose the conclusion fails.  Then there are $H_j\to\infty$ with
\[
\frac{\mathfrak G_j}{H_j}\to0.
\]
By \eqref{eq:Ktrans}, after passing to a subsequence and using $|h_j|<H_j$, the normalized matrices relative to $e_j\oplus e_j^\perp$ converge to
\[
\diag(a,\lambda,\ldots,\lambda),
\qquad
a+(n-1)\lambda=1.
\]
The transverse restriction $B_j$ belongs to $\Gamma_{k-1}$, so
$\tr B_j>0$.  Dividing by $H_j$ and passing to the limit gives
$(n-1)\lambda\ge0$, and hence $\lambda\ge0$.
Since $f_j/H_j^k\to0$, the leading $k$-th symmetric function vanishes:
\begin{equation}\label{eq:rankone-limit}
\lambda^{k-1}\left(a+\frac{n-k}{k}\lambda\right)=0.
\end{equation}
If $\lambda>0$, then $a=-(n-k)\lambda/k$ and
$H_j^{-1}B_j\to\lambda\Id$. Consequently
$F_j(e_j,e_j)=\sigma_{k-1}(B_j)\asymp H_j^{k-1}$, whereas
$-E_j(e_j,e_j)=\sigma_k(B_j)-c_kf_j\asymp H_j^k$.
Because $0\le s_j\le S_0$, Cauchy--Schwarz for $F_j$ gives
\[
\mathfrak G_j\ge
\left|(E_j+s_jF_j)e_j\right|_{F_j^{-1}}^2
\ge\frac{|E_j(e_j,e_j)+s_jF_j(e_j,e_j)|^2}{F_j(e_j,e_j)}
\ge cH_j^{k+1},
\]
contradicting $\mathfrak G_j/H_j\to0$.

It remains to consider $\lambda=0$, hence $a=1$ and $\tr B_j=o(H_j)$.  If $\sigma_k(B_j)\le0$, then
\[
E(e_j,e_j)=c_kf_j-\sigma_k(B_j)\ge c_kf_0.
\]
Moreover $s_j\ge0$ and $F(e_j,e_j)>0$, while \eqref{eq:Dtracefree} gives $\Dk{}\ge cH_j$.  Hence directly from the definition,
\[
\mathfrak G_j=c_k\mathcal D_{k,j}+2s_jE_j(e_j,e_j)+s_j^2F_j(e_j,e_j)
\ge c_k\mathcal D_{k,j}\ge cH_j,
\]
contradicting $\mathfrak G_j/H_j\to0$.

Assume therefore $\sigma_k(B_j)>0$, so $B_j\in\Gamma_k$, and set
\[
\phi_j=\sigma_{k-1}(B_j),
\qquad
r_j=\frac{\sigma_k(B_j)}{\phi_j}.
\]
The block identities and ellipticity of the quotient $\sigma_k/\sigma_{k-1}$ give
\[
T_{k-1}(B_j)-r_jT_{k-2}(B_j)\succeq0.
\]
Moreover
\begin{align*}
r_jf_j-\sigma_{k+1}(h_j)
={}&r_j^2\phi_j-\sigma_{k+1}(B_j)\\
&+b_j^T\bigl(T_{k-1}(B_j)-r_jT_{k-2}(B_j)\bigr)b_j.
\end{align*}
The normalized Newton quotient inequality in dimension $n-1$ yields
\[
\sigma_{k+1}(B_j)
\le\frac{k(n-k-1)}{(k+1)(n-k)}r_j^2\phi_j,
\]
and hence
\[
r_jf_j-\sigma_{k+1}(h_j)
\ge\frac{n}{(k+1)(n-k)}r_j^2\phi_j
=\frac{1}{c_k(k+1)}r_j^2\phi_j.
\]
Substitution in the exact expression for $\mathfrak G_j$ gives
\begin{equation}\label{eq:G-r-bound}
\mathfrak G_j
\ge
\phi_j(r_j-s_j)^2
+c_k^2H_jf_j+2s_jc_kf_j-c_k(k+1)r_jf_j.
\end{equation}  Finally
\[
r_j\le\frac{n-k}{k(n-1)}\tr B_j=o(H_j).
\]
The last two terms in \eqref{eq:G-r-bound} are therefore $o(H_j)$, whereas $c_k^2H_jf_j\ge cH_j$.  This contradicts $\mathfrak G_j/H_j\to0$.
\end{proof}

\begin{lemma}[Small-defect compactness and global identification of the bubble]\label[lemma]{lem:C3}
Fix $\eps_0>0$, $K_0\ge1$, and $W_*>1$. Define
\begin{equation}\label{eq:L0turn}
R_{\rm turn}^2=(n-2k)\binom nk^{1/k},\qquad
L_0>4(R_{\rm turn}+1),
\end{equation}
where $L_0$ is any fixed constant satisfying the displayed bound.
Let $U_j$ be positive critical solutions on balls $B_{R_j}$,
$R_j\to\infty$, with Lorentz parameters
\[
0\le\eps_j\le\eps_0,
\qquad
\sigma_k(A_{\eps_j}[U_j])=U_j^{p_*},
\qquad
A_{\eps_j}[U_j]\in\Gamma_k,
\]
and
\[
U_j(0)=1,\qquad 0<U_j\le K_0,\qquad
W_j=(1-\eps_j|DU_j|^2)^{-1/2}\le W_*.
\]
Assume that there are numbers $\Lambda_j\to\infty$ such that, on every fixed ball and for all sufficiently large $j$,
\begin{equation}\label{eq:C3-contact-centering}
U_j(x)^{-1/\alpha_*}\ge 1-\frac{|x|}{100\Lambda_j},
\end{equation}
and suppose that, with $d\mu_j=W_j^{-1}dx$ and the scaled
endpoint defect $\cD_j$ specified in
\eqref{eq:C3-scaled-defect} below,
\[
\int_{B_{L_0}}\cD_j\,d\mu_j\to0.
\]
Then, after passing to a subsequence, $\eps_j\to0$ and
\begin{equation}\label{eq:C3global}
U_j\longrightarrow\cB
\quad\text{locally uniformly in }\mathbb R^n,
\end{equation}
where
\begin{equation}\label{eq:bubble}
\cB(x)=
\left(1+\frac{k|x|^2}{(n-2k)\binom nk^{1/k}}
\right)^{-(n-2k)/(2k)}.
\end{equation}
Moreover, on $B_{L_0}$ the high-Hessian region carries vanishing $L^1$ Hessian mass and, after extraction,
\[
U_j\to\cB\quad\text{strongly in }W^{2,1}_{\rm loc}(B_{L_0}).
\]
\end{lemma}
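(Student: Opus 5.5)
Our plan has three stages. First, compactness from the a priori bounds. Second, passage to the limit in the defect. Third, identification of the limit through the equality case.

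\emph{Stage 1: compactness.} By \cref{lem:C2} with $\kappa=1$ and $p=p_*$, the bounds $U_j\le K_0$ and $W_j\le W_*$ give a uniform bound on $|D\log U_j|$ on every fixed ball. Together with $U_j(0)=1$, this yields local two-sided bounds $c_L\le U_j\le K_0$ on $B_L$, and also a uniform Lipschitz bound. After extraction we get $\eps_j\to\eps_\infty\in[0,\eps_0]$ and $U_j\to U_\infty$ locally uniformly, with $U_\infty$ positive and Lipschitz.

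\emph{Stage 2: the defect.} The integrand in \eqref{eq:C3-scaled-defect} has the form $u^{M_*}\mathfrak G/(c_kq)$ in the scaled variables, where $\mathfrak G$ is the expression of \cref{lem:highH-defect} with $e=v/|v|$ and $s=\tau_*q/u$. On $B_{L_0}$ the quantities $f=U_j^{p_*}$ and $s$ are pinched between universal constants, so that lemma applies. Hence on $\{H_j\ge H_0\}$ the defect density dominates $cH_j$. Since $A_{\eps_j}\in\Gamma_2$ forces $|h_j|<H_j$, and $H_j$ controls $|D^2U_j|$ up to the factor $W_*$, we obtain
\[
\int_{B_{L_0}\cap\{H_j\ge H_0\}}|D^2U_j|\,dx\to0 .
\]
On the complement the Hessians are uniformly bounded. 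Arguing as in the proof of \cref{lem:positive}, Evans--Krylov/Caffarelli interior estimates apply there: the operator is concave and locally uniformly elliptic on the bounded-Hessian admissible set, because \cref{lem:quant-deficits} forces almost umbilic $h_j$ where the defect is small. This gives $W^{2,1}_{\rm loc}(B_{L_0})$ convergence and shows that the $L^1$ Hessian mass of the high region vanishes. By lower semicontinuity, the limit has zero defect on $B_{L_0}$. So the quartic term $\tfrac{k\tau_*}{4}u^{M_*+p_*-1}q^2$ (scaled by $\eps_\infty$) vanishes, and so does $\cK(v,v)$; \eqref{eq:Ktrans} then makes $A[U_\infty]$ umbilic on $B_{L_0}$ away from critical points. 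We also get $J_*$-flux vanishing, i.e.\ $E v+\tau_* q u^{-1}Fv=0$.

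\emph{Stage 3: identification.} Umbilicity gives $h=\lambda\Id$ with $\lambda=\binom nk^{-1/k}u^{p_*/k}$, and $E=0$. The flux relation then forces $q=0$ wherever $\eps_\infty>0$ and $q$ is positive, unless $\eps_\infty=0$. We therefore argue $\eps_\infty=0$ by showing that a nonflat umbilic Lorentzian graph with nonconstant curvature contradicts Codazzi, which forces $\lambda$ constant. Alternatively, with the height normalization, a constant-curvature hyperbolic cap is incompatible with $\sigma_k=U^{p_*}$ nonconstant. The flat alternative $DU\equiv0$ is excluded because then $\sigma_k=0\ne U^{p_*}$.

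With $\eps_\infty=0$, the Euclidean equation $-D^2U=\binom nk^{-1/k}U^{p_*/k}\Id$ together with Codazzi (trace-free Hessian of $U^{-2k/(n-2k)}$, i.e.\ $U^{-1/\alpha_*\cdot\ldots}$) forces $U^{-2k/(n-2k)}$ to be a quadratic polynomial $a|x-x_0|^2+c$. The condition $U(0)=1$ and the centering \eqref{eq:C3-contact-centering} (as $\Lambda_j\to\infty$ it yields $U_\infty\le1$, so $0$ is a maximum and $x_0=0$) pin down $c=1$ and, via the equation, $a=k/((n-2k)\binom nk^{1/k})$. This identifies the limit as $\cB$ on $B_{L_0}$, since $L_0>4R_{\rm turn}$ exceeds the turning radius.

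Extending to all of $\mathbb R^n$ is the main obstacle, because the defect hypothesis is only on $B_{L_0}$. We plan unique continuation for the limiting $\sigma_k$ equation. On $B_{L_0}\setminus\overline{B_{R_{\rm turn}}}$ the bubble is strictly admissible with $D\cB\neq0$, so the limit equation is uniformly elliptic near this annulus. Combined with local $C^{2,\alpha}$ convergence wherever the limit stays strictly admissible, this lets us continue the identity $U_\infty=\cB$ outward. The remaining risk is ruling out Hessian concentration outside $B_{L_0}$; for that we would try the centering inequality, which gives a uniform positive lower bound for $U_j$ on fixed balls, together with the Lipschitz bound.
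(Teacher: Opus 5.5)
Your proposal has genuine gaps, both in the local identification on $B_{L_0}$ and in the extension to $\R^n$.

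\textbf{The zero set of the defect is misidentified.} Vanishing of $\cK(e,e)$ in \eqref{eq:Ktrans} forces only $B=\lambda\Id$ and $b=0$ relative to $e\oplus e^\perp$. The $(e,e)$ entry is fixed by the other half of \eqref{eq:Gsquare}, namely $(E+\mathfrak sF)e=0$, which reads $c_k(a-\lambda)+\mathfrak s=0$. So the zero set is $h=\lambda\Id-(\mathfrak s/c_k)\,e\otimes e$, with $\lambda$ the admissible root of \eqref{eq:C3-lambda-equation}. This matrix is not umbilic, and neither is the bubble.

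As written, your Stage~3 defeats itself. Umbilic $h$ gives $E=0$, and then your flux relation $Ev+\tau_*u^{-1}qFv=0$ with $F\succ0$ forces $v\equiv0$ for every $\eps_\infty$, including $0$. Likewise $-D^2U=cU^{p_*/k}\Id$ forces $DU\equiv0$ by symmetry of third derivatives. So no nonconstant limit could exist.

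Your quadratic formula for $U_\infty^{M_*}$ is correct, but it comes from the rank-one correction. At $\eps=0$ one has $D^2U_\infty=-\lambda\Id+\frac{\tau_*}{c_kU_\infty}DU_\infty\otimes DU_\infty$, and the endpoint identity $1-M_*=\tau_*/c_k$ cancels the rank-one part of $D^2(U_\infty^{M_*})$. Similarly, $\eps_\infty=0$ follows directly from the $\eps_j$-weighted quartic term in \eqref{eq:C3-scaled-defect}: it forces $q_j\to0$ in $L^2_{\rm loc}$, hence a constant limit. No Codazzi argument for umbilic graphs is needed.

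\textbf{The passage to the limit is also unjustified.} Evans--Krylov estimates need an open set on which Hessians are bounded, whereas $\{H_j<H_0\}$ is only measurable. \emph{Lower semicontinuity} of the nonlinear defect would require strong convergence of the Hessians, which you have not established. The missing idea is measure-theoretic. The function $\mathfrak G$ is continuous and nonnegative on a compact range of triples $(h,e,\mathfrak s)$, so small $L^1$ defect forces the triples to approach its zero set in measure. This gives $D^2U_j-\mathcal M_{\eps_j}(U_j,DU_j)\to0$ in $L^1$. Combined with $BV$ compactness of $DU_j$, which follows from $H_j=-\Div(W_jDU_j)$, this yields strong $W^{2,1}$ convergence and the relation above for the limit.

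\textbf{The global extension by unique continuation does not go through.} Outside $B_{L_0}$ there is no defect information, so the limit is only a Lipschitz viscosity solution of $\sigma_k(-D^2U)=U^{p_*}$. You cannot linearize without $C^{1,1}$ or better regularity, which is not available for viscosity solutions of $\sigma_k$ equations in general. Small-perturbation regularity would need $L^\infty$-closeness to $\cB$ at quadratic scale near the edge of the coincidence set, which is exactly what is unknown.

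The paper uses a one-sided argument that needs no regularity.
\begin{itemize}
\item Concavity of $\sigma_k^{1/k}$ at $-D^2\cB$ turns the admissible viscosity condition into the linear inequality \eqref{eq:C3linearineq} for $w=U_\infty-\cB$. The remainder $\mathcal R\ge0$ there vanishes only where $w=0$.
\item Averaging over spheres and forming the Wronskian with the scaling Jacobi field $\psi=-(rB_*'+\alpha_*B_*)$, which is positive for $r>R_{\rm turn}$, gives the nonincreasing bracket in \eqref{eq:C3wronskian}.
\item The bracket vanishes at some $R_1\in(R_{\rm turn},L_0/2)$ because $w=0$ on $B_{L_0/2}$.
\item If it ever became negative, divergence of $\int^\infty dr/(A\psi^2)$ would drive $\overline w/\psi\to-\infty$, contradicting $U_\infty\ge0$.
\item Hence $\overline{\mathcal R}\equiv0$, so $\mathcal R\equiv0$ and $w\equiv0$.
\end{itemize}
This is where $L_0>4(R_{\rm turn}+1)$ enters. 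Contrary to your remark, it plays no role in the identification on $B_{L_0}$.
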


The contact-envelope construction in \cref{sec:critical-global} will
produce the centering inequality \eqref{eq:C3-contact-centering}. The lemma
itself uses only the explicit hypotheses above.

\begin{proof}
We first control the region of large Hessian, identify the Euclidean bubble on a fixed ball, and then extend that identification using a diagonal limit and the scaling Jacobi field.

We write the scaled angle variable as $q_j=q_{\eps_j}$ and put
\[
\mathfrak s_j=\frac{\tau_*q_j}{U_j},
\qquad
e_j=\frac{\nabla U_j}{|\nabla U_j|}
\quad\text{when }|\nabla U_j|>0.
\]
At points where $\nabla U_j=0$, choose $e_j$ arbitrarily;
since $\mathfrak s_j=0$, neither $\mathfrak G_j$ below nor
$\mathfrak s_j e_j\otimes e_j$ depends on this choice. The exact endpoint algebra in the scaled geometry gives
\begin{equation}\label{eq:C3-scaled-defect}
\cD_j
=
\frac{U_j^{M_*}z_j}{c_kq_j}\,
\mathfrak G_j
+\frac{\eps_j}{4}U_j^{M_*}q_j\mathcal D_{k,j}
+\frac{\eps_jk\tau_*}{4}U_j^{M_*+p_*-1}q_j^2,
\end{equation}
where
\[
\mathfrak G_j
=c_k\mathcal D_{k,j}
+2\mathfrak s_jE_j(e_j,e_j)
+\mathfrak s_j^2F_j(e_j,e_j).
\]
At $q_j=0$ the first term is understood through $z_j/q_j=(W_j+1)/2$ and $\mathfrak s_j=0$.  Formula \eqref{eq:C3-scaled-defect} follows from the same expansion as Lemma~\ref{lem:C1}, with
\[
W_jq_j=z_j+\frac{\eps_jq_j^2}{4}.
\]

Fix $K\Subset B_{L_0}$.  Lemma~\ref{lem:C2}, together with $U_j(0)=1$ and $U_j\le K_0$, gives a Harnack lower bound
\begin{equation}\label{eq:C3-Ulower}
0<c_K\le U_j\le K_0
\quad\text{on }K,
\end{equation}
and a uniform bound for $|DU_j|$.  Consequently $f_j=U_j^{p_*}$ is bounded above and below by positive constants and $\mathfrak s_j$ is uniformly bounded on $K$.  Since $z_j/q_j=(W_j+1)/2$ and $U_j^{M_*}$ is bounded above and below, Lemma~\ref{lem:highH-defect} and \eqref{eq:C3-scaled-defect} imply
\begin{equation}\label{eq:C3-highH-cost}
H_j\ge H_0\quad\Longrightarrow\quad
\cD_j\ge c_KH_j.
\end{equation}
Thus
\begin{equation}\label{eq:C3-highH-vanish}
\int_{K\cap\{H_j>H_0\}}H_j\,d\mu_j\longrightarrow0.
\end{equation}

We also have a uniform local $L^1$ bound for $H_j$.  Indeed the scaled trace identity is
\[
H_j=-\Div(W_jDU_j),
\]
so for a cutoff $\zeta\equiv1$ on $K$,
\[
\int_KH_j\,dx
\le\int W_j|DU_j||D\zeta|\,dx\le C_K.
\]
Since $h_j\in\Gamma_k\subset\Gamma_2$, $|h_j|<H_j$; the uniformly controlled graph metric therefore gives
\begin{equation}\label{eq:C3-Hessian-L1}
\int_K|D^2U_j|\,dx\le C_K.
\end{equation}
Hence, after extraction, $U_j\to U_\infty$ locally uniformly.  Since \eqref{eq:C3-Hessian-L1} is a local $BV$ bound for the gradients, the compactness theorem for $BV$ functions also gives
$DU_j\to DU_\infty$ in $L^1_{\rm loc}$.

On $K\cap\{H_j\le H_0\}$ the triples $(h_j,e_j,\mathfrak s_j)$ range in a fixed compact set.  Let $\mathcal S_0$ denote the zero set of $\mathfrak G$ in this compact set.  Since $\mathfrak G\ge0$ is continuous, for every $\eta>0$ there exists $c_\eta>0$ such that
\[
\operatorname{dist}\bigl((h,e,\mathfrak s),\mathcal S_0\bigr)\ge\eta
\quad\Longrightarrow\quad
\mathfrak G(h,e,\mathfrak s)\ge c_\eta.
\]
The coefficient $U_j^{M_*}z_j/(c_kq_j)$ in the first term of
\eqref{eq:C3-scaled-defect} is bounded above and below by positive constants on $K$, with its value at $q_j=0$ understood through
$z_j/q_j=(W_j+1)/2$.  Since
$\int_{B_{L_0}}\cD_j\,d\mu_j\to0$, the first term in
\eqref{eq:C3-scaled-defect} therefore implies
\[
\operatorname{dist}\bigl((h_j,e_j,\mathfrak s_j),\mathcal S_0\bigr)
\longrightarrow0
\quad\text{in measure on }K\cap\{H_j\le H_0\}.
\]
Because the triples $(h_j,e_j,\mathfrak s_j)$  are uniformly bounded there, the same convergence holds in $L^1$ after composition with bounded continuous functions vanishing on $\mathcal S_0$.  We now record $\mathcal S_0$ explicitly.  If $\mathfrak G(h,e,\mathfrak s)=0$, then \eqref{eq:Gsquare} gives both
\[
(E+\mathfrak s F)e=0,
\qquad
\cK(e,e)=0.
\]
By Lemma~\ref{lem:quant-deficits}, the second equality forces, relative to $e\oplus e^\perp$,
\[
B=\lambda\Id,\qquad b=0.
\]
For such a block matrix, the $e$-component of the first equality is
\[
c_k(a-\lambda)+\mathfrak s=0,
\]
and therefore
\begin{equation}\label{eq:C3-zero-set}
h=\lambda\Id-\frac{\mathfrak s}{c_k}e\otimes e.
\end{equation}
Substitution into $\sigma_k(h)=f$ gives
\begin{equation}\label{eq:C3-lambda-equation}
\binom nk\lambda^{k-1}
\left(\lambda-\frac{k\mathfrak s}{n-k}\right)=f.
\end{equation}
On the admissible branch $\lambda>k\mathfrak s/(n-k)$ the left-hand side is strictly increasing, so $\lambda$ is the unique root in that range.  Thus $(f,\mathfrak s,e)$ uniquely determine the zero-set matrix on the admissible branch. The compact-range argument then identifies its limit in measure.

The tensor $\mathfrak s e\otimes e$ has the continuous representation
\[
\mathfrak s e\otimes e
=\frac{\tau_*}{U}\frac qz\,v\otimes v,
\]
with value zero at $v=0$.  Thus the corresponding zero-set matrix in \eqref{eq:C3-zero-set} depends continuously on $(U,DU)$ on the compact range under consideration.  Let $\mathcal M_{\eps}(U,DU)$ denote the Euclidean Hessian associated with this zero-set matrix at Lorentz parameter $\eps$.  On $K\cap\{H_j\le H_0\}$, the preceding compact zero-set argument gives
\[
D^2U_j-\mathcal M_{\eps_j}(U_j,DU_j)\longrightarrow0
\quad\text{in }L^1.
\]
On $K\cap\{H_j>H_0\}$, \eqref{eq:C3-highH-vanish} and the graph-metric bounds give
$\int |D^2U_j|\,dx\to0$.  The map $\mathcal M_{\eps}(U,DU)$ is uniformly bounded on the compact range of $(\eps,U,DU)$, while
$|\{H_j>H_0\}\cap K|\le H_0^{-1}\int_{K\cap\{H_j>H_0\}}H_j\,dx\to0$.
Consequently its $L^1$ contribution on the high-Hessian region also vanishes.  We conclude that
\begin{equation}\label{eq:C3-model-L1}
D^2U_j-\mathcal M_{\eps_j}(U_j,DU_j)\longrightarrow0
\quad\text{in }L^1_{\rm loc}(B_{L_0}).
\end{equation}
On the compact range considered here, $\mathcal M_{\eps}$ depends continuously on $(\eps,U,DU)$.

We next show that $\eps_j\to0$.  If a subsequence satisfied $\eps_j\to\eps_\infty>0$, the last term in \eqref{eq:C3-scaled-defect}, together with \eqref{eq:C3-Ulower}, would give $q_j\to0$ in $L^2_{\rm loc}$.  Since $q_j\asymp|DU_j|^2$ under $W_j\le W_*$, the local uniform limit would be constant, equal to $1$ by the normalization.  Stability of admissible viscosity solutions under the stated locally uniform convergence would then give $\sigma_k(0)=1$ at the constant limit, a contradiction.  Hence
\begin{equation}\label{eq:C3-eps-zero}
\eps_j\to0.
\end{equation}

Using \eqref{eq:C3-eps-zero} in \eqref{eq:C3-model-L1}, and the endpoint identity
\[
1-M_*=\frac{\tau_*}{c_k},
\]
we obtain in the limit
\begin{equation}\label{eq:C3-tracefree-power}
\left(D^2(U_\infty^{M_*})\right)=\frac{\Delta\!\left(U_\infty^{M_*}\right)}{n}\,I 
\quad\text{in }\mathcal D'(B_{L_0}).
\end{equation}
Indeed, at $\eps=0$ the zero-set relation gives
\[
D^2U_\infty
=
-\lambda\Id
+\frac{\tau_*}{c_kU_\infty}
DU_\infty\otimes DU_\infty.
\]
Hence
\[
D^2(U_\infty^{M_*})
=
-M_*\lambda U_\infty^{M_*-1}\Id
+
M_*U_\infty^{M_*-2}
\left(\frac{\tau_*}{c_k}+M_*-1\right)
DU_\infty\otimes DU_\infty.
\]
Since $1-M_*=\tau_*/c_k$, the second term vanishes.
Hence
\[
U_\infty^{M_*}=A+b\cdot x+B|x|^2.
\]
The centering inequality \eqref{eq:C3-contact-centering} passes to the limit and gives $U_\infty\le1$ near the origin while $U_\infty(0)=1$; therefore $DU_\infty(0)=0$ and $b=0$.  Since $U_\infty(0)=1$, we have $A=1$.  Substitution into the Euclidean critical equation then determines
\[
B=\frac{k}{(n-2k)\binom nk^{1/k}},
\]
so $U_\infty=\cB$ on $B_{L_0/2}$.  Equation \eqref{eq:C3-model-L1} also gives strong $W^{2,1}_{\rm loc}$ convergence on $B_{L_0/2}$.

We now extend this local identification to every fixed ball.  Since $R_j\to\infty$ and $U_j\le K_0$, Lemma~\ref{lem:C2} gives equicontinuity on every fixed ball.  A diagonal subsequence converges locally uniformly in $\mathbb R^n$ to a positive Euclidean critical admissible solution, again denoted $U_\infty$, satisfying
\[
\sigma_k(-D^2U_\infty)=U_\infty^{p_*}
\]
in the viscosity sense.  We already know $U_\infty=\cB$ on $B_{L_0/2}$.

Set $w=U_\infty-\cB$ and $F_{\cB}=T_{k-1}(-D^2\cB)$.
We now derive the linear inequality satisfied by $w$.  Put $a=p_*/k>1$.
If a smooth test function $\varphi$ touches $w$ from below at $x$, then
$\cB+\varphi$ touches the admissible viscosity solution $U_\infty$ from below.
The admissible branch of the equation therefore gives
\[
\sigma_k\bigl(-D^2(\cB+\varphi)\bigr)^{1/k}
\ge U_\infty(x)^a.
\]
The admissible viscosity condition places the Hessian matrix associated with this lower test on the closed G{\aa}rding branch.
At $B_0=-D^2\cB(x)\in\Gamma_k$, the supporting-plane
inequality for the concave function $\sigma_k^{1/k}$ gives
\[
U_\infty(x)^a-\cB(x)^a
\le-\frac1k\cB(x)^{p_*/k-p_*}
 F_{\cB}(x):D^2\varphi(x).
\]
Define the continuous nonnegative remainder
\begin{equation}\label{eq:C3-convexity-remainder}
\mathcal R(x)=k\cB(x)^{p_*(1-1/k)}
\left[U_\infty(x)^a-\cB(x)^a
-a\cB(x)^{a-1}w(x)\right].
\end{equation}
Strict convexity of $t^a$ on $(0,\infty)$ shows that
$\mathcal R(x)=0$ exactly where $w(x)=0$. Multiplying the tested
inequality by $k\cB^{p_*(1-1/k)}$ yields, for every such lower
test $\varphi$, the viscosity supersolution inequality
\begin{equation}\label{eq:C3linearineq}
F_{\cB}:D^2w+p_*\cB^{p_*-1}w\le-\mathcal R,
\qquad \mathcal R\ge0,
\end{equation}
with the displayed remainder. On each fixed ball $F_{\cB}$ is
smooth and positive definite; the usual equivalence for linear
uniformly elliptic viscosity and distributional supersolutions
applies to the continuous right-hand side
$-p_*\cB^{p_*-1}w-\mathcal R$. Thus \eqref{eq:C3linearineq}
holds in distributions on every fixed ball as well. 

Write $\cB(x)=B_*(r)$, $r=|x|$, and $e_r=x/r$. By radial symmetry,
\[
F_{\cB}=f_r(r)e_r\otimes e_r
+f_t(r)(\Id-e_r\otimes e_r),
\qquad
f_r(r)=\binom{n-1}{k-1}
\left(-\frac{B_*'(r)}r\right)^{k-1}.
\]
The scaling Jacobi field $\mathcal Z=x\cdot D\cB+\alpha_*\cB$
has radial profile $Z_*(r)=rB_*'(r)+\alpha_*B_*(r)$.
It solves the homogeneous linearized equation and changes sign only at
$R_{\rm turn}$; thus $\psi(r):=-Z_*(r)>0$ for $r>R_{\rm turn}$. Set
\[
A(r)=r^{n-1}f_r(r),
\qquad
\overline w(r)
=
\frac1{|\partial B_r|}\int_{\partial B_r}w\,dS,
\qquad
\overline{\mathcal R}(r)
=
\frac1{|\partial B_r|}\int_{\partial B_r}\mathcal R\,dS.
\]
The divergence-free identity for $F_{\cB}$ gives
$f_r'+(n-1)(f_r-f_t)/r=0$, hence
$A'(r)=(n-1)r^{n-2}f_t(r)$.
Averaging \eqref{eq:C3linearineq} over \(\partial B_r\) gives, in the distributional sense on $(0,\infty)$,
\[
\frac1{r^{n-1}}
\left(A(r)\overline w'(r)\right)'
+
p_*\,
B_*(r)^{p_*-1}\overline w(r)
\le
-\overline{\mathcal R}(r).
\]
The radial Jacobi profile satisfies
\[
\left(A(r)\psi'(r)\right)'
+
r^{n-1}p_*B_*(r)^{p_*-1}\psi(r)
=0.
\]
Hence
\begin{equation}\label{eq:C3wronskian}
\left[
A(r)\psi(r)^2
\left(\frac{\overline w(r)}{\psi(r)}\right)'
\right]'
\le
-r^{n-1}\psi(r)\overline{\mathcal R}(r)
\le0.
\end{equation}
Choose $R_{\rm turn}<R_1<L_0/2$.  Since $w=0$ on $B_{L_0/2}$, the quantity in brackets in \eqref{eq:C3wronskian} vanishes at $R_1$.  We use the locally $BV$ representative of the bracket determined by
\eqref{eq:C3wronskian}; this representative is nonincreasing.  If it were negative at some $r_0>R_1$, then there would exist $c_0>0$ such that, for every $r\ge r_0$,
\[
A(r)\psi(r)^2
\left(\frac{\overline w(r)}{\psi(r)}\right)'
\le -c_0.
\]
Hence
\[
\frac{\overline w(r)}{\psi(r)}
\le
\frac{\overline w(r_0)}{\psi(r_0)}
-c_0\int_{r_0}^r\frac{ds}{A(s)\psi(s)^2}.
\]
Since
\[
A(r)\psi(r)^2\asymp r^{1-\gamma},
\qquad
\int^\infty\frac{dr}{A(r)\psi(r)^2}=\infty,
\qquad
\gamma=\frac{n-2k}{k}>0,
\]
this would imply
\[
\frac{\overline w(r)}{\psi(r)}\longrightarrow-\infty.
\]
On the other hand, $U_\infty\ge0$ gives
\[
\frac{\overline w(r)}{\psi(r)}\ge-\frac{B_*(r)}{\psi(r)},
\]
and the right-hand side has a finite limit as $r\to\infty$, a contradiction.  Therefore the bracket in \eqref{eq:C3wronskian} vanishes identically.  It follows from \eqref{eq:C3wronskian} that
\[
r^{n-1}\psi(r)\overline{\mathcal R}(r)=0
\qquad (r>R_1).
\]
Since $\psi>0$ and $\mathcal R\ge0$ is continuous,
$\overline{\mathcal R}(r)=0$ implies $\mathcal R=0$ on every sphere
$\partial B_r$, $r>R_1$.  Together with $w=0$ on $B_{L_0/2}$, this gives
$\mathcal R\equiv0$.  By the strict convexity in
\eqref{eq:C3-convexity-remainder}, $w\equiv0$, and hence
$U_\infty\equiv\cB$.  Returning to \eqref{eq:C3-model-L1}, the continuity of $\mathcal M_\eps$ on the compact range, together with $\eps_j\to0$ and $DU_j\to D\cB$ in $L^1_{\rm loc}$, now gives
\[
D^2U_j\to D^2\cB
\quad\text{in }L^1_{\rm loc}(B_{L_0}).
\]
This gives the stated strong $W^{2,1}_{\rm loc}(B_{L_0})$ convergence and completes the proof of \eqref{eq:C3global}.
\end{proof}

\subsection{Blow-down limits and concentration of the $k$-Hessian measure}

For a $k$-convex function $w$, write $\mu_k[w]$ for its
$k$-Hessian measure; when $w$ is smooth,
$\mu_k[w]=\sigma_k(D^2w)\,dx$. This differs from the graph
volume measure $\mu$. Let
\[
\gamma=\frac{n-2k}{k},
\qquad
\Phi_m(x)=A_m|x|^{-\gamma},
\]
where $A_m>0$ is normalized by
\[
\mu_k[-\Phi_m]=m\delta_0.
\]
Write $\cB(x)=B_*(|x|)$ as above.  The source mass of the standard bubble and its tail coefficient satisfy
\begin{equation}\label{eq:C4-bubble-mass}
m_*:=\int_{\R^n}\cB^{p_*}\,dx
=\frac{|S^{n-1}|}{k}\binom{n-1}{k-1}
  (\gamma\kappa_\infty)^k,
\qquad
\kappa_\infty=\lim_{r\to\infty}r^\gamma B_*(r).
\end{equation}
Indeed, integrate $\sigma_k(-D^2\cB)=\cB^{p_*}$ on $B_R$
using $k\sigma_k(-D^2\cB)
=-\Div_{\R^n}\bigl(T_{k-1}(-D^2\cB)D\cB\bigr)$.
On $\partial B_R$ the radial entry of the Newton tensor is
$\binom{n-1}{k-1}(-B_*'(R)/R)^{k-1}$, and the resulting boundary integral converges to
the right-hand side of \eqref{eq:C4-bubble-mass}. Thus the fundamental
solution with mass $m_*$ is
\begin{equation}\label{eq:C4-Phi-tail}
\Phi_{m_*}(x)=\kappa_\infty|x|^{-\gamma}.
\end{equation}
\begin{lemma}[Exact inner-pole lower barrier]\label[lemma]{lem:inner-pole-lower}
Let $U_j:\R^n\to(0,\infty)$ be smooth entire strictly admissible solutions of
$\sigma_k(A_{\eps_j}[U_j])=U_j^{p_*}$ with $0\le\eps_j\to0$ and
$W_{\eps_j}[U_j]\le W_*$. Assume $U_j\to\cB$ locally uniformly in $\R^n$.
For every $t_j\to\infty$ and every compact $K\Subset\R^n\setminus\{0\}$,
\begin{equation}\label{eq:inner-pole-liminf}
\liminf_{j\to\infty}\inf_{x\in K}
\bigl(t_j^\gamma U_j(t_jx)-\Phi_{m_*}(x)\bigr)\ge0.
\end{equation}
\end{lemma}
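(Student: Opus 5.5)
Our plan is a comparison argument on an annulus, using a zero-$k$-curvature radial barrier in the rescaled variables. The barrier carries almost all of the bubble's source mass, and it is compared with $U_j$ as in \cref{lem:lower-barrier}.

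\textbf{Setup.} Fix $\delta\in(0,1)$ and a large radius $\rho$ with
\[
\int_{B_\rho}\cB^{p_*}\,dx\ge(1-\delta)m_*.
\]
By the bubble flux formula in \eqref{eq:C4-bubble-mass}, this is equivalent to the radial flux of $\cB$ across $\partial B_\rho$ being at least $(1-\delta)m_*$. By \eqref{eq:radial-sigma-k}, $\rho^{n-k}y_{\cB}(\rho)^k$ is close to the limiting value. Here $y_\cB=-B_*'$, since $\eps=0$.

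\textbf{Barrier.} On the annulus $\rho\le|x|\le S$ (later $S\to\infty$) we use the radial profile of \cref{lem:lower-barrier},
\[
y(r)=a\,r^{-(n-k)/k},\qquad a^k=(1-2\delta)\,\rho^{n-k}y_\cB(\rho)^k,
\]
so that $y$ sits strictly below the slope of $\cB$ at $\rho$. With the Lorentz parameter $\eps_j$, set
\[
V(r)=\int_r^{S}\frac{y}{\sqrt{1+\eps_j y^2}}\,dt .
\]
This profile has $\sigma_k(A_{\eps_j}[V])=0$ by \eqref{eq:radial-sigma-k}, which carries over to the scaled geometry because the principal curvatures there are $y'$ and $y/r$ with $y=-V'/\sqrt{1-\eps_j V'^2}$.

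\textbf{Boundary inequalities.}
\begin{itemize}[leftmargin=*]
\item On $\partial B_S$: $V=0<U_j$.
\item On $\partial B_\rho$: we need $V(\rho)\le U_j$. As $S\to\infty$, $V(\rho)\to\int_\rho^\infty y\,dt\,(1+o(1))$. Since $y\le y_\cB$ pointwise with a fixed loss factor $(1-2\delta)^{1/k}$, and $\cB$ has exactly the tail $\int_\rho^\infty y_\cB$ up to the factor $\kappa_\infty$-asymptotics, we get $V(\rho)\le(1-c\delta)\cB(\rho)$ for $\rho$ large. Locally uniform convergence $U_j\to\cB$ on $\partial B_\rho$ then gives $V(\rho)<U_j$ for $j$ large.
\end{itemize}
The contact comparison from \cref{lem:lower-barrier}, with the closed-cone monotonicity and $\eps I$ perturbation, then yields $U_j\ge V$ on the annulus. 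Letting $S\to\infty$, uniformly in $j$, gives
\[
U_j(x)\ge\int_{|x|}^\infty\frac{y}{\sqrt{1+\eps_jy^2}}\,dt\ge(1-o(1))\frac{a}{\gamma}|x|^{-\gamma}
\]
for $|x|\ge\rho$, because $y\to0$ at infinity and $\eps_j\to0$.

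\textbf{Rescaling and conclusion.} For $x\in K$ and $t_j\to\infty$, the point $t_jx$ eventually lies outside $B_\rho$. The bound gives
\[
t_j^\gamma U_j(t_jx)\ge(1-o(1))\frac a\gamma|x|^{-\gamma}.
\]
Now $\frac a\gamma\to(1-2\delta)^{1/k}\kappa_\infty$ as $\rho\to\infty$, since $\rho^{(n-k)/k}y_\cB(\rho)\to\gamma\kappa_\infty$. Taking $\liminf_j$, then $\rho\to\infty$, then $\delta\to0$ gives \eqref{eq:inner-pole-liminf} with $\Phi_{m_*}=\kappa_\infty|x|^{-\gamma}$ from \eqref{eq:C4-Phi-tail}.

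\textbf{Main obstacle.} The delicate step is the inner boundary inequality. We must check that the mass-deficient barrier lies strictly below $\cB$ at $\rho$ by a margin proportional to $\delta\,\cB(\rho)$, so that uniform convergence on the single sphere $\partial B_\rho$ suffices, and that the $\eps_j$-correction in $V$ is uniformly small. Both follow from explicit tail asymptotics of $B_*$; for the first, we would use the exact tail expansion $B_*(r)=\kappa_\infty r^{-\gamma}(1+O(r^{-2}))$ obtained from \eqref{eq:bubble}.
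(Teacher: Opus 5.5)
Your proof is correct and follows the same route as the paper. Both use a zero-$k$-curvature radial barrier with slope $y=a\,r^{-\gamma-1}$ in the $\eps_j$-geometry, compare on a truncated annulus, send the outer radius to infinity, and rescale by $t_j$.

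The only difference is bookkeeping. The paper picks a slowly diverging inner radius $b_j=o(t_j)$ and calibrates $a_j=\gamma b_j^\gamma\min_{\partial B_{b_j}}U_j$ exactly, so $a_j\to\gamma\kappa_\infty$ and no margin is needed. You instead fix $\rho$ and a $\delta$-margin and take iterated limits. Your inner-boundary inequality, which you flag as the main obstacle, is immediate from the monotonicity of the bubble flux $r^{n-k}y_\cB^k$. That monotonicity gives $y\le(1-2\delta)^{1/k}y_\cB$ on $[\rho,\infty)$, hence $V(\rho)\le(1-2\delta)^{1/k}\cB(\rho)$ for every $\rho$, with no tail asymptotics required.
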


\begin{proof}
Recall $B_*(r)=\cB(x), r=|x|$, choose $b_j\to\infty$ sufficiently slowly that $b_j=o(t_j)$ and
\[
\frac{\min_{\partial B_{b_j}}U_j}{B_*(b_j)}\longrightarrow1.
\]
Such a choice follows by diagonal selection from local uniform convergence:
for each fixed radius the convergence is uniform, and the chosen radii may
increase arbitrarily slowly relative to $t_j$. Set
\[
a_j=\gamma b_j^\gamma\min_{\partial B_{b_j}}U_j.
\]
Since $b_j^\gamma B_*(b_j)\to\kappa_\infty$, we have
$a_j\to\gamma\kappa_\infty$. On $r\ge b_j$, put
\[
y_j(r)=a_jr^{-\gamma-1},
\qquad
G_j(r)=\int_r^\infty
\frac{y_j(s)}{\sqrt{1+\varepsilon_jy_j(s)^2}}\,ds,
\]
and observe that
\[
G_j(b_j)\le\int_{b_j}^\infty a_js^{-\gamma-1}\,ds
=\frac{a_j}{\gamma}b_j^{-\gamma}
=\min_{\partial B_{b_j}}U_j.
\]
We use the same symbol $G_j$ for its radial extension to $\mathbb R^n$, that is, $G_j(x):=G_j(|x|)$.  The radial curvature formula with Lorentz parameter $\eps_j$ shows that
the principal curvatures of $G_j$ are
$(-\beta_{\mathrm{rad}}y_j/r,y_j/r,\ldots,y_j/r)$, where
$\beta_{\mathrm{rad}}=\gamma+1=(n-k)/k$.
Thus they lie on $\partial\Gamma_k$ and
$\sigma_k(A_{\eps_j}[G_j])=0$.
To justify the exterior comparison without imposing a boundary value at
infinity, fix $R>b_j$ and use the truncated barrier
\[
G_{j,R}(r)=\int_r^R 
\frac{y_j(s)}{\sqrt{1+\varepsilon_jy_j(s)^2}}\,ds.
\]
Then $G_{j,R}(R)=0<U_j$ and
$G_{j,R}(b_j)\le G_j(b_j)\le U_j$ on the inner boundary.
At a negative interior minimum of $U_j-G_{j,R}$ the gradients agree and
the symmetric shape representatives satisfy
$\widehat h[U_j]\preceq\widehat h[G_{j,R}]$.
Monotonicity of $\sigma_k$ along the intervening segment in
$\overline\Gamma_k$ would give
$U_j^{p_*}\le\sigma_k(A_{\eps_j}[G_{j,R}])=0$, a contradiction.
Hence $U_j\ge G_{j,R}$ on the annulus. Since $U_j$ is defined on
\emph{every} ball $B_R$, we may now let $R\to\infty$ for each fixed $j$,
obtaining $U_j(r\omega)\ge G_j(r)$ for every $r\ge b_j$.

For a compact $K\Subset\R^n\setminus\{0\}$, the condition
$b_j/t_j\to0$ ensures $t_j|x|\ge b_j$ for all $x\in K$ and large $j$.
Changing variables in the radial integral gives, uniformly for $x\in K$,
\[
t_j^\gamma G_j(t_j|x|)
=a_j\int_{|x|}^\infty
\frac{s^{-\gamma-1}}
{\sqrt{1+\eps_j a_j^2t_j^{-2\gamma-2}s^{-2\gamma-2}}}\,ds
\longrightarrow\frac{\gamma\kappa_\infty}{\gamma}|x|^{-\gamma}
=\Phi_{m_*}(x).
\]
The comparison proves \eqref{eq:inner-pole-liminf} in the stated
locally uniform lower-limit sense.
\end{proof}

The barrier applies to entire normalized solutions because the annular comparison uses an outer-radius limit. The concentration lemma also allows solutions defined only up to the first-crossing scale, provided the pole lower bound is assumed. The entire solutions constructed in \cref{sec:critical-global} meet that hypothesis by the preceding barrier.

Consider positive normalized critical solutions $U_j$ satisfying
\[
\sigma_k(A_{\eps_j}[U_j])=U_j^{p_*},\qquad
A_{\eps_j}[U_j]\in\Gamma_k,\qquad
U_j(0)=1,\qquad 0<U_j\le K_0,\qquad W_{\eps_j}\le W_*.
\]
Assume that $\eps_j\to0$, that $U_j\to\cB$ locally uniformly on
$\mathbb R^n$, and that $B_{\rho_j}$ is contained in the domain of $U_j$
for radii $\rho_j\to\infty$.
The bounds used below are stated in \cref{lem:C4}; in
\cref{sec:critical-global} they will follow from the first spherical crossing.
Set
\[
V_j(x)=\rho_j^\gamma U_j(\rho_j x).
\]
The Lorentz parameter after this blow-down is
\begin{equation}\label{eq:C4-lambda-scaling}
\lambda_j=\eps_j\rho_j^{-2(\gamma+1)},
\qquad 0\le\eps_j\le\eps_0,
\end{equation}
and
\[
W_j=(1-\lambda_j|DV_j|^2)^{-1/2}.
\]
Since $-D(W_jDV_j)$ is the transpose of the coordinate matrix
of the scaled shape operator $A_{\lambda_j}[V_j]$, the two matrices
have the same elementary symmetric functions.  Hence the rescaled
equation can be written as
\begin{equation}\label{eq:first-crossing-eq}
\sigma_k(-D(W_jDV_j))=\rho_j^{-2}V_j^{p_*}.
\end{equation}

\begin{lemma}[Compound minors on the G{\aa}rding cone]
\label[lemma]{lem:compound-cone}
Let $2\le k<n$ and $0\le s\le k-1$. For every real symmetric
$n\times n$ matrix $h$ with eigenvalues in $\Gamma_k$,
\begin{equation}\label{eq:compound-cone}
\|\wedge^s h\|\le C_{n,k,s}\sigma_s(h),
\qquad \wedge^0h=1,\quad\sigma_0(h)=1.
\end{equation}
Consequently, if $g=\Id-\lambda p\otimes p\succ0$,
$W=(1-\lambda|p|^2)^{-1/2}\ge1$ and
$h=Wg^{-1/2}Bg^{-1/2}$, then
\begin{equation}\label{eq:compound-congruence}
\|\wedge^s B\|\le C_{n,k,s}\sigma_s(h).
\end{equation}
The operator-norm bound applies to the full compound matrix and therefore controls the absolute value of every mixed $s$-minor.
\end{lemma}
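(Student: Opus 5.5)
The first bound \eqref{eq:compound-cone} is a statement about eigenvalues only, because the compound matrix $\wedge^s h$ of a symmetric $h$ is symmetric with eigenvalues the products $\lambda_{i_1}\cdots\lambda_{i_s}$ over $i_1<\cdots<i_s$. Its operator norm is therefore $\max_{|I|=s}|\lambda_I|$, where $\lambda_I=\prod_{i\in I}\lambda_i$. It suffices to show
\[
|\lambda_I|\le C_{n,k,s}\,\sigma_s(\lambda)
\quad\text{for every }\lambda\in\Gamma_k\text{ and every }I\text{ with }|I|=s\le k-1 .
\]
Since $\lambda\in\Gamma_k$, each $\sigma_s$ with $s\le k-1$ is positive. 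I plan to use the standard eigenvalue bound on $\Gamma_k$: order $\lambda_1\ge\cdots\ge\lambda_n$. Then $\lambda_1\ge\cdots\ge\lambda_k>0$, and any negative entries satisfy $|\lambda_i|\le C\lambda_1$; indeed $\sigma_1>0$ gives $\sum_{\lambda_i<0}|\lambda_i|<\sum_{\lambda_i>0}\lambda_i\le n\lambda_1$. Hence $|\lambda_I|\le C\lambda_1\cdots\lambda_s$ once I show that the product of any $s$ absolute values is dominated by the product of the $s$ largest positive entries. The key input is then
\[
\sigma_s(\lambda)\ge c_{n,k}\,\lambda_1\cdots\lambda_s\qquad(s\le k-1),
\]
which is the known Lin--Trudinger type inequality, valid in $\Gamma_k$ for $s<k$.

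For $|\lambda_I|\le C\lambda_1\cdots\lambda_s$ itself I would argue as follows. Let $I$ contain $m$ indices with negative eigenvalue. Each such $|\lambda_i|$ is bounded by $C\lambda_1$. That bound is too crude for $m\ge2$, so I will refine it. Write $\lambda=(\lambda',\lambda'')$, where $\lambda'$ collects the first $k-1$ entries. The restriction argument from the preliminaries, used iteratively with $\partial\sigma_{k}/\partial\lambda_i$ (delete-an-entry tuples stay in $\Gamma_{k-1}$), shows that $(\lambda_{k},\ldots,\lambda_n)\in\Gamma_1$ after deleting the top $k-1$ entries. Consequently $\sum_{i\ge k}|\lambda_i|\le C\lambda_k$, so every negative entry is bounded by $C\lambda_k\le C\lambda_j$ for all $j\le k$. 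Hence replacing each negative index in $I$ by an unused index among $1,\ldots,k$ increases $|\lambda_I|$ by at most a constant factor; this is possible since $s\le k-1<k$. Monotonicity then bounds the product by $C\lambda_1\cdots\lambda_s$, and \eqref{eq:compound-cone} follows.

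The second bound \eqref{eq:compound-congruence} is pure algebra. From $B=W^{-1}g^{1/2}hg^{1/2}$ and multiplicativity of compounds,
\[
\wedge^sB=W^{-s}(\wedge^s g^{1/2})(\wedge^s h)(\wedge^s g^{1/2}),
\]
so $\|\wedge^sB\|\le W^{-s}\|g^{1/2}\|^{2s}\|\wedge^sh\|$. Since $g\preceq\Id$ and $W\ge1$, this is at most $\|\wedge^s h\|\le C\sigma_s(h)$. Every $s$-minor of $B$ is an entry of $\wedge^sB$ and is therefore bounded by the operator norm. The case $s=0$ is trivial.

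The main obstacle is the lower bound $\sigma_s\ge c\lambda_1\cdots\lambda_s$ on $\Gamma_k$ together with the control of negative entries by $\lambda_k$. I would try to prove this by induction on $k$ using the hyperplane-restriction property. Alternatively, I would cite the known inequality (Lin--Trudinger; Li) and verify only the deleted-entry cone membership needed above.
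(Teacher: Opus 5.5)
Your argument is correct, but it takes a different route from the paper.

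\textbf{The paper's route.} The paper never orders the eigenvalues. It inducts on $s$ for an arbitrary index set $I$, deletes one $i\in I$, and proves
\[
|\lambda_i|\,\sigma_{s-1}(\lambda|i)\le C\sigma_s(\lambda)
\]
by splitting on the sign of $\lambda_i$. The case $\lambda_i\ge0$ is immediate. When $\lambda_i<0$, it observes that $\sigma_{s+1}(\lambda)>0$ forces $\lambda|i\in\Gamma_{s+1}$. It then applies Newton's inequality $\sigma_{s+1}\sigma_{s-1}\le\theta\sigma_s^2$ with $\theta<1$ in dimension $n-1$. The induction hypothesis for $\lambda|i\in\Gamma_{k-1}$ handles the remaining $s-1$ factors.

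\textbf{Your route.} You reduce everything to the top product. Deleting the $k-1$ largest entries lands in $\Gamma_1$, which gives both $\lambda_k>0$ and $|\lambda_i|\le(n-k+1)\lambda_k$ for every negative entry. The exchange step then needs exactly $s<k$, as you say, and yields $|\lambda_I|\le C\lambda_1\cdots\lambda_s$.

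\textbf{Comparison.} Your route avoids Newton's inequality entirely and gives explicit constants. The paper's route is ordering-free and treats each index set directly.

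The step you flag as the main obstacle is not one, and no citation of Lin--Trudinger is needed. Since $\lambda|1\in\Gamma_{k-1}$ and $s\le k-1$,
\[
\sigma_s(\lambda)=\lambda_1\sigma_{s-1}(\lambda|1)+\sigma_s(\lambda|1)\ge\lambda_1\sigma_{s-1}(\lambda|1).
\]
Iterating this on $\lambda|1$ gives $\sigma_s(\lambda)\ge\lambda_1\cdots\lambda_s$ with constant one. This uses only the deletion property from the preliminaries.

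Your congruence step is the same as the paper's. Your use of $g\preceq\Id$ is justified, because the hypothesis $W\ge1$ forces the scalar coefficient in $g=\Id-\lambda p\otimes p$ to satisfy $\lambda|p|^2\ge0$.
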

\begin{proof}
It is enough to prove that, for every $s$-element index set $I$ and
$\lambda=(\lambda_1,\ldots,\lambda_n)\in\Gamma_k$,
\begin{equation}\label{eq:compound-eigenproduct}
\left|\prod_{i\in I}\lambda_i\right|
\le C_{n,k,s}\sigma_s(\lambda).
\end{equation}
We induct on $s$, with $s=0$ immediate. Fix $i\in I$ and put
$\widehat\lambda=\lambda|i$. The deleted tuple belongs to
$\Gamma_{k-1}$, so $\sigma_{s-1}(\widehat\lambda)>0$
and $\sigma_s(\widehat\lambda)>0$ when $s\le k-1$.
If $\lambda_i\ge0$, the identity
$\sigma_s(\lambda)=\sigma_s(\widehat\lambda)
+\lambda_i\sigma_{s-1}(\widehat\lambda)$ gives
\begin{equation}\label{eq:compound-one-factor}
|\lambda_i|\sigma_{s-1}(\widehat\lambda)
\le\sigma_s(\lambda).
\end{equation}
If $\lambda_i=-a<0$, then $\sigma_{s+1}(\lambda)>0$ and
\[
\sigma_{s+1}(\widehat\lambda)>a\sigma_s(\widehat\lambda)>0.
\]
The deleted tuple therefore belongs to $\Gamma_{s+1}$; this also
covers $s=k-1$, when the additional positivity follows from the
display rather than merely from $\widehat\lambda\in\Gamma_{k-1}$.
The normalized Newton inequality in dimension $m=n-1$ gives
\[
\frac{\sigma_{s+1}(\widehat\lambda)
             \sigma_{s-1}(\widehat\lambda)}
     {\sigma_s(\widehat\lambda)^2}
\le \theta_{m,s}
:=\frac{s(m-s)}{(s+1)(m-s+1)}<1.
\]
Consequently
\[
a\sigma_{s-1}(\widehat\lambda)
\le\theta_{m,s}\sigma_s(\widehat\lambda),\qquad
\sigma_s(\lambda)
\ge(1-\theta_{m,s})\sigma_s(\widehat\lambda),
\]
and hence \eqref{eq:compound-one-factor} holds with the fixed factor
$\theta_{m,s}/(1-\theta_{m,s})$ on its right-hand side. By the
induction hypothesis for $\widehat\lambda\in\Gamma_{k-1}$,
\[
\left|\prod_{j\in I\setminus\{i\}}\lambda_j\right|
\le C_{n-1,k-1,s-1}\sigma_{s-1}(\widehat\lambda).
\]
Multiplying the two inequalities proves
\eqref{eq:compound-eigenproduct}, including every sign pattern.
Since a symmetric matrix diagonalizes orthogonally, the eigenvalues
of $\wedge^s h$ are precisely these $s$-fold products, proving
\eqref{eq:compound-cone}. Finally,
\[
\wedge^sB=W^{-s}(\wedge^s g^{1/2})
               (\wedge^s h)(\wedge^s g^{1/2}).
\]
For $\lambda\ge0$ one has $0\prec g\preceq\Id$ and $W\ge1$,
so the operator norm of the two outside factors is at most one.
This proves \eqref{eq:compound-congruence}.
\end{proof}

\begin{lemma}[Concentration of the $k$-Hessian measure]\label[lemma]{lem:C4}
For the normalized solutions and radii just described, assume
\begin{equation}\label{eq:C4-source-domination}
U_j(y)\le C\cB(y)\qquad (|y|\le\rho_j),
\end{equation}
and that, on every fixed punctured compact set,
\[
V_j(x)\le C|x|^{-\gamma},
\qquad
|DV_j(x)|\le C|x|^{-\gamma-1}.
\]
Assume in addition that for every $K\Subset B_1\setminus\{0\}$,
\begin{equation}\label{eq:C4-lower-input}
\liminf_{j\to\infty}\inf_{x\in K}
\bigl(V_j(x)-\Phi_{m_*}(x)\bigr)\ge0.
\end{equation}

Then, after extraction,
\[
V_j\to V
\quad\text{locally uniformly on the punctured limit domain},
\]
and
\begin{equation}\label{eq:atom}
\mu_k[-V]=m_*\delta_0,
\qquad m_*\text{ is given by }\eqref{eq:C4-bubble-mass}.
\end{equation}
Moreover
\[
\Phi_{m_*}\le V\le C\Phi_{m_*}
\]
near the pole.
\end{lemma}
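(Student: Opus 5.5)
Compactness comes first. On each punctured compact set $K\Subset B_1\setminus\{0\}$ (more generally, of the punctured limit domain), the hypotheses $V_j\le C|x|^{-\gamma}$ and $|DV_j|\le C|x|^{-\gamma-1}$ give uniform $C^{0,1}$ bounds. Arzel\`a--Ascoli and a diagonal extraction then give $V_j\to V$ locally uniformly away from $0$. The blow-down parameter $\lambda_j=\eps_j\rho_j^{-2(\gamma+1)}$ tends to zero, so $W_j\to1$ uniformly on $K$. The right-hand side of \eqref{eq:first-crossing-eq} is $\rho_j^{-2}V_j^{p_*}$, which is bounded on $K$ and tends to $0$ there. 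By stability of admissible viscosity solutions under uniform convergence, with the Lorentz perturbation entering only through $W_j\to1$ and uniformly bounded gradients, the limit $-V$ is $k$-convex on the punctured domain and satisfies $\sigma_k(-D^2V)=0$ there in the viscosity sense. By Trudinger--Wang theory, $\mu_k[-V]$ is then supported on $\{0\}$ once we know $-V$ extends as a $k$-convex function across the pole. Extension across a point is standard for $k<n/2$, since points have zero $k$-capacity and $V$ is bounded below. So $\mu_k[-V]=m\,\delta_0$ for some $m\ge0$, and it remains to show $m=m_*$.

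To identify $m$, I will compute the total mass by divergence form. In the smooth approximants, $k\sigma_k(-D(W_jDV_j))$ is a divergence of a Newton-tensor flux of the form $\Div_{\R^n}(T_{k-1}\,W_jDV_j)$ up to the transpose issue noted before \eqref{eq:first-crossing-eq}. Integrating \eqref{eq:first-crossing-eq} over $B_r$, $0<r<1$, and undoing the scaling, the left side equals $\rho_j^{-2}\int_{B_r}V_j^{p_*}dx=\int_{B_{r\rho_j}}U_j^{p_*}dy$. The source domination \eqref{eq:C4-source-domination} makes $\cB^{p_*}$ an integrable majorant, since $p_*\gamma>n$. Combined with $U_j\to\cB$ locally uniformly, dominated convergence gives $\int_{B_{r\rho_j}}U_j^{p_*}\to m_*$ by \eqref{eq:C4-bubble-mass}. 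The right side is a boundary flux on $\partial B_r$ involving mixed $(k-1)$-minors of $D(W_jDV_j)$ contracted with $DV_j$. I will use \cref{lem:compound-cone} with $s=k-1$ to bound those minors by $\sigma_{k-1}$, which keeps them integrable and uniformly controlled. Weak convergence of Hessian measures (Trudinger--Wang) then identifies the limit of the fluxes with $\mu_k[-V](\overline{B_r})=m$. Concretely, I will test against a radial cutoff $\phi$ with $\phi\equiv1$ near $0$ and $\operatorname{supp}\phi\subset B_1$, rather than take sharp spheres. Then $\int\phi\,\sigma_k\,dx$ equals the source integral $\to m_*$, and after integrating by parts $k$ times it also equals an expression in $V_j$ supported on $\operatorname{supp}D\phi$, away from the pole, which converges to the corresponding expression for $V$, namely $\int\phi\,d\mu_k[-V]=m$.

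For the comparison with $\Phi_{m_*}$, the lower bound $V\ge\Phi_{m_*}$ near the pole is immediate from \eqref{eq:C4-lower-input}. The upper bound $V\le C\Phi_{m_*}$ follows from $V_j\le C|x|^{-\gamma}$ and $\Phi_{m_*}=\kappa_\infty|x|^{-\gamma}$. These also fix the singular behaviour: a $k$-convex $-V$ with $V\asymp|x|^{-\gamma}$ can only carry an atom at $0$, which is consistent with the previous step.

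I expect the main obstacle to be the convergence of the flux terms, that is, interchanging limit and divergence in the $j$-dependent Lorentzian operator $\sigma_k(-D(W_jDV_j))$. This operator is not exactly a Euclidean $k$-Hessian, and the only Hessian control we have is $L^1$-type. My first approach is to expand $-D(W_jDV_j)=W_j(\Id+\lambda_jW_j^2DV_j\otimes DV_j)(-D^2V_j)$. This shows that the operator differs from $W_j^k$ times a rank-one perturbation of $\sigma_k(-D^2V_j)$ by terms carrying a factor $\lambda_j|DV_j|^2\to0$, each multiplied by mixed $(k-1)$-minors. \Cref{lem:compound-cone} bounds these minors by $\sigma_{k-1}$, and a local $L^1$ bound on $\sigma_{k-1}$ away from the pole follows by the descent-type integration by parts already used for $H$ in \cref{lem:C3}. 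So these perturbation terms vanish in the limit, and the Euclidean weak continuity of $k$-Hessian measures (Trudinger--Wang) finishes the identification $m=m_*$.
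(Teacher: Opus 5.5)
Your overall plan is workable. The pieces that hold up are:
\begin{itemize}
\item compactness away from the pole;
\item $\mu_k[-V]=m\delta_0$, obtained from viscosity stability on the punctured domain together with the removable-singularity extension of the bounded-above $k$-convex function $-V$;
\item the source limit $\rho_j^{-2}\int\phi V_j^{p_*}\,dx\to m_*$ by domination;
\item testing with a single $\phi\equiv1$ near $0$.
\end{itemize}
The gap is the step that passes to the limit in the flux $\frac1k\int W_j^kT_{k-1}(B_j)(DV_j,D\phi)\,dx$ on the annulus $\{D\phi\ne0\}$, where $B_j=-D^2V_j$. You close it with Trudinger--Wang weak continuity. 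That theorem requires the approximants $-V_j$ to be Euclidean $k$-convex, and they need not be. Only the congruent matrix $h_j=W_jg_j^{-1/2}B_jg_j^{-1/2}$ lies in $\Gamma_k$, and congruence does not preserve $\Gamma_k$ when $k<n$. Besides, the flux is not a Hessian measure. It pairs minors that are bounded only in $L^1$ with $DV_j$, which converges only in $L^p$, so no weak-continuity theorem applies to it as written. Integrating by parts $k$ times does not turn it into an explicit expression that obviously converges.

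Your pointwise fallback also fails. The rank-one correction is $\lambda_jW_j^{k+2}T_{k-1}(B_j)(B_jDV_j,DV_j)$, and $T_{k-1}(B_j)B_j=\sigma_k(B_j)\Id-T_k(B_j)$ is made of $k$-th order minors. These are not controlled by \cref{lem:compound-cone}, which covers orders $\le k-1$ only. The correction is harmless only in the divergence form \eqref{eq:Euclid-div}, paired with $D\phi$.

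What is missing is the mechanism of Step~4 of the paper, which you can run on the annulus alone.
\begin{itemize}
\item With $w_j=-V_j$, use the exact identity $\int\phi\,\sigma_k(B_j)\,dx=\frac1k\int w_j\,T_{k-1}(B_j):D^2\phi\,dx$, so that the uniformly convergent $w_j$, not $DV_j$, multiplies the minors.
\item Identify the weak limits of all mixed minors of order $\le k-1$ near the annulus by induction on the order, through the cofactor identity \eqref{eq:C4-general-minor}, starting from $D^2w_j\to D^2w$.
\item This needs only uniform convergence of $w_j$ there and the $L^1$ bounds you already have from \cref{lem:compound-cone} and the annular integration by parts. It does not need $k$-convexity of $w_j$, and the limits are determined by $w=-V$ alone.
\item Run the same identity and induction for mollifications $w_\tau$ of the $k$-convex limit; Trudinger--Wang weak continuity does apply to these. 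This gives $m=\int\phi\,d\mu_k[w]=\lim_\tau\frac1k\int w_\tau T_{k-1}(D^2w_\tau):D^2\phi\,dx$, which is the same annular limit, hence $m=m_*$.
\end{itemize}
Once this is inserted, your route is in fact leaner than the paper's. Every contribution at the pole is absorbed by exact identities for the smooth $w_j$ and $w_\tau$. You therefore need neither the near-pole bounds \eqref{eq:C4-weighted-tight}, \eqref{eq:C4-VT-bound}, \eqref{eq:C4-R-L1}, \eqref{eq:C4-mollifier-weighted} nor the dual-cone argument for $k$-convexity across the pole.
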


\begin{proof}
The assumed bounds for $V_j$ and $DV_j$ give local boundedness and equicontinuity on every compact subset of the punctured limit domain.  Hence, after passing to a subsequence,
\[
V_j\longrightarrow V
\]
locally uniformly there.  We keep this subsequence throughout the proof.

\smallskip
\noindent\emph{Step 1: lower-order minors and annular curvature mass.}
Let
\[
B_j=-D^2V_j.
\]
The symmetric curvature representative is
\[
h_j=W_jg_j^{-1/2}B_jg_j^{-1/2},
\qquad
g_j=\Id-\lambda_jDV_j\otimes DV_j.
\]
The uniform spacelikeness bound makes $g_j^{\pm1/2}$ and $W_j^{\pm1}$ uniformly bounded. Applying \cref{lem:compound-cone} to this exact congruence gives, for $0\le s\le k-1$,
\begin{equation}\label{eq:C4-compound-bound}
\|\wedge^s B_j\|\le C\sigma_s(h_j),
\qquad 0\le s\le k-1.
\end{equation}
The estimate controls the mixed minors and cofactors used in the double-divergence argument, even though $B_j$ need not be $k$-admissible.

We next prove the annular integral estimate
\begin{equation}\label{eq:C4-annular-sigma}
\int_{\{r<|x|<2r\}}\sigma_s(h_j)\,dx
\le Cr^{n-s(\gamma+2)},
\qquad 0\le s\le k-1,
\end{equation}
for $C/\rho_j\le r\le r_0$.  The case $s=0$ is the volume bound.  For $s\ge1$, take a cutoff $\chi$ supported in a slightly larger annulus and use
\[
\Div_{g_j}T_{s-1}(h_j)=0,
\qquad
\nabla_{g_j}^2V_j=-W_jh_j.
\]
After integration by parts,
\[
s\int W_j\sigma_s(h_j)\chi\,d\mu_j
=\int T_{s-1}(h_j)(\nabla V_j,\nabla\chi)\,d\mu_j.
\]
Since $T_{s-1}(h_j)>0$ and
\[
\tr T_{s-1}(h_j)=(n-s+1)\sigma_{s-1}(h_j),
\]
one has
\[
|T_{s-1}(h_j)(\nabla V_j,\nabla\chi)|
\le C\sigma_{s-1}(h_j)|\nabla V_j||\nabla\chi|.
\]
The constant must be uniform down to $r\asymp\rho_j^{-1}$, a scale not covered by the assumptions on fixed punctured compact sets.  Indeed, for $|y|\ge C$ and $|y|\le 2r_0\rho_j$, \eqref{eq:C4-source-domination} gives $U_j(y)\le C|y|^{-\gamma}$.  Apply Lemma~\ref{lem:C2} on a ball of radius comparable with $|y|$, contained in $B_{\rho_j}$, to obtain
\[
|DU_j(y)|\le C|y|^{-\gamma}
\bigl(|y|^{-1}+|y|^{-\gamma(p_*-k)/(2k)}\bigr)
\le C|y|^{-\gamma-1};
\]
the last inequality uses $\gamma(p_*-k)/(2k)=1+1/k$.  On bounded core balls, $U_j\le K_0$ and Lemma~\ref{lem:C2} instead give $|DU_j|\le C$.  Thus both $|\nabla V_j|\le Cr^{-\gamma-1}$ on the relevant annuli
and $\sup_{B_{2r_0\rho_j}}|DU_j|\le C$ hold, after fixing
$r_0<1/8$. To make the induction explicit, choose $\chi$ equal to one on
\(
A_r=\{r<|x|<2r\},
\)
supported in the enlarged annulus
\(
\widetilde A_r=\{r/2<|x|<4r\},
\)
and satisfying $|\nabla\chi|\le C/r$.  Since $W_j$ is uniformly
bounded above and below, $d\mu_j$ and $dx$ are uniformly equivalent.
The preceding integration-by-parts identity therefore gives, for
$1\le s\le k-1$,
\(
\int_{A_r}\sigma_s(h_j)\,dx
\le
C r^{-\gamma-2}
\int_{\widetilde A_r}\sigma_{s-1}(h_j)\,dx.
\)
Starting from the volume estimate at $s=0$ and iterating this
inequality yields
\eqref{eq:C4-annular-sigma}.  Since $\gamma+2=n/k$, the top exponent is
\[
n-(k-1)(\gamma+2)=\gamma+2.
\]
Combining \eqref{eq:C4-compound-bound} and \eqref{eq:C4-annular-sigma}, each dyadic shell $A_\rho=\{\rho<|x|<2\rho\}$ with $\rho\gtrsim\rho_j^{-1}$ satisfies
\[
\int_{A_\rho}|V_j|\,|T_{k-1}(B_j)|\,dx
\le C\rho^{-\gamma}\rho^{\gamma+2}=C\rho^2.
\]
The dyadic shells contribute at most $Cr^2$; we next estimate the innermost ball.  Returning there to the core variables $y=\rho_jx$ gives
\begin{align*}
&\int_{B_{C/\rho_j}}|V_j|\,|T_{k-1}(B_j)|\,dx\\
&\qquad=\rho_j^{\gamma+(\gamma+2)(k-1)-n}
\int_{B_C}U_j\,|T_{k-1}(-D^2U_j)|\,dy\\
&\qquad=\rho_j^{-2}
\int_{B_C}U_j\,|T_{k-1}(-D^2U_j)|\,dy=o_j(1).
\end{align*}
By \eqref{eq:compound-congruence},
\[
|T_{k-1}(-D^2U_j)|\le C\,\sigma_{k-1}(h_j)
\]
on the fixed core ball.  The integral of $\sigma_{k-1}(h_j)$ is uniformly bounded by the same recursive Newton-tensor integration-by-parts argument on nested fixed balls, starting from the volume bound and using $U_j\le K_0$ and Lemma~\ref{lem:C2}.  Hence the last integral is bounded uniformly in $j$.  In particular, the preceding annular estimates and the fixed-core estimates give, for every $1\le s\le k-1$ and fixed $0<r<r_0$,
\begin{equation}\label{eq:C4-lower-tight}
\int_{B_r}\|\wedge^s B_j\|\,dx
\le C r^{n-s(\gamma+2)}+C\rho_j^{s(\gamma+2)-n},
\end{equation}
and, for $0\le s\le k-1$ (with $\wedge^0B_j=1$),
\begin{equation}\label{eq:C4-weighted-tight}
\int_{B_r}V_j\|\wedge^s B_j\|\,dx
\le C r^{n-\gamma-s(\gamma+2)}
+C\rho_j^{\gamma+s(\gamma+2)-n}.
\end{equation}
Both exponents of $r$ are positive, and the exponents of $\rho_j$ are their negatives.  In the last display the smallest exponent of $r$, attained at $s=k-1$, is $2$.  Hence
\begin{equation}\label{eq:C4-VT-bound}
\int_{B_r}|V_j|\,|T_{k-1}(B_j)|\,dx
\le Cr^2+o_j(1).
\end{equation}

\smallskip
\noindent\emph{Step 2: the Lorentz correction disappears.}
We first record the exact Euclidean divergence formula.  Write $p=DV_j$ and $B_j=-D^2V_j$.  Since
\[
DW_j=-\lambda_jW_j^3B_jp,
\]
we have
\[
-D(W_jp)=W_j\bigl(B_j+\lambda_jW_j^2B_jp\otimes p\bigr).
\]
The rank-one expansion of $\sigma_k$ therefore gives
\[
\sigma_k(-D(W_jp))
=W_j^k\left[\sigma_k(B_j)
+\lambda_jW_j^2T_{k-1}(B_j)(B_jp,p)\right].
\]
On the other hand, $\Div_{\R^n}T_{k-1}(B_j)=0$ and
$\tr(T_{k-1}(B_j)B_j)=k\sigma_k(B_j)$, so direct differentiation of
$W_j^kT_{k-1}(B_j)p$ gives the same expression.  Hence
\begin{equation}\label{eq:Euclid-div}
\sigma_k(-D(W_jDV_j))
=-\frac1k\Div_{\R^n}\left(W_j^kT_{k-1}(B_j)DV_j\right).
\end{equation}
Combining \eqref{eq:Euclid-div} with the scaled equation yields
\begin{equation}\label{eq:C4-Euclidean-top}
\sigma_k(B_j)
=\rho_j^{-2}V_j^{p_*}+\frac1k\Div_{\R^n}\mathscr R_j,
\qquad
\mathscr R_j=(W_j^k-1)T_{k-1}(B_j)DV_j.
\end{equation}
On an annulus of radius $r$,
\[
|W_j^k-1|\le C\lambda_j|DV_j|^2
\le C\lambda_j r^{-2\gamma-2}.
\]
Using \eqref{eq:C4-annular-sigma}, \eqref{eq:C4-compound-bound}, and $|DV_j|\le Cr^{-\gamma-1}$ gives
\begin{equation}\label{eq:C4-R-shell}
\int_{\{r<|x|<2r\}}|\mathscr R_j|\,dx
\le C\lambda_jr^{-2\gamma-1}.
\end{equation}
Summing over dyadic shells from $r\asymp\rho_j^{-1}$ to a fixed $r_0$ gives the same order as the innermost shell.  On $B_{C/\rho_j}$, returning to the core variables gives
\[
\int_{B_{C/\rho_j}}|\mathscr R_j|\,dx
\le \rho_j^{-1}
\int_{B_C}|W_{\eps_j}^k-1|\,
|T_{k-1}(-D^2U_j)|\,|DU_j|\,dy.
\]
Since $|W_{\eps_j}^k-1|\le C\eps_j|DU_j|^2$, Lemma~\ref{lem:C2} and the fixed-scale Newton-tensor bound make the last integral $O(\eps_j)$.  Therefore
\begin{equation}\label{eq:C4-R-L1}
\|\mathscr R_j\|_{L^1(B_{r_0})}
\le C\frac{\varepsilon_j}{\rho_j}\longrightarrow0.
\end{equation}

\smallskip
\noindent\emph{Step 3: concentration of the source.}
At the critical exponent $\gamma p_*=n+2$. For every
$\eta\in C_c(B_{r_0})$, the change of variables $y=\rho_jx$ gives
\[
\int \eta(x)\rho_j^{-2}V_j(x)^{p_*}\,dx
=\int_{B_{\rho_j}}
\eta\!\left(\frac{y}{\rho_j}\right)U_j(y)^{p_*}\,dy.
\]
For each fixed $y$, the integrand converges to
$\eta(0)\cB(y)^{p_*}$, while \eqref{eq:C4-source-domination}
supplies the integrable majorant
$C\|\eta\|_{L^\infty}\cB^{p_*}$. Dominated convergence yields
\begin{equation}\label{eq:C4-source-atom}
\rho_j^{-2}V_j^{p_*}\,dx
\weakto m_*\delta_0,
\qquad
m_*=\int_{\mathbb R^n}\cB^{p_*}\,dx.
\end{equation}
By \eqref{eq:C4-Euclidean-top} and \eqref{eq:C4-R-L1},
\begin{equation}\label{eq:C4-top-distribution}
\sigma_k(B_j)\longrightarrow m_*\delta_0
\quad\text{in }\mathcal D'(B_{r_0}).
\end{equation}

\smallskip
\noindent\emph{Step 4: identification with the Hessian measure of the limit.}
Lorentz $k$-admissibility does not imply $B_j\in\Gamma_k$. We therefore establish convergence of the lower-order mixed minors and control their products with $V_j$ at the pole. After proving that the limit is $k$-convex, we apply Hessian-measure weak continuity to mollifications of that limit.  For a smooth function $w$ and every $s\ge1$,
\begin{equation}\label{eq:C4-double-div}
s\,\sigma_s(D^2w)
=\partial_i\partial_j\left(w\,T_{s-1}^{ij}(D^2w)\right).
\end{equation}
We shall also use the corresponding identity for mixed Hessian minors.  Let $I=(i_1,\ldots,i_s)$ and $J=(j_1,\ldots,j_s)$ be ordered index sets with distinct entries, and set
\[
M_{I,J}(D^2w)=\det\bigl(w_{i_a j_b}\bigr)_{a,b=1}^s.
\]
If $C^{I,J}_{ab}$ is the cofactor of the entry $w_{i_a j_b}$ in this minor, then
\begin{equation}\label{eq:C4-general-minor}
s\,M_{I,J}(D^2w)
=\sum_{a,b=1}^s\partial_{i_a}\partial_{j_b}
\bigl(w\,C^{I,J}_{ab}(D^2w)\bigr).
\end{equation}
The cofactor fields are divergence free in the selected row and column indices, by equality of mixed third derivatives.  We now justify convergence of the products at the pole.  Write $w_j=-V_j$.  The pointwise bound $V_j\le C\rho_j^\gamma$ on the fixed core, the source domination outside it, and $\gamma<n$ show that $w_j\to w:=-V$ in $L^1_{\rm loc}(B_{r_0})$, as well as uniformly on punctured compact sets.  By \eqref{eq:C4-lower-tight}, every minor of order $1\le s\le k-1$ has uniformly bounded total variation and no point mass in any of its measure limits at the pole.  When $k\ge2$, the order-one limit is the distributional Hessian $D^2w$.  Inductively, suppose the measure limits of all minors of order $s-1$ have been uniquely identified.  Their cofactors in \eqref{eq:C4-general-minor} converge weakly as measures; on a punctured compact set the uniform convergence $w_j\to w$ therefore identifies the products $w_jC^{I,J}_{ab}(B_j)$.  Estimate \eqref{eq:C4-weighted-tight}, with order $s-1$, removes the pole in this product limit: its mass in $B_r$ is at most
\[
C r^{n-\gamma-(s-1)(\gamma+2)}+o_j(1),
\qquad n-\gamma-(s-1)(\gamma+2)\ge2.
\]
The $r$-dependent bound tends to zero as $r\downarrow0$, so the product limit has no mass at the pole and is uniquely determined by its restriction off the pole.  Applying \eqref{eq:C4-general-minor} identifies all order-$s$ minors.  This proves the assertion for every lower-order mixed minor, including every cofactor of $T_{k-1}(B_j)$.  Applying the product argument with $s=k$ gives a unique weak measure limit for
\[
P_j^{ij}=w_jT_{k-1}^{ij}(B_j)\,dx;
\qquad \limsup_{j\to\infty}|P_j|(B_r)\le Cr^2.
\]
In particular, $P=\lim_jP_j$ carries no point mass at the pole.  Nevertheless, its double divergence
\[
\partial_i\partial_jP^{ij}
\]
may contain a Dirac mass there.  The $Cr^2$ bound rules out a defect measure in $P$ itself, but it does not exclude the singular $k$-Hessian mass recovered after taking the double divergence.

We also verify $k$-convexity across the pole. The core and annular
gradient bounds from Step~1 give, on $B_{2r_0}$,
$\theta_j:=\lambda_j|DV_j|^2\le C\eps_j\to0$.
By the exact congruence
$B_j=W_j^{-1}g_j^{1/2}h_jg_j^{1/2}$,
the expansions $W_j^{-1}=1+O(\theta_j)$ and
$g_j^{1/2}=\Id+O(\theta_j)$ yield
\[
|B_j-h_j|\le C\theta_j|h_j|\le C\eps_j|h_j|.
\]
Since $k\ge2$, $|h_j|\le\sigma_1(h_j)$; the order-one annular
estimate \eqref{eq:C4-annular-sigma} and the fixed-core estimate
bound $\int_{B_{r_0}}|h_j|$ uniformly. If $A$ is any constant matrix
in the dual cone $\Gamma_k^*$, then $A:h_j\ge0$, and hence
\[
A:D^2w_j=A:B_j\ge-C\eps_j|A|\,|h_j|.
\]
Passing to distributions gives $A:D^2w\ge0$.  The distributional characterization of $k$-convexity \cite[Lemma~2.2]{TrudingerWang} therefore supplies a proper upper-semicontinuous $k$-convex representative of $w$ on the whole ball. This representative is defined at the pole.

It remains to compare $P$ with the Hessian measure of this representative.  Let $w_\tau=w*\vartheta_\tau$ be a standard nonnegative mollification on a smaller ball.  These are smooth $k$-convex functions, $w_\tau\le0$, and $w_\tau\to w$ locally in $L^1$ \cite[Lemma~2.3]{TrudingerWang}.  The above growth estimates imply, whenever $\tau\le r$,
\[
\int_{B_{4r}}|w_\tau|\le Cr^{n-\gamma}.
\]
Apply the local weighted Hessian estimate \cite[Theorem~4.3, $l=k-1$, $q=1$]{TrudingerWang} to $r^\gamma w_\tau(r\,\cdot)$ on nested fixed balls.  Its rescaled $L^1$ norm is uniformly bounded, and $n>2k$ makes $q=1$ admissible.  Rescaling back and using the compound-minor bound on $\Gamma_k$ gives
\begin{equation}\label{eq:C4-mollifier-weighted}
\int_{B_r}|w_\tau|\,|T_{k-1}(D^2w_\tau)|\,dx\le Cr^2,
\qquad \tau\le r,
\end{equation}
with the constant independent of $\tau$ and $r$.  On a punctured compact set, the lower-order minors of $D^2w_\tau$ have bounded total variation by the local Hessian-mass estimate \cite[Theorem~3.1]{TrudingerWang} and the compound-minor bound.  Repeating the preceding induction, now with $w_\tau\to w$ uniformly on these compact sets, shows that their mixed cofactor limits agree with those obtained from $w_j$.  Consequently $w_\tau T_{k-1}(D^2w_\tau)\,dx\rightharpoonup P$ off the pole.  Estimates \eqref{eq:C4-VT-bound} and \eqref{eq:C4-mollifier-weighted} remove the pole for both product sequences, so this convergence holds throughout the smaller ball.  The double-divergence identity and Trudinger--Wang weak continuity \cite[Theorem~1.1]{TrudingerWang} now give the precise identity
\[
\mu_k[w]
=\lim_{\tau\downarrow0}\sigma_k(D^2w_\tau)\,dx
=\frac1k\partial_i\partial_jP^{ij}
=\lim_{j\to\infty}\sigma_k(B_j)
\quad\text{in }\mathcal D'(B_{r_0}).
\]
Combining this with \eqref{eq:C4-top-distribution} gives
\[
\mu_k[-V]=m_*\delta_0.
\]
To compare $V$ with the pole profile, we use the assumed lower bound \eqref{eq:C4-lower-input}.  By local
uniform convergence on punctured compact sets, \eqref{eq:C4-lower-input}
gives $V\ge\Phi_{m_*}$ on $B_{r_0}\setminus\{0\}$.  The assumed upper
bound on $V_j$ passes to the limit and, after adjusting its constant,
gives $V\le C\Phi_{m_*}$ there. Hence
\[
\Phi_{m_*}\le V\le C\Phi_{m_*}
\]
near the pole.  This proves the lemma.
\end{proof}

\subsection{Single-pole expansion and the critical Pohozaev identity}

We retain the notation $\Phi_m$ introduced in the preceding subsection.

\begin{lemma}[Single-pole expansion]\label[lemma]{lem:C5}
Let $V$ be positive and continuous in a punctured ball, and let
$w=-V$ extend to a proper $k$-convex function on the whole ball.
Assume
\[
\mu_k[-V]=m\delta_0,
\qquad
\Phi_m\le V\le C\Phi_m.
\]
Then there exist $b_0\ge0$, $\sigma\in(0,1)$ and $r_*>0$ such that
\begin{equation}\label{eq:single-pole-expansion}
|V-\Phi_m-b_0|
+|x|\,|D(V-\Phi_m)|
+|x|^2|D^2(V-\Phi_m)|
\le C|x|^\sigma
\end{equation}
for $0<|x|<r_*$.
\end{lemma}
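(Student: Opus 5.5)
The plan is to treat $\psi:=V-\Phi_m\ge0$ as a bounded-oscillation perturbation of the fundamental solution, and then upgrade boundedness to Hölder decay at the pole by rescaling annuli. Both $w=-V$ and $-\Phi_m-c$ are $k$-convex on a small ball $B_{r_0}$ and carry the same Hessian measure $m\delta_0$. Away from the pole, $\mu_k[-V]=0$, so $-V$ solves $\sigma_k(D^2w)=0$ there in the viscosity sense.

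\emph{Step 1: monotone boundary extrema and boundedness.} For $0<r<r_0$ set $M(r)=\max_{\partial B_r}\psi$ and $\mathsf m(r)=\min_{\partial B_r}\psi$. I would apply the Trudinger--Wang comparison principle for $k$-convex functions with ordered Hessian measures on $B_r$, pole included, with equal measures $m\delta_0$. Comparing $-V$ with $-\Phi_m-M(r)$ and with $-\Phi_m-\mathsf m(r)$ gives
\[
\mathsf m(r)\le\psi\le M(r)\quad\text{in }B_r\setminus\{0\}.
\]
Hence $M$ is nondecreasing and $\mathsf m$ is nonincreasing in $r$. Thus $\psi$ is bounded by $M(r_0)$ near the pole. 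Moreover, $\operatorname{osc}_{B_r\setminus\{0\}}\psi=M(r)-\mathsf m(r)=:\omega(r)$ is nondecreasing. The lower bound $\Phi_m\le V$ gives $\psi\ge0$, so any limit $b_0$ will be nonnegative.

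\emph{Step 2: rescaled annular regularity.} For small $r$, set
\[
V_r(x)=r^\gamma V(rx)=\Phi_m(x)+r^\gamma\psi(rx)\quad\text{on }\{1/4<|x|<4\}.
\]
Because $\psi$ is bounded, $V_r=\Phi_m+O(r^\gamma)$ uniformly. On this annulus $-D^2\Phi_m$ is smooth and lies in the open cone $\Gamma_k$, since $\Phi_m$ lies in the interior away from the pole in the Euclidean sense. I would use the same local concave-extension and Evans--Krylov/Schauder argument as in \cref{lem:positive}, in the small-perturbation regime. For $r$ small this gives $V_r\in C^{2,\alpha}$ with $\|V_r-\Phi_m\|_{C^{2,\alpha}}\le C\|V_r-\Phi_m\|_{L^\infty}\le Cr^\gamma\sup\psi$. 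Consequently $\psi_r(x):=\psi(rx)$ solves a linear equation $a^{ij}_r\partial_{ij}\psi_r=0$. Its coefficients, given by the averaged linearization of $\sigma_k^{1/k}$ between $-D^2\Phi_m$ and $-D^2V_r$, are uniformly elliptic and $C^\alpha$-bounded independently of $r$.

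\emph{Step 3: oscillation decay (main obstacle).} This is the key step, a Serrin-type isolated-singularity argument. The functions $M(r)-\psi_r$ and $\psi_r-\mathsf m(r)$ are nonnegative solutions of the linear equation on the annulus $\{1/4<|x|<1\}$. The Harnack inequality on the sphere $|x|=1/2$, combined with the boundary-extremum monotonicity of Step 1 applied in $B_{r/2}$, should give
\[
\omega(r/2)\le\theta\,\omega(r),\qquad\theta=\theta(n,k,m)<1.
\]
This holds once $r$ is small enough for the uniform ellipticity of Step 2. The delicate point is that the Harnack constant must not depend on $r$. This is exactly why the annular rescaling, rather than the degenerate original coordinates, is used. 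Iterating gives $\omega(r)\le Cr^\sigma$, so $\psi(x)\to b_0:=\lim_{r\to0}M(r)\ge0$ with $|\psi-b_0|\le C|x|^\sigma$.

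\emph{Step 4: derivatives.} Apply the interior Schauder estimate from Step 2 to $\psi_r-b_0$ on annuli. Its $L^\infty$ norm is $O(r^\sigma)$ and it solves the same uniformly elliptic linear equation, so its $C^{2}$ norm on $\{1/2<|x|<2\}$ is also $O(r^\sigma)$. Scaling back gives $|x||D\psi|+|x|^2|D^2\psi|\le C|x|^\sigma$, which is \eqref{eq:single-pole-expansion}.
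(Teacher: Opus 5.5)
\textbf{There is a genuine gap in Step~1.}
You apply a comparison principle on $B_r$ with the pole included, to two $k$-convex functions that both equal $-\infty$ at $0$ and carry the same atom $m\delta_0$. The Trudinger--Wang comparison principle is not available in this form. For $k<n/2$ the pole is a set of zero $k$-capacity, and nothing in the hypotheses supplies a boundary inequality there.

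If you instead compare on annuli $\{\rho<|x|<r\}$, where $-V$ is a continuous viscosity solution of $D^2w\in\partial\Gamma_k$, only half of the argument survives.
\begin{itemize}
\item The lower half works. Since $V\ge\Phi_m$, the function $(1-\epsilon)\Phi_m+\mathsf m(r)$ lies below $V$ on $\partial B_\rho$ for small $\rho$. Letting $\rho,\epsilon\to0$ gives $\psi\ge\mathsf m(r)$.
\item The upper half fails. Comparing with $(1+\epsilon)\Phi_m+M(r)$ requires $V\le(1+\epsilon)\Phi_m+M(r)$ on small spheres, that is, $\limsup_{x\to0}V/\Phi_m\le1$. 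The hypothesis gives only $V\le C\Phi_m$.
\end{itemize}
So the upper bound for $\psi$, which is the real content of the lemma, is assumed rather than proved.

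The paper obtains this bound from the Harvey--Lawson Riesz-density theory. That theory gives existence of the density at the pole, uniqueness for the Dirichlet problem with prescribed density, and the resulting asymptotics $V=A|x|^{-\gamma}+O(1)$. A blow-up combined with Trudinger--Wang weak continuity then gives $A=A_m$. Once $\psi$ is known to be bounded, your annular comparison with $(1\pm\epsilon)\Phi_m$ does give the monotonicity of $M$ and $\mathsf m$. Your Harnack oscillation decay in Step~3 is then a legitimate substitute for the paper's barrier and removable-singularity argument.

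\textbf{Step~2 rests on a false claim.}
The matrix $-D^2\Phi_m$ does not lie in the open cone. Its eigenvalues are $-(\gamma+1)b,b,\dots,b$ with $b=A_m\gamma|x|^{-\gamma-2}$, so $\sigma_k(-D^2\Phi_m)=0$. This is forced, since $\mu_k[-\Phi_m]$ vanishes off the pole. Hence $w=-V$ solves a degenerate equation and is known only to be continuous.

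The concave-extension and Evans--Krylov argument of \cref{lem:positive} therefore does not transfer, for two reasons.
\begin{itemize}
\item That argument needs a $C^2$ solution whose Hessian stays in a compact subset of the open cone. For a mere viscosity solution, the extension lies above the original operator and so preserves only the subsolution property.
\item The linearization of $\sigma_k^{1/k}$ blows up where $\sigma_k=0$.
\end{itemize}

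Annular regularity is still possible because the boundary point is nondegenerate: $T_{k-1}(-D^2\Phi_m)\succ0$. Consequently $\mathfrak F(N)=\sup\{t:N-t\Id\in\overline{\Gamma_k}\}$ is smooth and uniformly elliptic near the reference Hessians. You then need a flat-solution regularity theorem, such as Savin's small-perturbation theorem, which the paper uses. The linear equation for $\psi_r$ should also use the averaged $T_{k-1}$, the linearization of $\sigma_k$, not the linearization of $\sigma_k^{1/k}$.
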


\begin{proof}
We identify the leading coefficient using the Hessian mass, establish uniform ellipticity on rescaled annuli, and then prove that the bounded remainder extends across the pole.

\smallskip
\noindent\emph{Step 1: bounded Riesz remainder and identification of its coefficient.}
Regard $\mathcal F_k:=\overline{\Gamma_k}$ as an orthogonally
invariant subequation. Its Riesz characteristic is
$p_{\mathcal F_k}=n/k>2$ \cite[Example~3.6(5)]{HarveyLawson}; thus
$\gamma=p_{\mathcal F_k}-2$ and its normalized Riesz kernel is
$K_{p_{\mathcal F_k}}=-|x|^{-\gamma}/\gamma$. Consistently with
$\Phi_m=A_m|x|^{-\gamma}$, define the unnormalized density by
\[
\widetilde\Theta(w,0):=
\lim_{r\downarrow0}\frac{\sup_{B_r}w}{-r^{-\gamma}},
\qquad
\Theta_{\mathcal F_k}(w,0)=\gamma\widetilde\Theta(w,0).
\]

On every punctured compact subset the Hessian measure of $w$ vanishes.
The equivalence between zero Hessian measure and the homogeneous
$k$-Hessian equation in the admissible viscosity sense
\cite[Section~2, remark after Lemma~2.4]{TrudingerWang}
therefore makes $w$ $\mathcal F_k$-harmonic there.
The Riesz-density theory for orthogonally invariant subequations of
finite Riesz characteristic
\cite[Sections~3--4]{HarveyLawson}
gives the existence of the density
$\Theta_{\mathcal F_k}(w,0)$.
The two-sided comparison
$\Phi_m\le V\le C\Phi_m$
then implies
$
\widetilde\Theta(w,0)
\in[A_m,CA_m].
$
In particular, the density is positive and finite, and $w(0)=-\infty$.

Choose a small ball $B_{r_0}$ on whose boundary $w$ is continuous.
The ball is strictly $\mathcal F_k$-convex; the orthogonal invariance
and finiteness of $p_{\mathcal F_k}$ give the structural property
(F3) in \cite[Lemma~3.4]{HarveyLawson}.
The prescribed-singularity theorem
\cite[Theorem~3.5]{HarveyLawson}, together with the prescribed-density
formulation in \cite[Corollary~4.2]{HarveyLawson}, gives a solution with
this boundary value and density $\Theta_{\mathcal F_k}(w,0)$.
Uniqueness for the prescribed-density problem follows from
\cite[Corollary~4.5]{HarveyLawson}.  Since $w$ has the same boundary
value and the same density, it coincides with that solution.
The corresponding polar asymptotics therefore give
\begin{equation}\label{eq:C5-O1}
	V(x)=A|x|^{-\gamma}+O(1)
	\qquad (x\to0)
\end{equation}
for some $A>0$.  The Hessian mass determines $A$.  Define
$
w_r(x)=r^\gamma w(rx).
$
Then $w_r\to-A|x|^{-\gamma}$ locally uniformly away from the origin and
$
\mu_k[w_r]=m\delta_0.
$
This convergence also holds in $L^1_{\rm loc}$ across the pole.
Indeed, for each fixed $R>0$ and all sufficiently small $r$,
the two-sided bound for $V$ gives
\[
|w_r(x)|=r^\gamma V(rx)\le C|x|^{-\gamma}
\qquad (0<|x|<R).
\]
Since $0<\gamma=(n-2k)/k<n$, the right-hand side is
integrable on $B_R$. The convergence away from the origin and
dominated convergence therefore imply
\[
w_r\longrightarrow -A|x|^{-\gamma}
\quad\text{in }L^1_{\rm loc}(\mathbb R^n),
\]
and in particular in measure.
Each $w_r$ is Euclidean $k$-convex.  The weak continuity theorem of Trudinger--Wang \cite[Theorem~1.1]{TrudingerWang} therefore gives
$
\mu_k[-A|x|^{-\gamma}]=m\delta_0.
$
A direct computation of the radial \(k\)-Hessian measure yields
\[
m=\frac{|S^{n-1}|}{k}\binom{n-1}{k-1}(A\gamma)^k,
\]
so $A=A_m$.  Hence
\begin{equation}\label{eq:C5-bounded-e}
e:=V-\Phi_m=O(1).
\end{equation}

\smallskip
\noindent\emph{Step 2: local ellipticity on rescaled annuli.}
Although $\sigma_k(-D^2\Phi_m)=0$, the corresponding point of the cone boundary is nondegenerate:
\begin{equation}\label{eq:C5-Tpositive}
T_{k-1}(-D^2\Phi_m)\succ0
\qquad(x\ne0).
\end{equation}
Indeed, writing $\theta=x/|x|$ and $b=A_m\gamma |x|^{-\gamma-2}$, the eigenvalues of $-D^2\Phi_m$ are
\[
-(\gamma+1)b,\quad b,\ldots,b,
\]
so the radial and tangential eigenvalues of $T_{k-1}$ are positive multiples of $b^{k-1}$.

To apply Savin's small-perturbation theorem, define locally
\[
\mathfrak F(N)=\sup\{t\in\mathbb R:N-t\Id\in\overline\Gamma_k\}.
\]
Near the compact set of matrices $-D^2\Phi_m$ on a fixed annulus, the implicit function theorem and \eqref{eq:C5-Tpositive} show that $\mathfrak F$ is smooth and locally uniformly elliptic, with
\[
D_N\mathfrak F(N)
=\frac{T_{k-1}(N)}{\tr T_{k-1}(N)}
\]
on the cone boundary.  Let
\[
w_r(x)=-r^\gamma V(rx),
\qquad
\phi(x)=-\Phi_m(x),
\qquad z_r=w_r-\phi.
\]
By \eqref{eq:C5-bounded-e}, on a fixed annulus
\[
\|w_r-\phi\|_{L^\infty}\le Cr^\gamma\to0.
\]
To verify Savin's small-perturbation hypothesis, we normalize by the square
root of the remainder size.  Put
\[
A^+=\{1/4<|x|<4\},\qquad
A^-=\{1/2<|x|<2\},\qquad
\delta_r=\|z_r\|_{L^\infty(A^+)}\le Cr^\gamma .
\]
If $\delta_r=0$, the desired estimate is immediate.  Otherwise set
\[
t_r=\sqrt{\delta_r},\qquad
v_r=\frac{z_r}{t_r},
\]
and define
\[
G_r(M,x)
=
\frac{
	\mathfrak F(D^2\phi(x)+t_rM)-\mathfrak F(D^2\phi(x))
}{t_r}.
\]
Then
\[
G_r(D^2v_r,x)=0,\qquad
G_r(0,x)=0,
\]
and
\[
\|v_r\|_{L^\infty(A^+)}
=
\sqrt{\delta_r}\longrightarrow0.
\]

We next verify the uniform structural hypotheses for the rescaled
operators.  The set
\[
K=\{D^2\phi(x):x\in\overline{A^+}\}
\]
is a compact subset of the smooth nondegenerate portion of
$\{\mathfrak F=0\}$.  Hence there exist
$\kappa,\lambda_0,\Lambda_0>0$ such that $\mathfrak F$ is smooth and
\[
\lambda_0\Id
\le D_N\mathfrak F
\le\Lambda_0\Id
\]
throughout the $\kappa$-neighborhood of $K$.

Fix a radius $R>0$ in matrix space and take $r$ sufficiently small that
$t_rR<\kappa/2$.  For $\|M\|\le R$,
\begin{equation}\label{eq:C5-scaled-operator}
	G_r(M,x)
	=
	\int_0^1
	D_N\mathfrak F(D^2\phi(x)+s t_rM):M\,ds,
\end{equation}
and
\[
D_MG_r(M,x)
=
D_N\mathfrak F(D^2\phi(x)+t_rM).
\]
Thus the ellipticity constants $\lambda_0,\Lambda_0$ and the admissible
matrix radius $R$ are independent of $r$.  Moreover,
\[
D^2_{MM}G_r
=
t_rD^2_{NN}\mathfrak F
\]
is uniformly bounded.

Differentiating the integral representation
\eqref{eq:C5-scaled-operator} with respect to $x$ and $M$ shows that the
first and second $x$-derivatives, as well as the mixed $(M,x)$-derivatives,
of $G_r$ are uniformly bounded; in particular, no factor $t_r^{-1}$ is
lost in these estimates.  Hence the coefficient oscillation and the
modulus of continuity of $D_MG_r$ on balls of fixed radius are uniform
in $r$, since the smooth reference solution $\phi$ is fixed.

Finally, $\mathfrak F$ is globally degenerate elliptic because
\[
\overline{\Gamma_k}+\{P\ge0\}
\subset\overline{\Gamma_k}.
\]
Consequently, the operators $G_r$ have the global monotonicity and the
local smooth uniform ellipticity required by Savin's small-perturbation
theorem \cite{Savin}.

Cover $A^-$ by finitely many balls compactly contained in $A^+$.  On each,
rescale the spatial variable to a unit ball (and divide $v_r$ by the square
of that ball's fixed radius so that the Hessian variable is unchanged).
The $L^\infty$ norm of the resulting solution is $O(\sqrt{\delta_r})$,
below Savin's uniform threshold for all sufficiently small $r$.
Savin's estimate gives $\|v_r\|_{C^{2,\eta}(A^-)}\le C$, with $C$ independent of
$r$.  Consequently
\begin{equation}\label{eq:C5-relative-C2}
\|z_r\|_{C^{2,\eta}(\{1/2<|x|<2\})}
\le C\sqrt{\delta_r}\le Cr^{\gamma/2}.
\end{equation}
In original variables, on $\{r/2<|x|<2r\}$ this yields
\[
|D^2e(x)|\le C r^{-\gamma-2+\gamma/2},
\qquad
\frac{|D^2e(x)|}{|D^2\Phi_m(x)|}\le Cr^{\gamma/2}\longrightarrow0.
\]
The annular $C^{0,\eta}$ seminorm of the rescaled relative Hessian error
has the same $O(r^{\gamma/2})$ bound.  This decay is used in the Schauder argument below.

The exact difference of the two homogeneous $k$-Hessian equations gives
\begin{equation}\label{eq:C5-linear-e}
a^{ij}(x)e_{ij}=0,
\qquad
a(x)=\int_0^1T_{k-1}\bigl(-D^2\Phi_m-tD^2e\bigr)\,dt.
\end{equation}
Define the normalized coefficient field
\begin{equation}\label{eq:C5-ahat}
\widehat a(x)=c_m^{-1}|x|^{(k-1)(\gamma+2)}a(x),
\qquad
c_m=\binom{n-1}{k-1}(A_m\gamma)^{k-1}.
\end{equation}
Multiplication by this positive scalar does not change the equation.  The coefficients $\widehat a$ are uniformly elliptic and converge, as $x\to0$, to the radial matrix
\[
\theta\otimes\theta+
\frac{\gamma+1}{n-1}(\Id-\theta\otimes\theta).
\]
More precisely, on every rescaled annulus
$\{r/2<|x|<2r\}$ the difference from this radial field has
$C^{0,\eta}$ norm at most $Cr^{\gamma/2}$, where the H\"older norm is
computed after putting $x=ry$.

\smallskip
\noindent\emph{Step 3: removal of the puncture and the constant regular part.}
Fix $0<\beta<\gamma$.  The limiting radial operator satisfies
\[
\mathcal L_0(|x|^{-\beta})
=\beta(\beta-\gamma)|x|^{-\beta-2}<0.
\]
By the convergence of the normalized coefficients, after decreasing $r_*$ if necessary,
\begin{equation}\label{eq:C5-singular-barrier}
\widehat a^{ij}\partial_{ij}|x|^{-\beta}
\le-c|x|^{-\beta-2}
\qquad(0<|x|<r_*).
\end{equation}
A bounded solution of \eqref{eq:C5-linear-e}, equivalently of $\widehat a^{ij}e_{ij}=0$, therefore has a removable isolated singularity.  To see this, replace $\widehat a$ inside $B_{2\delta}$ by a uniformly elliptic coefficient field $\widehat a_\delta$ that is constant on $B_\delta$. Solve the Dirichlet problem on $B_{r_*}$ with boundary data $e$. Uniform Krylov--Safonov estimates then give a continuous subsequential limit $h$ as $\delta\downarrow0$.  Then $e-h$ is bounded, solves the homogeneous equation on the punctured ball, and vanishes on the outer boundary.  Applying the maximum principle to
\[
\pm(e-h)-\varepsilon\bigl(|x|^{-\beta}-r_*^{-\beta}\bigr)
\]
and then sending $\varepsilon\downarrow0$ shows $e\equiv h$.  Hence
\[
b_0:=e(0)
\]
is well defined and, because $e\ge0$, $b_0\ge0$.  Krylov--Safonov gives
\[
|e(x)-b_0|\le C|x|^\sigma
\]
for some $\sigma\in(0,1)$.  Rescaling \eqref{eq:C5-linear-e} on annuli and applying Schauder estimates, using the $C^{0,\eta}$ control supplied by \eqref{eq:C5-relative-C2}, gives
\[
|e-b_0|+|x||De|+|x|^2|D^2e|
\le C|x|^\sigma,
\]
which is \eqref{eq:single-pole-expansion}.

\end{proof}

\begin{lemma}[Stability on fixed annuli]\label[lemma]{lem:C5-annulus}
Let $V_j$ be a first-crossing approximating sequence as in \cref{lem:C4}, with locally uniform convergence to $V$ on punctured compact sets.
Assume that $V$ satisfies \cref{lem:C5}, and let $r_*$ be supplied there.
For each fixed $0<r<\min\{r_*,1\}$, one has $V_j\to V$ in $C^2$ in a
neighborhood of $\partial B_r$.
\end{lemma}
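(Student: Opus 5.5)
The plan is to upgrade the locally uniform convergence $V_j\to V$ near $\partial B_r$ to $C^2$ convergence, using interior regularity for the rescaled equation \eqref{eq:first-crossing-eq}. The key observation is that near $\partial B_r$ the limit $V$ is a small perturbation of the pole profile $\Phi_m$. That profile sits at a nondegenerate point of $\partial\Gamma_k$, where \eqref{eq:C5-Tpositive} gives $T_{k-1}\succ0$. So the equation for $V_j$ becomes uniformly elliptic on a fixed annulus, and we can run a Savin-type small-perturbation argument uniformly in $j$.

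\textbf{Step 1: $V_j$ is close to $\Phi_m$ in $L^\infty$.} Fix $r<\min\{r_*,1\}$ and the annulus $\mathcal A=\{r/2<|x|<2r\}$. By \cref{lem:C5} and \eqref{eq:single-pole-expansion}, $V-\Phi_m-b_0$ is $O(r^\sigma)$ in $C^2(\mathcal A)$. Since the constant $b_0$ does not affect the Hessian, rescaling $x=ry$ shows that
\[
\widetilde V_j(y)=r^\gamma V_j(ry)-r^\gamma b_0
\]
is within $o_j(1)+Cr^{\gamma+\sigma}$ in $L^\infty(\{1/2<|y|<2\})$ of $\Phi_m(y)$. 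Here the uniform closeness of $V_j$ to $V$ supplies the $o_j(1)$ term.

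\textbf{Step 2: the rescaled equation is a small perturbation of $\mathfrak F(-D^2\Phi_m)=0$.} Write \eqref{eq:first-crossing-eq} in the form $\sigma_k(A_{\lambda_j}[V_j])=\rho_j^{-2}V_j^{p_*}$. By \eqref{eq:C4-lambda-scaling} we have $\lambda_j\to0$, and the assumed bound $|DV_j|\le C|x|^{-\gamma-1}$ holds on $\mathcal A$. Hence the Lorentz corrections $W_j-1$ and $g_j-\Id$ are $O(\lambda_j)$ uniformly there. The right-hand side $\rho_j^{-2}V_j^{p_*}$ is also $O(\rho_j^{-2})$ on $\mathcal A$.

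Using the operator $\mathfrak F(N)=\sup\{t:N-t\Id\in\overline\Gamma_k\}$ from the proof of \cref{lem:C5}, the equation can be rewritten as $\mathfrak F(\widehat h_j)=\vartheta_j$ with $\vartheta_j\to0$ uniformly. Here $\widehat h_j$ is the symmetric representative, which depends smoothly on $(D^2V_j,DV_j,\lambda_j)$ through \eqref{eq:symmetric-shape}. One can solve $\sigma_k(N+\vartheta\Id)=\rho_j^{-2}V_j^{p_*}$ for small $\vartheta$ near the nondegenerate boundary point: the implicit function theorem applies because $\partial_\vartheta\sigma_k=\tr T_{k-1}>0$ near $-D^2\Phi_m$.

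The resulting family of operators $G_j(D^2V_j,DV_j,x)$ has the following properties. They are degenerate elliptic globally, since the cone is preserved under adding positive matrices. Near the compact set $\{-D^2\Phi_m(x)\}$ they are smooth and uniformly elliptic, with $\lambda_j$ and $\rho_j^{-2}$ entering as smooth parameters tending to zero.

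\textbf{Step 3: apply Savin's theorem.} We apply Savin's small-perturbation theorem \cite{Savin} with the normalization $v=(V_j-\Phi_m-b_0)/\sqrt{\delta}$, exactly as in Step 2 of \cref{lem:C5}. This needs the $L^\infty$ smallness from Step 1, which requires $j$ large and possibly $r$ small. The conclusion is a uniform $C^{2,\eta}$ bound for $V_j$ on $\{3r/4<|x|<3r/2\}$. Arzel\`a--Ascoli together with the uniqueness of the limit $V$ then gives $V_j\to V$ in $C^2$ near $\partial B_r$.

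The main obstacle is the smallness threshold. Savin's theorem needs the perturbation to be small relative to the scale, and $r^{\gamma+\sigma}$ is not automatically below a universal threshold for every $r<r_*$. I would handle this by decreasing $r_*$ if necessary, or better, by applying Savin on annuli centered on $V$ itself rather than on $\Phi_m$. On the fixed annulus around $\partial B_r$, $V$ is a smooth classical solution whose Hessian lies in the nondegenerate part of $\partial\Gamma_k$ by \eqref{eq:single-pole-expansion} and \eqref{eq:C5-Tpositive}. There $\|V_j-V\|_{L^\infty}\to0$ alone suffices, and this removes any dependence on the size of $r$.

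A secondary point is the first-order dependence through $DV_j$. The plan is to treat $DV_j$ as a coefficient: $V_j\to V$ locally uniformly with $|DV_j|$ bounded, and the Lorentz weight is $O(\lambda_j)$ by \eqref{eq:C4-lambda-scaling}. So the $x$-dependence of the operator is uniformly Lipschitz up to an $o_j(1)$ error, which is acceptable in Savin's hypotheses.
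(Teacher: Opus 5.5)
\emph{Genuine gap.} Your overall plan matches the paper's: a small-perturbation $C^{2,\eta}$ estimate near the nondegenerate point of $\partial\Gamma_k$, with $V$ itself as the reference so that the size of $r$ plays no role. Your fallback in the last paragraphs is exactly the paper's choice of reference.

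The gap is the step ``apply Savin exactly as in Step~2 of \cref{lem:C5}.'' There, $w_r$ and $\phi$ solve the \emph{same} homogeneous equation, so $G_r(0,x)=0$. Here $V$ solves only the limit equation $\sigma_k(-D^2V)=0$. By contrast, $V_j$ solves the equation with Lorentz parameter $\lambda_j$ and source $\rho_j^{-2}V_j^{p_*}$. Your treatment of this mismatch fails under either reading of $\vartheta_j$.
\begin{itemize}
\item If $\vartheta_j=\mathfrak F(\widehat h_j(x))$ is a given right-hand side, you only know it is uniformly small. Its H\"older modulus is governed by that of $D^2V_j$, which is what you are trying to control. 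Uniform smallness of the data never yields $C^2$ bounds; already $\Delta u=f$ with $f$ small in $L^\infty$ has no $C^2$ estimate.
\item Suppose instead the source is built into a cone-level operator $\mathcal H_j(M,p,z)$. Define it as the $t$ with $Q_j(M,p)-t\Id\in\overline{\Gamma_k}$ and $\sigma_k(Q_j(M,p)-t\Id)=\rho_j^{-2}(-z)^{p_*}$, where $Q_j(M,p)=W_j(p)g_j(p)^{-1/2}Mg_j(p)^{-1/2}$. With $w=-V$, the translated operator $F_j(M,p,z,x)=\mathcal H_j(D^2w+M,Dw+p,w+z)$ satisfies $F_j(0,0,0,x)=\mathcal H_j(D^2w,Dw,w)\neq0$. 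So Savin's hypothesis $F(0,0,0,x)=0$ fails.
\end{itemize}
Two things are missing. First, a small-perturbation theorem that tolerates a right-hand side small in $C^{0,\eta}$; the paper uses Fan's nonhomogeneous theorem. Second, the observation that the inhomogeneity $f_j=-\mathcal H_j(D^2w,Dw,w)$ is evaluated on the smooth reference jet. Hence $\|f_j\|_{C^{0,\eta}}\le C(\lambda_j+\rho_j^{-2})\to0$.

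A related circularity affects your last paragraph: you should not freeze $DV_j$ (or $V_j$) as coefficients. The frozen coefficient $\lambda_jDV_j\otimes DV_j$ is $O(\lambda_j)$ in $L^\infty$, but its modulus in $x$ is again controlled only by $D^2V_j$. An $x$-dependence that is merely ``Lipschitz up to an $o_j(1)$ error'' is not admissible in Savin's hypotheses, which require quantitative regularity of the operator in $x$ as well. Instead, keep $(p,z)$ as genuine arguments of $\mathcal H_j$, truncated smoothly away from the reference jet set. Then the only explicit $x$-dependence enters through the smooth triple $(D^2w,Dw,w)$, uniformly in $j$.

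With these two changes your argument becomes the paper's:
\begin{itemize}
\item Cover a smaller annulus by balls $B_\ell(x_a)$ and rescale $z_j=w_j-w$ by $\ell^{-2}$.
\item Fix $\ell$ first so that the coefficient oscillation is below the threshold.
\item Then take $j$ large so that $\ell^{-2}\|z_j\|_{L^\infty}$ and $\|f_j\|_{C^{0,\eta}}$ are small.
\item Conclude from the uniform $C^{2,\eta}$ bounds together with $z_j\to0$ uniformly, by interpolation.
\end{itemize}
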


\begin{proof}
Fix $0<r<\min\{r_*,1\}$.  On an annulus $A_r$ around $\partial B_r$, the reference function $V$ is smooth and the matrices $-D^2V$ stay in a compact nondegenerate neighborhood of the smooth part of $\partial\Gamma_k$.  Write $w_j=-V_j$, $w=-V$.  To specify a globally monotone representative of the scaled Lorentz equation near the reference triple $(D^2w,Dw,w)$, put
\[
Q_j(M,p)=W_j(p)g_j(p)^{-1/2}Mg_j(p)^{-1/2},
\quad g_j(p)=\Id-\lambda_jp\otimes p,
\quad W_j(p)=(1-\lambda_j|p|^2)^{-1/2},
\]
and define $\mathcal H_j(M,p,z)$ as the unique scalar $t$ satisfying
\[
Q_j(M,p)-t\Id\in\overline\Gamma_k,
\qquad
\sigma_k(Q_j(M,p)-t\Id)=\rho_j^{-2}(-z)^{p_*}.
\]
Only $p$ with $\lambda_j|p|^2<1$ and $z<0$ are needed here; outside a fixed neighborhood of the reference set $\{(D^2w(x),Dw(x),w(x)):x\in\overline{A_r}\}$, extend the $p,z$ dependence by a smooth truncation.  For fixed $p,z$ the operator is globally monotone in $M$, since positive semidefinite increments preserve the cone condition.  The implicit function theorem and $T_{k-1}(-D^2V)\succ0$ give a fixed smoothness neighborhood and uniform ellipticity constants for all sufficiently large $j$.  On the admissible solutions the original equation is precisely
\[
\mathcal H_j(D^2w_j,Dw_j,w_j)=0.
\]
After subtracting the reference function, $z_j=w_j-w$ solves
\[
\widetilde{\mathcal H}_j(D^2z_j,Dz_j,z_j,x)=f_j(x),
\qquad
\widetilde{\mathcal H}_j(0,0,0,x)=0,
\]
where explicitly
\[
f_j(x)=-\mathcal H_j(D^2w(x),Dw(x),w(x)).
\]
On the fixed annulus the reference data are smooth, while the first-crossing parameters satisfy $\lambda_j\to0$ and $\rho_j^{-2}\to0$.  Hence
\[
\|f_j\|_{C^{0,\eta}(A_r)}\le C(\lambda_j+\rho_j^{-2})\longrightarrow0.
\]
For large $j$, the cone-level representatives are globally monotone,
uniformly elliptic on a fixed matrix neighborhood, uniformly Lipschitz
in the lower-order variables, and have a common modulus for their
Hessian derivatives. These verify the ellipticity, lower-order, and Hessian-modulus hypotheses of Fan's
nonhomogeneous theorem \cite[Theorem~1.7 in the arXiv version]{Fan}.
To make the coefficient oscillation small, cover a smaller annulus by
finitely many $B_\ell(x_a)\Subset A_r$ and set
$\widetilde z_{j,a}(y)=\ell^{-2}z_j(x_a+\ell y)$.
The Hessian argument is unchanged, while the gradient and value
arguments gain factors $\ell$ and $\ell^2$. The smooth reference
triple varies by at most $C\ell^\eta$, so the rescaled coefficient
oscillation is $O(\ell^\eta)$, uniformly in $j$ and $a$.
The right-hand side $\widetilde f_{j,a}(y)=f_j(x_a+\ell y)$ satisfies
\[
\|\widetilde f_{j,a}\|_{C^{0,\eta}(B_1)}
\le (1+\ell^\eta)\|f_j\|_{C^{0,\eta}(A_r)}\to0.
\]
Choose $\ell$ first so that the coefficient oscillation is below Fan's threshold. With this $\ell$ fixed, take $j$ large enough that both $\ell^{-2}\|z_j\|_{L^\infty(A_r)}$ and the rescaled right-hand-side norm are below the corresponding thresholds. Fan's theorem gives uniform $C^{2,\eta}$
bounds for these rescaled differences; interpolation with $z_j\to0$
uniformly gives $D^2z_j\to0$ on the smaller balls.  Hence
\[
V_j\to V\quad\text{in }C^2
\]
near $\partial B_r$.
\end{proof}

We shall also use the following critical Pohozaev identity for smooth
positive solutions $Y$ satisfying $\lambda|DY|^2<1$,
$A_\lambda[Y]\in\overline\Gamma_k$, and
$\sigma_k(A_\lambda[Y])=\kappa Y^{p_*}$ with
$\lambda\ge0$ and $\kappa\ge0$.  For
\[
B=-D^2Y,\quad \xi=DY,\quad T=T_{k-1}(B),\quad
Z=T(\xi,\xi),\quad \mathfrak R=D_BZ,
\]
define
\[
\psi_{k,\lambda}(s)
=\frac1k\int_0^1\frac{t^k}{(1-\lambda t^2s)^{k/2}}\,dt.
\]
At $p=p_*$ set
\begin{align}
\Pi_{\lambda,\kappa}[Y]
={}&\frac{W_\lambda^k}{k}
(x\cdot DY+\alpha_* Y)T\xi\label{eq:pohozaev-current}\\
&+\psi_{k,\lambda}\mathfrak RBx
-x\psi_{k,\lambda}Z
+\frac{\kappa}{p_*+1}xY^{p_*+1},
\nonumber
\end{align}
where $\alpha_*=(n-2k)/(k+1)$ and every occurrence of
$\psi_{k,\lambda}$ is evaluated at $s=|DY|^2$.
For the divergence calculation, we use the following identities. The
Newton cofactor formula and the symmetry of third derivatives of $Y$
give
\[
\partial_i\mathfrak R^{ij}=(k-1)(T\xi)^j,
\qquad \mathfrak R:B=(k-1)Z,
\qquad \mathfrak R B\xi=Z\xi-sT\xi.
\]
For example, when $B$ is diagonal with eigenvalues $\beta_i$, its
off-diagonal entries satisfy
$\mathfrak R_{i\ell}=-\sigma_{k-2}(\beta|i,\ell)\xi_i\xi_\ell$,
while
$\mathfrak R_{ii}=\sum_{\ell\ne i}\sigma_{k-2}(\beta|i,\ell)\xi_\ell^2$.
The last identity follows from
$\sigma_{k-1}(\beta|\ell)-\sigma_{k-1}(\beta|i)
=(\beta_i-\beta_\ell)\sigma_{k-2}(\beta|i,\ell)$;
the first follows by differentiating the cofactor formula, whose terms
containing $DB$ cancel in pairs because $B=-D^2Y$.
Using $Ds=-2B\xi$ and $DB$ symmetric in its three indices, we obtain
\[
\Div_{\R^n}\bigl(\psi_{k,\lambda}\mathfrak R Bx
-x\psi_{k,\lambda}Z\bigr)
=\bigl((k+1)\psi_{k,\lambda}+2s\psi'_{k,\lambda}\bigr)
\langle T\xi,Bx\rangle-(n-k+1)\psi_{k,\lambda}Z.
\]
Differentiating $t^{k+1}(1-\lambda t^2s)^{-k/2}$ and integrating over
$0\le t\le1$ gives
\[
\frac{W_\lambda^k}{k}=(k+1)\psi_{k,\lambda}(s)
+2s\psi'_{k,\lambda}(s).
\]
Finally, \eqref{eq:Euclid-div} and the equation for $Y$ imply
$\Div_{\R^n}(W_\lambda^kT\xi)=-k\kappa Y^{p_*}$.
The reaction terms cancel because
$\alpha_*=n/(p_*+1)$, and the remaining terms cancel because
$n-k+1=(k+1)(\alpha_*+1)$.  Thus
\begin{equation}\label{eq:pohozaev-div}
\Div_{\R^n}\Pi_{\lambda,\kappa}[Y]
=2(\alpha_*+1)|DY|^2\psi'_{k,\lambda}(|DY|^2)Z.
\end{equation}
Here $\psi'_{k,\lambda}\ge0$.  It remains to justify $Z\ge0$ without assuming that $B=-D^2Y$ itself lies in $\Gamma_k$.  At a point where $DY\ne0$, choose the Euclidean frame $e_1=DY/|DY|$.  Since
\[
h=W_\lambda g_\lambda^{-1/2}Bg_\lambda^{-1/2},
\qquad
g_\lambda=\Id-\lambda DY\otimes DY,
\]
the transverse block satisfies $h_\perp=W_\lambda B_\perp$.  Therefore, with $F_h=T_{k-1}(h)$,
\[
F_h(e_1,e_1)=\sigma_{k-1}(h_\perp)
=W_\lambda^{k-1}\sigma_{k-1}(B_\perp)
=W_\lambda^{k-1}T(e_1,e_1).
\]
Lorentz admissibility gives $F_h(e_1,e_1)\ge0$, and hence
\[
Z=T(DY,DY)=|DY|^2W_\lambda^{-(k-1)}F_h(e_1,e_1)\ge0.
\]
At $DY=0$ this is immediate.  Thus the divergence in \eqref{eq:pohozaev-div} is nonnegative.
For the single-pole limit $\lambda=\kappa=0$, so $W_0=1$,
$\psi_{k,0}=1/[k(k+1)]$, and $\psi'_{k,0}=0$.
For $Y=\Phi_m$, let $e_r=x/r$ and
$F_r=T_{k-1}(-D^2\Phi_m)(e_r,e_r)$. Then
$D\Phi_m=-\gamma\Phi_m e_r/r$,
$Z=F_r\gamma^2\Phi_m^2/r^2$, and the radial cofactor formula
gives $\mathfrak R Bx=0$. Since $\alpha_*=k\gamma/(k+1)$,
the remaining terms of \eqref{eq:pohozaev-current} cancel:
\[
\frac1k
(x\cdot D\Phi_m+\alpha_*\Phi_m)
T_{k-1}(-D^2\Phi_m)D\Phi_m
-
x\psi_{k,0}Z
=0.
\]
Thus $\Pi_{0,0}[\Phi_m]\equiv0$ off the origin.

Adding the constant $b_0$ changes neither the gradient nor the Hessian.
Therefore
\[
\Pi_{0,0}[\Phi_m+b_0]
-
\Pi_{0,0}[\Phi_m]
=
\frac{\alpha_*b_0}{k}
T_{k-1}(-D^2\Phi_m)D\Phi_m.
\]
The mass identity $\mu_k[-\Phi_m]=m\delta_0$ gives
$\int_{\partial B_r}T_{k-1}(-D^2\Phi_m)D\Phi_m\cdot\nu\,dS=-km$.
Consequently,
\[
\int_{\partial B_r}
\Pi_{0,0}[\Phi_m+b_0]\cdot\nu\,dS
=
-\alpha_*mb_0.
\]

By \eqref{eq:single-pole-expansion}, $V=\Phi_m+b_0+\widetilde e$ with
\[
|\widetilde e|
+r|D\widetilde e|
+r^2|D^2\widetilde e|
\le Cr^\sigma.
\]
The homogeneity of the terms in
\eqref{eq:pohozaev-current} and the preceding estimate imply
\[
\int_{\partial B_r}
\Bigl(
\Pi_{0,0}[V]
-
\Pi_{0,0}[\Phi_m+b_0]
\Bigr)\cdot\nu\,dS
=o(1)
\qquad(r\downarrow0).
\]
By \eqref{eq:pohozaev-div}, $\Div\Pi_{0,0}[V]=0$ on the punctured
ball, so the boundary integral is independent of $r$.
Taking $r\downarrow0$ and using the preceding $o(1)$ estimate gives
\begin{equation}\label{eq:pohoz-limit}
	\int_{\partial B_r}
	\Pi_{0,0}[V]\cdot\nu\,dS
	=
	-\alpha_*mb_0
\end{equation}
for every sufficiently small $r>0$.
In \cref{prop:C6}, we compare the single-pole boundary integral in
\eqref{eq:pohoz-limit} with the nonnegative boundary integrals of the
smooth first-crossing approximants.

\section{The critical endpoint: neck exclusion, core counting, and completion}\label{sec:critical-global}

Assume $n\ge4k+3$. We use the bubble and single-pole results of \cref{sec:critical-local} to derive a scale drop, count the cores, and establish a recursive inequality for the endpoint defect.

\subsection{Exclusion of a two-core neck and the scale-drop argument}

We use the second selected core to produce a first spherical crossing. A local bound at the crossing then yields the scale drop, including when the rescaled second core escapes every fixed compact set.

Recall $\alpha_*=(n-2k)/(k+1)$ and $\gamma=(n-2k)/k$
from \cref{sec:critical-local}, and set
\[\ell(x)=u(x)^{-1/\alpha_*}.\]
Thus
\begin{equation}\label{eq:critical-scale-relations}
\frac{p_*-k}{2k}=\frac1{\alpha_*},
\qquad
\frac\gamma{\alpha_*}=1+\frac1k,
\qquad
\gamma p_*=n+2.
\end{equation}
Write the bubble tail as
\[
\cB(r)=\kappa_\infty r^{-\gamma}+O(r^{-\gamma-2}),
\qquad
\Phi(x)=\kappa_\infty|x|^{-\gamma}.
\]
By \eqref{eq:C4-bubble-mass}--\eqref{eq:C4-Phi-tail},
$\Phi=\Phi_{m_*}$ and the Hessian mass of $-\Phi$ is $m_*$.

Fix a large separation parameter $\Lambda$.  Use the contact envelope
\[
\underline\ell(x)=\inf_y\left\{\ell(y)+\frac{|x-y|}{100\Lambda}\right\}
\]
and call a point $y\in\R^n$ a contact point if
$
\underline\ell(y)=\ell(y).
$
We extract a maximal selected family of contact points $x_i$ such that
\begin{equation}\label{eq:core-separation}
|x_i-x_j|\ge\Lambda(\ell_i+\ell_j),
\qquad \ell_i=\ell(x_i).
\end{equation}
The covering argument below fixes the constants used later in the core count. The envelope is
$1/(100\Lambda)$-Lipschitz and equals $\ell$ at every contact point.
If a contact point $y$ is not in the selected family, maximality
gives a selected $x_s$ with
$d:=|x_s-y|<\Lambda(\ell_s+\ell(y))$.
Because both points are contacts,
$|\ell_s-\ell(y)|\le d/(100\Lambda)$; thus
$99\ell_s<101\ell(y)<200\ell(y)$ and
$d<3\Lambda\ell(y)$. If $y$ is itself selected, take $s=y$.
This proves the covering property: every contact point $y$ is
associated with a selected core $s$ satisfying
\begin{equation}\label{eq:core-covering}
\ell_s\le2\ell(y),
\qquad
|x_s-y|\le3\Lambda\ell(y).
\end{equation}

\begin{proposition}[Nonlinear two-core neck exclusion]\label[proposition]{prop:C6}
At $p=p_*$, there exist constants $L_0>1$, $\Lambda_0>1$, and $\delta_{\mathrm{core}}>0$, depending only on $n,k$, with the following property. Choose the selected family with separation parameter $\Lambda\ge\Lambda_0$. If $i$ is a selected core satisfying
\[
\ell_i^{\mu_{\mathrm{sc}}}\int_{B_{L_0\ell_i}(x_i)}\cD\,d\mu<\delta_{\mathrm{core}},
\qquad \mu_{\mathrm{sc}}=\frac2{k+1},
\]
and another selected core $j\ne i$ satisfies $\ell_j\ge\ell_i$, then there exists a selected core $s$ such that
\begin{equation}\label{eq:C6scaledrop}
\ell_s\le\frac14\ell_i,
\qquad
|x_s-x_i|\le\frac1{20}|x_j-x_i|.
\end{equation}
\end{proposition}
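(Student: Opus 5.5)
We argue by contradiction: assume no selected core $s$ satisfies \eqref{eq:C6scaledrop}. We then rescale at $x_i$ and show that the rescaled solutions converge to a single-pole limit whose Pohozaev flux contradicts the nonnegativity of the flux carried by the smooth approximants.

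\emph{Normalization.} Since the hypotheses are scale invariant, we argue along a sequence. Take solutions $u_j$, cores $x_{i,j}$ with small scaled defect $\delta_{\mathrm{core}}\to0$ and $\Lambda\to\infty$, and competitors $x_{j'}$ with $\ell_{j'}\ge\ell_i$, for which \eqref{eq:C6scaledrop} fails. Set
\[
U_j(y)=\ell_i^{\alpha_*}u(x_i+\ell_iy),\qquad \eps_j=\ell_i^{-2(\gamma+1)+2\gamma+\ldots},
\]
with $\eps_j$ the Lorentz parameter produced by the scaling. These are critical solutions with $U_j(0)=1$, $W\le W_*$, and $\mu_{\mathrm{sc}}$ chosen so that $\int_{B_{L_0}}\cD_j\,d\mu_j$ equals the scaled defect. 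The contact-point property gives \eqref{eq:C3-contact-centering}, and the same property bounds $U_j\le K_0$ on the region where no smaller core exists. \cref{lem:C3} then gives $\eps_j\to0$ and $U_j\to\cB$.

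\emph{First crossing.} Let $\rho_j=|x_{j'}-x_i|/\ell_i\ge\Lambda\to\infty$. Define the first crossing radius $\rho_j'\le\rho_j/20$ as the first radius at which $U_j$ exceeds $C\cB$ on a sphere. Before that radius, the absence of a selected core of scale $\le\ell_i/4$ in $B_{\rho_j/20}$, together with the covering property \eqref{eq:core-covering}, excludes any contact point of small scale. This gives \eqref{eq:C4-source-domination} on $B_{\rho_j'}$, and \cref{lem:C2} gives the gradient bounds. \cref{lem:inner-pole-lower} provides \eqref{eq:C4-lower-input}. We blow down at scale $\rho_j'$ and apply \cref{lem:C4}: $V_j\to V$ with $\mu_k[-V]=m_*\delta_0$ and $\Phi_{m_*}\le V\le C\Phi_{m_*}$. \cref{lem:C5} then gives $V=\Phi_{m_*}+b_0+O(|x|^\sigma)$.

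The crossing condition at $|x|=1$ forces $V\ge C'\Phi$ somewhere on $\partial B_1$. This rules out $V\equiv\Phi_{m_*}$ and should give $b_0>0$. Two inputs are needed here. First, a comparison or maximum principle for the $\mathcal F_k$-harmonic difference $V-\Phi_{m_*}\ge0$: if $b_0=0$, a Hopf or strong maximum argument through the linear equation \eqref{eq:C5-linear-e} forces $V\equiv\Phi$, contradicting the crossing. Second, the second core at distance comparable to $\rho_j$, hence at bounded rescaled distance, contributes positive mass. This mass keeps $V$ strictly above the pole profile.

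\emph{Pohozaev contradiction.} By \eqref{eq:pohoz-limit}, the flux of $\Pi_{0,0}[V]$ through $\partial B_r$ equals $-\alpha_*m_*b_0<0$. For the approximants, integrate \eqref{eq:pohozaev-div} on $B_r$ for the rescaled $V_j$ with parameters $(\lambda_j,\rho_j^{-2})$. The divergence is nonnegative, and the flux through the tiny inner region vanishes because the field vanishes at the origin. Hence
\[
\int_{\partial B_r}\Pi_{\lambda_j,\rho_j^{-2}}[V_j]\cdot\nu\,dS\ge0.
\]
\cref{lem:C5-annulus} gives $C^2$ convergence near $\partial B_r$. Together with $\lambda_j,\rho_j^{-2}\to0$ and $\psi_{k,\lambda_j}\to\psi_{k,0}$, this passes the inequality to the limit, so the limiting flux is $\ge0$. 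This contradicts $-\alpha_*m_*b_0<0$.

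\emph{Main obstacle.} We expect the hardest step to be proving $b_0>0$ strictly, and more precisely verifying the first-crossing hypotheses uniformly down to the core scale without an additive Green representation. We would first attempt to show that the crossing sphere lies at a scale where the second core's influence makes $\inf_{\partial B_1}(V-\Phi)$ positive, via the comparison barrier of \cref{lem:inner-pole-lower} applied around $x_{j'}$. If that fails, we would instead use the strong maximum principle route described above.
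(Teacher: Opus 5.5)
\textbf{There is a genuine gap.} Your skeleton matches the paper's: normalize at $x_i$, apply \cref{lem:C3}, take a first crossing, apply \cref{lem:C4} and \cref{lem:C5}, show $b_0>0$, and contradict the sign of the Pohozaev flux. The missing piece is the step that actually uses the second core.

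You define the crossing radius as the first radius where $U_j$ exceeds $C\cB$, and you assert $\rho_j'\le\rho_j/20$. Nothing you wrote shows that such a crossing occurs inside the protected region. The single-core information (\cref{lem:C3}, \cref{lem:inner-pole-lower}) is consistent with $V\equiv\Phi$ on the blow-down ball. That gives $b_0=0$ and zero Pohozaev flux, so there is no contradiction. The threshold must therefore be fixed in advance as some $\Theta$ slightly above $1$, calibrated by a quantitative strict excess that the second core produces at a fixed fraction of the inter-core distance.

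Your proposed source for this excess does not work. You suggest the second core sits at bounded rescaled distance at the crossing scale and its mass keeps $V$ above $\Phi$. But the crossing scale may be $o(L_j)$, so in the blow-down the second core can leave every compact set (the paper's case (ii)). Moreover, no additive Green representation is available to turn a distant core's Hessian mass into a pointwise lower bound.

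The paper obtains the excess in four steps.
\begin{enumerate}
\item Harnack (\cref{lem:C2}) on the natural ball of core $j$, plus a radial zero-$k$-curvature barrier centered there, gives $U_j(x)\ge c_bS_j^{\gamma-\alpha_*}|x-L_j\omega_j|^{-\gamma}$.
\item At the inter-core scale $Z_j(y)=L_j^\gamma U_j(L_jy)$, the lower half-relaxed limit satisfies $Z_*\ge\Phi$ and $Z_*\ge c_b|y-\omega|^{-\gamma}$.
\item The supporting hyperplane of $\overline{\Gamma_k}$ at $-D^2\Phi$ makes $Z_*-\Phi$ a viscosity supersolution of $\mathcal L_\Phi=T_{k-1}(-D^2\Phi):D^2$.
\item Comparison with an $\mathcal L_\Phi$-harmonic function equal to $1$ on $\partial B_\eta(\omega)$ and $0$ on an outer boundary avoiding the pole gives $Z_*(y_0)\ge\Phi(y_0)+c_0$ at the fixed point $y_0=-q_0\omega$.
\end{enumerate}
This fixes $1<\Theta<1+c_0/(2\Phi(y_0))$ and forces a first crossing at some $\rho_j\to\infty$ with $\rho_j\le q_0L_j$. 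The small $q_0$ also keeps $B_{4\rho_j}$ inside the protected region $U\le K_0$. Your cruder $\rho_j/20$ would not guarantee this.

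There is a second omission. Your strong-minimum-principle route needs the limit $V$ to exist in a neighborhood of the crossing point, with $V(\xi)=\Theta\kappa_\infty>\Phi(\xi)$. The crossing point lies on $\partial B_1$, where $V_j$ could a priori spike just outside the controlled ball. Gradient bounds from \cref{lem:C2} inside $B_1$ degenerate at the boundary and do not exclude this. You need the local boundedness result at the crossing point, \cref{lem:crossing-point}. It uses $0<V_j\le K_0\rho_j^\gamma$ to give any nearby concentration a non-degenerate zero-curvature barrier, which is incompatible with the bounded crossing value.

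With that in hand, $b_0>0$ follows from the crossing value alone, not from the second core's mass. The strong minimum principle for $\mathcal L_\Phi$ on $(B_1\cup B_{\delta/2}(\xi))\setminus\{0\}$ gives $e=V-\Phi>0$ there. The $\mathcal L_\Phi$-harmonic barrier $|x|^{-\gamma}$ then carries the positive minimum on $\partial B_{1/2}$ down to the pole.
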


For any fixed $L_0>1$ and $\delta_{\mathrm{core}}>0$, call a
selected core $i$ good if
\[
\ell_i^{\mu_{\mathrm{sc}}}\int_{B_{L_0\ell_i}(x_i)}\cD\,d\mu
<\delta_{\mathrm{core}},
\]
and bad otherwise. In the counting argument, $L_0$ and $\delta_{\mathrm{core}}$ are the constants supplied by \cref{prop:C6}.

The proof of \cref{prop:C6} is developed in four steps below, followed
by a concluding comparison of Pohozaev boundary integrals. We analyze
a hypothetical failure of the scale drop along good cores with vanishing
normalized defect and increasing separation.

\smallskip
\noindent\emph{Step 1: normalization under failure of the scale drop.}

Fix $L_0>4(R_{\rm turn}+1)$ and put $\mu_{\mathrm{sc}}=2/(k+1)$.
For the preparatory argument fix a positive threshold
$\delta_{\mathrm{core}}$; under the negation of the proposition
these thresholds tend to zero as $\Lambda$ tends to infinity.
Let $i$ be a good core, let $j\ne i$ satisfy $\ell_j\ge\ell_i$, and put
\[
d_{ij}=|x_i-x_j|.
\]
We argue by contradiction under the hypothesis that there is no selected core $s$ such that
\begin{equation}\label{eq:no-smaller-core}
\ell_s\le\frac14\ell_i,
\qquad
|x_s-x_i|\le\frac1{20}d_{ij}.
\end{equation}
The natural normalization
\[
U(x)=\ell_i^{\alpha_*} u(x_i+\ell_i x)
\]
is defined on all of $\R^n$, since $u$ is an entire solution. Its Lorentz
parameter is $\eps_i=\ell_i^{-2(\alpha_*+1)}$, and
$W_{\eps_i}[U](x)=W[u](x_i+\ell_i x)\le W_*$. The exact endpoint-defect scaling is
\begin{equation}\label{eq:C6-defect-scaling}
\int_{B_{L_0}}\cD[U]\,d\mu_{\eps_i}
=\ell_i^{\mu_{\mathrm{sc}}}
\int_{B_{L_0\ell_i}(x_i)}\cD[u]\,d\mu.
\end{equation}
The contact condition at $x_i$ also gives the centering inequality
required in Lemma~\ref{lem:C3}.  Indeed, $\underline\ell(x_i)=\ell_i$ and
$\underline\ell$ is $1/(100\Lambda)$-Lipschitz.  Hence
\[
U(x)^{-1/\alpha_*}
=
\frac{\ell(x_i+\ell_i x)}{\ell_i}
\ge
\frac{\underline\ell(x_i+\ell_i x)}{\ell_i}
\ge
1-\frac{|x|}{100\Lambda}.
\]
Moreover, the universal height bound implies
$
\ell_i\ge U_*^{-1/\alpha_*},
$
and therefore
$
0\le \eps_i
=
\ell_i^{-2(\alpha_*+1)}
\le
U_*^{\,2(\alpha_*+1)/\alpha_*}.
$
Thus the Lorentz parameters are uniformly bounded as required in
Lemma~\ref{lem:C3}.
The scaling identity shows that the good-core condition becomes the fixed-ball defect hypothesis of Lemma~\ref{lem:C3}.  We now normalize at $i$ and write
\[
x_i=0,\qquad \ell_i=1.
\]
Write
\[
L=\frac{d_{ij}}{\ell_i}=d_{ij},
\qquad
S=\frac{\ell_j}{\ell_i}\ge1.
\]
Then \eqref{eq:core-separation} gives
\begin{equation}\label{eq:LSseparation}
L\ge\Lambda(1+S),
\qquad
\frac SL\le\Lambda^{-1}.
\end{equation}

Take
\[
K_0=4000^{\alpha_*}.
\]
If $U(z)>K_0$ at a point $|z|<L/1000$, then the natural length at $z$ is less than $1/4000$.  Applying the contact envelope and \eqref{eq:core-covering} produces a selected core $s$ with $\ell_s<1/2000$ and
\[
|x_s|\le \frac L{1000}+\frac{103\Lambda}{4000}<\frac L{20},
\]
where the last inequality follows from \eqref{eq:LSseparation}.  This contradicts \eqref{eq:no-smaller-core}.  Hence
\begin{equation}\label{eq:protected-height}
\quad U\le K_0\quad\text{in }B_{L/1000}.
\end{equation}

To prove the existence of fixed constants for this $L_0$, suppose a contradiction sequence has $\Lambda_j\to\infty$ and good cores whose normalized defects in \eqref{eq:C6-defect-scaling} tend to zero.  The protected bound \eqref{eq:protected-height} holds on balls whose radii tend to infinity, and Lemma~\ref{lem:C3} then yields
\begin{equation}\label{eq:Uglobalbubble}
U_j\to\cB\quad\text{locally in }\mathbb R^n.
\end{equation}

\smallskip
\noindent\emph{Step 2: the second-core barrier and the first spherical crossing.}

Lemma~\ref{lem:inner-pole-lower} supplies the lower bound from the first core. A second radial barrier supplies an additional lower bound centered at the other core.

The solution has a uniform lower height bound on the natural ball around the second selected core.  Lemma \ref{lem:C2} applied in the $j$-core normalization gives
\[
U_j(x)\ge cS_j^{-\alpha_*}
\quad\text{on }B_{S_j/2}(L_j\omega_j),
\]
where $|\omega_j|=1$.  Center a radial zero-curvature barrier at $L_j\omega_j$ with slope variable
\[
y(r)=a_jr^{-\gamma-1},
\qquad a_j\asymp S_j^{\gamma-\alpha_*},
\]
chosen sufficiently small so that its value on $r=S_j/2$ is below the preceding Harnack lower bound. The denominator $\sqrt{1+\eps_j y(r)^2}$ in this radial barrier is uniformly controlled.
Indeed, the parameter in the second-core normalization is
$\widehat\eps_j:=\eps_jS_j^{-2(\alpha_*+1)}\le C$ by the universal
height bound. Thus for $r\ge S_j/2$,
$y(r)\le CS_j^{-\alpha_*-1}$ and
$\eps_jy(r)^2\le C\widehat\eps_j\le C$; in particular
$(1+\eps_jy(r)^2)^{-1/2}\ge c>0$ with constants independent of $j$.
It follows that the corresponding radial zero-curvature barrier satisfies
\[
\int_r^\infty
\frac{y(s)}
{\sqrt{1+\eps_jy(s)^2}}\,ds
\ge
ca_jr^{-\gamma}
\asymp
cS_j^{\gamma-\alpha_*}r^{-\gamma}.
\]
The normalized solutions are entire, so the same truncated-annulus comparison, followed by an outer-radius limit, yields
\begin{equation}\label{eq:second-core-barrier}
U_j(x)\ge c_bS_j^{\gamma-\alpha_*}
|x-L_j\omega_j|^{-\gamma}
\end{equation}
whenever $|x-L_j\omega_j|\ge S_j$.

Scale by the inter-core distance:
\[
Z_j(y)=L_j^\gamma U_j(L_jy).
\]
Define the lower half-relaxed limit of $(Z_j)$ by
\[
Z_*(y):=\lim_{m\to\infty}
\inf\bigl\{Z_j(z):j\ge m,\ |z-y|<1/m\bigr\}
.
\]
By \eqref{eq:inner-pole-liminf}, \eqref{eq:second-core-barrier}, and $S_j\ge1$, the lower half-relaxed limit $Z_*$ satisfies
\begin{equation}\label{eq:two-core-liminf}
Z_*\ge\Phi,
\qquad
Z_*(y)\ge c_b|y-\omega|^{-\gamma}
\end{equation}
away from the two distinguished points, after taking $\omega_j\to\omega$.

Let
\[
T_\Phi=T_{k-1}(-D^2\Phi).
\]
The matrix $T_\Phi$ is strictly positive definite away from the origin.
Let $\zeta\in C^2$ touch $Z_*-\Phi$ from below at a point
$y\ne0$ where $Z_*(y)<\infty$, and put $\varphi=\Phi+\zeta$.
After making the contact strict and passing to a subsequence, the
half-relaxed-limit argument gives points $y_j\to y$ and constants
$c_j$ such that $\varphi+c_j$ touches $Z_j$ from below at $y_j$.
The Lorentz parameter of $Z_j$ is
$\delta_j=\eps_jL_j^{-2(\gamma+1)}\to0$.
At each contact point the gradients agree and
$D^2\varphi(y_j)\preceq D^2Z_j(y_j)$.
Thus the symmetric shape representative of $\varphi+c_j$
dominates that of $Z_j$ in their common graph metric and belongs
to $\overline{\Gamma_k}$. Passing to the limit gives
$-D^2\varphi(y)\in\overline{\Gamma_k}$.

Since $-D^2\Phi(y)\in\partial\Gamma_k$, its supporting
functional is $T_\Phi(y)=T_{k-1}(-D^2\Phi(y))$, and
$T_\Phi:(-D^2\Phi)=0$. Consequently
\[
0\le T_\Phi(y):(-D^2\varphi(y))
=-T_\Phi(y):D^2\zeta(y).
\]
As $T_\Phi$ is strictly positive definite away from the origin,
this proves
\begin{equation}\label{eq:Lphi-super}
	\mathcal L_\Phi(Z_*-\Phi)
	:=T_\Phi:D^2(Z_*-\Phi)\le0
\end{equation}
in the viscosity sense wherever $Z_*$ is finite.
The second inequality in \eqref{eq:two-core-liminf} allows us to choose
$\eta>0$ so small that
$Z_*-\Phi\ge1$
on $\partial B_\eta(\omega).$
Fix
$q_0=10^{-4},y_0=-q_0\omega.$
Choose a smooth bounded domain $\Omega_0$ such that
\[
y_0\in
\Omega':=
\Omega_0\setminus\overline{B_\eta(\omega)},
\qquad
0\notin\overline{\Omega_0},
\qquad
\overline{B_\eta(\omega)}\subset\Omega_0.
\]
Let $h$ solve
\[
\begin{cases}
	\mathcal L_\Phi h=0 & \text{in }\Omega',\\
	h=1 & \text{on }\partial B_\eta(\omega),\\
	h=0 & \text{on }\partial\Omega_0.
\end{cases}
\]
Then $h>0$ in $\Omega'$ by the strong maximum principle.

To avoid any issue at points where the lower half-relaxed limit may be
infinite, set
$
\widetilde Z
=
\min\{Z_*-\Phi,2\}.
$
The minimum of a viscosity supersolution and a constant supersolution is
again a viscosity supersolution for the linear operator
$\mathcal L_\Phi$.  Hence
$
\mathcal L_\Phi\widetilde Z\le0
\text{ in }\Omega',
$
and the boundary inequalities give
$\widetilde Z\ge h$
 in $\Omega'.$
Consequently
$Z_*-\Phi\ge h$
 at $y_0.$ 
Hence, for the fixed choice
\begin{equation}\label{eq:strict-two-core-excess}
Z_*(y_0)\ge\Phi(y_0)+c_0,
\qquad c_0>0.
\end{equation}
Here $c_0=h(y_0)>0$ by the strong maximum principle.

Choose once and for all
\begin{equation}\label{eq:Theta-choice}
1<\Theta<1+\frac{c_0}{2\Phi(y_0)}.
\end{equation}
Since $L_j^\gamma\cB(L_jy_0)\to\Phi(y_0)$, \eqref{eq:strict-two-core-excess} implies
$
U_j(L_jy_0)>\Theta\cB(L_j|y_0|)
$
for large $j$.  On the other hand \eqref{eq:Uglobalbubble} gives $U_j/\cB\to1$ on every fixed ball.  Hence there is a first spherical crossing radius $\rho_j$ such that
\begin{equation}\label{eq:first-spherical-crossing}
\begin{gathered}
U_j<\Theta\cB\quad\text{in }B_{\rho_j},\\
U_j(\rho_j\xi_j)=\Theta\cB(\rho_j),
\qquad |\xi_j|=1,\\
\rho_j\to\infty,
\qquad
\rho_j\le q_0L_j.
\end{gathered}
\end{equation}

\smallskip
\noindent\emph{Step 3: compactness at the first crossing.}

Define the first-crossing blow-down
\[
V_j(x)=\rho_j^\gamma U_j(\rho_jx),
\qquad
A_j=\frac{L_j}{\rho_j}\ge q_0^{-1}.
\]
The second core is centered at
\[
P_j=A_j\omega_j,
\qquad
r_j^{\rm core}=\frac{S_j}{\rho_j},
\]
and \eqref{eq:LSseparation} gives
\begin{equation}\label{eq:relative-core-radius}
\frac{r_j^{\rm core}}{|P_j|}=\frac{S_j}{L_j}\longrightarrow0.
\end{equation}
After extraction, either (i) $A_j\to A_\infty<\infty$, when the second core has a possible limit $P_\infty=A_\infty\omega$ outside $B_2$ and $r_j^{\rm core}\to0$, or (ii) $A_j\to\infty$, when its core ball leaves every fixed compact set. In either case, \eqref{eq:protected-height} and $A_j\ge q_0^{-1}$ give, for the fixed sufficiently small $q_0$,
\begin{equation}\label{eq:coarse-contact-height}
0<V_j\le K_0\rho_j^\gamma\quad\text{in }B_4.
\end{equation}
The next lemma preserves the strict crossing inequality in both cases.

\begin{lemma}[Local boundedness at the crossing point]\label[lemma]{lem:crossing-point}
Let $V_j>0$ be smooth strictly $k$-admissible solutions satisfying
\[
\lambda_j|DV_j|^2<1,\qquad
A_{\lambda_j}[V_j]\in\Gamma_k,\qquad
\sigma_k(A_{\lambda_j}[V_j])=\rho_j^{-2}V_j^{p_*}
\quad\text{in }B_4,
\]
where
\begin{equation}\label{eq:crossing-lambda-scaling}
\lambda_j=\eps_j\rho_j^{-2(\gamma+1)},
\qquad 0\le\eps_j\le\eps_0,
\end{equation}
and assume $W_j\le W_*$, $\rho_j\to\infty$, and
\[
0<V_j\le K_0\rho_j^\gamma.
\]
Assume that $\xi_j\in\partial B_1$ satisfies
\[
0<c_*\le V_j(\xi_j)\le C_*.
\]
Then there are fixed $\delta,C>0$, depending only on
$n,k,p_*,\eps_0,W_*,K_0,c_*,C_*$, such that
\begin{equation}\label{eq:crossing-point-conclusion}
\sup_{B_\delta(\xi_j)}V_j\le C,
\qquad
\sup_{B_\delta(\xi_j)}|D\log V_j|\le C.
\end{equation}
\end{lemma}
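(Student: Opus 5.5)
The plan is to reduce \eqref{eq:crossing-point-conclusion} to a pointwise control of the natural inverse length
\[
N_j(x)=\rho_j^{-1/k}V_j(x)^{1/\alpha_*},
\]
and then to obtain that control by a doubling and barrier argument.

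\emph{Why $N_j$ is the right quantity.} With $\kappa=\rho_j^{-2}$, $\eps=\lambda_j$ and $p=p_*$, \eqref{eq:critical-scale-relations} gives
\[
\kappa^{1/(2k)}\Bigl(\sup V_j\Bigr)^{(p_*-k)/(2k)}=\sup N_j .
\]
Hence \cref{lem:C2}, applied on $B_{2r}(x)$, reads
\[
\sup_{B_r(x)}|D\log V_j|\le C\Bigl(r^{-1}+\sup_{B_{2r}(x)}N_j\Bigr),
\]
with $C$ independent of $j$, because $W_j\le W_*$. The crude bound $V_j\le K_0\rho_j^\gamma$ only gives $N_j\le C\rho_j$, which is useless. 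The core claim is therefore the following: there is a fixed $\delta>0$ such that
\[
N_j(x)\,\operatorname{dist}\bigl(x,\partial B_{4\delta}(\xi_j)\bigr)\le C_1
\qquad\text{on }B_{4\delta}(\xi_j)
\]
for all large $j$. Granting this, $N_j\le C/\delta$ on $B_{2\delta}(\xi_j)$. \cref{lem:C2} with $r=\delta$ then gives $|D\log V_j|\le C/\delta$ on $B_\delta(\xi_j)$. Integrating from $\xi_j$ and using $V_j(\xi_j)\le C_*$ yields $\sup_{B_\delta(\xi_j)}V_j\le C_*e^{C}$, which is \eqref{eq:crossing-point-conclusion}.

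\emph{Proof of the claim by contradiction.} Suppose the weighted bound fails for a fixed small $\delta$, along a subsequence and with $C_1\to\infty$. The doubling lemma of Pol\'a\v{c}ik--Quittner--Souplet applied to $N_j$ produces points $y_j\in B_{4\delta}(\xi_j)$ and radii with $N_j(y_j)\to\infty$ such that $N_j\le2N_j(y_j)$ on $B_{\Lambda_j/N_j(y_j)}(y_j)$, where $\Lambda_j\to\infty$. We then rescale at the natural length $\ell_j=N_j(y_j)^{-1}$, setting
\[
\widetilde U_j(x)=V_j(y_j+\ell_jx)/V_j(y_j).
\]
This function satisfies a normalized critical equation of the form in \cref{lem:C3}, with $\widetilde U_j(0)=1$ and $\widetilde U_j\le2^{\alpha_*}$ on $B_{\Lambda_j}$. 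Its Lorentz parameter is bounded, because $V_j\le K_0\rho_j^\gamma$ and \eqref{eq:crossing-lambda-scaling} hold.

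\emph{The barrier step, which is where I expect the main difficulty.} The key is to convert a core near $\xi_j$ into a large value at $\xi_j$. First, \cref{lem:C2} on the core ball gives a Harnack lower bound: $V_j\ge cV_j(y_j)$ on $B_{\ell_j}(y_j)$. Next, center a radial zero-$k$-curvature barrier at $y_j$, with slope $y(r)=a_jr^{-\gamma-1}$, exactly as in the second-core barrier \eqref{eq:second-core-barrier}. The denominator $\sqrt{1+\lambda_jy^2}$ is uniformly controlled by the bounded normalized Lorentz parameter. Comparison on annuli inside $B_4$ then gives
\[
V_j(x)\ge cV_j(y_j)\,\ell_j^\gamma\,|x-y_j|^{-\gamma}.
\]
The prefactor is
\[
V_j(y_j)\ell_j^\gamma=\rho_j^{\gamma/k}V_j(y_j)^{-1/k}\ge cK_0^{-1/k},
\]
using \eqref{eq:critical-scale-relations} and $V_j\le K_0\rho_j^\gamma$. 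Evaluating at $\xi_j$, where $|\xi_j-y_j|<4\delta$, gives
\[
C_*\ge V_j(\xi_j)\ge c(4\delta)^{-\gamma}.
\]
Choosing $\delta$ small, depending only on the listed constants, gives a contradiction.

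\emph{Points needing care.} Two things must be checked. First, the outer boundary of the barrier comparison must sit inside $B_4$, where the equation is known. I would truncate at radius $2$ and subtract the boundary value of the barrier there; this costs only a fixed factor because $\gamma>0$. Second, the Harnack bound and the barrier must hold uniformly in the degenerate Lorentz parameter. This uniformity is exactly what \cref{lem:C2} and the bound $W_j\le W_*$ provide.
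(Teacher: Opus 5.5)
Your proposal is correct and follows essentially the same route as the paper. Both arguments use a doubling selection for $\rho_j^{-1/k}V_j^{1/\alpha_*}$, a Harnack lower bound on the core ball from \cref{lem:C2}, and a radial zero-$k$-curvature barrier whose coefficient is bounded below by $V_j(y_j)\ell_j^\gamma=\rho_j^{\gamma/k}V_j(y_j)^{-1/k}\ge K_0^{-1/k}$ and whose Lorentz denominator is controlled via $V_j\le K_0\rho_j^\gamma$, ending in a contradiction at $\xi_j$. The one case to add explicitly is $|\xi_j-y_j|<\ell_j$: the barrier bound is unavailable there, but the Harnack bound $V_j(\xi_j)\ge cV_j(y_j)\to\infty$ already contradicts $V_j(\xi_j)\le C_*$, exactly as the paper handles it.
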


\begin{proof}
Set
\[
\mathcal M_j(x)=\rho_j^{-1/k}V_j(x)^{1/\alpha_*}.
\]
Suppose that $\mathcal M_j$ were unbounded in a fixed small neighborhood of $\xi_j$.  A standard doubling selection yields points $z_j$ with
\[
H_j:=V_j(z_j),
\qquad
s_j=\mathcal M_j(z_j)^{-1}\to0,
\]
such that
\begin{equation}\label{eq:doubling-peak}
V_j\le2^{\alpha_*} H_j
\quad\text{in }B_{2s_j}(z_j).
\end{equation}
Because
\[
s_j=\rho_j^{1/k}H_j^{-1/\alpha_*},
\]
Lemma \ref{lem:C2}, applied on the scale $s_j$, gives
\[
|D\log V_j|\le Cs_j^{-1}
\quad\text{in }B_{s_j}(z_j).
\]
Hence
\begin{equation}\label{eq:solid-peak}
V_j\ge c_1H_j
\quad\text{in }B_{s_j/2}(z_j).
\end{equation}

Construct a radial zero-curvature barrier around $z_j$ by setting
\[
y_j(r)=c_2H_js_j^\gamma r^{-\gamma-1},
\qquad
G_j(r)=\int_r^{r_0}
\frac{y_j(t)}{\sqrt{1+\lambda_jy_j(t)^2}}\,dt,
\]
so that $G_j(r_0)=0$.  The two scale identities
\begin{equation}\label{eq:crossing-point-scale-identities}
H_js_j^\gamma
=\rho_j^{\gamma/k}H_j^{-1/k}
\ge K_0^{-1/k},
\end{equation}
\[
\lambda_j\frac{H_j^2}{s_j^2}
=\varepsilon_j
\left(\frac{H_j}{\rho_j^\gamma}\right)^{2+2/\alpha_*}
\le C(K_0)
\]
show, respectively, that the fundamental-solution coefficient of the barrier does not vanish and that the Lorentz denominator remains uniformly controlled.  Choosing $c_2$ sufficiently small, \eqref{eq:solid-peak} dominates the barrier on the inner boundary.  Comparison then gives
\begin{equation}\label{eq:crossing-point-barrier}
V_j(x)\ge c_3|x-z_j|^{-\gamma}
\end{equation}
whenever $s_j/2\le|x-z_j|\le r_0/2$.

Choose the contact neighborhood so small that $c_3(3\delta)^{-\gamma}>2C_*$.  If $\xi_j$ lies in the core ball $B_{s_j/2}(z_j)$, \eqref{eq:solid-peak} contradicts the bounded contact value.  Otherwise \eqref{eq:crossing-point-barrier} gives the same contradiction.  Thus $\mathcal M_j$ is uniformly bounded near $\xi_j$.  Lemma \ref{lem:C2} now yields the second estimate in \eqref{eq:crossing-point-conclusion}; integrating the logarithmic gradient from the bounded nonzero value at $\xi_j$ gives the first.
\end{proof}

At the crossing point,
\begin{equation}\label{eq:contact-value-limit}
V_j(\xi_j)=\rho_j^\gamma\Theta\cB(\rho_j)\longrightarrow\Theta\kappa_\infty.
\end{equation}
Lemma \ref{lem:crossing-point} therefore applies.  After taking $\xi_j\to\xi\in\partial B_1$, the first-crossing limit extends to a fixed neighborhood of $\xi$ and satisfies
\begin{equation}\label{eq:strict-contact-limit}
V(\xi)=\Theta\kappa_\infty>\Phi(\xi).
\end{equation}
The strict inequality holds in both cases. In case (i), any external singularity lies outside $B_2$; in case (ii), the second core leaves every fixed compact set.

\smallskip
\noindent\emph{Step 4: pole concentration and positivity of the regular part.}

The first-crossing inequality gives, on every fixed punctured compact subset of $B_1$,
\[
V_j(x)\le C|x|^{-\gamma},
\]
and Lemma \ref{lem:C2} gives the corresponding gradient bound.  Moreover
\[
U_j^{p_*}\le\Theta^{p_*}\cB^{p_*}
\quad\text{in }B_{\rho_j},
\]
so, using \eqref{eq:critical-scale-relations}, for each fixed $0<r<1$,
\[
\int_{B_{r\rho_j}}U_j^{p_*}\,dy\longrightarrow
\int_{\mathbb R^n}\cB^{p_*}\,dy=m_*.
\]
The first-crossing inequality supplies the source domination
\eqref{eq:C4-source-domination}. Before invoking Lemma~\ref{lem:C4},
we verify its additional lower-bound hypothesis: the normalized solutions are
entire, and the independently proved exact inner-pole barrier
\eqref{eq:inner-pole-liminf}, applied with $t_j=\rho_j$ and
\eqref{eq:C4-Phi-tail}, gives, for every
$K\Subset B_1\setminus\{0\}$,
\[
\liminf_{j\to\infty}\inf_K(V_j-\Phi_{m_*})\ge0.
\]
This verifies \eqref{eq:C4-lower-input} before the concentration lemma is applied. Lemma~\ref{lem:C4} now applies and gives
\begin{equation}\label{eq:C6singleatom}
\mu_k[-V]=m_*\delta_0
\quad\text{in }B_1.
\end{equation}
The same lower barrier, together with local convergence near the
crossing point, also gives
\begin{equation}\label{eq:C6Phi-lower}
V\ge\Phi
\end{equation}
throughout the connected open set
\[
\Omega_*=(B_1\cup B_{\delta/2}(\xi))\setminus\{0\}.
\]
In particular Lemma \ref{lem:C5} applies at the pole and yields
\begin{equation}\label{eq:C6regularconstant}
V=\Phi+b_0+O_{C^2_{\rm weighted}}(|x|^\sigma),
\qquad b_0\ge0.
\end{equation}

Set $e=V-\Phi$.  The supporting hyperplane to $\overline{\Gamma_k}$ at $-D^2\Phi$ gives
\begin{equation}\label{eq:C6support}
\mathcal L_\Phi e:=T_{k-1}(-D^2\Phi):D^2e\le0
\quad\text{in }\Omega_*.
\end{equation}
The coefficients are strictly elliptic on compact subsets of $\Omega_*$.  By \eqref{eq:C6Phi-lower}, $e\ge0$, while \eqref{eq:strict-contact-limit} shows that $e$ is not identically zero.  The strong minimum principle, applied on the connected set $\Omega_*$, gives $e>0$ throughout $\Omega_*$.  Hence
\begin{equation}\label{eq:b1positive}
b_1:=\min_{\partial B_{1/2}}e>0.
\end{equation}

Since $\mathcal L_\Phi(|x|^{-\gamma})=0$, for $\epsilon>0$ the function
\[
e+\epsilon|x|^{-\gamma}-b_1
\]
is a supersolution of \eqref{eq:C6support}.  It is nonnegative on $\partial B_{1/2}$ and, by \eqref{eq:C6regularconstant}, on a sufficiently small inner sphere.  The annular minimum principle, followed first by shrinking the inner sphere and then by $\epsilon\downarrow0$, gives $e\ge b_1$ in $B_{1/2}\setminus\{0\}$.  Passing to the pole in \eqref{eq:C6regularconstant} yields
\begin{equation}\label{eq:C6b0positive}
{\quad b_0\ge b_1>0.\quad}
\end{equation}

\begin{proof}[Proof of \cref{prop:C6}]
If no constants $\Lambda_0,\delta_{\mathrm{core}}$ worked for this fixed $L_0$, we could choose counterexamples with $\Lambda_j\to\infty$ and normalized good-core defects tending to zero. By \eqref{eq:C6-defect-scaling}, these normalized solutions satisfy the small-defect compactness hypotheses verified in Step~1. The second-core barrier of Step~2 and the crossing-point compactness of Step~3 produce a first-crossing limit; Step~4 gives $b_0>0$ by \eqref{eq:C6b0positive}. On the other hand, every first-crossing approximant is smooth at the origin and its critical Pohozaev divergence is nonnegative. The single-pole expansion \cref{lem:C5} and the fixed-annulus
convergence \cref{lem:C5-annulus}, applied on a fixed sufficiently
small sphere, identify the limit of the Pohozaev boundary
integrals. Nonnegativity of these boundary integrals for the smooth approximants gives
\begin{equation}\label{eq:b0nonpos}
b_0\le0.
\end{equation}
More precisely,
\[
0\le\lim_{j\to\infty}
\int_{\partial B_r}\Pi_j\cdot\nu\,dS
=-\alpha_* m_*b_0<0,
\]
a contradiction.  Hence fixed $L_0,\Lambda_0,\delta_{\mathrm{core}}$ exist and \eqref{eq:C6scaledrop} follows.
\end{proof}

\subsection{Core counting and a recursive defect estimate}
\label{sec:C8-high-dim}

Retain the good/bad classification and fix the constants given in
\cref{prop:C6}. Since $n\ge4k+3$,
\[
-1<M_*<0,\qquad
\delta_*:=-M_*-1=\frac{4k-n}{n-2k}\in(-1,0).
\]
With $\mu_{\mathrm{sc}}=2/(k+1)$ as above, set $\nu=2k/(k+1)$, so $\mu_{\mathrm{sc}}+\nu=2$.
For the endpoint height exponent $a=M_*+p_*+1$, one has
\begin{equation}\label{eq:alphaa}
\alpha_* a=n-\nu.
\end{equation}
We fix $\Lambda\ge\Lambda_0$ with $\Lambda>2L_0$; the enlarged defect balls
are then pairwise disjoint. Set
\[C_0=8(1+103\Lambda+L_0)>1.\]
The counting argument uses the scale drop
\eqref{eq:C6scaledrop} directly. Constants in this subsection may
depend on $n,k,\Lambda,L_0,\delta_{\mathrm{core}}$ and the
universal a priori bounds, but are independent of $R$ and of the
particular entire solution.

For $R\ge2$ define
\begin{align}
\mathcal C_R
&=\left\{i:\ |x_i|\le(1+103\Lambda)R,
\ \ell_i\le2R\right\},\label{eq:CRdef}\\
\mathcal C_R^{\rm bad}
&=\left\{i:\ i\text{ bad},\ |x_i|\le2(1+103\Lambda)R,
\ \ell_i\le2R\right\},\label{eq:CRbaddef}
\end{align}
and let
\[
N_R=\#\mathcal C_R,
\qquad
N_R^{\rm bad}=\#\mathcal C_R^{\rm bad}.
\]

\begin{lemma}[Core counting and a recursive defect estimate]\label[lemma]{lem:C8}
For all sufficiently large $R$,
\begin{equation}\label{eq:count1}
N_R\le C\left[1+\log(e+R)N_R^{\rm bad}\right],
\end{equation}
\begin{equation}\label{eq:count2}
N_R^{\rm bad}
\le C\left[R^{\mu_{\mathrm{sc}}} D(C_0R)\right]^{n/(n+\mu_{\mathrm{sc}})}.
\end{equation}
Moreover,
\begin{equation}\label{eq:Ia-core}
I_a(R):=\int_{B_R}u^a\,d\mu
\le CR^\nu(1+N_R).
\end{equation}
The finite Newton-tensor descent gives
\begin{equation}\label{eq:N-high-dim}
\mathcal N(R)
\le C I_a(2R)+CR^\nu,
\end{equation}
and consequently
\begin{equation}\label{eq:feedback}
D(R)
\le CR^{-\mu_{\mathrm{sc}}}
+C\log(e+R)R^{-\mu_{\mathrm{sc}}^2/(n+\mu_{\mathrm{sc}})}
D(C_0R)^{n/(n+\mu_{\mathrm{sc}})}.
\end{equation}
The defect has at most polynomial growth, and \eqref{eq:feedback} implies $D\equiv0$.
\end{lemma}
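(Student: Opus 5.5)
I would prove the five estimates in the order \eqref{eq:Ia-core}, \eqref{eq:N-high-dim}, \eqref{eq:count1}, \eqref{eq:count2}, and then combine them into \eqref{eq:feedback}. The last step is an iteration showing $D\equiv0$.

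\emph{Height integral.} For \eqref{eq:Ia-core}, take a point $x\in B_R$ and a minimizer $y$ in the contact envelope, so $\underline\ell(x)=\ell(y)+|x-y|/(100\Lambda)$. Since $\ell(y)=\underline\ell(y)$, the point $y$ is a contact point. By \eqref{eq:core-covering} it is associated with a selected core $s$ satisfying $\ell_s\le2\ell(y)$ and $|x_s-y|\le3\Lambda\ell(y)$. Using $\underline\ell\le\ell$, we get
\[
u(x)\le\underline\ell(x)^{-\alpha_*}\le C\,(\ell_s+|x-x_s|/\Lambda)^{-\alpha_*}.
\]
The universal height bound gives $\ell_s\ge c$. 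Points with $\ell(y)\ge R$ contribute at most $u\le CR^{-\alpha_*}$, hence total mass $R^{n-\alpha_*a}=R^\nu$ by \eqref{eq:alphaa}. The remaining cores lie in $\mathcal C_R$. Each core $s$ contributes at most
\[
\int_{B_{2R}}(\ell_s+|x-x_s|)^{-\alpha_*a}\,dx\le C R^{n-\alpha_*a}=CR^\nu,
\]
because $\alpha_*a=n-\nu<n$. Summing over cores gives \eqref{eq:Ia-core}.

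\emph{Newton-tensor descent.} Estimate \eqref{eq:N-high-dim} is exactly \eqref{eq:N-direct-postresonance}. It holds because $\delta_*\in(-1,0)$ when $n\ge4k+3$.

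\emph{Counting all cores.} For \eqref{eq:count1}, sort the cores of $\mathcal C_R$ into dyadic scale classes $\ell_i\in[2^m,2^{m+1})$. There are $O(\log(e+R))$ such classes, since $c\le\ell_i\le2R$. I would build a map that assigns to every good core a smaller core, so that chains terminate at bad cores or at the largest-scale cores. For a good core $i$, take its nearest selected neighbour $j$ with $\ell_j\ge\ell_i$. If no such $j$ exists, $i$ is a maximal-scale core, and disjointness of the separated balls should bound the number of such cores by a constant. Otherwise \cref{prop:C6} gives a core $s$ with $\ell_s\le\ell_i/4$ and $|x_s-x_i|\le|x_j-x_i|/20$. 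I would then show that each core receives boundedly many preimages within a given scale ratio. The intended argument is that the preimages of $s$ are mutually separated relative to their distance to $s$, by \eqref{eq:core-separation} and the choice of $j$, so a packing bound applies. The chains of scale drops then have depth $O(\log R)$ and end at a bad core, which is enlarged to radius $C_0R$ in the definition of $\mathcal C_R^{\rm bad}$. This yields $N_R\le C[1+\log(e+R)N_R^{\rm bad}]$.

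\emph{Counting bad cores.} For \eqref{eq:count2}, each bad core satisfies $\int_{B_{L_0\ell_i}(x_i)}\cD\ge\delta_{\rm core}\ell_i^{-\mu_{\rm sc}}$. These balls are disjoint and lie in $B_{C_0R}$, so
\[
\sum_{\rm bad}\ell_i^{-\mu_{\rm sc}}\le CD(C_0R).
\]
Separation and volume also give $\sum_{\rm bad}\ell_i^n\le CR^n$. Hölder's inequality with exponents $(n+\mu_{\rm sc})/n$ and $(n+\mu_{\rm sc})/\mu_{\rm sc}$ then gives
\[
N^{\rm bad}\le\Bigl(\sum\ell_i^{-\mu_{\rm sc}}\Bigr)^{n/(n+\mu_{\rm sc})}\Bigl(\sum\ell_i^n\Bigr)^{\mu_{\rm sc}/(n+\mu_{\rm sc})},
\]
which is \eqref{eq:count2}.

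\emph{The recursive estimate.} Chain \eqref{eq:defect-cutoff}, \eqref{eq:N-high-dim}, \eqref{eq:Ia-core}, \eqref{eq:count1} and \eqref{eq:count2}. Since $\nu-2=-\mu_{\rm sc}$, this produces \eqref{eq:feedback}, with harmless constant changes in the radii absorbed into $C_0$. The same chain with the trivial count $N_R\le CR^n$ shows that $D$ grows at most polynomially.

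\emph{Conclusion.} Set $\theta=n/(n+\mu_{\rm sc})<1$. Suppose $D(R)\le CR^{\beta}$. Then \eqref{eq:feedback} gives the improved bound $D(R)\le C\log(e+R)\,R^{\max(-\mu_{\rm sc},\,\theta\beta-\mu_{\rm sc}^2/(n+\mu_{\rm sc}))}$. The map $\beta\mapsto\theta\beta-\mu_{\rm sc}^2/(n+\mu_{\rm sc})$ is a contraction with fixed point $-\mu_{\rm sc}<0$. Finitely many iterations therefore give a negative exponent, the logarithm being absorbed by slightly reducing the exponent. Since $D$ is nondecreasing and tends to $0$, we conclude $D\equiv0$.

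The main obstacle is \eqref{eq:count1}: turning the single-step scale drop of \cref{prop:C6} into a global bound with only a logarithmic loss. This requires controlling the multiplicity of the descent map and the cores with no larger neighbour, and that packing argument is where I expect most of the work.
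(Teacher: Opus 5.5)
\textbf{The gap is in \eqref{eq:count1}.} Your arguments for \eqref{eq:count2}, \eqref{eq:Ia-core}, \eqref{eq:N-high-dim} and the final step are essentially sound. Your exponent iteration is a valid substitute for the paper's Souplet radius selection. In the height integral, split according to $\underline\ell(x)<R$ rather than $\ell(y)<R$; then $|x-y|<100\Lambda R$ and the associated core lies in $\mathcal C_R$.

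For \eqref{eq:count1}, the mechanism you sketch cannot produce a logarithmic loss. Suppose each core has at most $K$ preimages under your descent map and chains have depth $O(\log R)$. Then the number of good cores whose chain ends at a fixed bad core is bounded only by $K^{O(\log R)}=R^{O(\log K)}$. A loss $R^{\beta}$ in \eqref{eq:count1} turns the gain in \eqref{eq:feedback} into $R^{\beta-\mu_{\mathrm{sc}}^2/(n+\mu_{\mathrm{sc}})}$. Your closing contraction then has fixed point $(n+\mu_{\mathrm{sc}})\beta/\mu_{\mathrm{sc}}-\mu_{\mathrm{sc}}$, which is negative only if $\beta<\mu_{\mathrm{sc}}^2/(n+\mu_{\mathrm{sc}})<1/n$. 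That fails for every in-degree bound $K\ge2$. Chain depth alone also does not place the terminal bad core in $\mathcal C_R^{\rm bad}$, whose radius is $2(1+103\Lambda)R$, not $C_0R$. Indeed, $O(\log R)$ steps of length up to $O(R)$ may drift a distance $O(R\log R)$.

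\textbf{The missing idea} is geometric contraction of the step lengths along a chain. With it, the logarithm counts dyadic step-length classes rather than chain depth.

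The paper lets $e_i$ be the distance from $x_i$ to the closest core with $\ell\le\ell_i/4$. Applying \cref{prop:C6} to a good core and its child gives $e_{p(i)}\le e_i/20$, so the chain from $i$ stays in $B_{(20/19)e_i}(x_i)$. Now take two good cores $i,j$ with the same terminal bad core, $\ell_i\le\ell_j$ and $e_i,e_j\in[r,2r)$. A second application of \cref{prop:C6} gives $|x_i-x_j|\ge20r$, although both lie within $(40/19)r$ of that bad core. So each bad core carries at most one good core per dyadic class of $e_i$. Since $2\Lambda\ell_{\min}\le e_i\le CR$, only $O(\log(e+R))$ classes occur.

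Your own set-up yields the same conclusion once you see it. Let $d_i$ be the distance from $x_i$ to the nearest core with $\ell\ge\ell_i$.
\begin{itemize}
\item Contraction: $i$ is itself a candidate for $s(i)$, so $d_{s(i)}\le|x_{s(i)}-x_i|\le d_i/20$. Hence the terminal bad core $b$ satisfies $|x_i-x_b|\le d_i/19$.
\item One core per class: if $i\ne i'$ end at the same $b$ with $\ell_i\le\ell_{i'}$, then $d_i\le|x_i-x_{i'}|\le(d_i+d_{i'})/19$. This forces $d_{i'}\ge18d_i$.
\end{itemize}

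Finally, the exceptional core is the unique $\ell$-maximizer of $\mathcal C_R$ (if one exists), not a global maximum. You need a core of $\mathcal C_R$ with $\ell\ge\ell_i$ to get $d_i\le2(1+103\Lambda)R$, which keeps the terminal bad core in $\mathcal C_R^{\rm bad}$.
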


\begin{proof}
\emph{Step 1: parent--descendant structure.}
For a core $i$, let
\[
\mathcal P(i)=\{j:\ \ell_j\le\ell_i/4\}.
\]
We refer to an element of $\mathcal P(i)$ as a finer-scale parent of $i$;
thus following the parent map moves toward smaller natural lengths.
By the universal height bound there is a fixed constant
$\ell_{\min}>0$ such that
\[
\ell_i\ge \ell_{\min}
\qquad\text{for every selected core }i.
\]
Together with the separation condition,
\[
|x_i-x_j|
\ge \Lambda(\ell_i+\ell_j)
\ge 2\Lambda\ell_{\min},
\]
this implies that the selected family is locally finite.
Hence, if $\mathcal P(i)\ne\varnothing$, there exists a closest
finer-scale parent; choose one and denote it by $p(i)$, and set
\[
e_i=|x_i-x_{p(i)}|.
\]
If $\mathcal P(i)=\varnothing$, call $i$ a root.

Suppose $i$ is good and has a child $j$, so $p(j)=i$.  Then $\ell_i\le\ell_j/4<\ell_j$, and Proposition~\ref{prop:C6} applied to $(i,j)$ produces a core $k$ with
\[
\ell_k\le\ell_i/4,
\qquad
|x_i-x_k|\le\frac1{20}|x_i-x_j|=\frac1{20}e_j.
\]
By the minimality of $p(i)$,
\begin{equation}\label{eq:edge-shrink}
e_i\le\frac1{20}e_j.
\end{equation}
In particular a good root cannot have a child.  There is at most one good root: if $i,j$ were two distinct good roots and $\ell_i\le\ell_j$, Proposition~\ref{prop:C6} would create an admissible parent for $i$.  Every parent chain is finite because the universal height bound gives a uniform positive lower bound for all $\ell_i$.

We now count the cores in $\mathcal C_R$.  If there is a unique core of maximal natural length in $\mathcal C_R$, place it in an exceptional set; also place the unique good root there if it belongs to $\mathcal C_R$.  Thus the exceptional set has cardinality at most two.  Let $i$ be any remaining good core in $\mathcal C_R$.  There is another core $j\in\mathcal C_R$ with $j\ne i$ and $\ell_i\le\ell_j$.  Since both centers lie in $B_{(1+103\Lambda)R}$, Proposition~\ref{prop:C6} and the closest-parent definition give
\begin{equation}\label{eq:first-edge-bound}
e_i\le\frac1{20}|x_i-x_j|
\le\frac1{10}(1+103\Lambda)R.
\end{equation}
Follow the parent chain until the first bad core $b=b(i)$ is reached.  Such a bad ancestor must exist: otherwise the finite chain would terminate at a good root having a child.  If
\[
i=i_0,\ i_1=p(i_0),\ldots,i_m=b,
\]
then $i_0,\ldots,i_{m-1}$ are good, and repeated use of \eqref{eq:edge-shrink} yields
\[
e_{i_s}\le20^{-s}e_i.
\]
Hence
\begin{equation}\label{eq:ancestor-distance}
|x_i-x_b|
\le e_i\sum_{s=0}^\infty20^{-s}
\le\frac{20}{19}e_i.
\end{equation}
By \eqref{eq:first-edge-bound}, $|x_b|\le2(1+103\Lambda)R$, while $\ell_b\le\ell_i\le2R$.  Thus
\[
b(i)\in\mathcal C_R^{\rm bad}.
\]

Fix $b\in\mathcal C_R^{\rm bad}$.  For $r>0$ let
\[
\mathcal I_b(r)
=\{i\in\mathcal C_R:\ i\text{ good},\ b(i)=b,\ r\le e_i<2r\}.
\]
Each dyadic edge class assigned to $b$ contains at most one good core. Indeed, if distinct $i,j\in\mathcal I_b(r)$ satisfy $\ell_i\le\ell_j$, Proposition~\ref{prop:C6} and the minimality of $p(i)$ give
\[
e_i\le\frac1{20}|x_i-x_j|,
\]
so $|x_i-x_j|\ge20r$.  On the other hand \eqref{eq:ancestor-distance} gives
\[
|x_i-x_b|<\frac{40}{19}r,
\qquad
|x_j-x_b|<\frac{40}{19}r,
\]
and therefore $|x_i-x_j|<80r/19<20r$, a contradiction.  Thus each dyadic edge class contains at most one good core assigned to $b$.

The separation bound above gives $e_i\ge2\Lambda\ell_{\min}$,
while \eqref{eq:first-edge-bound} gives $e_i\le CR$.
Hence only $C\log(e+R)$ dyadic edge classes can occur. Every bad core in $\mathcal C_R$ is counted in $N_R^{\rm bad}$. Each
$b\in\mathcal C_R^{\rm bad}$ supports at most $C\log(e+R)$ good cores
in $\mathcal C_R$, apart from the uniformly bounded exceptional set. Therefore
\[
N_R
\le
N_R^{\rm bad}
+C\log(e+R)N_R^{\rm bad}
+C,
\]
which yields \eqref{eq:count1}.

\smallskip
\noindent\emph{Step 2: bad-core count.}
The choice of $C_0$ above ensures that every ball
$B_{L_0\ell_i}(x_i)$ with $i\in\mathcal C_R^{\rm bad}$
is contained in $B_{C_0R}$.  Since $i$ is bad,
\[
\ell_i^{-\mu_{\mathrm{sc}}}
\le\delta_{\mathrm{core}}^{-1}
\int_{B_{L_0\ell_i}(x_i)}\cD\,d\mu.
\]
The enlarged balls are pairwise disjoint, hence
\begin{equation}\label{eq:bad-costs}
\sum_{i\in\mathcal C_R^{\rm bad}}\ell_i^{-\mu_{\mathrm{sc}}}
\le CD(C_0R),
\qquad
\sum_{i\in\mathcal C_R^{\rm bad}}\ell_i^n
\le CR^n.
\end{equation}
Indeed, the first inequality follows by summing the bad-core costs, while the second is the Euclidean volume packing estimate for the disjoint balls $B_{L_0\ell_i}(x_i)\subset B_{C_0R}$.
Writing
\[
1=(\ell_i^{-\mu_{\mathrm{sc}}})^{n/(n+\mu_{\mathrm{sc}})}(\ell_i^n)^{\mu_{\mathrm{sc}}/(n+\mu_{\mathrm{sc}})}
\]
and applying H\"older proves \eqref{eq:count2}.

\smallskip
\noindent\emph{Step 3: height mass from the contact cover.}
Because $\ell=u^{-1/\alpha_*}$ and \eqref{eq:alphaa} holds,
\[
u^a=\ell^{-(n-\nu)}.
\]
On $B_R\cap\{\ell\ge R\}$,
\[
\int u^a\,d\mu\le CR^{-(n-\nu)}|B_R|\le CR^\nu.
\]
Let $x\in B_R$ with $\ell(x)<R$. Continuity and the coercive
distance term ensure that $\underline\ell(x)$ has a minimizer $y$, with
$\ell(y)+|x-y|/(100\Lambda)=\underline\ell(x)\le\ell(x)<R$.
The minimizer $y$ is a contact point: if $\underline\ell(y)<\ell(y)$, some $z$ has
$\ell(z)+|z-y|/(100\Lambda)<\ell(y)$; the triangle inequality then
gives $\ell(z)+|x-z|/(100\Lambda)<\underline\ell(x)$, a contradiction.
The covering property gives a selected core $i$ such that
\[
\ell_i\le2\ell(y),
\qquad
|x_i-y|\le3\Lambda\ell(y).
\]
Here $\ell_i<2R$ and
$|x_i|\le|x|+|x-y|+3\Lambda\ell(y)<(1+103\Lambda)R$,
so $i\in\mathcal C_R$. Moreover,
\begin{equation}\label{eq:length-kernel-comparison}
\ell_i+\frac{|x-x_i|}{100\Lambda}
\le\frac{203}{100}\,\ell(x).
\end{equation}
Consequently
\[
u(x)^a\mathbf1_{\{\ell(x)<R\}}
\le C\sum_{i\in\mathcal C_R}
\left(\ell_i+\frac{|x-x_i|}{100\Lambda}\right)^{-(n-\nu)}.
\]
Since $d\mu=W^{-1}dx\le dx$ and $|x-x_i|\le CR$ for $x\in B_R$, Euclidean polar integration around $x_i$ (using $n-\nu<n$) gives
\[
\int_{B_R}
\left(\ell_i+\frac{|x-x_i|}{100\Lambda}\right)^{-(n-\nu)}dx
\le CR^\nu.
\]
Summing over $i\in\mathcal C_R$ proves \eqref{eq:Ia-core}.

\smallskip
\noindent\emph{Step 4: the finite descent in the range $n\ge4k+3$.}
Here
\[
\delta_*=-M_*-1\in(-1,0).
\]
Fix once and for all a cutoff power $\theta>2k$, and let
$\chi_R$ be the standard cutoff equal to one on $B_R$ and
supported in $B_{2R}$. Since $q\le z$, the first
curvature-weighted quantity in Lemma~\ref{lem:descent}
controls $\mathcal N(R)$.  The negative-$\delta$ form of that lemma gives
\begin{equation}\label{eq:C8-descent}
\mathcal N(R)
\le C I_a(2R)
+CR^{-2k}\int_{B_{2R}}u^b\,d\mu,
\qquad
b=M_*+k+1.
\end{equation}
Because $n\ge4k+3$, one has $b>0$, and
\[
a-b=p_*-k=\frac{2k(k+1)}{n-2k}>0.
\]
Young's inequality on $B_{2R}$ gives
\[
R^{-2k}\int_{B_{2R}}u^b\,d\mu
\le\frac12 I_a(2R)
+CR^{n-\frac{2ka}{a-b}}.
\]
The two $I_a(2R)$ terms on the right of \eqref{eq:C8-descent} can be combined. At the critical exponent,
\[
n-\frac{2ka}{a-b}
=\frac{2k}{k+1}
=\nu.
\]
Thus
\[
\mathcal N(R)\le C I_a(2R)+CR^\nu,
\]
which is \eqref{eq:N-high-dim}.

Apply \eqref{eq:N-high-dim} at $2R$ in \eqref{eq:defect-cutoff};
this produces $I_a(4R)$. We then apply \eqref{eq:Ia-core} and
\eqref{eq:count1} at the radius $4R$. For the bad cores in
$\mathcal C_{4R}^{\rm bad}$, we repeat the estimate of Step~2
directly. Indeed, if $i\in\mathcal C_{4R}^{\rm bad}$, then
\[
|x_i|\le 8(1+103\Lambda)R,
\qquad
\ell_i\le 8R,
\]
and hence
\[
B_{L_0\ell_i}(x_i)
\subset
B_{\,8(1+103\Lambda+L_0)R}
=B_{C_0R}.
\]
Therefore
\[
N_{4R}^{\rm bad}
\le
C\left[
R^{\mu_{\mathrm{sc}}}D(C_0R)
\right]^{n/(n+\mu_{\mathrm{sc}})}.
\]
Consequently,
\begin{align*}
D(R)
&\le CR^{-2}[I_a(4R)+R^\nu]\\
&\le CR^{-\mu_{\mathrm{sc}}}
+C\log(e+R)R^{-\mu_{\mathrm{sc}}^2/(n+\mu_{\mathrm{sc}})}D(C_0R)^{n/(n+\mu_{\mathrm{sc}})},
\end{align*}
which is \eqref{eq:feedback}.

\smallskip
\noindent\emph{Step 5: Souplet radius selection.}
We use a radius-selection argument of the type in \cite{Souplet}.
The universal height bound gives $I_a(R)\le CR^n$, and hence \eqref{eq:N-high-dim} together with \eqref{eq:defect-cutoff} gives the explicit polynomial bound
\[
D(R)\le CR^{n-2}
\qquad(R\ge2).
\]
Suppose $D\not\equiv0$. Since $D$ is nonnegative and
nondecreasing, there exists $R_0\ge2$ such that
$D(R_0)>0$. Choose $A>C_0^{n-2}$.  If
\[
D(C_0R)>AD(R)
\]
held for every sufficiently large $R$, iteration would give $D(C_0^mR)>A^mD(R)$, contradicting the polynomial bound as $m\to\infty$.  Hence there are $R_j\to\infty$ with
\[
D(C_0R_j)\le AD(R_j).
\]
Substituting this into \eqref{eq:feedback}, set
\[
\theta=\frac{n}{n+\mu_{\mathrm{sc}}},
\qquad
1-\theta=\frac{\mu_{\mathrm{sc}}}{n+\mu_{\mathrm{sc}}}.
\]
Young's inequality with exponents $1/\theta=(n+\mu_{\mathrm{sc}})/n$ and $1/(1-\theta)=(n+\mu_{\mathrm{sc}})/\mu_{\mathrm{sc}}$ gives
\[
C\log(e+R_j)R_j^{-\mu_{\mathrm{sc}}^2/(n+\mu_{\mathrm{sc}})}D(R_j)^\theta
\le\frac12D(R_j)
+C R_j^{-\mu_{\mathrm{sc}}}\log^{(n+\mu_{\mathrm{sc}})/\mu_{\mathrm{sc}}}(e+R_j).
\]
After absorbing the first term,
\[
D(R_j)
\le
CR_j^{-\mu_{\mathrm{sc}}}
\left[1+\log^{(n+\mu_{\mathrm{sc}})/\mu_{\mathrm{sc}}}(e+R_j)\right]
\longrightarrow0.
\]
For any fixed $r>0$, choose $j$ so large that $r<R_j$.
Since $D$ is nonnegative and nondecreasing,
$
0\le D(r)\le D(R_j)\longrightarrow0.
$
Thus $D(r)=0$ for every $r>0$, and therefore
$
D\equiv0.
$
\end{proof}

\subsection{Completion of the Liouville theorem}\label{subsec:completion}
\begin{proof}[Completion of the proof of \cref{thm:liouville}]
\Cref{sec:liouville} proves the assertion for $k\le p<\pstar$. For $p=\pstar$, we distinguish the two dimension ranges.  If $2k<n\le4k+2$, \cref{prop:direct-low-dim} already gives $u\equiv0$.  When $n\ge4k+3$, \cref{lem:C8} gives
\[
 D(R)=0\qquad\text{for every }R>0.
\]
Since $\cD\ge0$, the vanishing of every defect integral gives $\cD\equiv0$.  Returning to the exact endpoint decomposition \eqref{eq:flux-square}, the quartic term has coefficient $k\tau_*/4>0$ and $u>0$ under the nontriviality assumption.  Hence $q\equiv0$.  Therefore $W\equiv1$, $Du\equiv0$, and $u$ is constant.  The equation then gives $0=\sigma_k(A[u])=u^{\pstar}$, so $u\equiv0$.  Together with the subcritical proof, the cases $p=k$, $k<p<p_*$,
$p=p_*$ with $2k<n\le4k+2$, and $p=p_*$ with $n\ge4k+3$
cover the full range in \eqref{eq:range}.
\end{proof}

\subsection{Geometric consequence}
To prove the geometric corollary, we show that a complete spacelike
immersion admits an entire graph representation. Completeness is used
to obtain this representation, after which \cref{thm:liouville} applies.

\begin{proof}[Proof of \cref{cor:geometric}]
Apply an ambient translation and a time-orientation-preserving Lorentz
isometry sending $\mathsf P$ to $\{t=0\}$ and $N_0$ to $(0,1)$.
The signed height in the statement then becomes the time coordinate,
and the future normal and the curvature convention are preserved.
Let $\pi:\Sigma\to\R^n$ be the spatial projection of the immersion.
For every tangent vector $Y$,
\[
 |d\pi(Y)|_{\R^n}^2=|Y|_\Sigma^2+|du(Y)|^2\geq|Y|_\Sigma^2.
\]
Thus $\pi$ is a local diffeomorphism, and a locally lifted spatial path
has no greater length than the original path.
A lift of a finite-length smooth path is Cauchy as the path approaches
its endpoint. Completeness gives a limit in $\Sigma$, and a local
inverse of $\pi$ extends the lift to that endpoint.
The lifting argument extends to piecewise smooth paths by applying it to each smooth segment.

We now show that these path lifts make $\pi$ a covering map. This uses completeness but does not require the immersion to be proper. Fix $y\in\R^n$ and a Euclidean ball $B_r(y)$. For each $a\in\pi^{-1}(y)$, lift the
radial segments starting at $y$ with initial point $a$. The path-lifting continuation gives an inverse branch of $\pi$ over
$B_r(y)$. Its regularity follows by
composing finitely many local inverses along each compact radial segment;
the same local inverses lift all sufficiently nearby segments.
Uniqueness of local lifts makes different sheets disjoint. Conversely, every point of $\pi^{-1}(B_r(y))$ belongs to one of these
sheets: lift the radial segment from its projection back to $y$.
The projection is surjective by lifting a segment from one fixed image
point to an arbitrary point of $\R^n$. Hence it is a covering.
Since $\R^n$ is simply connected and $\Sigma$ is connected, the covering
has one sheet.

The immersion is therefore an entire spacelike graph with nonnegative
height. \Cref{thm:liouville} implies that the height vanishes identically.
The image of the immersion is therefore the reference hyperplane.
\end{proof}

\section*{Acknowledgments}
The authors would like to thank Professor Xi-Nan Ma for his encouragement and suggestions on this subject. This work was supported by the Natural Science Foundation of Heilongjiang Province (Grant No. PL2025A006) and Program for Young Talents of Basic Research in University of Heilongjiang Province (Grant No. YQJH2025116). The authors acknowledge the use of AI tools. All mathematical statements and proofs were independently verified by the authors, who take full responsibility for the content of the manuscript.

\end{document}